\definecolor{MyDarkBlue}{cmyk}{0.8,0.3,0.8,0.4}
\definecolor{yellow}{rgb}{0.99,0.99,0.70}
\definecolor{white}{rgb}{1.0,1.0,1.0}
\definecolor{black}{rgb}{0.00,0.00,0.00}
\numberwithin{equation}{section}
\newcommand{\be}{\begin{eqnarray}}
\newcommand{\ee}{\end{eqnarray}}
\newcommand{\ce}{\begin{eqnarray*}}
\newcommand{\de}{\end{eqnarray*}}
\newtheorem{theorem}{Theorem}[section]
\newtheorem{lemma}{Lemma}[section]
\newtheorem{definition}{Definition}[section]
\newtheorem{proposition}{Proposition}[section]
\newtheorem{example}{Example}[section]
\newtheorem{corollary}{Corollary}[section]
\newtheorem{rem}[theorem]{Remark}
\def\[{{\Big[}}
\def\]{{\Big]}}
\def\<{{\langle}}
\def\>{{\rangle}}
\def\({{\big(}}
\def\){{\big)}}
\def\min{{\mathord{{\rm min}}}}
\def\bb2{{\boldsymbol{2}}}
\def\={&\!\!=\!\!&}
\def\b1{{\mathbbm 1}}
\def\geq{\geqslant}
\def\leq{\leqslant}
\def\ge{\geqslant}
\def\le{\leqslant}
\def\[{{\Big[}}
\def\]{{\Big]}}
\def\<{{\langle}}
\def\>{{\rangle}}
\def\min{{\mathord{{\rm min}}}}
\def\={&\!\!=\!\!&}
\def\bt{\begin{theorem}}
\def\et{\end{theorem}}
\def\bl{\begin{lemma}}
\def\el{\end{lemma}}
\def\br{\begin{remark}}
\def\er{\end{remark}}
\def\bd{\begin{definition}}
\def\ed{\end{definition}}
\def\bc{\begin{corollary}}
\def\ec{\end{corollary}}
\def\geq{\geqslant}
\def\leq{\leqslant}
\def\ge{\geqslant}
\def\le{\leqslant}
\def\<{\langle} \def\>{\rangle}
\begin{document}

\title[Yosida Approximation and Multi-Valued SEI]
{Generalized Yosida approximation and Multi-Valued Stochastic Evolution Inclusions$^\dagger$}


\thanks{$\dagger$
This work is supported by National Key R\&D program of China (No. 2023YFA1010101). The research of W. Hong is also supported by  NSFC (No.~12401177) and  NSF of Jiangsu Province (No.~BK20241048).  The research of W. Liu is also supported by NSFC (No.~12171208, 12090011,12090010). }

\maketitle
\centerline{\scshape Wujing Fan$^a$, Wei Hong$^b$,   Wei Liu$^{b}$\footnote{~Corresponding author: weiliu@jsnu.edu.cn} }

\medskip

\vspace{1mm}
 {\footnotesize
\centerline{  $a.$ School of Statistics and Data Science, Nankai University, Tianjin 300071, China}}
	
\vspace{1mm}
{\footnotesize
\centerline{ $b.$ School of Mathematics and Statistics, Jiangsu Normal University, Xuzhou 221116, China}}

\begin{abstract}
In this paper, we generalize the classical Yosida approximation by utilizing a nonstandard duality mapping to establish the existence and uniqueness of both (probabilistically) weak and strong solutions and demonstrate the continuous dependence on initial values for a class of  multi-valued stochastic evolution inclusions within the variational framework. 

Furthermore,  leveraging this generalized Yosida approximation,  we derive the finite-time extinction of solutions with probability one and also provide an explicit upper bound of the moment of extinction time for multi-valued stochastic evolution inclusions perturbed by linear multiplicative noise.  The main results are applicable to various examples, including multi-valued stochastic porous media equations, stochastic $\Phi$-Laplace equations and  stochastic evolution inclusions involving subdifferentials.

\bigskip
\noindent
\textbf{Keywords}: Generalized Yosida approximation;  Stochastic evolution inclusions; Variational framework; Maximal-monotone operators.
\\
\textbf{Mathematics Subject Classification (2020)}: 60H15, 35R60

\end{abstract}
\maketitle \rm

\tableofcontents

\section{Introduction}
The multi-valued stochastic evolution inclusion (SEI), which is given by the following form
 \begin{equation}\label{ge1}
	dX(t)\in \mathcal{A}(t,X(t))dt+\sigma(t,X(t))dW(t),
\end{equation}
is closely linked to a wide range of physical phenomena and models, such as  self-organized criticality,  fluid flow in porous media, the dynamics of non-Newtonian fluids and phase transition processes (cf.~e.g.~\cite{BDR091,BDR12,BR12,BRR15,G15,GT14} and references therein).

 As far as we know, the first result concerning  multi-valued SEIs was obtained by Kr{\'e}e  \cite{kree82}. Subsequently, C\'epa \cite{Cpa1998ProblmeDS} demonstrated that the Skorokhod problem is equivalent to a multi-valued maximal monotone operator equation and established an existence result for its generalized form. Petterson \cite{Pettersson95} employed the Yosida approximation technique to establish the existence of solutions to SEIs involving a maximal-monotone operator. Zhang \cite{Zhang2007SkorohodPA} extended C{\'e}pa's results to infinite-dimensional settings and  relaxed the Lipschitz continuity condition on the single-valued  operator to a global monotonicity assumption.  Ren et al.~\cite{Ren12,RWZ10} studied  the long-time behaviour and the regularity of invariant measures to  multi-valued stochastic differential equations (SDEs). Rascanu et al. \cite{BR97,R81} developed the framework of stochastic variational inequalites (SVIs), introducing the notion of weak (or SVI) solutions for multi-valued SEIs. This approach has  been widely employed in the analysis of multi-valued SEIs; see, for instance,  \cite{BDR09,GR15,BR13,GT16} and references therein. More recently, Gess et al.~\cite{GRW24} established the existence and uniqueness results for SVI solutions to stochastic nonlinear (possibly multi-valued) diffusion equations driven by multiplicative noise, where the drift operator $L$ is the generator of a transient Dirichlet form.



On the other hand, the theory of maximal-monotone operators plays a crucial role in the  analysis of nonlinear partial differential equations. In this paper, we focus on  SEIs in infinite-dimensional spaces, where the multi-valued mapping $\mathcal{A}$ in \eqref{ge1} is decomposed into a combination of a multi-valued maximal-monotone operator and a single-valued pseudo-monotone operator. To study this class of SEIs, we employ the variational framework which will be introduced below.


\subsection{Variational framework}

Let $H$ be a separable Hilbert space, with its inner product and norm denoted by $\left\langle\cdot,\cdot\right\rangle_H$ and $\|\cdot\|_H$, respectively. The dual space of $H$ is denoted by $H^*$, which is identified with $H$  through the Riesz isomorphism. Let $V$ and its dual space $V^*$ be uniformly convex Banach spaces such that the embedding $V\subset H$ is compact, continuous, and dense. Under these conditions, $V$ and $V^*$ are also strictly convex and reflexive (cf. \cite{barbu2010nonlinear}). The norms of $V$ and $V^*$ are donated by $\|\cdot\|_V$ and $\|\cdot\|_{V^*}$, respectively. Furthermore, the dual pairing between $v\in V^*$ and $u\in V$ is represented by $\left\langle v,u \right\rangle$. Through the Riesz isomorphism, we construct the following Gelfand triple
\begin{equation}\label{gelfand}
V\subset H\simeq H^*\subset V^*.
\end{equation}
Consequently, we have
$$\left\langle z,v\right\rangle=\left\langle z,v\right\rangle_H,~~\text{for~all~}z\in H,~v\in V. $$
Let $(\Omega,\mathcal{F},\mathbb{P})$ be a complete probability space equipped with a  normal filtration $\{\mathcal{F}_t\}_{ t\geq0}$. Let $(U,\left<\cdot,\cdot\right>_U)$ be a separable Hilbert space, and let $(L_2(U,H),\|\cdot\|_{\mathcal{L}_2})$ denote the space of all Hilbert-Schmidt operators from  $U$ to $H$.

\vspace{1mm}
Let $T>0$. In this paper we focus on the following type  multi-valued SEIs
\begin{align}\label{general}
		dX(t)+B(t,X(t))dt\in -\mathcal{A}(t,X(t))dt+\sigma(t,X(t))dW(t),~X(0)= x\in H,
\end{align}
where the multi-valued (maximal-monotone) mapping $\mathcal{A}$ and the single-valued mappings $B,\sigma$ are defined as follows
\begin{align*}
	&\mathcal{A}: [0,T]\times V \to 2^{V^*},\\
	&B: [0,T]\times V \to V^*,\\
	&\sigma: [0,T]\times V \to L_2(U,H).
\end{align*}
Here $W(t)$ denotes a cylindrical Wiener process on $U$ w.r.t.~the filtration $\{\mathcal{F}_t\}_{ t\geq0}$. Furthermore, the mappings $\mathcal{A},B,\sigma$ are assumed to be measurable.

The classical variational framework and the theory of monotone operators were first introduced by Minty \cite{M62}. This foundational work was further developed by Browder \cite{browder63,browder64}, Leray and Lions \cite{LL65} as well as Hartman and Stampacchia \cite{HS66}. Afterwards,  the variational approach for stochastic partial differential equations (SPDEs) was originally developed by Pardoux \cite{Pardoux72,Pardoux75}. This approach was subsequently explored  by Krylov and Rozovskii \cite{Krylov79} and Gyongy \cite{Gyongy}. Based on these developments, Liu and R\"ockner \cite{liu10} and Liu \cite{liu11} extended the variational framework to encompass  a class of locally monotone operators, broadening the range of nonlinear SPDEs, including e.g.~stochastic 2D Navier-Stokes equations, that can be analyzed. Very recently, R{\"o}ckner et al.~\cite{rockner2022well} further generalized the results of \cite{liu10,liu11} to SPDEs with multiplicative noises and fully local monotonicity conditions. For additional applications and advancements of the variational approach to SPDEs, we refer the reader to \cite{LR15,prevot2007concise,Ren07,rockner08,rockner09,zhang09}.

Notice that all the aforementioned works focus on the single-valued case, there are much less results in the literature that can be applied to multi-valued SEIs. As previously mentioned, Zhang \cite{Zhang2007SkorohodPA} investigated the multi-valued SEIs (\ref{general}) within the Gelfand triple (\ref{gelfand}). However,  the multi-valued operator $\mathcal{A}$  in their study is restricted to the space $H$ (i.e.,~$\mathcal{A}: H \to 2^{H}$), which excludes some model of  nonlinear SEIs, such as multi-valued stochastic porous media equations.
Liu and Stephan \cite{liu2014yosida} made a progress by establishing the existence and uniqueness of solutions to a class of multi-valued SEIs (\ref{general}) driven by L\'evy noises. In their work,  the multi-valued operator $\mathcal{A}$ is assumed to be coercive with an exponent $\alpha \in (1,2]$, and the single-valued operator $B$ is required to be  Lipschitz continuous. Gess and T\"olle \cite{GT14} studied multi-valued, monotone, weakly dissipative SEIs, which extend to cases where the coercivity exponent $\alpha\in [1,2]$, thereby covering certain stochastic singular diffusion equations.

However, the well-posedness result of  multi-valued SEIs within the variational framework for coercivity  exponent $\alpha>2$  remains an open problem. We would like to highlight the main challenges in studying multi-valued SEIs with general coercivity  exponent $\alpha>1$. Specifically, the classical  Yosida approximation (cf.~\cite{barbu2010nonlinear}), commonly employed to handle maximal-monotone operators, only works for the case of $\alpha\in (1,2]$ due to certain technical limitations, as pointed out in \cite{liu2014yosida}. In particular, the coercivity of the Yosida approximation operators fails to hold when $\alpha>2$  (cf.~Lemma 3.10 in \cite{liu2014yosida}). Consequently, to address the general case $\alpha>2$, several new techniques involving the generalized  Yosida approximation are required.

More precisely,  we  generalize the classical Yosida approximation by employing a novel  duality mapping (cf.~Definition \ref{defJ} and (\ref{Alamd})), which is also of independent interest in analysis.  To achieve this, we first establish the existence and uniqueness of solutions to an associated resolvent equation  (cf.~(\ref{res}) below).
Subsequently, we derive several key properties of the  generalized Yosida approximation, which are crucial for  the convergence of the approximating sequence.

To  develop a more general theory for  the existence of (probabilistically) weak solutions to multi-valued SEIs (\ref{general}), we assume the single-valued  operator $B$ to be pseudo-monotone.
While the monotonicity technique is effective for SPDEs with single-valued monotone operators (cf.~\cite{LR15}), they fail in addressing pseudo-monotone operators. To solve this problem, we combine stochastic compactness arguments with  pseudo-monotonicity techniques. However, due to the joint perturbation caused by multi-valued maximal-monotone operators,  several non-trivial difficulties arise in proving the convergence of the coefficient $B(t,X_{\lambda}(t))$ and the Yosida approximation operator $\mathcal{A}_{\lambda}(t,X_{\lambda}(t))$, as detailed in Lemmas \ref{lemmaB1} and \ref{lemc}. Furthermore, we establish the existence of a unique (probabilistically) strong solution and demonstrate the continuous dependence on initial values for (\ref{general}) under fully local monotonicity conditions on single-valued operators.
\subsection{Finite-time extinction}
Self-organized criticality (SOC) is a widely studied framework in physics  for modeling complex systems,  including phenomena such as earthquakes and neuronal activity. The SOC behavior observed in the Bak-Tang-Wiesenfeld (BTW) sandpile model corresponds to the finite-time extinction of solutions to multi-valued stochastic diffusion equations (cf.~\cite{BDR091}). Similarly, the Stefan problem, which describes phase transitions in porous media or heat conduction, can be formulated as a class of multi-valued stochastic porous media equations, see also \cite{BDR091}. The time evolution of these systems exhibits SOC, underscoring the fundamental role of finite-time extinction in the analysis of multi-valued SEIs.

Several previous works have explored the finite-time extinction of solutions to multi-valued SEIs perturbed by linear multiplicative noises. For example, Barbu et al.~\cite{BDR091} proved that, for multi-valued stochastic porous media equations in  one spatial dimension,  finite-time extinction  occurs with positive probability for small initial values. In their  subsequent work \cite{BDR092}, the restriction to one dimension was relaxed, extending the results to all dimensions.
In \cite{BR12}, the  asymptotic extinction of solutions to multi-valued stochastic porous media equations was established with probability one in dimensions $d=\{1,2,3\}$. R\"ockner and Wang \cite{RW13}  generalized the results of \cite{BDR091} to encompass a broader class of operators, including fractional Laplacians. They proved the exponential integrability of the extinction time for the Zhang model, provided the noise is small enough. Gess \cite{G15} showed that the finite-time extinction of the BTW model holds with probability one in all dimensions.

Concerning finite time extinction, we distinguish the following situations:
\begin{enumerate}[(i)]

\item
Extinction with positive probability for small initial conditions;

\vspace{1mm}
\item
Extinction with positive probability for all initial values;

\vspace{1mm}
\item
Finite time extinction with probability one for all initial values.
\end{enumerate}
While both Situations (i) and (ii) are mathematically intriguing,  the robustness of the relaxation into subcritical states in SOC is of fundamental importance in physics. Consequently,   Situation (iii) aligns most closely with the perspective of SOC.

In  this  work, we establish finite-time extinction with probability one for  all initial values $x\in H$ (i.e.,~Situation (iii)) for a class of multi-valued SEIs perturbed by linear multiplicative noise. As a consequence in the proof, we demonstrate the $L^2$-convergence of the Yosida approximation sequence by utilizing the powerful characterization of convergence in probability as given by Gy\"{o}ngy and Krylov \cite{GK96} (see Theorem \ref{finiconv} for details). Based on this,  we provide a quantitative characterization of the probability that the extinction time $\tau_e$ is less than any given time $T$, i.e.
\begin{equation*}
\mathbb{P}(\tau_e\leq T)\geq 1-\frac{c^*\|x\|_{H}^{2-\alpha}}{T},
\end{equation*}
and also derive an explicit upper bound of the moment of $\tau_e$, i.e.
$$\mathbb{E}\tau_e\leq c^*\|x\|_H^{2-\alpha},$$
  for multi-valued SEIs in the variational framework with any coercivity exponent $\alpha \in (1,2)$. Moreover, in comparison to \cite{BDR091, BDR092, BDR12,BRR15},  our  results extend the finite-time extinction from cases with positive probability and small initial conditions to  cases with probability one for all initial values, and we also remove the restriction on spatial dimension.

To illustrate the generality of the present framework, we apply the main results to several concrete examples. The first application concerns the multi-valued stochastic porous media equations
\begin{align*}
	\left\{
	\begin{aligned}
		dX(t) &\in \Delta\Psi (X(t))dt+\sigma(t, X(t))dW(t),\\
		X(0) &= x\in H^{-1}(\Lambda).
	\end{aligned}
	\right.
\end{align*}
A typical example of the multi-valued mapping $\Psi$ is given by
\begin{align*}
    \Psi(s)=\left\{
	\begin{aligned}
		&\rho+ \delta s^{p-1}, \ \text{if~}s>0,\\
		&[-\rho,\rho], \ \text{if~}s= 0,\\
        &-\rho+\delta (-s)^{p-1}, \ \text{if~}s<0,
	\end{aligned}
	\right.
\end{align*}
where  $\rho,\delta$ are some positive constants. This example was involved in \cite{liu2014yosida} for $p\in(1,2]$. In this work, we extend the well-posedness result  to all $p>1$ and further establish finite-time extinction of solutions with probability one  for all initial values when $p\in (1,2)$.

Next, we  investigate the multi-valued stochastic $\Phi$-Laplace equations
\begin{align}\label{eq0001}
	\left\{
	\begin{aligned}
		dX(t) &\in \text{div}\Phi(\nabla X(t))dt+\sigma(t, X(t))dW(t),\\
		X(0) &= x\in L^2(\Lambda),
	\end{aligned}
	\right.
\end{align}
which have been studied in e.g.~\cite{GT14,SST23}  for  specific potential functions $\Phi$ (see Subsection \ref{exlap} for details). We develop a general well-posedness result  and also for the first time establish the upper bound of the moment estimate for extinction time to  (\ref{eq0001}), which is of independent interest.

Finally, we apply our results to  SEIs with subdifferentials
\begin{align*}
	\left\{
	\begin{aligned}
		\partial_t u(t,x)&\in  \left[\Delta u(t,x)-g(t,x, u(t,x),\nabla
   u(t,x))-\partial\varphi(u(t,x))\right]dt+\sigma(t,u(t,x))dW(t),\\
		u(0,x)&=u_0(x)\in L^2(\Lambda).
	\end{aligned}
	\right.
\end{align*}
This type of SEI with single-valued monotone operators was previously studied in  \cite{Zhang2007SkorohodPA}. Within our framework, we extend the results of \cite{Zhang2007SkorohodPA} to more general operators, broadening their applicability to a wider class of nonlinear parabolic equations.

The rest of the paper is structured as follows: Section 2 gives some preliminaries, including the definition and properties of the maximal-monotone operator and the generalized Yosida approximation. Section 3 presents the weak and strong well-posedness results. Section 4 discusses the finite-time extinction of solutions to multi-valued SEIs. Section 5 illustrates several concrete applications of our general framework. Finally, we provide several useful results related to multi-valued maximal-monotone operators in the Appendix.

\section{Preliminaries}
In this section, we recall some definitions and basic results regarding maximal monotone and pseudo monotone operators on Banach spaces.     Then, we construct a nonlinear version of  Yosida approximation for maximal monotone operators on Banach spaces by solving a newly formulated resolvent equation.

We denote the power set of $V^*$ by $2^{V^*}$. A multi-valued mapping $\mathcal{A}:V\to 2^{V^*}$  maps $x\in V$ to a subset $\mathcal{A}(x)\subset V^*$. The domain of  $\mathcal{A}$ is defined as the set $\mathcal{D}(\mathcal{A}):=\left\{x\in V \mid \mathcal{A}(x)\neq\emptyset\right\}$. For a multi-valued map $\mathcal{A}$, its graph is given by $$\mathcal{G}(\mathcal{A}):=\left\{[x,y]\in V\times V^* \mid y\in \mathcal{A}(x)\right\}.$$
\subsection{Maximal-monotone and pseudo-monotone operators}
First, we recall the definition of maximal-monotone operators.
\begin{definition}
$(i)$ A multi-valued operator $\mathcal{A}: V\to 2^{V^*}$ is called monotone if
	\begin{align*}
		\left\langle v_1 - v_2, u_1 - u_2 \right\rangle &\ge 0, \quad\text{for~all}~[v_i, u_i] \in \mathcal{G}(\mathcal{A}), \ i=1,2.
	\end{align*}

$(ii)$ A monotone operator $\mathcal{A}: V\to 2^{V^*}$ is called maximal-monotone if it is not properly contained in any other monotone extension $\tilde{\mathcal{A}}$ such that $\mathcal{G}(\mathcal{A})\subsetneq\mathcal{G}(\tilde{\mathcal{A}}).$

\vspace{1mm}
$(iii)$ The minimal section $\mathcal{A}^0 :\mathcal{D}(\mathcal{A})\subset V \to 2^{V^*}$ of a maximal-monotone operator $\mathcal{A}$ is defined for $x \in \mathcal{D}(\mathcal{A})$ as
    \begin{align*}
    	\mathcal{A}^0(x):=\Big\{y\in \mathcal{A}(x) \mid \|y\|_{V^*} = \min_{z\in \mathcal{A}(x)} \|z\|_{V^*}\Big\}.
    \end{align*}

\end{definition}

\begin{rem}
We note that since $V^*$ is strictly convex, then $\mathcal{A}^0$ is single-valued $($cf.~\cite{barbu2010nonlinear}$)$.
\end{rem}

The following lemma concerns the convergence of maximal-monotone operators.
\begin{lemma}$($cf.~\cite[Lemma 2.3]{barbu2010nonlinear}$)$\label{propmaxi}	Let $\mathcal{A}: V\to 2^{V^*}$ be maximal-monotone. Let $[u_n, v_n]\in \mathcal{G}(\mathcal{A})$ be such that $u_n \rightharpoonup u$, $v_n \rightharpoonup v,$ and either
\begin{equation*}
	\limsup\limits_{n,m \to \infty}\left\langle u_n-u_m, v_n-v_m \right\rangle \le  0
\end{equation*}
or
\begin{equation*}
	\limsup\limits_{n \to \infty}\left\langle u_n-u, v_n-v \right\rangle \le 0.
\end{equation*}
Then, $[u,v]\in \mathcal{G}(\mathcal{A})$ and $\left\langle u_n, v_n \right\rangle \to \left\langle u, v \right\rangle$, as $n\to \infty.$
\end{lemma}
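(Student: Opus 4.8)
The plan is to reduce the statement to the defining property of maximality together with two elementary passages to the limit. To obtain $[u,v]\in\mathcal{G}(\mathcal{A})$ it suffices to prove that $[u,v]$ is monotonically related to the whole graph, i.e.~$\langle u-x,v-w\rangle\ge 0$ for every $[x,w]\in\mathcal{G}(\mathcal{A})$; for then $\mathcal{G}(\mathcal{A})\cup\{[u,v]\}$ is again the graph of a monotone operator, and maximality forces $[u,v]\in\mathcal{G}(\mathcal{A})$. The technical heart is the inequality
\begin{equation*}
\limsup_{n\to\infty}\langle u_n,v_n\rangle\le\langle u,v\rangle,
\end{equation*}
and I would first show that \emph{each} of the two hypotheses implies it. Throughout one uses freely that, $V$ and $V^*$ being reflexive, every ``mixed'' pairing converges: $\langle u_n,v_m\rangle\to\langle u,v_m\rangle$ and $\langle u_m,v_n\rangle\to\langle u_m,v\rangle$ for each fixed $m$, as well as $\langle u_n,v\rangle\to\langle u,v\rangle$ and $\langle u,v_n\rangle\to\langle u,v\rangle$.

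\textbf{Step 1 (reducing the hypotheses).} Under the second hypothesis the bound is immediate: expanding $\langle u_n-u,v_n-v\rangle=\langle u_n,v_n\rangle-\langle u,v_n\rangle-\langle u_n,v\rangle+\langle u,v\rangle$ and letting $n\to\infty$ in the last three (convergent) terms gives $\limsup_{n\to\infty}\langle u_n,v_n\rangle-\langle u,v\rangle\le 0$. Under the first hypothesis, set $\Lambda:=\limsup_{n\to\infty}\langle u_n,v_n\rangle$ and expand
\begin{equation*}
\langle u_n-u_m,v_n-v_m\rangle=\langle u_n,v_n\rangle-\langle u_n,v_m\rangle-\langle u_m,v_n\rangle+\langle u_m,v_m\rangle .
\end{equation*}
Fixing $m$ and passing to $\limsup_{n\to\infty}$, the right-hand side becomes $\Lambda-\langle u,v_m\rangle-\langle u_m,v\rangle+\langle u_m,v_m\rangle$ (all terms but the first converge in $n$); passing then to $\limsup_{m\to\infty}$ produces $2\Lambda-2\langle u,v\rangle$. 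Since an iterated $\limsup$ is always dominated by the joint $\limsup$, this quantity is at most $\limsup_{n,m\to\infty}\langle u_n-u_m,v_n-v_m\rangle\le 0$, so $\Lambda\le\langle u,v\rangle$. Justifying ``iterated $\limsup\le$ joint $\limsup$'' and keeping track of which mixed terms actually converge is the only genuinely delicate point of the proof; everything else is routine.

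\textbf{Step 2 (membership and convergence).} For an arbitrary $[x,w]\in\mathcal{G}(\mathcal{A})$, monotonicity of $\mathcal{A}$ gives $0\le\langle u_n-x,v_n-w\rangle=\langle u_n,v_n\rangle-\langle u_n,w\rangle-\langle x,v_n\rangle+\langle x,w\rangle$; passing to $\liminf_{n\to\infty}$ and combining with Step~1,
\begin{equation*}
\langle u,w\rangle+\langle x,v\rangle-\langle x,w\rangle\le\liminf_{n\to\infty}\langle u_n,v_n\rangle\le\limsup_{n\to\infty}\langle u_n,v_n\rangle\le\langle u,v\rangle ,
\end{equation*}
which rearranges to $\langle u-x,v-w\rangle\ge 0$. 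As $[x,w]\in\mathcal{G}(\mathcal{A})$ was arbitrary, maximality yields $[u,v]\in\mathcal{G}(\mathcal{A})$. Finally, taking $[x,w]=[u,v]$ in the left-most inequality of the last display gives $\liminf_{n\to\infty}\langle u_n,v_n\rangle\ge\langle u,v\rangle$, which combined with the $\limsup$ bound shows $\langle u_n,v_n\rangle\to\langle u,v\rangle$, as claimed.
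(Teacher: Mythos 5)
Your argument is correct and is essentially the standard proof of this result; the paper itself offers no proof, citing only Lemma~2.3 of \cite{barbu2010nonlinear}, and your two steps (deriving $\limsup_{n}\langle u_n,v_n\rangle\le\langle u,v\rangle$ from either hypothesis, then testing monotone relatedness against an arbitrary $[x,w]\in\mathcal{G}(\mathcal{A})$ and invoking maximality) reproduce that reference's argument. The delicate points you flag — convergence of the mixed pairings under weak convergence and the inequality ``iterated $\limsup\le$ joint $\limsup$'' — are both handled correctly.
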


In the sequel, we introduce the definition of duality mapping, which differs from the classical one presented in the existing works (e.g.~\cite{barbu2010nonlinear})  and plays an important role in applications to examples of  multi-valued nonlinear stochastic evolution inclusions.
\begin{definition}\label{defJ}
	Let $\alpha>1$. The duality mapping $J: V \to 2^{V^*}$ on the space $V$ is defined by
\begin{align*}
	J(u):=\left\{v \in V^* \mid \left\langle v, u \right\rangle = \| u \|_V^{\alpha} = \| v \|_{V^*}^{\frac{\alpha}{\alpha-1}} \right\}, \quad\text{for~each}~u \in V.
\end{align*}
\end{definition}

\begin{proposition}
$(i)$ The duality mapping $J$ is monotone, locally bounded and coercive.

        \vspace{1mm}
$(ii)$  $J$ is single-valued, demicontinuous and odd.
\end{proposition}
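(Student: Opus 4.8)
The plan is to establish each listed property of the generalized duality mapping
\[
J(u)=\Big\{v\in V^*\;\Big|\;\langle v,u\rangle=\|u\|_V^\alpha=\|v\|_{V^*}^{\frac{\alpha}{\alpha-1}}\Big\}
\]
essentially by reducing it to the corresponding facts for the \emph{classical} duality mapping $J_2\colon V\to 2^{V^*}$ of gauge function $t\mapsto t$, i.e.\ $J_2(u)=\{v:\langle v,u\rangle=\|u\|_V^2=\|v\|_{V^*}^2\}$, together with an explicit radial rescaling. Concretely, I would first check the elementary algebraic identity that, for $u\neq 0$,
\[
J(u)=\|u\|_V^{\alpha-2}\,J_2(u),
\]
and $J(0)=\{0\}$: indeed if $v\in J_2(u)$ then $w:=\|u\|_V^{\alpha-2}v$ satisfies $\langle w,u\rangle=\|u\|_V^{\alpha-2}\|u\|_V^2=\|u\|_V^\alpha$ and $\|w\|_{V^*}=\|u\|_V^{\alpha-2}\|u\|_V=\|u\|_V^{\alpha-1}=(\|u\|_V^\alpha)^{(\alpha-1)/\alpha}=\|w\|_{V^*}^{\alpha/(\alpha-1)}$, and conversely any $w\in J(u)$ forces $\|w\|_{V^*}=\|u\|_V^{\alpha-1}$, so $\|u\|_V^{2-\alpha}w\in J_2(u)$. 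Since $V$ (hence $V^*$) is uniformly convex, $J_2$ is single-valued, bijective, demicontinuous, bounded on bounded sets, strictly monotone and coercive (standard, cf.~\cite{barbu2010nonlinear}); I would quote these and then transport them.

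For part $(ii)$: single-valuedness of $J$ is immediate from $J(u)=\|u\|_V^{\alpha-2}J_2(u)$ and single-valuedness of $J_2$. Oddness follows from $J_2(-u)=-J_2(u)$ and the evenness of $u\mapsto\|u\|_V^{\alpha-2}$. For demicontinuity: if $u_n\to u$ strongly in $V$, then $\|u_n\|_V^{\alpha-2}\to\|u\|_V^{\alpha-2}$ (for $u\ne0$; the case $u=0$ is handled separately using $\|J(u_n)\|_{V^*}=\|u_n\|_V^{\alpha-1}\to0$, giving even norm convergence) and $J_2(u_n)\rightharpoonup J_2(u)$ by demicontinuity of $J_2$, so $J(u_n)=\|u_n\|_V^{\alpha-2}J_2(u_n)\rightharpoonup J(u)$ because a strongly convergent scalar times a weakly convergent sequence converges weakly. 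For part $(i)$: coercivity is read off from $\langle J(u),u\rangle/\|u\|_V=\|u\|_V^{\alpha-1}\to\infty$; local boundedness from $\|J(u)\|_{V^*}=\|u\|_V^{\alpha-1}$; and monotonicity needs the genuine argument below.

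The monotonicity of $J$ is the one point that does not reduce to a pure rescaling, since $\|u\|_V^{\alpha-2}$ is not constant. I would argue it as follows. The map $J$ is the subdifferential of the convex, Gâteaux-differentiable functional $\varphi(u)=\tfrac1\alpha\|u\|_V^\alpha$ on $V$: indeed $t\mapsto\tfrac1\alpha t^\alpha$ is convex increasing on $[0,\infty)$ and $u\mapsto\|u\|_V$ is convex, so $\varphi$ is convex, and a direct computation of the directional derivative gives $\varphi'(u)=J(u)$ (using that $\tfrac{d}{dt}\tfrac1\alpha\|u+t h\|_V^\alpha\big|_{t=0}=\|u\|_V^{\alpha-1}\cdot(\text{derivative of }\|\cdot\|_V)=\|u\|_V^{\alpha-2}\langle J_2(u),h\rangle=\langle J(u),h\rangle$). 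Since the subdifferential of any convex function is monotone, $J$ is monotone. The main obstacle, then, is purely bookkeeping at $u=0$: several of the rescaling identities have the prefactor $\|u\|_V^{\alpha-2}$ which degenerates (when $\alpha<2$ it blows up, when $\alpha>2$ it vanishes), so each continuity/boundedness claim must be verified separately at the origin using the clean identities $J(0)=\{0\}$ and $\|J(u)\|_{V^*}=\|u\|_V^{\alpha-1}$; apart from that, everything follows by transporting classical duality-mapping facts through the radial scaling, or (for monotonicity) via the convex-potential representation $\varphi=\tfrac1\alpha\|\cdot\|_V^\alpha$.
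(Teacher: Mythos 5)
Your proof is correct, but it takes a genuinely different route from the paper's. The paper works directly from the definition of $J$: monotonicity comes from the two--point inequality $\langle v_1-v_2,u_1-u_2\rangle\ge(\|u_1\|_V^{\alpha-1}-\|u_2\|_V^{\alpha-1})(\|u_1\|_V-\|u_2\|_V)\ge0$; single-valuedness from the estimate $2\|v_1\|_{V^*}\|u\|_V\le\langle v_1+v_2,u\rangle\le\|v_1+v_2\|_{V^*}\|u\|_V$ combined with strict convexity of $V^*$; demicontinuity from a weak-$*$ limit-point argument using weak lower semicontinuity of the dual norm; all self-contained and computational. You instead factor $J(u)=\|u\|_V^{\alpha-2}J_2(u)$ through the classical duality map and transport its known properties, and you obtain monotonicity from the convex potential $\varphi=\tfrac1\alpha\|\cdot\|_V^\alpha$. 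Your route is closer to the textbook treatment of duality maps with gauge functions (Asplund's theorem), is less computational, and the identification $J=\partial\varphi$ is conceptually stronger -- it even yields maximal monotonicity for free, which the paper later has to extract from Theorem 26.A/Proposition 32.7 of Zeidler. The price is reliance on more classical machinery: you quote demicontinuity and single-valuedness of $J_2$ (which the paper essentially reproves from scratch for $J$), and you must handle the degenerate prefactor $\|u\|_V^{\alpha-2}$ at the origin case by case, which you correctly flag and do.

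Two small points worth tightening. First, writing $\varphi'(u)=\|u\|_V^{\alpha-2}\langle J_2(u),h\rangle$ presupposes that $\|\cdot\|_V$ is G\^ateaux differentiable away from the origin; this is not automatic in a general Banach space, though it does hold here because $V$ is reflexive and $V^*$ is strictly (indeed uniformly) convex, so you should either say so or sidestep differentiability entirely by checking $J(u)\subseteq\partial\varphi(u)$ directly via Young's inequality: for $v\in J(u)$ and any $w$, $\langle v,w-u\rangle\le\|u\|_V^{\alpha-1}\|w\|_V-\|u\|_V^{\alpha}\le\varphi(w)-\varphi(u)$. Second, in the demicontinuity step the conclusion "strongly convergent scalar times weakly convergent sequence converges weakly" implicitly uses boundedness of $J_2(u_n)$, which is fine since $\|J_2(u_n)\|_{V^*}=\|u_n\|_V$ is convergent, but deserves a word.
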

\begin{proof}
\textbf{Proof of (i)}:
Let $u_1, u_2\in V$ and $v_1\in J(u_1), v_2\in J(u_2)$. In view of Definition \ref{defJ} and $\alpha>1$,  we have
\begin{align}\label{Jpro1}
    \left\langle v_1-v_2, u_1-u_2 \right\rangle&\ge \left\langle v_1, u_1 \right\rangle+\left\langle v_2, u_2 \right\rangle-\|v_1\|_{V^*}\|u_2\|_V-\|v_2\|_{V^*}\|u_1\|_V\nonumber\\
    &=\|u_1\|_V^{\alpha}+\|u_2\|_V^{\alpha}-\|u_1\|_V^{\alpha-1}\|u_2\|_V-\|u_2\|_V^{\alpha-1}\|u_1\|_V\nonumber\\
    &=(\|u_1\|_V^{\alpha-1}-\|u_2\|_V^{\alpha-1})(\|u_1\|_V-\|u_2\|_V)\nonumber\\
    &\ge 0.
\end{align}
Thus, $J$ is monotone.

Let $u\in V$ and $v\in J(u)$. Since $\alpha>1$, $\|v\|_{V^*}=\|u\|_V^{\alpha-1}$
and $\left\langle v,u \right\rangle = \|u\|_V^{\alpha}$ , we deduce that $J$ is locally bounded and coercive.

\vspace{1mm}
\textbf{Proof of (ii)}: The proof is postponed in Proposition \ref{app1} in the Appendix.
\end{proof}

\begin{rem}
Note that, applying the Hahn-Banach theorem, $J(u)$ is non-empty for every $u\in V$, which implies that $\mathcal{D}(J)=V$. Recall that $J$ is a single-valued mapping.
Since $J$ is monotone, demicontinuous and coercive, it follows from Theorem 26.A in \cite{ZachariasB} that $J$ is surjective. Consequently, the mapping $J: V\to V^*$ is a bijection.

Furthermore,  since $J$ is monotone and demicontinuous, we can conclude that $J$ is maximal-monotone by Proposition 32.7 in \cite{ZachariasB}. Additionally, since $V^*$ is uniformly convex,  it implies from Proposition 21.23 $(d)$ in \cite{ZachariasB} that $J$ is continuous.
\end{rem}

\begin{definition}
The inverse mapping $J^{-1}:V^* \to 2^V$ defined by
$$J^{-1}(v): =\left\{u\in V \mid v\in J(u)\right\}$$ satisfies
\begin{align*}
	J^{-1}(v) = \left\{ u \in V \mid \left\langle v, u \right\rangle = \| u \|_V^{\alpha} = \| v \|_{V^*}^{\frac{\alpha}{\alpha-1}} \right\}, \quad\text{for~each}~ v\in V^*.
\end{align*}
\end{definition}

\begin{rem}
Since $V$ is reflexive,  $J^{-1}$ is the duality mapping on $V^*$ and $\mathcal{D}(J^{-1})=V^*$. Moreover, due to the strict convexity of $V$, we know that $J^{-1}$ is also single-valued $($from $V^*$ to $V$$)$ and demicontinuous.
\end{rem}

In what follows, we also recall the definition of pseudo-monotone operators, which was first proposed by Br\'ezis in \cite{Brezis68}.
\begin{definition}
	The operator $B:V\to V^*$ is called pseudo-monotone if $u_n\rightharpoonup u$ in $V$ and
	$$
	\limsup_{n\to \infty}\langle B(u_n),u_n-u\rangle \le 0,
	$$
	then for any $v\in V$,
	$$
	\langle B(u),u-v\rangle \le \liminf_{n\to \infty}\langle B(u_n),u_n-v\rangle.
	$$
\end{definition}
\begin{rem}
	There is an alternative definition  of pseudo-monotonicity introduced by Browder in \cite{browder77}. Specifically, an operator $B:V\to V^*$ is pseudo-monotone iff $u_n\rightharpoonup u$ in $V$ and
	$$
	\limsup\limits_{n\to \infty}\langle B(u_n),u_n-u\rangle \le 0,
	$$
	then $B(u_n)\rightharpoonup B(u)$ in $V^*$ and
$$
	\lim\limits_{n\to\infty}\langle B(u_n), u_n\rangle = \langle B(u),u\rangle.
	$$
It has been proved that these two definitions are equivalent, which can be referred to Remark 5.2.12 in \cite{LR15}.
\end{rem}

\subsection{Generalized Yosida approximation}
To construct the generalized Yosida approximation of maximal-monotone operators on Banach spaces, we first consider the following resolvent equation
\begin{align}
	0 \in J(x_{\lambda}-x)+\lambda \mathcal{A}(x_{\lambda}),\label{res}
\end{align}
where the operator $\mathcal{A}$ is maximal-monotone, $\lambda >0$ and $x\in V$.

\begin{rem}
It should be pointed out that since the definition of the duality mapping $J$ differs from
the classical one $($see e.g.~\cite{barbu2010nonlinear}$)$, the well-posedness of the resolvent equation $(\ref{res})$ and the properties of the generalized Yosida approximation require to be established.
\end{rem}

The existence and uniqueness of solutions to Eq.~(\ref{res}) are presented as follows.
\begin{proposition}\label{propres} For all $x\in V$, there exists a unique solution $x_{\lambda}$ to \eqref{res}.
\end{proposition}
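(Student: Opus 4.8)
The plan is to reformulate the resolvent inclusion \eqref{res} as a surjectivity problem for a maximal-monotone operator and then invoke the classical theorem that a coercive maximal-monotone operator on a reflexive Banach space is surjective. Fix $x\in V$ and $\lambda>0$. Observe that $x_\lambda$ solves \eqref{res} if and only if $x_\lambda\in\mathcal{D}(\mathcal{A})$ and there exists $w\in\mathcal{A}(x_\lambda)$ with $-\lambda w\in J(x_\lambda-x)$, i.e.\ $-\frac{1}{\lambda}J(x_\lambda-x)\in\mathcal{A}(x_\lambda)$. Introduce the shifted duality map $J_x(u):=J(u-x)$, so that solving \eqref{res} amounts to finding $x_\lambda$ with $0\in \lambda\mathcal{A}(x_\lambda)+J_x(x_\lambda)$, equivalently $0\in(\lambda\mathcal{A}+J_x)(x_\lambda)$. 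Thus it suffices to show the operator $T:=\lambda\mathcal{A}+J_x: V\to 2^{V^*}$ is surjective, in particular that $0\in\mathrm{Range}(T)$.

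First I would record the properties of $J_x$: since $J$ is single-valued, maximal-monotone, demicontinuous (indeed continuous, as noted in the Remark following Definition \ref{defJ}), bounded on bounded sets, with $\|J(u)\|_{V^*}=\|u\|_V^{\alpha-1}$ and $\langle J(u),u\rangle=\|u\|_V^\alpha$, the translate $J_x$ inherits all of these (monotonicity and demicontinuity are translation-invariant; the shift only perturbs norms by the fixed constant $\|x\|_V$). In particular $J_x$ is everywhere defined, monotone, demicontinuous, hence hemicontinuous, so by Browder's sum theorem (e.g.\ the version in \cite{barbu2010nonlinear} or Theorem 32.I in \cite{ZachariasB}) the sum $\lambda\mathcal{A}+J_x$ of the maximal-monotone multi-valued operator $\lambda\mathcal{A}$ and the everywhere-defined monotone demicontinuous operator $J_x$ is again maximal-monotone, since $\mathcal{D}(J_x)=V\supset\mathrm{int}(\overline{\mathcal{D}(\lambda\mathcal{A})})$ trivially. (If $\mathcal{D}(\mathcal{A})=\emptyset$ were possible one would need to exclude it, but a maximal-monotone operator has nonempty domain.)

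Next I would verify coercivity of $T=\lambda\mathcal{A}+J_x$ relative to the point $x$. Pick any $[u_0,y_0]\in\mathcal{G}(\mathcal{A})$, which is nonempty since $\mathcal{A}$ is maximal-monotone. For $[u,z]\in\mathcal{G}(T)$ write $z=\lambda w+J(u-x)$ with $w\in\mathcal{A}(u)$. Monotonicity of $\mathcal{A}$ gives $\langle w-y_0,u-u_0\rangle\ge0$, hence
\begin{align*}
\langle z,u-x\rangle &= \lambda\langle w,u-x\rangle+\langle J(u-x),u-x\rangle\\
&\ge \lambda\langle w,u-u_0\rangle+\lambda\langle w,u_0-x\rangle+\|u-x\|_V^{\alpha}\\
&\ge \lambda\langle y_0,u-u_0\rangle+\lambda\langle w,u_0-x\rangle+\|u-x\|_V^{\alpha}.
\end{align*}
The only genuinely uncontrolled term is $\lambda\langle w,u_0-x\rangle$; but this is a fixed vector $u_0-x$ paired against $w\in\mathcal{A}(u)$, so some care is needed. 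One standard way around it: instead choose $u_0=x$ if $x\in\mathcal{D}(\mathcal{A})$ (then the bad term vanishes and coercivity is immediate from the $\|u-x\|_V^\alpha$ term, $\alpha>1$); in general pick $u_0\in\mathcal{D}(\mathcal{A})$ arbitrary and use that for $[u,z]\in\mathcal{G}(T)$ one also has, from $z-J(u-x)=\lambda w\in\lambda\mathcal{A}(u)$ and monotonicity against $[u_0,y_0]$, the bound $\langle z-J(u-x)-\lambda y_0,u-u_0\rangle\ge0$, i.e.\ $\langle z,u-u_0\rangle\ge \langle J(u-x),u-u_0\rangle+\lambda\langle y_0,u-u_0\rangle$; combining with $\langle J(u-x),u-u_0\rangle = \|u-x\|_V^\alpha+\langle J(u-x),x-u_0\rangle\ge \|u-x\|_V^\alpha-\|u-x\|_V^{\alpha-1}\|x-u_0\|_V$ shows $\frac{\langle z,u-u_0\rangle}{\|u-u_0\|_V}\to+\infty$ as $\|u\|_V\to\infty$ because $\|u-x\|_V^\alpha$ dominates, $\alpha>1$. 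Hence $T$ is coercive. Since $V$ is reflexive and $T$ is maximal-monotone and coercive, Corollary 2.2 (or the Minty--Browder surjectivity theorem, Theorem 2.A in \cite{barbu2010nonlinear} / Theorem 32.A in \cite{ZachariasB}) gives $\mathrm{Range}(T)=V^*\ni0$, so a solution $x_\lambda$ exists.

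For uniqueness, suppose $x_\lambda,x_\lambda'$ both solve \eqref{res}, with $w\in\mathcal{A}(x_\lambda)$, $w'\in\mathcal{A}(x_\lambda')$ and $-\lambda w=J(x_\lambda-x)$, $-\lambda w'=J(x_\lambda'-x)$. Monotonicity of $\mathcal{A}$ gives $\langle w-w',x_\lambda-x_\lambda'\rangle\ge0$, i.e.\ $-\frac{1}{\lambda}\langle J(x_\lambda-x)-J(x_\lambda'-x),(x_\lambda-x)-(x_\lambda'-x)\rangle\ge0$, so $\langle J(x_\lambda-x)-J(x_\lambda'-x),(x_\lambda-x)-(x_\lambda'-x)\rangle\le0$. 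But $J$ is monotone, so this inner product is also $\ge0$, hence $=0$; from the computation \eqref{Jpro1} equality forces $\|x_\lambda-x\|_V=\|x_\lambda'-x\|_V$, and then since $V^*$ is strictly convex the duality map $J$ is strictly monotone on each sphere — more directly, equality in $\langle J(u_1)-J(u_2),u_1-u_2\rangle\ge0$ together with $\|u_1\|_V=\|u_2\|_V$ and strict convexity of $V$ forces $u_1=u_2$ (this is the standard strict monotonicity of duality maps on spaces with strictly convex dual; it is exactly where strict/uniform convexity is used). Therefore $x_\lambda-x=x_\lambda'-x$, i.e.\ $x_\lambda=x_\lambda'$.

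The main obstacle I anticipate is the coercivity verification: because $\mathcal{A}$ is multi-valued with possibly proper domain, one cannot simply ``test against $u-x$'' and must handle the fixed inhomogeneous term $\langle w,u_0-x\rangle$ carefully, either by a judicious choice of the reference point $[u_0,y_0]\in\mathcal{G}(\mathcal{A})$ or by exploiting that the $J$-term grows like $\|u-x\|_V^\alpha$ with $\alpha>1$, which ultimately dominates any term linear in $\|u\|_V$; modulo this, the argument is a routine application of the maximal-monotone sum and surjectivity theorems, with strict convexity of $V$ (equivalently strict convexity of $V^*$) supplying uniqueness.
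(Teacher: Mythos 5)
Your proof is correct and follows essentially the same route as the paper: existence via maximal monotonicity of $J(\cdot-x)+\lambda\mathcal{A}$ (the sum theorem, Corollary 2.6 in \cite{barbu2010nonlinear}) together with coercivity and the Minty--Browder surjectivity theorem, and uniqueness via the monotonicity sandwich forcing $\langle J(x_\lambda^1-x)-J(x_\lambda^2-x),x_\lambda^1-x_\lambda^2\rangle=0$ and then strict convexity. Your coercivity verification is more detailed than the paper's one-line assertion, but the argument is the same.
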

\begin{proof}
Recall that $V$ is a reflexive Banach space, $\mathcal{A}:V\to 2^{V^*}$ is maximal-monotone and $J:V\to V^*$ is demicontinuous and monotone. It follows from Corollary 2.6 in \cite{barbu2010nonlinear} that for any $\lambda>0$ and $x\in V$, the operator $$J(\cdot-x)+\lambda \mathcal{A}$$
is maximal-monotone. Furthermore, since $J$ is coercive and $\mathcal{A}$ is monotone, we deduce that $J(\cdot-x)+\lambda \mathcal{A} $ is also coercive for any $\lambda>0$ and $x\in V$. Thus, by Corollary 2.2 in \cite{barbu2010nonlinear},  there exists $x_\lambda\in V$ such that
$$0 \in J(x_{\lambda}-x)+\lambda \mathcal{A}(x_{\lambda}).$$

Now, we show the uniqueness of solutions to Eq.~(\ref{res}) by contradiction. We assume that the solutions of Eq.~\eqref{res} are not unique, i.e.,  there exist $x_{\lambda}^1,x_{\lambda}^2$ that satisfy Eq.~\eqref{res}. Then, there exist $y_{\lambda}^1\in \mathcal{A}(x_{\lambda}^1)$ and $y_{\lambda}^2\in \mathcal{A}(x_{\lambda}^2)$ such that
$$J(x_{\lambda}^1-x)+\lambda y_{\lambda}^1=J(x_{\lambda}^2-x)+\lambda y_{\lambda}^2.$$
It follows that
\begin{equation}\label{es0}
\langle J(x_{\lambda}^1-x)-J(x_{\lambda}^2-x),x_{\lambda}^1-x-(x_{\lambda}^2-x)\rangle=-\lambda\langle y_{\lambda}^1-y_{\lambda}^2, x_{\lambda}^1-x_{\lambda}^2\rangle\le0.
\end{equation}
On the other hand, by the definition of $J$ we have
\begin{eqnarray}\label{es2}
\!\!\!\!\!\!\!\!&&\langle J(x_{\lambda}^1-x)-J(x_{\lambda}^2-x),x_{\lambda}^1-x-(x_{\lambda}^2-x)\rangle
\nonumber \\
\!\!\!\!\!\!\!\!&&\ge (\|x_{\lambda}^1-x\|_V^{\alpha-1}-\|x_{\lambda}^2-x\|_V^{\alpha-1})(\|x_{\lambda}^1-x\|_V-\|x_{\lambda}^2-x\|_V)
\nonumber \\
\!\!\!\!\!\!\!\!&&\ge 0.
\end{eqnarray}
Combining $(\ref{es0})$ and $(\ref{es2})$, we can deduce that
$$
\langle J(x_{\lambda}^1-x), x_{\lambda}^2-x \rangle=\langle J(x_{\lambda}^2-x), x_{\lambda}^1-x\rangle=\|x_{\lambda}^1-x\|_V^{\alpha}=\|x_{\lambda}^2-x\|_V^{\alpha}.
$$
In view of the definition of $J$ and the fact that $J$ is single-valued, it follows that $$J(x_{\lambda}^1-x)=J(x_{\lambda}^2-x).$$
Finally, since $J^{-1}$ is single-valued, we conclude that
$$x_{\lambda}^1=x_{\lambda}^2.$$
The proof is complete.
\end{proof}

Now, let $\mathcal{R}_{\lambda}: V \to V$ be the resolvent operator of $\mathcal{A}$, which is defined by
$$\mathcal{R}_{\lambda}(x) := x_{\lambda},$$
where $x_{\lambda}$ is the unique solution of \eqref{res}.
Then, the generalized Yosida approximation $\mathcal{A}_{\lambda}: V\to V^*$ of $\mathcal{A}$ is defined by
	\begin{equation}\label{Alamd}
		\mathcal{A}_{\lambda}(x):= \frac{1}{\lambda}J(x-\mathcal{R}_{\lambda}(x)),
	\end{equation}
where $\lambda >0~\text{and}~x\in V$. By  Definition \ref{defJ} and  Proposition \ref{propres}, it is known that for each $x\in V$ and $\lambda > 0$, the mapping $\mathcal{A}_{\lambda}(\cdot)$ is single-valued.

\begin{rem}
    Since Definition \ref{defJ} extends the classical  duality mapping, which reduces to the special case of $\alpha=2$, the generalized Yosida approximation (\ref{Alamd}) offers a more natural extension compared to  the classical Yosida approximation.
    Specifically, for any $\alpha>1$, if the maximal monotone operator $\mathcal{A}$ is coercive, its generalized Yosida approximation operator $\mathcal{A}_{\lambda}$ retains coercivity. In contrast, the classical Yosida approximation is limited to $\alpha\in (1,2]$, as  demonstrated in Lemma 3.10 in \cite{liu2014yosida}. Furthermore, the current definition is pivotal in establishing the convergence of $\mathcal{A}_{\lambda}$ for the general case  $\alpha>1$, as illustrated in the proof of Lemma \ref{lemc}.
\end{rem}
\vspace{1mm}
The properties of the Yosida approximation (\ref{Alamd}) are collected in the following.
\begin{proposition}\label{propyosgj}\
The generalized Yosida approximation $\mathcal{A}_{\lambda}$ has the following properties:

\vspace{1mm}
$(i)$ $\mathcal{A}_{\lambda}$ is monotone, bounded on bounded sets, and demicontinuous from $V$ to $V^*$.

\vspace{1mm}
$(ii)$ $\|\mathcal{A}_{\lambda}(x)\|_{V^*}\le \|\mathcal{A}^0(x)\|_{V^*}$, for each $x\in \mathcal{D}(\mathcal{A})$ and $\lambda >0.$
	
\vspace{1mm}
$(iii)$  $\mathcal{A}_{\lambda}(x) \to \mathcal{A}^0(x)$ in $V^*$, as $\lambda\to0$, for each $x\in \mathcal{D}(\mathcal{A}).$
	
\vspace{1mm}
$(iv)$ For any $x\in V$,  $\mathcal{A}_{\lambda}(x) \in \mathcal{A}(\mathcal{R}_{\lambda}(x)).$

\end{proposition}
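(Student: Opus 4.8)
The plan is to prove the four assertions in the order (iv), (ii), (i), (iii), since (iv) is the structural identity on which the rest rests. Throughout I write $x_\lambda:=\mathcal{R}_\lambda(x)$ and use freely the identities $\langle J(u),u\rangle=\|u\|_V^\alpha$ and $\|J(u)\|_{V^*}=\|u\|_V^{\alpha-1}$ from Definition~\ref{defJ}, the oddness of $J$, and the fact (recorded in the remarks after Definition~\ref{defJ}) that $J$ is single-valued and maximal-monotone on $V$. Assertion (iv) is immediate from \eqref{res} and \eqref{Alamd}: \eqref{res} reads $-\lambda^{-1}J(x_\lambda-x)\in\mathcal{A}(x_\lambda)$, and since $J$ is odd the left-hand side equals $\lambda^{-1}J(x-x_\lambda)=\mathcal{A}_\lambda(x)$, hence $\mathcal{A}_\lambda(x)\in\mathcal{A}(\mathcal{R}_\lambda(x))$. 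For (ii) I would take $x\in\mathcal{D}(\mathcal{A})$ and $y\in\mathcal{A}(x)$, apply monotonicity of $\mathcal{A}$ to the pairs $[\mathcal{R}_\lambda(x),\mathcal{A}_\lambda(x)]$ (from (iv)) and $[x,y]$ to get $\langle\mathcal{A}_\lambda(x)-y,x_\lambda-x\rangle\ge0$, and substitute $\mathcal{A}_\lambda(x)=\lambda^{-1}J(x-x_\lambda)$ together with the identities above and $\langle J(x-x_\lambda),x_\lambda-x\rangle=-\|x-x_\lambda\|_V^\alpha$; the inequality collapses to $\lambda^{-1}\|x-x_\lambda\|_V^\alpha\le\|y\|_{V^*}\|x-x_\lambda\|_V$, hence $\|\mathcal{A}_\lambda(x)\|_{V^*}=\lambda^{-1}\|x-x_\lambda\|_V^{\alpha-1}\le\|y\|_{V^*}$ (the case $x_\lambda=x$ being trivial), and minimizing over $y\in\mathcal{A}(x)$ gives (ii).

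For the monotonicity part of (i), given $x_1,x_2\in V$ I would decompose $x_1-x_2=(x_{\lambda,1}-x_{\lambda,2})+\big[(x_1-x_{\lambda,1})-(x_2-x_{\lambda,2})\big]$ and pair with $\mathcal{A}_\lambda(x_1)-\mathcal{A}_\lambda(x_2)$: the first pairing is $\ge0$ by (iv) and monotonicity of $\mathcal{A}$, while the second equals $\lambda^{-1}\langle J(x_1-x_{\lambda,1})-J(x_2-x_{\lambda,2}),(x_1-x_{\lambda,1})-(x_2-x_{\lambda,2})\rangle\ge0$ by monotonicity of $J$. For boundedness on bounded sets (with $\lambda>0$ fixed) I would fix once and for all a reference pair $x_0\in\mathcal{D}(\mathcal{A})$, $y_0\in\mathcal{A}(x_0)$ (possible since $\mathcal{A}$ is maximal-monotone), use monotonicity of $\mathcal{A}$ between $[\mathcal{R}_\lambda(x),\mathcal{A}_\lambda(x)]$ and $[x_0,y_0]$ together with $\langle J(x-x_\lambda),x_\lambda-x\rangle=-\|x-x_\lambda\|_V^\alpha$, and Young's inequality to absorb the resulting cross terms; this bounds $\|x-x_\lambda\|_V$, hence $\|\mathcal{A}_\lambda(x)\|_{V^*}=\lambda^{-1}\|x-x_\lambda\|_V^{\alpha-1}$, in terms only of $\lambda$, $\alpha$, $\|x\|_V$, $\|x_0\|_V$ and $\|y_0\|_{V^*}$.

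The demicontinuity part of (i) is where the real work lies, and I expect it to be the main obstacle. Let $x_n\to x$ strongly in $V$; by the subsequence criterion for weak convergence it suffices to extract from any subsequence a further subsequence along which $\mathcal{A}_\lambda(x_n)\rightharpoonup\mathcal{A}_\lambda(x)$ in $V^*$. Boundedness on bounded sets makes $\{\mathcal{A}_\lambda(x_n)\}$ bounded in $V^*$, and then $\|\mathcal{A}_\lambda(x_n)\|_{V^*}=\lambda^{-1}\|x_n-x_{\lambda,n}\|_V^{\alpha-1}$ forces $\{x_{\lambda,n}\}$ to be bounded in $V$; passing to a subsequence I may assume $\mathcal{A}_\lambda(x_n)\rightharpoonup g$ in $V^*$ and $x_{\lambda,n}\rightharpoonup\xi$ in $V$, so $x_n-x_{\lambda,n}\rightharpoonup x-\xi$. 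Using the splitting $x_n-x_m=(x_{\lambda,n}-x_{\lambda,m})+\big[(x_n-x_{\lambda,n})-(x_m-x_{\lambda,m})\big]$, the strong convergence $x_n-x_m\to0$, and the nonnegativity of the $\mathcal{A}$-monotonicity pairing (via (iv)) and of the $J$-monotonicity pairing, I would show that both $\limsup_{n,m\to\infty}\langle\mathcal{A}_\lambda(x_n)-\mathcal{A}_\lambda(x_m),x_{\lambda,n}-x_{\lambda,m}\rangle\le0$ and $\limsup_{n,m\to\infty}\langle J(x_n-x_{\lambda,n})-J(x_m-x_{\lambda,m}),(x_n-x_{\lambda,n})-(x_m-x_{\lambda,m})\rangle\le0$. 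Applying Lemma~\ref{propmaxi} to the maximal-monotone operator $\mathcal{A}$ along $[x_{\lambda,n},\mathcal{A}_\lambda(x_n)]$ then gives $g\in\mathcal{A}(\xi)$, and applying it to the maximal-monotone operator $J$ along $[x_n-x_{\lambda,n},\,\lambda\mathcal{A}_\lambda(x_n)=J(x_n-x_{\lambda,n})]$ gives $J(x-\xi)=\lambda g$. Hence $g=\lambda^{-1}J(x-\xi)\in\mathcal{A}(\xi)$, i.e.\ $\xi$ solves the resolvent equation \eqref{res}; by the uniqueness in Proposition~\ref{propres} we conclude $\xi=\mathcal{R}_\lambda(x)$ and $g=\mathcal{A}_\lambda(x)$, as required. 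The subtle point is that one cannot identify $g$ directly from demicontinuity of $J$, which only handles strongly convergent arguments; it is the twofold use of the maximal-monotone closedness lemma (for $\mathcal{A}$ and for $J$), followed by uniqueness of the resolvent, that pins down the two weak limits simultaneously.

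Finally, for (iii), fix $x\in\mathcal{D}(\mathcal{A})$. From (ii), $\|x-\mathcal{R}_\lambda(x)\|_V=(\lambda\|\mathcal{A}_\lambda(x)\|_{V^*})^{1/(\alpha-1)}\le(\lambda\|\mathcal{A}^0(x)\|_{V^*})^{1/(\alpha-1)}\to0$, so $\mathcal{R}_\lambda(x)\to x$ in $V$ as $\lambda\to0$. Given any $\lambda_n\to0$, the sequence $\{\mathcal{A}_{\lambda_n}(x)\}$ is bounded in $V^*$ by (ii); along a subsequence $\mathcal{A}_{\lambda_n}(x)\rightharpoonup g$ in $V^*$, and since $\mathcal{R}_{\lambda_n}(x)\to x$ strongly, Lemma~\ref{propmaxi} applied to $\mathcal{A}$ along $[\mathcal{R}_{\lambda_n}(x),\mathcal{A}_{\lambda_n}(x)]$ (recalling (iv)) yields $g\in\mathcal{A}(x)$. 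Weak lower semicontinuity of $\|\cdot\|_{V^*}$ and (ii) give $\|g\|_{V^*}\le\|\mathcal{A}^0(x)\|_{V^*}$, while $g\in\mathcal{A}(x)$ forces the reverse inequality, so $\|g\|_{V^*}=\|\mathcal{A}^0(x)\|_{V^*}$; strict convexity of $V^*$ makes the minimal section unique, whence $g=\mathcal{A}^0(x)$. Moreover $\|\mathcal{A}_{\lambda_n}(x)\|_{V^*}\to\|g\|_{V^*}$, so the Radon--Riesz property of the uniformly convex space $V^*$ upgrades the weak convergence to strong, i.e.\ $\mathcal{A}_{\lambda_n}(x)\to\mathcal{A}^0(x)$ in $V^*$; since the limit is independent of the chosen subsequence, $\mathcal{A}_\lambda(x)\to\mathcal{A}^0(x)$ in $V^*$ as $\lambda\to0$.
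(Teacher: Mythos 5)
Your proof is correct and, for parts (ii), (iii), (iv) and the boundedness claim in (i), follows essentially the same route as the paper: the same monotonicity pairing against a fixed element of $\mathcal{G}(\mathcal{A})$ for (ii) and for boundedness, the same "weak limit is in $\mathcal{A}(x)$, has minimal norm, hence equals $\mathcal{A}^0(x)$, then upgrade to strong convergence by uniform convexity" scheme for (iii). The one place where you genuinely add content is the monotonicity and, above all, the demicontinuity in (i), which the paper dismisses with "it is clear": your argument --- splitting $x_n-x_m$ into the resolvent part and the $J$-part, showing both monotone pairings vanish in the $\limsup$, applying Lemma~\ref{propmaxi} twice (once to $\mathcal{A}$, once to the maximal-monotone $J$) to identify the two weak limits, and then invoking uniqueness of the resolvent equation from Proposition~\ref{propres} to conclude $\xi=\mathcal{R}_\lambda(x)$ and $g=\mathcal{A}_\lambda(x)$ --- is exactly the kind of argument the paper's assertion implicitly relies on, and it is carried out correctly. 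No gaps.
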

\begin{proof}
Since $J$ is odd, it follows from \eqref{res}, \eqref{Alamd}, and the definition of $\mathcal{R}_{\lambda}$ that $\mathcal{A}_{\lambda}(x)\in \mathcal{A}(\mathcal{R}_{\lambda}(x))$ for all $x\in V$, which yields claim (iv). We now proceed to prove claims (i), (ii), and (iii) in order.

\vspace{1mm}
\textbf{Proof of (i):} First, it clear that $\mathcal{A}_{\lambda}:V\to V^*$ is monotone and demicontinuous.
Let $[u,v]\in\mathcal{G}(\mathcal{A})$. By the definition of $\mathcal{A}_{\lambda}(x)$ and the monotonicity of $\mathcal{A}$, we can obtain
$$
\langle J(\mathcal{R}_{\lambda}(x)-x), \mathcal{R}_{\lambda}(x)-u\rangle\le\lambda\langle v,u-\mathcal{R}_{\lambda}(x)\rangle,
$$
which implies
\begin{eqnarray*}
\!\!\!\!\!\!\!\!&&\|\mathcal{R}_{\lambda}(x)-x\|_V^{\alpha}
\nonumber \\
\le \!\!\!\!\!\!\!\!&& \|x-u\|_V\|\mathcal{R}_{\lambda}(x)-x\|_V^{\alpha-1}+\lambda\|x-u\|_V\|v\|_{V^*}+\lambda\|v\|_{V^*}\|\mathcal{R}_{\lambda}(x)-x\|_V
\nonumber \\
\le \!\!\!\!\!\!\!\!&&\frac{1}{2}\|\mathcal{R}_{\lambda}(x)-x\|_V^{\alpha}+C\Big(\|x-u\|_V^{\alpha}+\|x-u\|_V\|v\|_{V^*}+\|v\|_{V^*}^{\frac{\alpha}{\alpha-1}}\Big).
\end{eqnarray*}
Then, it follows that the operators $\mathcal{R}_{\lambda}$ and $\mathcal{A}_{\lambda}$ are bounded on bounded sets.

\vspace{1mm}
\textbf{Proof of (ii):} Let $[x,y]\in \mathcal{G}(\mathcal{A})$. By the monotonicity of $\mathcal{A}$, we have
$$
0\le \langle y- \mathcal{A}_{\lambda}(x), x-\mathcal{R}_{\lambda}(x)\rangle \le \|y\|_{V^*}\|x-\mathcal{R}_{\lambda}(x)\|_{V}-\frac{1}{\lambda}\|x-\mathcal{R}_{\lambda}(x)\|_V^{\alpha}.
$$
It follows that
$$
\lambda\|\mathcal{A}_{\lambda}(x)\|_{V^*}=\|x-\mathcal{R}_{\lambda}(x)\|_V^{\alpha-1}\le \lambda \|y\|_{V^*},~\forall y\in \mathcal{A}(x).
$$
Thus, we deduce that for any $x\in \mathcal{D}(\mathcal{A})$ and $\lambda>0$,
$$
\|\mathcal{A}_{\lambda}(x)\|_{V^*}\le  \|\mathcal{A}^0(x)\|_{V^*}.
$$

\vspace{1mm}
\textbf{Proof of (iii):}
The proof of claim (iii) is similar to that of Proposition 2.2 (v) in \cite{barbu2010nonlinear}, we outline it here for  reader's convenience.
Let $x\in \mathcal{D}(\mathcal{A})$ and consider $\lambda\to 0$ such that
$$\mathcal{A}_{\lambda}(x)\rightharpoonup y~~\text{in}~~V^*.$$
As shown in the proof of Proposition 2.2 (iv) in \cite{barbu2010nonlinear}, we have $y\in \mathcal{A}(x)$. Furthermore, since $\|\mathcal{A}_{\lambda}(x)\|_{V^*}\le \|\mathcal{A}^0(x)\|_{V^*}$, it implies that $y=\mathcal{A}^0(x)$.  Since $V^*$ is uniformly convex, by Lemma 1.1 in \cite{barbu2010nonlinear} it follows that $\mathcal{A}_{\lambda}(x)\to\mathcal{A}^0(x)$ in $V^*$. This completes the proof.
\end{proof}

\section{Well-posedness}

\subsection{Main results}
In this subsection, we show the existence and uniqueness of weak and strong solutions and the continuous dependence on initial values to Eq.~\eqref{general}.
We first recall the definition of (probabilistically) weak solutions.
\begin{definition}\label{weaksolution}
 A couple $(X,\eta,W)$ is called a (probabilistically) weak solution to Eq.~\eqref{general}, if there exists a stochastic basis $(\Omega,\mathcal{F},\{\mathcal{F}_t\}_{t\in[0,T]},\mathbb{P})$ such that $X\in L^{\alpha}([0,T]\times\Omega,V)\cap L^2([0,T]\times\Omega,H)$, $\eta\in L^1([0,T]\times\Omega,V^*)$ and  $W$ is an $U$-valued cylindrical Wiener process on $(\Omega,\mathcal{F},\{\mathcal{F}_t\}_{t\in[0,T]},\mathbb{P})$, which  satisfy

\vspace{1mm}
$(i)$ $X\in C([0,T],H)$ $\mathbb{P}$-a.s.;
	
\vspace{1mm}
$(ii)$ $X$ and $\int_{0}^{\cdot}\eta(s)ds$ are $(\mathcal{F}_t)$-adapted;
	
\vspace{1mm}
$(iii)$  $\eta \in \mathcal{A}(\cdot,X(\cdot))$ $dt\otimes\mathbb{P}\text{-a.e.};$
	
\vspace{1mm}
$(iv)$ for all $t\in[0,T]$, the following equality holds in $V^*$
	\begin{equation*}
		X(t) = x - \int_{0}^{t}( \eta(s)+B(s,\bar{X}(s)))ds + \int_{0}^{t}\sigma(s,\bar{X}(s))dW(s)~\mathbb{P}\text{-a.s.},
	\end{equation*}
	where $\bar{X}$ is any $V$-valued progressively measurable $dt\otimes \mathbb{P}$-version of $X$.

\end{definition}

\vspace{1mm}
We assume that there exist $f\in L^1([0,T],[0,\infty))$, $C,\delta>0$, $\beta\geq 0$ and $\alpha>1$ such that the following conditions hold for a.e.~$t \in [0,T]$.

\vspace{1mm}
More precisely, for the multi-valued operator $\mathcal{A}$, we suppose that

\vspace{1mm}
\begin{enumerate}
	\item [(\textbf{H}$_{\mathcal{A}}^1$)] (Maximal monotonicity): For any $x,y \in V$,
	$$
	\left<v-w,x-y\right>\ge 0, \quad \text{for~any}~v\in \mathcal{A}(t,x), w\in \mathcal{A}(t,y),
	$$
	and $\mathcal{A}(t,\cdot)$ is maximal-monotone.
	\item [(\textbf{H}$_{\mathcal{A}}^2$)] (Coercivity): For any $x\in V$ and $v\in \mathcal{A}(t,x)$,
	$$
	\left<v,x\right>\ge \delta\|x\|_V^{\alpha}-f(t).
	$$
	\item[(\textbf{H}$_{\mathcal{A}}^3$)] (Growth): For  any $x\in V$,
	$$
	\|\mathcal{A}^0(t,x)\|_{{V}^*}^{\frac{\alpha}{\alpha-1}}\le (f(t)+C\|x\|_V^{\alpha})(1+\|x\|_H^{\beta}).
	$$
\end{enumerate}

\vspace{1mm}
For mappings $B$ and $\sigma$, we suppose that

\vspace{1mm}
\begin{enumerate}
	
        \item[(\textbf{H}$_B^1$)] (Weak coercivity): For any $x\in V$,
	$$
	2\left<B(t,x),x\right> \ge -f(t)(1+\|x\|_H^2).
	$$
	\item[(\textbf{H}$_B^2$)] (Growth): For  any $x\in V$,
	$$
	\|B(t,x)\|_{{V}^*}^{\frac{\alpha}{\alpha-1}}\le (f(t)+C\|x\|_V^{\alpha})(1+\|x\|_H^{\beta}).
	$$
	\item[(\textbf{H}$_{\sigma}$)] For any sequence $\{x_n\}_{n=1}^{\infty}$ and $x$ in $V$ satisfying $\|x_n-x\|_H \to 0,$
	\begin{equation*}
		\|\sigma(t,x_n)-\sigma(t,x)\|_{\mathcal{L}_2}  \to 0.	
	\end{equation*}
	Moreover,  for any $x\in V$,
	\begin{equation*}
		\|\sigma(t,x)\|_{\mathcal{L}_2}^2\le f(t)(1+\|x\|_H^2).
	\end{equation*}
\end{enumerate}

In the sequel, we state the first main result of this work concerning the existence of weak solutions to Eq.~\eqref{general}.
\begin{theorem}\label{theoremzy11}
Assume that the operator $B(t,\cdot)$ is pseudo-monotone for a.e.~$\ t\in [0,T]$, and that $(\mathbf{H}_{\mathcal{A}}^1)$-$(\mathbf{H}_{\mathcal{A}}^3)$, $(\mathbf{H}_B^1)$, $(\mathbf{H}_B^2)$ and $(\mathbf{H}_{\sigma})$ hold. Then for any $x\in H$, there exists a $($probabilistically$)$ weak solution to Eq.~\eqref{general}. In addition, for any $p\ge 2$, the following moment estimate holds
\begin{align}
	\mathbb{E}\Big[\sup\limits_{t\in [0,T]}\|X(t)\|_H^p\Big]+\mathbb{E}\left\{\left(\int_{0}^{T}\|X(s)\|_V^{\alpha}ds\right)^{\frac{p}{2}}\right\}<\infty.\label{genest1}
\end{align}

\end{theorem}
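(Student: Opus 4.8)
The plan is to construct the weak solution via the generalized Yosida approximation and a stochastic compactness (martingale method) argument. First, I would replace $\mathcal{A}(t,\cdot)$ by its generalized Yosida approximation $\mathcal{A}_\lambda(t,\cdot)$ defined in \eqref{Alamd}, yielding the approximating equation
\begin{equation*}
dX_\lambda(t)+B(t,X_\lambda(t))\,dt=-\mathcal{A}_\lambda(t,X_\lambda(t))\,dt+\sigma(t,X_\lambda(t))\,dW(t),\qquad X_\lambda(0)=x.
\end{equation*}
By Proposition \ref{propyosgj}(i), $\mathcal{A}_\lambda(t,\cdot)$ is single-valued, monotone, demicontinuous and bounded on bounded sets, hence (being monotone and demicontinuous it is pseudo-monotone) the sum $\mathcal{A}_\lambda(t,\cdot)+B(t,\cdot)$ is pseudo-monotone; combined with $(\mathbf{H}_{\mathcal{A}}^2)$, $(\mathbf{H}_B^1)$ and the growth bounds $(\mathbf{H}_{\mathcal{A}}^3)$, $(\mathbf{H}_B^2)$, $(\mathbf{H}_\sigma)$, the approximating equation falls within an existing variational well-posedness theory for pseudo-monotone coefficients (e.g.\ the framework of \cite{LR15,rockner2022well}), so for each $\lambda>0$ it admits a solution $X_\lambda$. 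The key point here is that $\lambda$ only appears through $\mathcal{A}_\lambda$, whose growth is controlled \emph{uniformly in }$\lambda$ by $(\mathbf{H}_{\mathcal{A}}^3)$ via Proposition \ref{propyosgj}(ii): $\|\mathcal{A}_\lambda(t,\mathcal{R}_\lambda(t,X_\lambda))\|_{V^*}\le\|\mathcal{A}^0(t,\cdot)\|_{V^*}$ at the point $\mathcal{R}_\lambda(\cdot)$, so after applying Itô's formula to $\|X_\lambda(t)\|_H^2$ (and then $\|\cdot\|_H^p$) and using $(\mathbf{H}_{\mathcal{A}}^2)$, $(\mathbf{H}_B^1)$, $(\mathbf{H}_\sigma)$, Young's inequality, Burkholder--Davis--Gundy and Gronwall, I obtain the bound \eqref{genest1} with a constant independent of $\lambda$. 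These are exactly the a priori estimates stated in the theorem.

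\textbf{Tightness and passage to the limit.}
Next I would set $\eta_\lambda:=\mathcal{A}_\lambda(t,X_\lambda)\in\mathcal{A}(t,\mathcal{R}_\lambda(t,X_\lambda))$ (Proposition \ref{propyosgj}(iv)) and note that $\{\eta_\lambda\}$ is bounded in $L^{\alpha/(\alpha-1)}([0,T]\times\Omega,V^*)$ uniformly in $\lambda$, while $\{X_\lambda\}$ is bounded in $L^\alpha([0,T]\times\Omega,V)\cap L^p(\Omega,C([0,T],H))$ and $\{\mathcal{R}_\lambda(\cdot,X_\lambda)\}$ is bounded in $L^\alpha([0,T]\times\Omega,V)$ (using the resolvent bound from the proof of Proposition \ref{propyosgj}(i)). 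Since $V\subset H$ is compact, I would upgrade these bounds to tightness of the laws of $(X_\lambda,\int_0^\cdot\eta_\lambda\,ds,\int_0^\cdot\sigma(s,X_\lambda)\,dW,W)$ on a suitable path space (combining an Aubin--Lions/Dubinskii-type compactness argument in $C([0,T],V^*)\cap L^\alpha([0,T],H)$ with a uniform fractional-Sobolev-in-time estimate on the stochastic integral, together with the weak topologies on the reflexive spaces), then apply the Skorokhod representation theorem to obtain a.s.\ convergent versions $(\tilde X_\lambda,\tilde\eta_\lambda,\ldots)$ on a new probability space with limit $(\tilde X,\tilde\eta,\tilde W)$. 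Identifying the stochastic integral in the limit is routine; the assumption $(\mathbf{H}_\sigma)$ (continuity of $\sigma$ in the $H$-norm together with $L^\infty_t L^2$ growth) is precisely what lets me pass to the limit in $\sigma(s,\tilde X_\lambda)\to\sigma(s,\tilde X)$. This yields a limiting identity $\tilde X(t)=x-\int_0^t(\tilde\eta+B(s,\tilde X))\,ds+\int_0^t\sigma(s,\tilde X)\,d\tilde W$ in $V^*$, modulo the two identifications $\tilde\eta\in\mathcal{A}(\cdot,\tilde X)$ and $B(s,\tilde X_\lambda)\rightharpoonup B(s,\tilde X)$ in the appropriate sense.

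\textbf{The two identifications --- the main obstacle.}
The crux, and the step I expect to be hardest, is to simultaneously show (a) that the weak limit of $B(t,\tilde X_\lambda)$ equals $B(t,\tilde X)$ using only pseudo-monotonicity of $B$, and (b) that the weak limit $\tilde\eta$ of $\tilde\eta_\lambda$ lies in the graph of $\mathcal{A}(t,\cdot)$ at $\tilde X$, and that $\mathcal{R}_\lambda(t,\tilde X_\lambda)\to\tilde X$ so that the perturbations do not interact badly; this is the content of Lemmas \ref{lemmaB1} and \ref{lemc}. The standard monotonicity trick is unavailable because $B$ is only pseudo-monotone, so instead I would apply Itô's formula to $\|\tilde X_\lambda(t)\|_H^2$ and to $\|\tilde X(t)\|_H^2$, subtract, and use the monotonicity of $\mathcal{A}_\lambda$ (equivalently of $\mathcal{A}$ along the resolvent) to produce an inequality of the form $\limsup_{\lambda\to 0}\mathbb{E}\int_0^T\langle B(s,\tilde X_\lambda)+\tilde\eta_\lambda,\tilde X_\lambda-\tilde X\rangle\,ds\le 0$; because both terms on the left are individually sign-controlled (the $\mathcal{A}$-term via monotonicity against a test element of the graph, after using Proposition \ref{propyosgj}(ii)--(iii) and $\|X_\lambda-\mathcal{R}_\lambda(X_\lambda)\|_V^\alpha=\lambda\|\mathcal{A}_\lambda(X_\lambda)\|_{V^*}^{\alpha/(\alpha-1)}\to 0$ in $L^1$), one can decouple the $\limsup$ into $\limsup_\lambda\mathbb{E}\int_0^T\langle B(s,\tilde X_\lambda),\tilde X_\lambda-\tilde X\rangle\,ds\le 0$, which by pseudo-monotonicity (in Browder's equivalent form, cf.\ the Remark after the definition) gives $B(s,\tilde X_\lambda)\rightharpoonup B(s,\tilde X)$ and convergence of the corresponding pairings, and a companion statement $\limsup_\lambda\langle\tilde\eta_\lambda,\mathcal{R}_\lambda(\tilde X_\lambda)-\tilde X\rangle\le 0$ which, via Lemma \ref{propmaxi} applied to the maximal-monotone operator $\mathcal{A}(t,\cdot)$, yields $\tilde\eta\in\mathcal{A}(t,\tilde X)$ and $\langle\tilde\eta_\lambda,\mathcal{R}_\lambda(\tilde X_\lambda)\rangle\to\langle\tilde\eta,\tilde X\rangle$. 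Care is needed with the stochastic Itô correction terms when subtracting the two energy identities (they must be shown not to obstruct the sign, again using $(\mathbf{H}_\sigma)$ and lower semicontinuity of the $H$-norm), and with measurable selection to define $\tilde\eta_\lambda$ as an adapted process. Once both identifications are in hand, $(\tilde X,\tilde\eta,\tilde W)$ satisfies all four items of Definition \ref{weaksolution} --- with $\tilde X\in C([0,T],H)$ $\mathbb{P}$-a.s.\ following from a standard argument applying Itô's formula to the limit equation --- and the moment estimate \eqref{genest1} passes to the limit by Fatou's lemma and weak lower semicontinuity, completing the proof.
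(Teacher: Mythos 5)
Your proposal is correct and follows essentially the same route as the paper: generalized Yosida approximation, uniform moment estimates via the coercivity of $\mathcal{A}_\lambda$ (Lemma \ref{lemmacoe1}) and Proposition \ref{propyosgj}(ii), tightness in $C([0,T],V^*)\cap L^{\alpha}([0,T],H)$ plus Skorokhod representation, and the simultaneous identification of the limits of $B(t,X_\lambda)$ and $\mathcal{A}_\lambda(t,X_\lambda)$ by combining the energy-identity $\limsup$ inequality with pseudo-monotonicity of $B$ and Lemma \ref{propmaxi} for the maximal-monotone part. The only organizational difference is that the paper runs the decoupling argument pointwise a.e.\ in $(t,\omega)$ (Lemma \ref{lemmaB1}) and applies Lemma \ref{propmaxi} to the lifted operator $\Psi$ on $L^{\alpha}([0,T]\times\Omega,V)$ rather than to $\mathcal{A}(t,\cdot)$ directly, but this does not change the substance of the argument.
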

\begin{rem}
     $(i)$ In previous works such as \cite{Zhang2007SkorohodPA,liu2014yosida},  the analysis was restricted that the single-valued operator $B(t,u)$ satisfies the  Lipschitz continuity or the global monotonicity. Our result provides a more general existence theorem for weak solutions to SEIs  by incorporating multi-valued maximal-monotone operators and single-valued pseudo-monotone operators within the variational framework. This extension allows for a broader class of nonlinear SEIs, as illustrated in Section~\ref{sec5}.

     $(ii)$ Theorem \ref{corollary1} also generalizes the results in \cite{GT14} by permitting a general polynomial growth condition for the maximal-monotone operator $($i.e.,~condition $(\mathbf{H}_{\mathcal{A}}^3)$$)$, whereas  \cite{GT14} is restricted to the linear growth case. This generalization encompasses a wider class of multi-valued stochastic $\Phi$-Laplace equations, such as $p$-Laplace equations $($with $p>1$$)$ and Non-Newtonian fluid models, as detailed in Subsection \ref{exlap}.
\end{rem}

Next, we consider the existence and uniqueness of (probabilistically) strong solutions to Eq.~\eqref{general}. The definitions of the (probabilistically) strong solutions and the pathwise uniqueness are presented as follows.
\begin{definition}\label{strongsolution}
 A couple $(X,\eta)$ is called a (probabilistically) strong solution to Eq.~\eqref{general}, if for every probability space $(\Omega,\mathcal{F},\{\mathcal{F}_t\}_{t\in[0,T]},\mathbb{P})$ with an $U$-valued cylindrical Wiener process $W$,  we have $X\in L^{\alpha}([0,T]\times\Omega,V)\cap L^2([0,T]\times\Omega,H)$ and $\eta\in L^1([0,T]\times\Omega,V^*)$ satisfy $(i)$-$(iv)$ in Definition \ref{weaksolution}.
\end{definition}

\begin{definition}\label{pathwiseunique}
    We say that pathwise uniqueness holds for Eq.~\eqref{general}, if whenever the probability space $(\Omega,\mathcal{F},\{\mathcal{F}_t\}_{t\in[0,T]},\mathbb{P})$ and the $U$-valued cylindrical Wiener process $W$ are fixed, two solutions $X$ and $X'$ such that $X(0)=X'(0)$ $\mathbb{P}$-a.s., then $\mathbb{P}$-a.s.
    $$
    X(t)=X'(t),\ t\in [0,T].
    $$
\end{definition}

In order to guarantee the existence and uniqueness of (probabilistically) strong solutions to Eq.~\eqref{general}, we assume the hemicontinuity and local monotonicity conditions instead of the pseudo-monotonicity as follows.

\begin{enumerate}
    \item[$(\mathbf{H}_B^3)$] (Hemicontinuity): The map $s \mapsto \langle B(t,x+sy),v\rangle$ is continuous on $\mathbb{R}$ for any $x,y,v\in V$.

\vspace{1mm}
	 \item[$(\mathbf{H}_B^4)$] (Local monotonicity): For any $x,y\in V$,
	$$
	-2\left<B(t,x)-B(t,y),x-y\right>+\|\sigma(t,x)-\sigma(t,y)\|_{\mathcal{L}_2}^2\le (f(t)+\rho(x)+\zeta(y))\|x-y\|_H^2,
	$$
where $\rho$ and $\zeta$ are  measurable functions from $V$ to $\mathbb{R}_{+}$ satisfying
	$$
	\rho(x)+\zeta(x)\le C(1+\|x\|_V^{\alpha})(1+\|x\|_H^{\beta}).
	$$
\end{enumerate}

The following discusses the pseudo-monotonicity of $B$ under assumptions $(\mathbf{H}_B^3)$ and $(\mathbf{H}_B^4)$, whose proof can refer to \cite{liu11}.
\begin{lemma}\label{lemmapseudo1}
	Assume that $(\mathbf{H}_B^3)$ and $(\mathbf{H}_B^4)$ hold. Then $B(t,\cdot)$ is pseudo-monotone from $V$ to $V^*$ for a.e.~$t\in [0,T].$
\end{lemma}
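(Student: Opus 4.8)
The plan is to verify the Br\'ezis definition by establishing the equivalent Browder characterization recalled above: given $u_n\rightharpoonup u$ in $V$ with $\limsup_{n}\langle B(t,u_n),u_n-u\rangle\le0$, I want to show $B(t,u_n)\rightharpoonup B(t,u)$ in $V^*$ and $\langle B(t,u_n),u_n\rangle\to\langle B(t,u),u\rangle$. This is the by-now standard Liu--R\"ockner argument (cf.~\cite{liu11,LR15}); the only real work is adapting it to the local monotonicity $(\mathbf{H}_B^4)$ and the extra $\sigma$-term it carries. I would fix $t\in[0,T]$ outside the exceptional null set, abbreviate $B:=B(t,\cdot)$ and $\sigma:=\sigma(t,\cdot)$, and record three facts: since $V\subset H$ is compact, $u_n\to u$ strongly in $H$; the sequence $\{u_n\}$ is bounded in both $V$ and $H$, so by the growth bound on $\rho+\zeta$ in $(\mathbf{H}_B^4)$ there is $M<\infty$ with $\sup_n\rho(u_n)\le M$; and by the standing growth assumption $(\mathbf{H}_B^2)$ the sequence $\{B(u_n)\}$ is bounded in $V^*$.

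\emph{Step 1 ($\langle B(u_n),u_n-u\rangle\to0$).} Applying $(\mathbf{H}_B^4)$ with $(x,y)=(u_n,u)$ and discarding the nonnegative term $\|\sigma(u_n)-\sigma(u)\|_{\mathcal{L}_2}^2$ gives
$$-2\langle B(u_n)-B(u),u_n-u\rangle\le (f(t)+\rho(u_n)+\zeta(u))\|u_n-u\|_H^2\le (f(t)+M+\zeta(u))\|u_n-u\|_H^2\longrightarrow0,$$
so $\liminf_n\langle B(u_n)-B(u),u_n-u\rangle\ge0$; since $\langle B(u),u_n-u\rangle\to0$ this yields $\liminf_n\langle B(u_n),u_n-u\rangle\ge0$, and combined with the hypothesis, $\lim_n\langle B(u_n),u_n-u\rangle=0$.

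\emph{Step 2 (identification of the weak limit).} Next I would pin down the weak limit of $B(u_n)$ by a Minty-type device. Fixing $v\in V$ and passing to a subsequence along which $B(u_n)\rightharpoonup\xi$ in $V^*$, I would test $(\mathbf{H}_B^4)$ with $(x,y)=(u_n,z_s)$ where $z_s:=u-sv$, $s\in(0,1)$; discarding the $\sigma$-term, writing $u_n-z_s=(u_n-u)+sv$, and using Step~1, $B(u_n)\rightharpoonup\xi$, $\langle B(z_s),u_n-z_s\rangle\to s\langle B(z_s),v\rangle$, $\|u_n-z_s\|_H^2\to s^2\|v\|_H^2$ and $\rho(u_n)\le M$, one obtains after dividing by $2s>0$
$$\langle\xi,v\rangle\ge\langle B(z_s),v\rangle-\tfrac{s}{2}\big(f(t)+M+\zeta(z_s)\big)\|v\|_H^2.$$
Letting $s\downarrow0$, the hemicontinuity $(\mathbf{H}_B^3)$ gives $\langle B(z_s),v\rangle=\langle B(u+s(-v)),v\rangle\to\langle B(u),v\rangle$, while $\zeta(z_s)$ stays bounded since $z_s\to u$ in $V$, so the last term vanishes and $\langle\xi,v\rangle\ge\langle B(u),v\rangle$. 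Replacing $v$ by $-v$ gives the reverse inequality, hence $\langle\xi,v\rangle=\langle B(u),v\rangle$ for all $v\in V$, i.e. $\xi=B(u)$. As this limit is independent of the extracted subsequence, the whole sequence satisfies $B(u_n)\rightharpoonup B(u)$ in $V^*$; together with Step~1, $\langle B(u_n),u_n\rangle=\langle B(u_n),u_n-u\rangle+\langle B(u_n),u\rangle\to\langle B(u),u\rangle$, which is precisely the Browder characterization.

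I expect Step~2 to be the only substantive point: testing against the segment $z_s=u-sv$ and letting $s\downarrow0$ is exactly what couples the one-dimensional hemicontinuity with the local monotonicity so as to identify $\xi$, and care is needed to carry the a priori bound $\rho(u_n)\le M$ through the limit and to check at each stage that the $\mathcal{L}_2$-term may be discarded. The remaining ingredients --- strong $H$-convergence from the compact embedding and the passage from subsequences to the full sequence --- are routine.
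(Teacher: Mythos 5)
Your proof is correct and is essentially the standard Liu--R\"ockner argument that the paper itself delegates to \cite{liu11}: strong $H$-convergence of $u_n$ via the compact embedding, the local monotonicity $(\mathbf{H}_B^4)$ (with the nonnegative $\sigma$-term discarded) to force $\lim_n\langle B(u_n),u_n-u\rangle=0$, and Minty's trick along the segment $u-sv$ combined with the hemicontinuity $(\mathbf{H}_B^3)$ to identify the weak limit of $B(u_n)$. The only caveat is that extracting a weakly convergent subsequence of $\{B(u_n)\}$ uses the growth condition $(\mathbf{H}_B^2)$, which is not among the stated hypotheses of the lemma but is in force wherever the lemma is applied; alternatively one could verify the Br\'ezis form of pseudo-monotonicity directly from your Steps 1--2, which does not require boundedness of $\{B(u_n)\}$ in $V^*$.
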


Now, we state the existence and uniqueness of (probabilistically) strong solutions to Eq.~\eqref{general}.
\begin{theorem}\label{corollary1}
	Assume that  $(\mathbf{H}_{\mathcal{A}}^1)$-$(\mathbf{H}_{\mathcal{A}}^3)$, $(\mathbf{H}_B^1)$-$(\mathbf{H}_B^4)$ and $(\mathbf{H}_{\sigma})$ hold.
  Then for any $x\in H$, there exists a unique $($probabilistically$)$ strong solution to Eq.~\eqref{general} and the estimate \eqref{genest1} holds.
\end{theorem}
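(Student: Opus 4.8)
The plan is to deduce Theorem~\ref{corollary1} from the weak-existence result Theorem~\ref{theoremzy11} together with a pathwise uniqueness estimate and the Yamada--Watanabe principle. First I would invoke Lemma~\ref{lemmapseudo1}: under $(\mathbf{H}_B^3)$ and $(\mathbf{H}_B^4)$ the operator $B(t,\cdot)$ is pseudo-monotone, so all hypotheses of Theorem~\ref{theoremzy11} are satisfied and there exists a (probabilistically) weak solution $(X,\eta,W)$ to Eq.~\eqref{general} obeying the moment bound \eqref{genest1}. Thus only uniqueness --- in the pathwise sense of Definition~\ref{pathwiseunique} --- needs to be established; once pathwise uniqueness is known, the classical Yamada--Watanabe theorem (in the infinite-dimensional variational form, as in \cite{LR15,rockner2022well}) upgrades weak existence plus pathwise uniqueness to existence of a unique (probabilistically) strong solution in the sense of Definition~\ref{strongsolution}, and the moment estimate \eqref{genest1} is inherited.

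For pathwise uniqueness, fix a stochastic basis and Wiener process $W$ and let $X,X'$ be two solutions with associated selections $\eta\in\mathcal{A}(\cdot,X(\cdot))$, $\eta'\in\mathcal{A}(\cdot,X'(\cdot))$ and $X(0)=X'(0)$ $\mathbb{P}$-a.s. Set $Z:=X-X'$. Applying the Itô formula for $\|Z(t)\|_H^2$ in the Gelfand triple (see e.g.~\cite[Theorem 4.2.5]{LR15}, valid since $Z\in L^\alpha([0,T]\times\Omega,V)\cap L^2([0,T]\times\Omega,H)$ and $\eta+B(\cdot,X),\eta'+B(\cdot,X')\in L^1$), one gets, for $t\in[0,T]$,
\begin{align*}
\|Z(t)\|_H^2
&=-2\int_0^t\langle \eta(s)-\eta'(s),Z(s)\rangle\,ds
-2\int_0^t\langle B(s,X(s))-B(s,X'(s)),Z(s)\rangle\,ds\\
&\quad+\int_0^t\|\sigma(s,X(s))-\sigma(s,X'(s))\|_{\mathcal{L}_2}^2\,ds
+2\int_0^t\langle Z(s),(\sigma(s,X(s))-\sigma(s,X'(s)))dW(s)\rangle_H.
\end{align*}
By the maximal monotonicity $(\mathbf{H}_{\mathcal{A}}^1)$ the first term is $\le 0$ and drops out; this is exactly the point where passing through the multi-valued part costs nothing. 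The remaining drift terms are controlled by the local monotonicity hypothesis $(\mathbf{H}_B^4)$, which bounds $-2\langle B(s,X)-B(s,X'),Z\rangle+\|\sigma(s,X)-\sigma(s,X')\|_{\mathcal{L}_2}^2$ by $(f(s)+\rho(X(s))+\zeta(X'(s)))\|Z(s)\|_H^2$. Taking expectation after a localization (the stochastic integral is a local martingale; use stopping times and the moment bound \eqref{genest1} to justify its vanishing in expectation) yields
\begin{align*}
\mathbb{E}\,\|Z(t\wedge\tau_R)\|_H^2\le \mathbb{E}\int_0^{t\wedge\tau_R}\big(f(s)+\rho(X(s))+\zeta(X'(s))\big)\|Z(s)\|_H^2\,ds,
\end{align*}
and an application of the stochastic Gronwall lemma (as in \cite[Lemma 5.3]{LR15} or \cite{rockner2022well}), using that $\int_0^T(f(s)+\rho(X(s))+\zeta(X'(s)))\,ds<\infty$ $\mathbb{P}$-a.s.\ by the growth bound in $(\mathbf{H}_B^4)$ combined with \eqref{genest1}, forces $Z\equiv0$ on $[0,T]$ $\mathbb{P}$-a.s., i.e.\ pathwise uniqueness.

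The main obstacle I anticipate is purely technical: making the Itô formula and the subsequent Gronwall argument rigorous in the multi-valued setting, where $\eta,\eta'$ are only $L^1$ in time and the coefficients are unbounded, so one must introduce the right stopping times $\tau_R:=\inf\{t:\|X(t)\|_H\vee\|X'(t)\|_H>R\}\wedge\inf\{t:\int_0^t(1+\|X(s)\|_V^\alpha+\|X'(s)\|_V^\alpha)ds>R\}$, argue on $[0,\tau_R]$, and then let $R\to\infty$ using \eqref{genest1}. A secondary point is to confirm that the Yamada--Watanabe machinery applies verbatim when the solution includes the auxiliary process $\eta$; this is handled by viewing $(X,\int_0^\cdot\eta\,ds)$ jointly as the unknown (both adapted by Definition~\ref{weaksolution}(ii)) and noting that pathwise uniqueness of $X$ together with $\eta=$ the $V^*$-valued drift reconstructed from the equation determines $\eta$ up to $dt\otimes\mathbb{P}$-null sets, so uniqueness of the pair follows. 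With these steps the existence of a unique strong solution and the estimate \eqref{genest1} are established, completing the proof.
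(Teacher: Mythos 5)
Your proposal is correct and follows essentially the same route as the paper: reduce to pathwise uniqueness plus the Yamada--Watanabe theorem, drop the multi-valued term by the monotonicity in $(\mathbf{H}_{\mathcal{A}}^1)$, and control the remaining drift by the local monotonicity $(\mathbf{H}_B^4)$. The only cosmetic difference is that the paper builds the exponential weight $\varphi(t)=\exp\{-\int_0^t(f+\rho(X_1)+\zeta(X_2))\,ds\}$ directly into the Itô computation so the drift is nonpositive pathwise (making the Gronwall step implicit), whereas you take expectation first and invoke a stochastic Gronwall lemma --- the two arguments are equivalent.
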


The continuous dependence on  initial data is also given as follows.
\begin{theorem}\label{theoremzy21}
 Suppose that the assumptions in Theorem \ref{corollary1} hold.
  Let $\{x_n\}_{n=1}^{\infty}$ and $x$ be a sequence with $\|x_n-x\|_H \to 0$ and $X(t,x)$ be the unique solution of Eq.~\eqref{general} with initial value $x$. Then, for any $p>0$,
\begin{align*}
	\lim\limits_{n\to \infty}\mathbb{E}\Big[\sup_{t\in [0,T]}\|X(t,x_n)-X(t,x)\|_H^p\Big]=0.
\end{align*}
\end{theorem}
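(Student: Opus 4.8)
The plan is to run a stochastic Gronwall argument for the difference $Z_n:=X(\cdot,x_n)-X(\cdot,x)$; the twist is that the local monotonicity $(\mathbf{H}_B^4)$ only controls the Gronwall coefficient by a quantity that is pathwise finite but not exponentially integrable, so the classical exponential-weight estimate has to be fed into a uniform-integrability / convergence-in-probability bootstrap rather than yielding a quantitative rate. Write $X_n:=X(\cdot,x_n)$, $X:=X(\cdot,x)$ and let $\eta_n\in\mathcal{A}(\cdot,X_n)$, $\eta\in\mathcal{A}(\cdot,X)$ be the selections associated with the two strong solutions; by the Yosida-approximation construction underlying Theorem \ref{corollary1} these lie in $L^{\alpha/(\alpha-1)}([0,T]\times\Omega;V^*)$, so the Gelfand-triple It\^o formula applies to $\|Z_n(t)\|_H^2$. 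Discarding the nonnegative term $2\int_0^t\langle\eta_n(s)-\eta(s),Z_n(s)\rangle\,ds$ by $(\mathbf{H}_{\mathcal{A}}^1)$ and using $(\mathbf{H}_B^4)$, one obtains, with $g_n(s):=f(s)+\rho(X_n(s))+\zeta(X(s))$,
\begin{align*}
\|Z_n(t)\|_H^2&\le\|x_n-x\|_H^2+\int_0^t g_n(s)\|Z_n(s)\|_H^2\,ds\\
&\quad+2\int_0^t\big\langle Z_n(s),(\sigma(s,X_n(s))-\sigma(s,X(s)))\,dW(s)\big\rangle_H.
\end{align*}
Multiplying by $\Phi_n(t):=\exp\!\big(-\!\int_0^t g_n(s)\,ds\big)$ and integrating by parts absorbs the Gronwall term, giving $\Phi_n(t)\|Z_n(t)\|_H^2\le\|x_n-x\|_H^2+M_n(t)$ with $M_n(t):=2\int_0^t\Phi_n(s)\langle Z_n(s),(\sigma(s,X_n(s))-\sigma(s,X(s)))\,dW(s)\rangle_H$. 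The decisive point is that $M_n$ is a genuine martingale: by the linear-growth bound in $(\mathbf{H}_\sigma)$, $\Phi_n\le1$, and \eqref{genest1} with $p=4$, $\mathbb{E}\langle M_n\rangle_T\le C\,\mathbb{E}\!\int_0^T f(s)\big(1+\|X_n(s)\|_H^2+\|X(s)\|_H^2\big)^2ds<\infty$; an inspection of the proof of Theorem \ref{corollary1} shows all such bounds, as well as $\mathbb{E}\!\int_0^T g_n(s)\,ds$ (via $(\mathbf{H}_B^4)$ and $\rho,\zeta\ge0$), are uniform over initial data in bounded subsets of $H$, hence uniform in $n$.

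From here the conclusion is reached in four steps. (a) Taking expectations gives $\mathbb{E}\!\int_0^T\Phi_n(t)\|Z_n(t)\|_H^2\,dt\le T\|x_n-x\|_H^2\to0$; splitting over $\{\int_0^T g_n\le R\}$, where $\Phi_n\ge e^{-R}$, and using Markov's inequality together with $\sup_n\mathbb{E}\!\int_0^T g_n<\infty$ upgrades this to $\int_0^T\|Z_n(t)\|_H^2\,dt\to0$ in probability; since $\mathbb{E}(\int_0^T\|Z_n\|_H^2dt)^2\le T^2\,\mathbb{E}\sup_t\|Z_n(t)\|_H^4\le C$ uniformly, this family is uniformly integrable and hence $X_n\to X$ in $L^2(\Omega\times[0,T];H)$. (b) Along an arbitrary subsequence extract a further one with $X_{n_k}\to X$ $(\mathbb{P}\otimes dt)$-a.e.; by the continuity part of $(\mathbf{H}_\sigma)$, $\|\sigma(\cdot,X_{n_k})-\sigma(\cdot,X)\|_{\mathcal{L}_2}^2\to0$ a.e., dominated by $2f(\cdot)(2+\|X_{n_k}\|_H^2+\|X\|_H^2)$ whose integrals converge (using $\|X_{n_k}\|_H^2\to\|X\|_H^2$ in $L^1$, splitting $f$ into $\{f\le K\}\cup\{f>K\}$ to handle its unboundedness), so a generalized dominated convergence theorem yields $\mathbb{E}\!\int_0^T\|\sigma(s,X_n(s))-\sigma(s,X(s))\|_{\mathcal{L}_2}^2\,ds\to0$. (c) By Doob and Burkholder--Davis--Gundy, $\mathbb{E}\sup_{t\le T}\Phi_n(t)\|Z_n(t)\|_H^2\le\|x_n-x\|_H^2+C\,\mathbb{E}\langle M_n\rangle_T^{1/2}$, and $\langle M_n\rangle_T\le4\big(\sup_t\|Z_n(t)\|_H^2\big)\int_0^T\|\sigma(s,X_n)-\sigma(s,X)\|_{\mathcal{L}_2}^2ds$, so Cauchy--Schwarz together with \eqref{genest1} and (b) gives $\mathbb{E}\sup_{t\le T}\Phi_n(t)\|Z_n(t)\|_H^2\to0$. (d) A final split over $\{\int_0^T g_n\le R\}$ removes the weight, so $\sup_{t\le T}\|Z_n(t)\|_H^2\to0$ in probability; for each $p>0$ the family $\{\sup_t\|Z_n(t)\|_H^p\}_n$ is bounded in some $L^r(\Omega)$ with $r>1$ by \eqref{genest1}, hence uniformly integrable, and convergence in probability upgrades to $\mathbb{E}\sup_{t\le T}\|Z_n(t)\|_H^p\to0$.

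The main obstacle I anticipate is the apparent circularity: the quadratic variation of the stochastic term contains $\|\sigma(s,X_n)-\sigma(s,X)\|_{\mathcal{L}_2}^2$, which under the mere continuity in $(\mathbf{H}_\sigma)$ becomes small only once $X_n\to X$ in $H$ is already known, so a naive Gronwall/absorbing estimate produces a bound of the order of $\int_0^T g_n$ rather than a vanishing one. The way around it — establishing $L^2(\Omega\times[0,T];H)$-convergence first from the \emph{expectation} of the weighted inequality, which needs nothing beyond the true-martingale property of $M_n$, and only then re-running the $\sup_t$-estimate — is the heart of the argument. The remaining care concerns (i) the $n$-uniformity of the moment bounds and of $\mathbb{E}\int_0^T g_n$, read off from the proof of Theorem \ref{corollary1}; (ii) the generalized dominated convergence step with the $L^1$- (not $L^\infty$-) weight $f$; and (iii) justifying the It\^o formula, i.e.\ the $L^{\alpha/(\alpha-1)}$-integrability of the maximal-monotone selections $\eta_n,\eta$.
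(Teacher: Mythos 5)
Your proof is correct and follows essentially the same route as the paper: It\^o's formula for $\|X(\cdot,x_n)-X(\cdot,x)\|_H^2$, dropping the maximal-monotone term, the exponential weight $\exp(-\int_0^\cdot(f+\rho(X_n)+\zeta(X)))$ from $(\mathbf{H}_B^4)$, a first pass giving $dt\otimes\mathbb{P}$-convergence, then $(\mathbf{H}_\sigma)$ plus Vitali for the diffusion coefficients, a B-D-G estimate for the supremum, removal of the weight, and a uniform-integrability upgrade to $L^p$. The only (interchangeable) difference is that the paper removes the weight by localizing with the stopping times $\tau_n^M$ built from $\sup_t\|X(t,x_n)\|_H$ and $\int_0^\cdot\|X(s,x_n)\|_V^\alpha\,ds$, on which $\varphi_n$ is bounded below pathwise, whereas you condition on the tail event $\{\int_0^T g_n\le R\}$ and use $\sup_n\mathbb{E}\int_0^T g_n<\infty$.
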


\begin{rem}
   The existence and uniqueness of solutions to multi-valued SEIs  were established in \cite{liu2014yosida}. Theorems  \ref{corollary1} and \ref{theoremzy21} extend this result by allowing arbitrary coercivity exponents $\alpha>1$, whereas \cite{liu2014yosida} is limited to $\alpha\in (1,2]$. Moreover, we establish the well-posedness  of multi-valued SEIs with single-valued fully local monotone operators, relaxing the Lipschitz continuity assumption imposed in \cite{liu2014yosida}. These extensions enable the unified treatment for a class of multi-valued stochastic porous media equations, thereby generalizing the results in \cite{BDR091,BDR092} $($see Subsection~\ref{mspme}$)$.
\end{rem}

\begin{rem}
	Note that Theorem \ref{theoremzy21} implies the Feller property of the transition semigroup $P_t:C_b(H)\to C_b(H)$ associated to Eq.~\eqref{general} with the time-homogenous coefficients $($$\mathcal{A},B,\sigma$ are independent of $t$$)$. Building upon this result, in forthcoming work, we aim to investigate the existence and uniqueness of invariant probability measures for Eq.~\eqref{general} by employing the generalized Yosida approximation developed in this work.
\end{rem}

\subsection{Approximating sequences}
In order to prove the existence of (probabilistically) weak solutions to Eq.~(\ref{general}), we consider the following approximating equations
\begin{equation}
	\left\{	
	\begin{aligned}
		&dX_{\lambda}(t)+\left[\mathcal{A}_{\lambda}(t,X_{\lambda}(t))+B(t,X_{\lambda}(t))\right]dt= \sigma(t,X_{\lambda}(t))dW(t),\\
		&X_{\lambda}(0)=x\in H,
	\end{aligned}
    \right.\label{subequ1}
\end{equation}
where $\mathcal{A}_{\lambda}$ is the generalized Yosida approximation of $\mathcal{A}$ given by (\ref{Alamd}).

The following lemma gives the existence of (probabilistically) weak solutions to Eq.~(\ref{subequ1}).
\begin{lemma}\label{propyos1}
Suppose that the assumptions in Theorem \ref{theoremzy11} hold. For any $\lambda\in\left(0,\delta^{-1}\right)$ and initial value $x\in H$, there exists a weak solution $X_\lambda$ to Eq.~\eqref{subequ1}.
\end{lemma}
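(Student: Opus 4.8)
The plan is to verify that equation \eqref{subequ1}, with the single-valued drift $u\mapsto \mathcal{A}_\lambda(t,u)+B(t,u)$, fits into an existing variational well-posedness framework for stochastic evolution equations with (fully local) pseudo-monotone / locally monotone coefficients — concretely the results of \cite{rockner2022well} (or \cite{liu11}), which guarantee existence of a probabilistically weak (indeed strong) solution once the coefficients satisfy hemicontinuity/pseudo-monotonicity, coercivity, growth, and the usual condition on $\sigma$. So the task reduces to checking those structural hypotheses for $B_\lambda(t,\cdot):=\mathcal{A}_\lambda(t,\cdot)+B(t,\cdot)$ with the \emph{same} parameters $\alpha,\beta$ and a possibly $\lambda$-dependent (but fixed, for each $\lambda<\delta^{-1}$) function $f$ and constant $C$.

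First I would assemble the properties of $\mathcal{A}_\lambda$. By Proposition \ref{propyosgj}(i), $\mathcal{A}_\lambda(t,\cdot):V\to V^*$ is monotone, demicontinuous and bounded on bounded sets; in particular it is pseudo-monotone, and since $B(t,\cdot)$ is pseudo-monotone by hypothesis, the sum $\mathcal{A}_\lambda(t,\cdot)+B(t,\cdot)$ is pseudo-monotone (the class of pseudo-monotone operators is stable under addition). For the growth bound: from the proof of Proposition \ref{propyosgj}(i) one extracts $\|\mathcal{R}_\lambda(t,x)-x\|_V^{\alpha}\le C_\lambda(\ldots)$, hence $\|\mathcal{A}_\lambda(t,x)\|_{V^*}^{\alpha/(\alpha-1)}=\lambda^{-\alpha/(\alpha-1)}\|x-\mathcal{R}_\lambda(t,x)\|_V^{\alpha}$ is controlled by $(f(t)+C\|x\|_V^\alpha)(1+\|x\|_H^\beta)$ up to $\lambda$-dependent constants, which together with $(\mathbf{H}_B^2)$ gives the growth condition for the sum.

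The main point — and the step I expect to be the real obstacle — is the \emph{coercivity} of the approximating drift, uniformly enough in $x$ (for fixed $\lambda$) to close the a priori estimate. Here one uses the monotonicity of $\mathcal{A}_\lambda$ together with $(\mathbf{H}_\mathcal{A}^2)$: pick $y_\lambda=\mathcal{A}_\lambda(t,x)\in\mathcal{A}(t,\mathcal{R}_\lambda(t,x))$ by Proposition \ref{propyosgj}(iv), so that
\[
\langle \mathcal{A}_\lambda(t,x),x\rangle=\langle \mathcal{A}_\lambda(t,x),x-\mathcal{R}_\lambda(t,x)\rangle+\langle y_\lambda,\mathcal{R}_\lambda(t,x)\rangle\ge \tfrac1\lambda\|x-\mathcal{R}_\lambda(t,x)\|_V^\alpha+\delta\|\mathcal{R}_\lambda(t,x)\|_V^\alpha-f(t).
\]
Combining with $(\mathbf{H}_B^1)$ and using $\|x\|_V^\alpha\le C_\alpha(\|x-\mathcal{R}_\lambda(t,x)\|_V^\alpha+\|\mathcal{R}_\lambda(t,x)\|_V^\alpha)$ one obtains, for $\lambda<\delta^{-1}$ (which is exactly where the restriction $\lambda\in(0,\delta^{-1})$ enters, guaranteeing $\min\{\lambda^{-1},\delta\}\ge\delta>0$ after balancing),
\[
2\langle \mathcal{A}_\lambda(t,x)+B(t,x),x\rangle\ge \tilde\delta\|x\|_V^\alpha-f(t)(1+\|x\|_H^2)
\]
for some $\tilde\delta>0$ depending on $\delta,\alpha,\lambda$. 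This is precisely the coercivity condition required by the cited framework.

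Finally, $(\mathbf{H}_\sigma)$ is already in the exact form demanded (strong continuity in $H$ plus the linear growth bound), so all hypotheses are met and the abstract theorem yields a probabilistically weak solution $X_\lambda\in L^\alpha([0,T]\times\Omega,V)\cap L^2([0,T]\times\Omega,H)$ with $X_\lambda\in C([0,T],H)$ $\mathbb{P}$-a.s., satisfying \eqref{subequ1} in $V^*$. (One could equally invoke local-monotonicity–based uniqueness results to get a strong solution, but the statement only asserts existence of a weak solution, so the pseudo-monotone existence theorem suffices.) I would also record here, for later use, the moment estimate \eqref{genest1} for $X_\lambda$ with constants independent of $\lambda$, which follows from the same Itô formula / Burkholder–Davis–Gundy argument applied to $\|X_\lambda(t)\|_H^p$ using the \emph{uniform} coercivity lower bound $\langle\mathcal{A}_\lambda(t,x),x\rangle\ge -f(t)$ (valid for all $\lambda>0$, not just $\lambda<\delta^{-1}$) coming from Proposition \ref{propyosgj}(iv) and $(\mathbf{H}_\mathcal{A}^2)$ — this $\lambda$-uniformity is what makes the subsequent passage to the limit $\lambda\to0$ possible.
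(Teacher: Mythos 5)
Your proposal is correct and follows essentially the same route as the paper: both reduce Eq.~\eqref{subequ1} to the framework of \cite{rockner2022well} by checking pseudo-monotonicity of $\mathcal{A}_\lambda(t,\cdot)+B(t,\cdot)$, coercivity, and the growth bound via Proposition \ref{propyosgj}(ii) and $(\mathbf{H}_{\mathcal{A}}^3)$, $(\mathbf{H}_B^2)$; the coercivity computation you carry out inline (splitting $\langle\mathcal{A}_\lambda(t,x),x\rangle$ along $x-\mathcal{R}_\lambda(x)$ and $\mathcal{R}_\lambda(x)$, with $\lambda<\delta^{-1}$ used to balance the terms) is exactly the paper's Lemma \ref{lemmacoe1}. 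The only quibble is your closing aside: the $\lambda$-uniform moment estimate actually needs the full lower bound $\langle\mathcal{A}_\lambda(t,x),x\rangle\ge \delta 2^{-\alpha}\|x\|_V^{\alpha}-f(t)$ (not merely $\ge -f(t)$) to control $\int\|X_\lambda\|_V^{\alpha}\,ds$, but that concerns the subsequent lemma rather than the existence statement proved here.
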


\begin{proof}
It suffices to justify  the assumptions in Corollary 2.7 in \cite{rockner2022well}. Let $\lambda\in\left(0,\delta^{-1}\right)$. Denote
$$A:=-\mathcal{A}_{\lambda}-B.$$
Since $\mathcal{A}_{\lambda}(t,\cdot)$ is monotone and demicontinuous and $B(t,\cdot)$ is pseudo-monotone for a.e.~$t \in [0,T]$, it follows from Proposition 27.7 (c) in \cite{ZachariasB} that $B(t,\cdot) + \mathcal{A}_{\lambda}(t,\cdot)$ is pseudo-monotone for a.e.~$t \in [0,T]$. According to Lemma \ref{lemmacoe1},  the Yosida approximation $\mathcal{A}_{\lambda}$  is coercive. From this, using $(\mathbf{H}_B^1)$ and $(\mathbf{H}_{\sigma})$, we deduce that for any $v\in V$ and a.e.~$t\in[0,T]$,
\begin{align}\label{coel}
2\left<A(t,v),v\right>+\|\sigma(t,v)\|_{\mathcal{L}_2}^2 \le& Cf(t)(1+\|v\|_H^2)-\delta 2^{-\alpha+1}\|v\|_V^{\alpha}.	
\end{align}
Thus, Hypothesis (H3) in \cite{rockner2022well} is satisfied. Since $\mathcal{D}(\mathcal{A})=V$ and $\|\mathcal{A}_{\lambda}(\cdot,x)\|_{{V}^*}\le \|\mathcal{A}^0(\cdot,x)\|_{{V}^*}$ for all $x\in \mathcal{D}(\mathcal{A})$, the conditions $(\mathbf{H}_{\mathcal{A}}^3)$ and $(\mathbf{H}_B^2)$ ensure that for any $v\in V$ and $\alpha>1$, there exist constants $\beta\geq 0$ and $C>0$ such that
\begin{align}\label{ABgrowth}
\|A(t,v)\|_{{V}^*}^{\frac{\alpha}{\alpha-1}}\le \big(f(t)+C\|v\|_V^{\alpha}\big)\big(1+\|v\|_H^{\beta}\big).
\end{align}
Therefore, Hypothesis (H4) in \cite{rockner2022well} is also satisfied. This completes the proof.
\end{proof}
\begin{rem}
	Since $B(t,\cdot)+\mathcal{A}_{\lambda}(t,\cdot)$ is pseudo-monotone for a.e.~$t\in [0,T]$ and locally bounded, we find that $B(t,\cdot)+\mathcal{A}_{\lambda}(t,\cdot)$ is demicontinuous $($hence, hemicontinuous$)$ for a.e.~$t\in [0,T]$ by Proposition 27.7 $($b$)$ in \cite{ZachariasB}, which implies  Hypothesis $(H1)$ in \cite{rockner2022well} directly.
\end{rem}

\begin{lemma}\label{propgj1}
 There exists a constant $C_T>0$ such that for any $0<\lambda<\delta^{-1}$ we have
\begin{align*}
\mathbb{E}\Big[\sup_{t\in[0, T]}\|X_{\lambda}(t)\|_H^p\Big]+\mathbb{E}\int_{0}^{T}\|X_{\lambda}(s)\|_V^{\alpha}\|X_{\lambda}(s)\|_H^{p-2}ds \le C_T(1+\|x\|_H^p).
\end{align*}
\end{lemma}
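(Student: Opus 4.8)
The plan is to carry out the standard $L^p$ a priori estimate for the approximating equation \eqref{subequ1} in the variational setting, the two points requiring care being the uniformity of the resulting constant in $\lambda$ and the localization needed to make the stochastic integrals integrable. Throughout fix $p\ge 2$, $\lambda\in(0,\delta^{-1})$, and write $A:=-\mathcal{A}_{\lambda}-B$, so that the weak solution $X_{\lambda}$ from Lemma \ref{propyos1} solves $dX_{\lambda}(t)=A(t,X_{\lambda}(t))\,dt+\sigma(t,X_{\lambda}(t))\,dW(t)$ with $X_{\lambda}(0)=x$. Since $X_{\lambda}\in L^{\alpha}([0,T]\times\Omega,V)\cap C([0,T],H)$, $A(\cdot,X_{\lambda})\in L^{\frac{\alpha}{\alpha-1}}([0,T]\times\Omega,V^{*})$ by \eqref{ABgrowth}, and $\sigma(\cdot,X_{\lambda})\in L^{2}([0,T]\times\Omega,L_{2}(U,H))$ by $(\mathbf{H}_{\sigma})$, I would apply the It\^o formula in the Gelfand triple \eqref{gelfand} to $\|X_{\lambda}(t)\|_{H}^{2}$ and then compose with $y\mapsto y^{p/2}$ (a $C^{2}$ function on $[0,\infty)$ for $p\ge 4$, and for $2\le p<4$ after the routine mollification $y\mapsto(y+\varepsilon)^{p/2}$ followed by $\varepsilon\downarrow 0$), dominating the It\^o correction term coming from the quadratic variation of the $H$-valued martingale by $\|\sigma(s,X_{\lambda}(s))\|_{\mathcal{L}_{2}}^{2}\le f(s)(1+\|X_{\lambda}(s)\|_{H}^{2})$ from $(\mathbf{H}_{\sigma})$. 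This yields, for all $t\in[0,T]$,
\begin{align*}
\|X_{\lambda}(t)\|_{H}^{p}&\le\|x\|_{H}^{p}+\frac{p}{2}\int_{0}^{t}\|X_{\lambda}(s)\|_{H}^{p-2}\big(2\langle A(s,X_{\lambda}(s)),X_{\lambda}(s)\rangle+(p-1)\|\sigma(s,X_{\lambda}(s))\|_{\mathcal{L}_{2}}^{2}\big)\,ds\\
&\quad+p\int_{0}^{t}\|X_{\lambda}(s)\|_{H}^{p-2}\langle X_{\lambda}(s),\sigma(s,X_{\lambda}(s))\,dW(s)\rangle_{H}.
\end{align*}

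Next I would insert the coercivity estimate \eqref{coel} already obtained in the proof of Lemma \ref{propyos1}, namely $2\langle A(t,v),v\rangle+\|\sigma(t,v)\|_{\mathcal{L}_{2}}^{2}\le Cf(t)(1+\|v\|_{H}^{2})-\delta 2^{-\alpha+1}\|v\|_{V}^{\alpha}$, whose constants $C,\delta,\alpha$ do not depend on $\lambda$; together with $(\mathbf{H}_{\sigma})$ and $\|X_{\lambda}\|_{H}^{p-2}(1+\|X_{\lambda}\|_{H}^{2})\le 1+2\|X_{\lambda}\|_{H}^{p}$ this produces constants $c_{0}:=p\delta 2^{-\alpha}>0$ and $C_{p}>0$, both independent of $\lambda$, with
\begin{align*}
\|X_{\lambda}(t)\|_{H}^{p}+c_{0}\int_{0}^{t}\|X_{\lambda}(s)\|_{V}^{\alpha}\|X_{\lambda}(s)\|_{H}^{p-2}\,ds\le\|x\|_{H}^{p}+C_{p}\int_{0}^{t}f(s)\big(1+\|X_{\lambda}(s)\|_{H}^{p}\big)\,ds+M_{\lambda}(t),
\end{align*}
where $M_{\lambda}(t):=p\int_{0}^{t}\|X_{\lambda}(s)\|_{H}^{p-2}\langle X_{\lambda}(s),\sigma(s,X_{\lambda}(s))\,dW(s)\rangle_{H}$ is a continuous local martingale whose quadratic variation obeys $\langle M_{\lambda}\rangle_{t}\le C_{p}\big(\sup_{s\le t}(1+\|X_{\lambda}(s)\|_{H}^{p})\big)\int_{0}^{t}f(s)(1+\|X_{\lambda}(s)\|_{H}^{p})\,ds$ by $(\mathbf{H}_{\sigma})$.

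To close the estimate I would localize by the stopping times $\tau_{N}:=\inf\{t\in[0,T]:\|X_{\lambda}(t)\|_{H}\ge N\}\wedge T$, which are well defined, increase to $T$ $\mathbb{P}$-a.s.\ (since $X_{\lambda}\in C([0,T],H)$ $\mathbb{P}$-a.s.), and satisfy $\sup_{s\le\tau_{N}}\|X_{\lambda}(s)\|_{H}^{p}\le N^{p}$. Taking $\sup_{s\le t\wedge\tau_{N}}$ on both sides, discarding the nonnegative $V$-integral on the left, taking expectations, and bounding $\mathbb{E}\sup_{s\le t\wedge\tau_{N}}|M_{\lambda}(s)|$ by the Burkholder--Davis--Gundy inequality followed by Young's inequality so as to absorb $\tfrac12\mathbb{E}\sup_{s\le t\wedge\tau_{N}}\|X_{\lambda}(s)\|_{H}^{p}$ (finite, hence legitimately absorbable) into the left-hand side, I expect to reach
\begin{align*}
\mathbb{E}\Big[\sup_{s\le t\wedge\tau_{N}}\|X_{\lambda}(s)\|_{H}^{p}\Big]\le C\big(1+\|x\|_{H}^{p}\big)+C\int_{0}^{t}f(s)\,\mathbb{E}\Big[\sup_{r\le s\wedge\tau_{N}}\|X_{\lambda}(r)\|_{H}^{p}\Big]\,ds,
\end{align*}
with $C$ independent of $\lambda$ and $N$. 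Gronwall's lemma with the $L^{1}([0,T])$ weight $f$ then gives $\mathbb{E}\sup_{s\le T\wedge\tau_{N}}\|X_{\lambda}(s)\|_{H}^{p}\le C_{T}(1+\|x\|_{H}^{p})$ uniformly in $\lambda$ and $N$, and Fatou's lemma as $N\to\infty$ yields the first term of the claim. Finally, returning to the differential inequality of the previous paragraph (before taking suprema), evaluating at $t=T\wedge\tau_{N}$, taking expectations (the expectation of $M_{\lambda}(\cdot\wedge\tau_{N})$ vanishes, as it is then a genuine martingale) and using $f\in L^{1}([0,T])$ together with the bound just obtained controls $\mathbb{E}\int_{0}^{T\wedge\tau_{N}}\|X_{\lambda}(s)\|_{V}^{\alpha}\|X_{\lambda}(s)\|_{H}^{p-2}\,ds$ uniformly in $\lambda,N$; letting $N\to\infty$ by monotone convergence completes the proof.

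The main obstacle is structural rather than computational: the constant $C_{T}$ must be independent of $\lambda$, and this is precisely where the $\lambda$-uniform coercivity of the generalized Yosida approximation (Lemma \ref{lemmacoe1}, hence \eqref{coel}) is essential --- exactly the property that fails for the classical Yosida approximation when $\alpha>2$, as recalled earlier. The mild non-smoothness of $y\mapsto y^{p/2}$ at the origin for $2<p<4$ and the integrability needed to annihilate the martingale term are routine technicalities, dispatched by the mollification and the localization indicated above.
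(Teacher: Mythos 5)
Your proposal is correct and follows essentially the same route as the paper: It\^o's formula for $\|X_\lambda\|_H^p$, the $\lambda$-uniform coercivity bound \eqref{coel} coming from Lemma \ref{lemmacoe1}, localization by stopping times on the $H$-norm, BDG plus Young to absorb the supremum of the martingale term, Gronwall with the $L^1$ weight $f$, and Fatou/monotone convergence to remove the localization. The only differences (deriving the $p$-th power formula by composing the squared-norm It\^o formula with $y\mapsto y^{p/2}$, and recovering the $V$-integral in a second pass rather than carrying it through the supremum step) are cosmetic.
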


\begin{proof}
By It\^o's formula we have
\begin{align*}
	\|X_{\lambda}(t)\|_H^p&=\|x\|_H^p-\frac{p}{2}\int_{0}^{t}\|X_{\lambda}(s)\|_H^{p-2} \big[2\left<\mathcal{A}_{\lambda}(s,X_{\lambda}(s))+B(s,X_{\lambda}(s)),X_{\lambda}(s)\right>\nonumber\\
	&\quad+\|\sigma(s,X_{\lambda}(s))\|_{\mathcal{L}_2}^2\big]ds+\frac{p(p-2)}{2}\int_{0}^{t}\|X_{\lambda}(s)\|_H^{p-4}\|\sigma(s,X_{\lambda}(s))^*X_{\lambda}(s)\|_{U}^2ds\nonumber\\
	&\quad+p\int_{0}^{t}\|X_{\lambda}(s)\|_H^{p-2}\left<X_{\lambda}(s), \sigma(s,X_{\lambda}(s))dW(s)\right>_H.
\end{align*}
Then by (\ref{coel}) and let $c=\delta2^{-\alpha+1}>0$, it follows that
\begin{align}\label{es1}
	&\|X_{\lambda}(t)\|_H^p+\frac{pc}{2}\int_{0}^{t}\|X_{\lambda}(s)\|_V^{\alpha}\|X_{\lambda}(s)\|_H^{p-2}ds\nonumber\\
	\le& \|x\|_H^p+C\int_{0}^{t}f(s)(1+\|X_{\lambda}(s)\|_H^{p})ds\nonumber\\
	&+p\int_{0}^{t}\|X_{\lambda}(s)\|_H^{p-2}\left<X_{\lambda}(s), \sigma(s, X_{\lambda}(s))dW(s)\right>_H.
\end{align}
Set
$$\tau_{\lambda}^M:=\inf\big\{t\ge0: \|X_{\lambda}(t)\|_H >M\big\} \land T.$$
We have $\tau_{\lambda}^M \to T$ $\mathbb{P}\text{-a.s.}$, as $M \to \infty,$ for each $\lambda\in \left(0,\delta^{-1}\right)$. Taking the supremum over $t\in[0, T\land \tau_{\lambda}^M]$ and then taking expectation on both sides of (\ref{es1}), we get
\begin{align}
	&\mathbb{E}\Big[\sup\limits_{t\in[0, T\land \tau_{\lambda}^M]}\|X_{\lambda}(t)\|_H^p\Big]+\frac{pc}{2}\mathbb{E}\int_{0}^{T\land \tau_{\lambda}^M}\|X_{\lambda}(s)\|_V^{\alpha}\|X_{\lambda}(s)\|_H^{p-2}ds\nonumber \\
	\le&\|x\|_H^p+C\int_{0}^{T}f(s)ds+C\mathbb{E}\int_{0}^{T\land \tau_{\lambda}^M}f(s)\|X_{\lambda}(s)\|_H^pds\nonumber \\
	&+p\mathbb{E}\left[\sup\limits_{t\in[0, T\land \tau_{\lambda}^M]}\left|\int_{0}^{t}\|X_{\lambda}(s)\|_H^{p-2}\left<X_{\lambda}(s), \sigma(s,X_{\lambda}(s))dW(s)\right>_H\right|\right].\label{pes1}
\end{align}
By B-D-G's inequality, Young's inequality and $(\mathbf{H}_{\sigma})$,  we have
\begin{align}
	&p\mathbb{E}\left[\sup\limits_{t\in [0,T\land \tau_{\lambda}^M]}\left|\int_{0}^{t}\|X_{\lambda}(s)\|_H^{p-2}\left<X_{\lambda}(s), \sigma(s,X_{\lambda}(s))dW(s)\right>_H\right|\right]
\nonumber \\
	\le& C\mathbb{E}\left[\int_{0}^{T\land \tau_{\lambda}^M}\|X_{\lambda}(s)\|_H^{2p-2}\|\sigma(s,X_{\lambda}(s))\|_{\mathcal{L}_2}^2ds\right]^{\frac{1}{2}}
\nonumber\\
	\le& C\mathbb{E}\left[\sup\limits_{s\in[0,T\land \tau_{\lambda}^M]}\|X_{\lambda}(s)\|_H^{p}\cdot\int_{0}^{T\land \tau_{\lambda}^M}\|X_{\lambda}(s)\|_H^{p-2}\|\sigma(s,X_{\lambda}(s))\|_{\mathcal{L}_2}^2ds\right]^{\frac{1}{2}}
\nonumber\\
	\le&\frac{1}{2}\mathbb{E}\Big[\sup\limits_{s \in [0,T\land \tau_{\lambda}^M]}\|X_{\lambda}(s)\|_H^{p}\Big]+C\mathbb{E}\int_{0}^{T\land \tau_{\lambda}^M}\|X_{\lambda}(s)\|_H^{p-2}\|\sigma(s,X_{\lambda}(s))\|_{\mathcal{L}_2}^2ds
\nonumber \\
	\le&\frac{1}{2}\mathbb{E}\Big[\sup\limits_{s \in[0, T\land \tau_{\lambda}^M]}\|X_{\lambda}(s)\|_H^{p}\Big]+C\int_{0}^{T}f(s)ds+C\mathbb{E}\int_{0}^{T\land \tau_{\lambda}^M}f(s)\|X_{\lambda}(s)\|_H^pds.\label{martes1}
\end{align}
Combining inequalities \eqref{pes1}-\eqref{martes1} and applying Gronwall's inequality, we obtain
\begin{align*}
	&\mathbb{E}\Big[\sup\limits_{t\in [0,T\land \tau_{\lambda}^M]}\|X_{\lambda}(t)\|_H^p\Big]+C\mathbb{E}\int_{0}^{T \land \tau_{\lambda}^M}\|X_{\lambda}(s)\|_V^{\alpha}\|X_{\lambda}(s)\|_H^{p-2}ds \\
	\le& C\left(\|x\|_H^p+\int_{0}^{T}f(s)ds\right)\exp\Big\{C\int_{0}^{T}f(s)ds\Big\}.
\end{align*}
Letting $M\to \infty$ and applying Fatou's lemma, due to $f\in L^1 ([0,T],\mathbb{R}_+)$, it follows that Lemma \ref{propgj1} holds.
\end{proof}

\begin{lemma}\label{corogjx1}For any $\lambda\in \left(0,\delta^{-1}\right)$, the following inequality holds
\begin{align*}
\mathbb{E}\int_{0}^{T}\Big(\|\mathcal{A}_{\lambda}(s,X_{\lambda}(s))\|_{V^*}^{\frac{\alpha}{\alpha-1}}+\|B(s,X_{\lambda}(s))\|_{V^*}^{\frac{\alpha}{\alpha-1}}+\|\sigma(s,X_{\lambda}(s))\|_{\mathcal{L}_2}^2\Big)ds
\le C_T(1+\|x\|_H^{\beta+2}).
\end{align*}
\end{lemma}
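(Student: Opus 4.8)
The plan is to combine the growth hypotheses $(\mathbf{H}_{\mathcal{A}}^3)$, $(\mathbf{H}_B^2)$ and $(\mathbf{H}_{\sigma})$ with the a priori moment bound of Lemma \ref{propgj1}, the latter applied with two different exponents $p$. Recall that $\mathcal{D}(\mathcal{A})=V$, so by Proposition \ref{propyosgj}(ii) we have $\|\mathcal{A}_{\lambda}(s,X_{\lambda}(s))\|_{V^*}\le\|\mathcal{A}^0(s,X_{\lambda}(s))\|_{V^*}$ for $dt\otimes\mathbb{P}$-a.e.\ $(s,\omega)$, and hence by $(\mathbf{H}_{\mathcal{A}}^3)$,
\begin{equation*}
\|\mathcal{A}_{\lambda}(s,X_{\lambda}(s))\|_{V^*}^{\frac{\alpha}{\alpha-1}}\le\big(f(s)+C\|X_{\lambda}(s)\|_V^{\alpha}\big)\big(1+\|X_{\lambda}(s)\|_H^{\beta}\big).
\end{equation*}
The same right-hand side bounds $\|B(s,X_{\lambda}(s))\|_{V^*}^{\frac{\alpha}{\alpha-1}}$ by $(\mathbf{H}_B^2)$, while $\|\sigma(s,X_{\lambda}(s))\|_{\mathcal{L}_2}^2\le f(s)(1+\|X_{\lambda}(s)\|_H^2)$ by $(\mathbf{H}_{\sigma})$. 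Adding the three estimates, integrating in time and taking expectations, it suffices to bound
\begin{equation*}
\mathbb{E}\int_{0}^{T}\Big[f(s)\big(1+\|X_{\lambda}(s)\|_H^{\beta}+\|X_{\lambda}(s)\|_H^{2}\big)+C\|X_{\lambda}(s)\|_V^{\alpha}\big(1+\|X_{\lambda}(s)\|_H^{\beta}\big)\Big]ds
\end{equation*}
by $C_T(1+\|x\|_H^{\beta+2})$.

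For the terms carrying the weight $f$, I would use $f\in L^1([0,T],\mathbb{R}_+)$ together with $\mathbb{E}[\sup_{t\in[0,T]}\|X_{\lambda}(t)\|_H^{p}]\le C_T(1+\|x\|_H^{p})$ from Lemma \ref{propgj1}; taking $p=2$ and $p=\beta\vee 2$ (and noting $\|X_{\lambda}(s)\|_H^{\beta}\le 1+\|X_{\lambda}(s)\|_H^{\beta\vee 2}$ when $\beta<2$), these contribute at most $C_T(1+\|x\|_H^{\beta+2})$. For $\mathbb{E}\int_0^T\|X_{\lambda}(s)\|_V^{\alpha}ds$ I would apply Lemma \ref{propgj1} with $p=2$ (so that the weight $\|X_{\lambda}(s)\|_H^{p-2}$ equals $1$), and for the cross term $\mathbb{E}\int_0^T\|X_{\lambda}(s)\|_V^{\alpha}\|X_{\lambda}(s)\|_H^{\beta}ds$ I would apply Lemma \ref{propgj1} with $p=\beta+2\ (\ge 2)$, which gives exactly this integral bounded by $C_T(1+\|x\|_H^{\beta+2})$. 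Since $\|x\|_H^2$ and $\|x\|_H^{\beta}$ are both dominated by $1+\|x\|_H^{\beta+2}$ (as $\beta\ge 0$), collecting all the terms yields the assertion, with a constant $C_T$ independent of $\lambda\in(0,\delta^{-1})$ because the constant in Lemma \ref{propgj1} is.

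This estimate presents no real difficulty: it is a bookkeeping consequence of the growth conditions and Lemma \ref{propgj1}. The only point deserving attention is that the $H$-norm power $\|X_{\lambda}\|_H^{\beta}$ produced by $(\mathbf{H}_{\mathcal{A}}^3)$ and $(\mathbf{H}_B^2)$ has to be absorbed by the weight $\|X_{\lambda}\|_H^{p-2}$ appearing in the a priori bound, which forces the choice $p=\beta+2$ and thereby explains the exponent $\beta+2$ on the right-hand side of the statement.
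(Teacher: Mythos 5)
Your proof is correct and follows the same route as the paper: bound the three integrands via Proposition \ref{propyosgj}(ii) together with $(\mathbf{H}_{\mathcal{A}}^3)$, $(\mathbf{H}_B^2)$, $(\mathbf{H}_{\sigma})$, then invoke Lemma \ref{propgj1}. The paper leaves the final step as "combine with Lemma \ref{propgj1}," whereas you spell out the choices $p=2$ and $p=\beta+2$ needed to absorb the weight $\|X_{\lambda}\|_H^{\beta}$; this is exactly the intended bookkeeping.
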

\begin{proof}
Recall the fact that the operator $\mathcal{A}$ has the full domain (i.e.~$\mathcal{D}(\mathcal{A})=V$). By $(\mathbf{H}_{\mathcal{A}}^3)$, $(\mathbf{H}_B^2)$, $(\mathbf{H}_{\sigma})$ and  Proposition $\ref{propyosgj}$ (ii), we have
\begin{align}
&\mathbb{E}\int_{0}^{T}\Big(\|\mathcal{A}_{\lambda}(s,X_{\lambda}(s))\|_{V^*}^{\frac{\alpha}{\alpha-1}}+\|B(s,X_{\lambda}(s))\|_{V^*}^{\frac{\alpha}{\alpha-1}}+\|\sigma(s,X_{\lambda}(s))\|_{\mathcal{L}_2}^2\Big)ds \nonumber\\
\le& \mathbb{E}\int_{0}^{T}\Big((f(s)+C\|X_{\lambda}(s)\|_V^{\alpha})(1+\|X_{\lambda}(s)\|_H^{\beta})+f(s)(1+\|X_{\lambda}(s)\|_H^2)\Big)ds.\label{lambgj1}
\end{align}
Combining Lemma $\ref{propgj1}$ with \eqref{lambgj1}, we complete the proof.
\end{proof}

\subsection{Pre-compactness}
In this part, we take a subsequence $\{\lambda_k\}_{k=1}^{\infty}$ of $\{\lambda\}_{\lambda\in \left(0,\delta^{-1}\right)}$, where $\lambda_k\to 0$ as $k \to \infty$. Along this subsequence, we denote $X_{\lambda_k}$ by $X_k$. Define the following stopping times
\begin{align*}
	\tau_k^M := \inf\big\{t\ge 0: \|X_k(t)\|_H^2 > M\big\}\land \inf \Big\{t\ge 0: \int_{0}^{t}\|X_k(s)\|_V^{\alpha}ds >M\Big\}\land T,
\end{align*}
with the convention $\inf \emptyset = \infty $. By Chebyshev's inequality,
Lemma $\ref{propgj1}$ implies that
\begin{align}
 	\lim\limits_{M \to \infty} \sup\limits_{k\in\mathbb{N}}\mathbb{P}(\tau_k^M < T)=0.\label{Cheb1}
 \end{align}

We present the tightness of the laws of $\{X_k\}_{k=1}^{\infty}$.
\begin{lemma}\label{lemmatig1}
The sequence $\{X_k\}_{k=1}^{\infty}$ is tight in the space $C([0,T],V^*)\cap L^{\alpha}([0,T],H)$.
\end{lemma}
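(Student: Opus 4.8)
The plan is to prove tightness of $\{X_k\}_{k=1}^{\infty}$ separately in $C([0,T],V^*)$ and in $L^{\alpha}([0,T],H)$; this is enough, since for every $\varepsilon>0$ the intersection of a compact subset of $C([0,T],V^*)$ carrying probability $\ge 1-\varepsilon/2$ with a compact subset of $L^{\alpha}([0,T],H)$ carrying probability $\ge 1-\varepsilon/2$ is a compact subset of $C([0,T],V^*)\cap L^{\alpha}([0,T],H)$ of probability $\ge 1-\varepsilon$. Throughout I would use the decomposition
$$X_k(t)=x-\int_0^t\big(\mathcal{A}_{\lambda_k}(s,X_k(s))+B(s,X_k(s))\big)\,ds+\int_0^t\sigma(s,X_k(s))\,dW(s)=:x+Y_k(t)+M_k(t),$$
together with the uniform estimates already available: Lemma \ref{propgj1} (applied with some fixed $p>2$) gives $\sup_k\mathbb{E}\big[\sup_{t\in[0,T]}\|X_k(t)\|_H^p\big]<\infty$, while Lemma \ref{corogjx1} and $(\mathbf{H}_{\sigma})$ give $\sup_k\mathbb{E}\int_0^T\big(\|\mathcal{A}_{\lambda_k}(s,X_k(s))+B(s,X_k(s))\|_{V^*}^{\alpha/(\alpha-1)}+\|\sigma(s,X_k(s))\|_{\mathcal{L}_2}^2\big)\,ds<\infty$. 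The crucial point is that the $V^*$-bound on the Yosida term $\mathcal{A}_{\lambda_k}$ is uniform in $\lambda_k$, which is exactly Proposition \ref{propyosgj}(ii) combined with $(\mathbf{H}_{\mathcal{A}}^3)$.

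For tightness in $C([0,T],V^*)$ I would verify pointwise tightness and a uniform-in-probability control of the modulus of continuity. Pointwise tightness is immediate: the embedding $H\hookrightarrow V^*$ is compact (being the adjoint of the compact embedding $V\hookrightarrow H$), so the bound on $\mathbb{E}\sup_t\|X_k(t)\|_H^p$ makes the laws of $X_k(t)$ tight in $V^*$, uniformly in $t$ and $k$. For the modulus: $Y_k$ lies, uniformly in probability, in a bounded set of $W^{1,\alpha/(\alpha-1)}([0,T],V^*)$, so by Hölder's inequality $\|Y_k(t)-Y_k(s)\|_{V^*}\le|t-s|^{1/\alpha}\big(\int_0^T\|\mathcal{A}_{\lambda_k}(r,X_k(r))+B(r,X_k(r))\|_{V^*}^{\alpha/(\alpha-1)}dr\big)^{(\alpha-1)/\alpha}$, and a Chebyshev argument controls $\sup_{|t-s|\le\delta}\|Y_k(t)-Y_k(s)\|_{V^*}$ in probability uniformly in $k$. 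For the stochastic part I would use the Burkholder--Davis--Gundy inequality and $(\mathbf{H}_{\sigma})$: subdividing $[0,T]$ into intervals $I_i$ of length $\le\delta$,
$$\mathbb{E}\Big[\sup_i\,\sup_{t\in I_i}\|M_k(t)-M_k(\inf I_i)\|_H^p\Big]\le C_p\sum_i\mathbb{E}\Big(\int_{I_i}f(r)(1+\|X_k(r)\|_H^2)\,dr\Big)^{p/2}\le C_p\big(1+\mathbb{E}\sup_t\|X_k(t)\|_H^p\big)\,\varpi_f(\delta)^{p/2-1}\|f\|_{L^1},$$
where $\varpi_f(\delta):=\sup_{|I|\le\delta}\int_I f(r)\,dr\to 0$ as $\delta\to 0$ by absolute continuity of the integral, and the last step uses $\sum_i(\int_{I_i}f)^{p/2}\le(\sup_i\int_{I_i}f)^{p/2-1}\sum_i\int_{I_i}f$, which is exactly where $p>2$ enters. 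Since for a continuous path the modulus of continuity on scale $\delta$ is dominated, up to a factor $3$, by $\sup_i\sup_{t\in I_i}\|\cdot(t)-\cdot(\inf I_i)\|$, this yields equicontinuity of $\{M_k\}$ in probability, hence of $\{X_k\}$ in $C([0,T],V^*)$, and tightness follows from the standard criterion (pointwise tightness plus uniform control of the modulus of continuity). If one prefers to bypass the $f\in L^1$ bookkeeping, the same conclusion is reached by first proving tightness of the stopped processes $X_k(\cdot\wedge\tau_k^M)$ for each fixed $M$ — on whose trajectories the integrands above are dominated by the deterministic functions $(1+M)f$ and $(f+C\|X_k\|_V^{\alpha})(1+M^{\beta/2})$, so that all estimates become deterministic and $k$-uniform — and then transferring to $\{X_k\}$ by means of \eqref{Cheb1}.

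For tightness in $L^{\alpha}([0,T],H)$ I would deduce it from the previous step rather than start afresh. By Lemma \ref{propgj1} with $p=2$, $\sup_k\mathbb{E}\int_0^T\|X_k(s)\|_V^{\alpha}\,ds<\infty$, so by Chebyshev, for each $\varepsilon>0$ there is $R>0$ with $\mathbb{P}\big(\int_0^T\|X_k(s)\|_V^{\alpha}ds>R\big)\le\varepsilon/2$ uniformly in $k$; also pick, from the first step, a compact $K\subset C([0,T],V^*)$ with $\mathbb{P}(X_k\in K)\ge 1-\varepsilon/2$. The set $\mathcal{K}:=K\cap\big\{u:\int_0^T\|u(s)\|_V^{\alpha}ds\le R\big\}$ is relatively compact in $C([0,T],V^*)$ and bounded in $L^{\alpha}([0,T],V)$, and by Ehrling's inequality $\|u\|_H\le\eta\|u\|_V+C_\eta\|u\|_{V^*}$ (valid because $V\hookrightarrow H$ is compact and $H\hookrightarrow V^*$ is continuous) any $C([0,T],V^*)$-convergent sequence in $\mathcal{K}$ is Cauchy in $L^{\alpha}([0,T],H)$; hence $\mathcal{K}$ is relatively compact there too. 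Its closure in $C([0,T],V^*)\cap L^{\alpha}([0,T],H)$ is then a compact set carrying probability $\ge 1-\varepsilon$ for every $k$, which gives the claim (and, incidentally, tightness in the intersection space).

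The step I expect to be the main obstacle is the time-regularity of the stochastic integral $M_k$ under the weak hypothesis $f\in L^1([0,T])$: a direct Kolmogorov/H\"older estimate is not available, so the argument must exploit a higher moment $p>2$ from Lemma \ref{propgj1} together with the absolute-continuity modulus $\varpi_f$, or else route through the stopped processes $X_k(\cdot\wedge\tau_k^M)$ and \eqref{Cheb1}. The other potentially delicate point — the uniformity of all drift estimates in $\lambda_k$ — is already taken care of by Lemma \ref{corogjx1} and Proposition \ref{propyosgj}(ii), which is precisely why the generalized, rather than the classical, Yosida approximation is used here.
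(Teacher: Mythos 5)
Your proof is correct, but it follows a genuinely different route from the paper's in both halves. For tightness in $C([0,T],V^*)$ the paper invokes Jakubowski's criterion to reduce to the scalar processes $\langle X_k,e\rangle$ and then verifies Aldous's condition for these, estimating $\mathbb{E}|\langle X_k^M(\zeta_k+\delta)-X_k^M(\zeta_k),e\rangle|^{\alpha}$ with stopping times; you instead verify the modulus-of-continuity criterion directly in $V^*$, controlling the drift by H\"older and the martingale by a BDG estimate at an exponent $p>2$ combined with the absolute-continuity modulus $\varpi_f$ of $f\in L^1$. Both work: the paper's argument only needs $\alpha$-th moments of increments, while yours leans on the higher moment from Lemma \ref{propgj1} (which is indeed available for every $p\ge 2$), and your interval-subdivision bound $\sum_i(\int_{I_i}f)^{p/2}\le \varpi_f(\delta)^{p/2-1}\|f\|_{L^1}$ is exactly the right device to make the $f\in L^1$ hypothesis harmless. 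The larger divergence is in the $L^{\alpha}([0,T],H)$ step: the paper applies a Simon-type criterion (Lemma 5.2 of \cite{rockner2022well}) and proves the time-translation estimate $\mathbb{E}\int_0^{T-\delta}\|X_k^M(t+\delta)-X_k^M(t)\|_H^{\alpha}dt\to 0$ via It\^o's formula, with a separate computation for $\alpha\in(1,2]$ and $\alpha>2$; you bypass all of this by combining the already-established $C([0,T],V^*)$-tightness with the uniform $L^{\alpha}([0,T],V)$ bound and Ehrling's inequality, i.e.\ an Aubin--Lions-type interpolation at the level of compact sets. This is shorter, avoids the case split in $\alpha$, and directly produces a single compact set in the intersection space; the only point you should make explicit is that $u\mapsto\int_0^T\|u(s)\|_V^{\alpha}ds$ is lower semicontinuous on $C([0,T],V^*)$ so that your set $\mathcal{K}$ is closed, a fact the paper itself uses elsewhere (e.g.\ in deriving \eqref{low21}).
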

\begin{proof}
In order to get the lemma, we will prove that $\{X_k\}_{k=1}^{\infty}$ is tight in $C([0,T],V^*)$ and in $L^{\alpha}([0,T],H)$ separately. Then for any sequence $\{\mathcal{L}_{X_k}\}_{k=1}^{\infty}$ we can find a subsequence, still denoted by $\{\mathcal{L}_{X_k}\}_{k=1}^{\infty}$, such that $\{\mathcal{L}_{X_k}\}_{k=1}^{\infty}$ converges weakly in $C([0,T],V^*)\cap L^{\alpha}([0,T],H)$.

\vspace{1mm}
\textbf{Step 1.} In this step, we prove that $\{X_k\}_{k=1}^{\infty}$ is tight in $C([0,T],V^*)$. Notice that $H$ is compactly embedded into $V^*$,
it is sufficient to show that for every $e\in P_m H$, $m\in \mathbb{N}$, $\{\left<X_k, e\right>\}_{k=1}^{\infty}$ is tight in the space $C([0,T],\mathbb{R})$ by Theorem 3.1 in \cite{jakubowski1986skorokhod}. In terms of the Aldou's tightness criterion, the result will be proved if we can show that for any stopping time $0\le \zeta_k \le T$ and for any $\varepsilon >0$,
\begin{equation}
\lim\limits_{\delta\to 0}\sup\limits_{k\in \mathbb{N}}\mathbb{P}\left(|\langle X_k(\zeta_k+\delta)-X_k(\zeta_k), e\rangle| > \varepsilon\right) = 0,\label{p1}	
\end{equation}
where $\zeta_k+\delta := T\land (\zeta_k+\delta)\vee 0$. Set $X_k^M(t):=X_k(t\land \tau_k^M)$. By Chebyshev's inequality, it implies that
\begin{align}
	&\mathbb{P}\left(\left|\left<X_k(\zeta_k+\delta)-X_k(\zeta_k),e\right>\right| > \varepsilon\right)\nonumber \\
	\le& \mathbb{P}\left(\left|\left<X_k(\zeta_k+\delta)-X_k(\zeta_k),e\right>\right| > \varepsilon, \tau_k^M \ge T\right)+\mathbb{P}(\tau_k^M < T) \nonumber \\
	\le& \frac{1}{{\varepsilon}^{\alpha}}\mathbb{E}\left|\langle X_k^M(\zeta_k+\delta)-X_k^M(\zeta_k), e\rangle\right|^{\alpha}+\mathbb{P}(\tau_k^M< T).\label{cheb1}
\end{align}
Using \eqref{subequ1} and B-D-G's inequality, we have
\begin{align}
	&\mathbb{E}\left|\left<X_k^M(\zeta_k+\delta)-X_k^M(\zeta_k), e\right>\right|^{\alpha}\nonumber \\
	\le& C\mathbb{E}\left(\int_{\zeta_k\land \tau_k^M}^{(\zeta_k+\delta)\land \tau_k^M}\left|\left<\mathcal{A}_{\lambda_k}(s,X_k(s))+B(s, X_k(s)), e\right>\right|ds\right)^{\alpha}\nonumber \\
	&+C\mathbb{E}\left(\int_{\zeta_k\land \tau_k^M}^{(\zeta_k+\delta)\land \tau_k^M}\|e\|_H^2\|\sigma(s,X_k(s))\|_{\mathcal{L}_2}^2 ds\right)^{\frac{\alpha}{2}}\nonumber\\
	=&: I_k+ II_k.\label{i1i21}
\end{align}
According to H\"older's inequality, $(\mathbf{H}_{\mathcal{A}}^3)$, $(\mathbf{H}_B^2)$ and Proposition $\ref{propyosgj}$ (ii), we deduce that
\begin{align*}
	I_k &\le C|\delta| \mathbb{E}\left(\int_{\zeta_k\land \tau_k^M}^{(\zeta_k+\delta)\land \tau_k^M}|\left<\mathcal{A}_{\lambda_k}(s,X_k(s))+B(s, X_k(s)), e\right>|^{\frac{\alpha}{\alpha-1}}ds \right)^{\alpha-1}\nonumber \\
	&\le C|\delta|\mathbb{E}\left(\int_{0}^{T\land \tau_k^M}\|e\|_V^{\frac{\alpha}{\alpha-1}}(f(s)+C\|X_k(s)\|_V^{\alpha})(1+\|X_k(s)\|_H^{\beta})ds\right)^{\alpha-1}\nonumber \\
	&\le C_M|\delta|.
\end{align*}
Thus, we have
\begin{equation}
	\lim\limits_{\delta \to 0}\sup\limits_{k\in \mathbb{N}} I_k = 0.\label{i11}
\end{equation}
Similarly, by $(\mathbf{H}_{\sigma})$ it follows that
\begin{align*}
	II_k &\le C \mathbb{E}\left(\int_{\zeta_k\land \tau_k^M}^{(\zeta_k+\delta)\land \tau_k^M}\|e\|_H^2 f(s)(1+\|X_k(s)\|_H^2)ds\right)^{\frac{\alpha}{2}}\\
	&\le C_M\left(\int_{\zeta_k\land \tau_k^M}^{(\zeta_k+\delta)\land \tau_k^M}f(s)ds\right)^{\frac{\alpha}{2}}.
\end{align*}
Since $f\in L^1([0,T],\mathbb{R}_+)$, by the absolute continuity of the Lebesgue integral, we have
\begin{equation}
	\lim\limits_{\delta \to 0}\sup\limits_{k\in \mathbb{N}} II_k = 0.\label{i21}
\end{equation}
Combining \eqref{i1i21}-\eqref{i21} yields
\begin{equation}
	\lim\limits_{\delta \to 0} \sup\limits_{k\in \mathbb{N}} \mathbb{E}\left|\left<X_k^M(\zeta_k+\delta)-X_k^M(\zeta_k), e\right>\right|^{\alpha}=0.\label{iz1}
\end{equation}
According to \eqref{Cheb1} and \eqref{iz1}, letting $\delta \to
0$ and then letting $M \to \infty$ in \eqref{cheb1} yield \eqref{p1}.

\vspace{1mm}
\textbf{Step 2.} In this step, we  show that $\{X_k\}_{k=1}^{\infty}$ is tight in $L^{\alpha}([0,T],H)$. By Lemma 5.2 in \cite{rockner2022well} and
\begin{equation*}
	\sup\limits_{k\in \mathbb{N}}\mathbb{E}\int_{0}^{T}\|X_k(t)\|_V^{\alpha}dt < \infty,
\end{equation*}
it suffices to prove that for any $\varepsilon >0$
\begin{equation}
	\lim\limits_{\delta \to 0^{+}} \sup\limits_{k\in \mathbb{N}}\mathbb{P}\left(\int_{0}^{T-\delta}\|X_k(t+\delta)-X_k(t)\|_H^{\alpha}dt > \varepsilon\right) = 0.\label{App1}
\end{equation}
Let $X_k^M(t):= X_k(t\land \tau_k^M)$. Then we have
\begin{align}
	&\mathbb{P}\left(\int_{0}^{T-\delta}\|X_k(t+\delta)-X_k(t)\|_H^{\alpha}dt > \varepsilon\right)\nonumber \\
	\le& \mathbb{P}\left(\int_{0}^{T-\delta}\|X_k(t+\delta)-X_k(t)\|_H^{\alpha}dt > \varepsilon, \tau_k^M \ge T\right)+\mathbb{P}\left(\tau_k^M < T\right)\nonumber \\
	\le& \frac{1}{\varepsilon}\mathbb{E}\int_{0}^{T-\delta}\|X_k^M(t+\delta)-X_k^M(t)\|_H^{\alpha}dt + \mathbb{P}(\tau_k^M < T).\label{jyb1}
\end{align}
Once we can prove that for any fixed $M > 0$,
\begin{equation}
	\lim\limits_{\delta \to 0^{+}}\sup\limits_{k\in \mathbb{N}}\mathbb{E}\int_{0}^{T-\delta}\|X_k^M(t+\delta)-X_k^M(t)\|_H^{\alpha} dt =0,\label{sj1}
\end{equation}
 letting $\delta\to 0$  and $M\to \infty$ in \eqref{jyb1},  then we deduce that \eqref{App1} holds by \eqref{Cheb1}. Next, we  prove \eqref{sj1}.

Firstly, we consider the case $\alpha\in(1, 2]$. By It\^o's formula, we have
\begin{align*}
	&\mathbb{E}\|X_k^M(t+\delta)-X_k^M(t)\|_H^2 \\
	=&\mathbb{E}\int_{t\land \tau_k^M}^{(t+\delta)\land \tau_k^M}\left[-2\langle \mathcal{A}_{\lambda_k}(r,X_k(r))+B(r,X_k(r)), X_k(r)-X_k(t\land \tau _k^M)\rangle\right] dr \\
	&+\mathbb{E}\int_{t\land \tau_k^M}^{(t+\delta)\land \tau_k^M}\|\sigma(r,X_k(r))\|_{\mathcal{L}_2}^2dr.
\end{align*}
Thus, it implies that
\begin{align}
	&\mathbb{E}\int_{0}^{T-\delta}\|X_k^M(t+\delta)-X_k^M(t)\|_H^2dt \nonumber \\
	=&\mathbb{E}\int_{0}^{T-\delta}dt \int_{t\land \tau_k^M}^{(t+\delta)\land \tau_k^M}\Big[-2\left<\mathcal{A}_{\lambda_k}(r,X_k(r))+B(r,X_k(r)), X_k(r)\right>+\|\sigma(r,X_k(r))\|_{\mathcal{L}_2}^2\Big]dr \nonumber \\
	&+2\mathbb{E}\int_{0}^{T-\delta}dt \int_{t\land \tau_k^M}^{(t+\delta)\land \tau_k^M} \left<\mathcal{A}_{\lambda_k}(r,X_k(r))+B(r,X_k(r)), X_k(t\land \tau_k^M)\right>dr \nonumber \\
	=:&~ III_k+IV_k. \label{I121}
\end{align}
By Fubini's theorem and \eqref{coel}, it follows that
\begin{align}
	III_k &=\mathbb{E}\int_{0}^{T\land \tau_k^M}\Big[-2\left<\mathcal{A}_{\lambda_k}(r,X_k(r))+B(r,X_k(r)), X_k(r)\right>+\|\sigma(r,X_k(r))\|_{\mathcal{L}_2}^2\Big]dr
\nonumber\\
&\cdot\int_{0\lor (r-\delta)}^{r}\mathbf{1}_{\{\tau_k^M>t\}} dt \nonumber \\
	&\le C\delta\mathbb{E}\int_{0}^{T\land \tau_k^M}f(s)\left(1+\|X_k(s)\|_H^2\right)ds \nonumber \\
	&\le C_M\delta.\label{I11}
\end{align}
Similarly, by \eqref{ABgrowth} we can deduce that
\begin{align}
	|IV_k|&=2\left|\mathbb{E}\int_{0}^{T\land \tau_k^M}dr \int_{0\lor (r-\delta)}^{r} \mathbf{1}_{\{\tau_k^M>t\}}\left<\mathcal{A}_{\lambda_k}(r,X_k(r))+B(r,X_k(r)), X_k(t\land \tau_k^M)\right>dt\right|\nonumber \\
	&\le C \mathbb{E}\int_{0}^{T\land \tau_k^M}\|\mathcal{A}_{\lambda_k}(r,X_k(r))+B(r,X_k(r))\|_{V^*}dr\int_{0\lor (r-\delta)}^{r}\|X_k(t\land \tau_k^M)\|_Vdt \nonumber \\
	&\le C \delta^{\frac{\alpha-1}{\alpha}}\mathbb{E}\int_{0}^{T\land \tau_k^M}\|\mathcal{A}_{\lambda_k}(r,X_k(r))+B(r,X_k(r))\|_{V^*}dr\left(\int_{0}^{T\land \tau_k^M}\|X_k(t)\|_V^{\alpha}dt\right)^{\frac{1}{\alpha}} \nonumber\\
	&\le C_M \delta^{\frac{\alpha-1}{\alpha}}. \label{I21}
\end{align}
Combining \eqref{I121}-\eqref{I21} yields
\begin{align*}
	\sup\limits_{k\in \mathbb{N}}\mathbb{E}\int_{0}^{T-\delta}\|X_k^M(t+\delta)-X_k^M(t)\|_H^{2}dt \le C_M(\delta+\delta^{\frac{\alpha-1}{\alpha}}).
\end{align*}
Since $\alpha\in (1,2]$, we can apply  H\"older's inequality to obtain
\begin{align*}
	&\lim\limits_{\delta \to 0^{+}}\sup\limits_{k\in \mathbb{N}}\mathbb{E}\int_{0}^{T-\delta}\|X_k^M(t+\delta)-X_k^M(t)\|_H^{\alpha}ds \\
	\le &C\left(\lim\limits_{\delta \to 0^{+}}\sup\limits_{k\in \mathbb{N}}\mathbb{E}\int_{0}^{T-\delta}\|X_k^M(t+\delta)-X_k^M(t)\|_H^2 ds\right)^{\frac{\alpha}{2}}=0.
\end{align*}
We have completed the proof of \eqref{sj1} for $\alpha\in (1,2].$

\vspace{1mm}
Now we consider the case $\alpha>2$. Note that
\begin{align*}
	&\mathbb{E}\|X_k^M(t+\delta)-X_k^M(t)\|_H^{\alpha}\nonumber \\
	=&\frac{\alpha}{2}\mathbb{E}\int_{t \land \tau_k^M}^{(t+\delta)\land \tau_k^M}\|X_k(r)-X_k(t\land \tau_k^M)\|_H^{\alpha-2}\nonumber \\
	&\cdot \left[-2\left<\mathcal{A}_{\lambda_k}(r,X_k(r))+B(r,X_k(r)), X_k(r)-X_k(t\land \tau_k^M)\right>+\|\sigma(r,X_k(r))\|_{\mathcal{L}_2}^2\right]dr \nonumber \\
	&+\frac{\alpha(\alpha-2)}{2}\mathbb{E}\int_{t \land \tau_k^M}^{(t+\delta)\land \tau_k^M}\|X_k(r)-X_k(t\land \tau_k^M)\|_H^{\alpha-4}
\\
&\cdot\|(\sigma(r,X_k(r)))^*(X_k(r)-X_k(t\land \tau_k^M))\|_U^2 dr.
\end{align*}
Using Fubini's theorem, we have
\begin{align}
	&\mathbb{E}\int_{0}^{T-\delta}\|X_k^M(t+\delta)-X_k^M(t)\|_H^{\alpha}dt\nonumber \\
	=&\frac{\alpha}{2}\mathbb{E}\int_{0}^{T-\delta}dt\int_{t \land \tau_k^M}^{(t+\delta)\land \tau_k^M}\|X_k(r)-X_k(t\land \tau_k^M)\|_H^{\alpha-2} \nonumber \\
	&\cdot\left[-2\left<\mathcal{A}_{\lambda_k}(r,X_k(r))+B(r,X_k(r)), X_k(r)\right>+\|\sigma(r,X_k(r))\|_{\mathcal{L}_2}^2\right]dr \nonumber \\
	&+\alpha \mathbb{E}\int_{0}^{T-\delta}dt\int_{t \land \tau_k^M}^{(t+\delta)\land \tau_k^M}\|X_k(r)-X_k(t\land \tau_k^M)\|_H^{\alpha-2}
\nonumber \\
&\cdot\left<\mathcal{A}_{\lambda_k}(r,X_k(r))+B(r,X_k(r)), X_k(t\land \tau_k^M)\right>dr \nonumber \\
	&+\frac{\alpha(\alpha-2)}{2}\mathbb{E}\int_{0}^{T-\delta}dt \int_{t \land \tau_k^M}^{(t+\delta)\land \tau_k^M}\|X_k(r)-X_k(t\land \tau_k^M)\|_H^{\alpha-4}
\nonumber \\
& \cdot\|(\sigma(r,X_k(r)))^*(X_k(r)-X_k(t\land \tau_k^M))\|_U^2 dr\nonumber \\
	=&: I_{k}+II_{k}+III_{k}. \label{J123}
\end{align}
Applying the similar arguments as in \eqref{I11} and \eqref{I21}, it implies that
\begin{align}
I_{k} &\le C_M\delta, \label{J1}\\
|II_{k}|&\le C_M\delta^{\frac{\alpha-1}{\alpha}}\label{J2}.	
\end{align}
On the other hand, by $(\mathbf{H}_{\sigma})$ and Fubini's theorem it follows that
\begin{align}
	III_k &\le C_M\mathbb{E}\int_{0}^{T\land \tau_k^M}f(r)\|X_k(r)-X_k(t\land\tau_k^M)\|_H^{\alpha-2}\left(1+\|X_k(r)\|_H^{2}\right)dr
\nonumber \\
&~~~\cdot\int_{0\lor (r-\delta)}^{r} \mathbf{1}_{\{\tau_k^M>t\}}dt \nonumber \\
	&\le C_M\delta. \label{J3}
\end{align}
Combining \eqref{J123}-\eqref{J3}, we obtain
\begin{align*}
	\sup\limits_{k\in \mathbb{N}}\mathbb{E}\int_{0}^{T-\delta}\|X_k^M(t+\delta)-X_k^M(t)\|_H^{\alpha}dt \le  C_M(\delta+\delta^{\frac{\alpha-1}{\alpha}}).
\end{align*}
Therefore, \eqref{sj1} is proved.  We complete the proof of the lemma.
\end{proof}

\subsection{Proof of Theorem \ref{theoremzy11}}
Let
$$
\Upsilon:=[L^{\alpha}([0,T],H)\cap C([0,T],V^*)]\times C([0,T],U_1),
$$
where $U_1$ is a Hilbert space and the embedding $U \subset U_1$ is Hilbert-Schmidt. According to Lemma \ref{lemmatig1}, we note that the family of the laws $\mathcal{L}(X_k,W)$ is tight in $\Upsilon$. By  the  Skorohod representation theorem, there exists a new probability space $(\widetilde{\Omega},\widetilde{\mathcal{F}},\widetilde{\mathbb{P}})$, along with a sequence of $\Upsilon$-valued random vectors $\{(\widetilde{X}_k,\widetilde{W}_k)\}$ and $(\widetilde{X},\widetilde{W})$, such that
\begin{enumerate}
     \item[(i)]
     \begin{equation}\label{eqlaw}
     \mathcal{L}(\widetilde{X}_k,\widetilde{W}_k)=\mathcal{L}(X_k,W);
      \end{equation}
     \item[(ii)] $\widetilde{\mathbb{P}}\text{-a.s.}$
       \begin{equation}\label{eqcw}
   \|\widetilde{W}_k-\widetilde{W}\|_{C([0,T],U_1)} \to 0	;
      \end{equation}
     \item[(iii)]$\widetilde{\mathbb{P}}\text{-a.s.}$
     \begin{align}
     	\|\widetilde{X}_k-\widetilde{X}\|_{L^{\alpha}([0,T],H)}+\|\widetilde{X}_k-\widetilde{X}\|_{C([0,T],V^*)} \to 0.	\label{gj1}
     \end{align}
\end{enumerate}

Let $\widetilde{\mathcal{F}}_t$ be the usual filtration generated by
$\{\widetilde{X}_k(s),\widetilde{X}(s),\widetilde{W}(s):s\le t\}$.
It is clear that $\widetilde{W}$ is an $\{\widetilde{\mathcal{F}}_t\}$-cylindrical Wiener process on $U$, and
\begin{align}
	\widetilde{X}_k(t) =& x -\int_{0}^{t}\mathcal{A}_{\lambda_k}(s,\widetilde{X}_k(s))ds-\int_{0}^{t}B(s,\widetilde{X}_k(s))ds\nonumber\\
&
+\int_{0}^{t}\sigma(s,\widetilde{X}_k(s))d\widetilde{W}_k(s), \quad t\in [0,T].\label{eqk1}
\end{align}
In view of the Claim (i) above and Lemma \ref{propyos1}, we know that $\{\widetilde{X}_k\}$ satisfies the same moment estimates, i.e., for any $p\ge 2$,
\begin{align}\label{xgj1}
	\sup\limits_{k\in \mathbb{N}}\left\{\widetilde{\mathbb{E}}\Big[\sup\limits_{t\in [0,T]}\|\widetilde{X}_k(t)\|_H^p\Big]+\widetilde{\mathbb{E}}\left(\int_{0}^{T}\|\widetilde{X}_k(t)\|_V^{\alpha}dt\right)^{\frac{p}{2}}\right\}< \infty.
\end{align}
Due to the fact that $\|\cdot\|_H$ and $\|\cdot\|_V$ are lower semicontinuous in $V^*$, by \eqref{gj1} and Fatou's lemma, it follows that
\begin{align}
	\widetilde{\mathbb{E}}\Big[\sup\limits_{t\in [0,T]}\|\widetilde{X}(t)\|_H^p\Big] & \le \widetilde{\mathbb{E}}\Big[\sup\limits_{t\in [0,T]}\liminf\limits_{k\to\infty}\|\widetilde{X}_k(t)\|_H^p\Big] \nonumber \\
	&\le \widetilde{\mathbb{E}}\Big[\liminf\limits_{k\to\infty}\sup\limits_{t\in [0,T]}\|\widetilde{X}_k(t)\|_H^p\Big] \nonumber \\
	&\le \liminf\limits_{k\to\infty}\widetilde{\mathbb{E}}\Big[\sup\limits_{t\in [0,T]}\|\widetilde{X}_k(t)\|_H^p\Big] <\infty.\label{low11}
\end{align}
Similarly, we have
\begin{align}
	\widetilde{\mathbb{E}}\left(\int_{0}^{T}\|\widetilde{X}(s)\|_V^{\alpha}ds\right)^{\frac{p}{2}} < \infty.\label{low21}
\end{align}

Then, by Lemma \ref{corogjx1} we can establish the following estimates.
\begin{lemma}\label{lem1}
The following estimates hold
\begin{align}
&\sup\limits_{k\in \mathbb{N}}\widetilde{\mathbb{E}}\int_{0}^{T}\|\mathcal{A}_{\lambda_k}(t,\widetilde{X}_k(t))\|_{V^*}^{\frac{\alpha}{\alpha-1}}dt < \infty,\nonumber\\
&\sup\limits_{k\in \mathbb{N}}\widetilde{\mathbb{E}}\int_{0}^{T}\|B(t,\widetilde{X}_k(t))\|_{V^*}^{\frac{\alpha}{\alpha-1}}dt < \infty.\label{Bk1}
\end{align}
\end{lemma}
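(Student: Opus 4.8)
The plan is to deduce \eqref{Bk1} by transporting the uniform bound of Lemma~\ref{corogjx1} from the original probability space to $(\widetilde{\Omega},\widetilde{\mathcal{F}},\widetilde{\mathbb{P}})$ via the equality of laws \eqref{eqlaw}. First I would observe that, for each fixed $k$, the functionals
\[
g\mapsto\int_0^T\|\mathcal{A}_{\lambda_k}(t,g(t))\|_{V^*}^{\frac{\alpha}{\alpha-1}}\,dt,
\qquad
g\mapsto\int_0^T\|B(t,g(t))\|_{V^*}^{\frac{\alpha}{\alpha-1}}\,dt,
\]
defined on $L^{\alpha}([0,T],V)$ and extended by $+\infty$ off this set, are Borel measurable on the Polish space $L^{\alpha}([0,T],H)\cap C([0,T],V^*)$; this rests on the demicontinuity of $\mathcal{A}_{\lambda_k}(t,\cdot)$ and $B(t,\cdot)$ from $V$ to $V^*$ (Proposition~\ref{propyosgj}(i)) together with their measurability in $t$. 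Since \eqref{xgj1} already guarantees $\widetilde{X}_k\in L^{\alpha}([0,T],V)$ $\widetilde{\mathbb{P}}$-a.s., evaluating these functionals at $\widetilde{X}_k$ reproduces precisely the integrals appearing in \eqref{Bk1}.

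By \eqref{eqlaw} the $\widetilde{\mathbb{E}}$-value of each functional applied to $\widetilde{X}_k$ equals the $\mathbb{E}$-value of the same functional applied to $X_k=X_{\lambda_k}$ on the original space. Since $X_{\lambda_k}$ solves \eqref{subequ1} with initial datum $x$, Lemma~\ref{corogjx1} bounds the latter by $C_T\big(1+\|x\|_H^{\beta+2}\big)$ with $C_T$ independent of $k$, and taking the supremum over $k\in\mathbb{N}$ yields both estimates in Lemma~\ref{lem1}.

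A self-contained alternative, which bypasses any measurability discussion, is to estimate directly on $(\widetilde{\Omega},\widetilde{\mathcal{F}},\widetilde{\mathbb{P}})$. Combining $(\mathbf{H}_{\mathcal{A}}^3)$ with Proposition~\ref{propyosgj}(ii) and using $(\mathbf{H}_B^2)$, one has pointwise
\[
\|\mathcal{A}_{\lambda_k}(t,\widetilde{X}_k(t))\|_{V^*}^{\frac{\alpha}{\alpha-1}}+\|B(t,\widetilde{X}_k(t))\|_{V^*}^{\frac{\alpha}{\alpha-1}}
\le 2\big(f(t)+C\|\widetilde{X}_k(t)\|_V^{\alpha}\big)\big(1+\|\widetilde{X}_k(t)\|_H^{\beta}\big),
\]
so after integrating in $t$ and taking $\widetilde{\mathbb{E}}$ it suffices to control $\widetilde{\mathbb{E}}\int_0^T f(t)\big(1+\|\widetilde{X}_k(t)\|_H^{\beta}\big)\,dt$ and $\widetilde{\mathbb{E}}\int_0^T\|\widetilde{X}_k(t)\|_V^{\alpha}\big(1+\|\widetilde{X}_k(t)\|_H^{\beta}\big)\,dt$ uniformly in $k$; the first is at most $\|f\|_{L^1}\big(1+\widetilde{\mathbb{E}}\sup_t\|\widetilde{X}_k(t)\|_H^{\beta}\big)$, while for the second I would factor out $\sup_t\|\widetilde{X}_k(t)\|_H^{\beta}$ and apply the Cauchy--Schwarz inequality to separate it from $\int_0^T\|\widetilde{X}_k(t)\|_V^{\alpha}\,dt$. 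All the resulting moments are finite uniformly in $k$ by \eqref{xgj1}. Either way, the estimate is essentially a bookkeeping exercise built on results already established; I do not expect a genuine obstacle here, the only point meriting care being the Borel measurability and well-definedness of the nonlinear path functionals in the first route, which is in any case subsumed by the a.s.\ integrability recorded in \eqref{xgj1}.
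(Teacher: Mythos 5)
Your proposal is correct and follows essentially the same route as the paper, which simply invokes Lemma \ref{corogjx1} (itself proved exactly by your second, direct estimate combining $(\mathbf{H}_{\mathcal{A}}^3)$, Proposition \ref{propyosgj}(ii), $(\mathbf{H}_B^2)$ and the uniform moment bounds) transported to the new probability space via the equality of laws \eqref{eqlaw}. Both of your routes are valid, and the uniform-in-$k$ moments you need are indeed supplied by \eqref{xgj1}.
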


In the sequel, we aim to prove that $(\widetilde{X},\widetilde{W})$ is a solution to equation \eqref{general}. Since the spaces $L^{\alpha}(\widetilde{\Omega}\times[0,T],V), L^{\frac{\alpha}{\alpha-1}}(\widetilde{\Omega}\times[0,T],V^*)$  are reflexive, by the estimate (\ref{xgj1}), Lemma \ref{lem1} and the Banach-Alaoglu's theorem, it implies  that there exist $\widehat{X}\in L^{\alpha}(\widetilde{\Omega}\times[0,T],V)$, $\widetilde{\mathcal{\eta}}\in L^{\frac{\alpha}{\alpha-1}}(\widetilde{\Omega}\times[0,T],V^*)$ and $\widetilde{\mathcal{B}}\in L^{\frac{\alpha}{\alpha-1}}(\widetilde{\Omega}\times[0,T],V^*)$  such that
\begin{align}
	&\widetilde{X}_k \rightharpoonup \widehat{X}\quad \text{in~} L^{\alpha}(\widetilde{\Omega}\times[0,T],V);\label{conv11} 
	\\
	&\mathcal{A}_{\lambda_k}(\cdot,\widetilde{X}_k(\cdot)) \rightharpoonup \widetilde{\mathcal{\eta}} \quad\text{in } L^{\frac{\alpha}{\alpha-1}}(\widetilde{\Omega}\times[0,T],V^*); \label{conv31} \\
        &B(\cdot,\widetilde{X}_k(\cdot)) \rightharpoonup \widetilde{\mathcal{B}}\quad \text{in } L^{\frac{\alpha}{\alpha-1}}(\widetilde{\Omega}\times[0,T],V^*). \label{Bjx1}
\end{align}
Here and in the remaining proof, the limits are taken by choosing a subsequence of $\{k\}$ if necessary.

The following lemma characterizes the limit of the stochastic integral on the right hand side of (\ref{eqk1}).
\begin{lemma}\label{lemmaD1}
We have the following convergence of the stochastic integral
$$\lim_{k\to\infty}\widetilde{\mathbb{E}}\bigg[\sup_{t\in[0,T]}\Big\|\int_{0}^{t}\sigma(s,\widetilde{X}_k(s))d\widetilde{W}_k(s)- \int_{0}^{t}\sigma(s,\widetilde{X}(s))d\widetilde{W}(s)\Big\|_H^2\bigg]= 0.$$
\end{lemma}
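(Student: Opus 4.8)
The plan is to pass to the limit in the stochastic integral by combining the convergence of the integrands, which comes from the continuity hypothesis in $(\mathbf{H}_{\sigma})$, with the joint almost sure convergence $(\widetilde{X}_k,\widetilde{W}_k)\to(\widetilde{X},\widetilde{W})$ in $\Upsilon$ supplied by the Skorokhod representation, and then to upgrade the resulting convergence in probability to an $L^2$-convergence via a uniform integrability argument based on the moment bounds \eqref{xgj1}. \textbf{Step 1 (convergence of the integrands in $L^2(\widetilde{\Omega}\times[0,T],L_2(U,H))$).} By \eqref{gj1}, $\widetilde{X}_k\to\widetilde{X}$ in $L^{\alpha}([0,T],H)$ $\widetilde{\mathbb{P}}$-a.s., hence $\widetilde{X}_k\to\widetilde{X}$ in $dt\otimes\widetilde{\mathbb{P}}$-measure as $H$-valued maps; by the subsequence characterization of convergence in measure and the continuity of $\sigma(t,\cdot):H\to L_2(U,H)$ from $(\mathbf{H}_{\sigma})$, this gives $\sigma(\cdot,\widetilde{X}_k(\cdot))\to\sigma(\cdot,\widetilde{X}(\cdot))$ in $dt\otimes\widetilde{\mathbb{P}}$-measure. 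To upgrade this to convergence of the squared Hilbert--Schmidt norms in $L^1(\widetilde{\Omega}\times[0,T])$, I would invoke Vitali's theorem: from the growth bound in $(\mathbf{H}_{\sigma})$ one has $\|\sigma(t,x)-\sigma(t,y)\|_{\mathcal{L}_2}^2\le Cf(t)(1+\|x\|_H^2+\|y\|_H^2)$, and since $f\in L^1([0,T])$ and, by \eqref{xgj1}, $\sup_k\widetilde{\mathbb{E}}\big[\sup_{t\in[0,T]}(1+\|\widetilde{X}_k(t)\|_H^2)^{p/2}\big]<\infty$ for some $p>2$, the family $\{f(t)(1+\|\widetilde{X}_k(t)\|_H^2)\}_k$ is uniformly integrable on $([0,T]\times\widetilde{\Omega},\,dt\otimes\widetilde{\mathbb{P}})$, and hence so is $\{\|\sigma(\cdot,\widetilde{X}_k(\cdot))-\sigma(\cdot,\widetilde{X}(\cdot))\|_{\mathcal{L}_2}^2\}_k$. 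This yields
\begin{equation}\label{plan-step1}
\widetilde{\mathbb{E}}\int_0^T\|\sigma(s,\widetilde{X}_k(s))-\sigma(s,\widetilde{X}(s))\|_{\mathcal{L}_2}^2\,ds\longrightarrow 0,\qquad k\to\infty.
\end{equation}

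\textbf{Step 2 (convergence of the stochastic integrals in probability).} By \eqref{eqlaw} (and the construction preceding \eqref{eqk1}), $\sigma(\cdot,\widetilde{X}_k(\cdot))$ is progressively measurable with respect to a filtration for which $\widetilde{W}_k$ is a cylindrical Wiener process, while $\sigma(\cdot,\widetilde{X}(\cdot))$ is $\{\widetilde{\mathcal{F}}_t\}$-progressively measurable, $\widetilde{W}$ being a cylindrical Wiener process with respect to $\{\widetilde{\mathcal{F}}_t\}$. Combining \eqref{plan-step1}, the almost sure convergence $\widetilde{W}_k\to\widetilde{W}$ in $C([0,T],U_1)$ from \eqref{eqcw}, and a standard convergence theorem for stochastic integrals under Skorokhod-type convergence of both the integrand and the integrator (see, e.g., \cite{rockner2022well}), I would obtain
\begin{equation}\label{plan-step2}
\sup_{t\in[0,T]}\Big\|\int_0^t\sigma(s,\widetilde{X}_k(s))\,d\widetilde{W}_k(s)-\int_0^t\sigma(s,\widetilde{X}(s))\,d\widetilde{W}(s)\Big\|_H\longrightarrow 0\quad\text{in }\widetilde{\mathbb{P}}\text{-probability.}
\end{equation}
For a self-contained argument one would approximate $\sigma(\cdot,\widetilde{X}(\cdot))$ in $L^2(\widetilde{\Omega}\times[0,T],L_2(U,H))$ by elementary $\{\widetilde{\mathcal{F}}_t\}$-adapted processes $G^{(m)}$, split the difference in \eqref{plan-step2} into $\int_0^{\cdot}(\sigma(s,\widetilde{X}_k(s))-G^{(m)})\,d\widetilde{W}_k(s)+\int_0^{\cdot}G^{(m)}\,d(\widetilde{W}_k-\widetilde{W})(s)+\int_0^{\cdot}(G^{(m)}-\sigma(s,\widetilde{X}(s)))\,d\widetilde{W}(s)$, bound the first and third terms in $L^2$ uniformly in $k$ by the Burkholder--Davis--Gundy inequality and \eqref{plan-step1}, and treat the middle term for fixed $m$ as a finite linear combination of increments of $\widetilde{W}_k-\widetilde{W}$, which tends to $0$ uniformly on $[0,T]$ by \eqref{eqcw}.

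\textbf{Step 3 (upgrading to $L^2$).} By the Burkholder--Davis--Gundy inequality, the growth bound in $(\mathbf{H}_{\sigma})$, and \eqref{xgj1} together with \eqref{low11} (applied with $p=4$),
\begin{equation*}
\sup_k\widetilde{\mathbb{E}}\Big[\sup_{t\in[0,T]}\Big\|\int_0^t\sigma(s,\widetilde{X}_k(s))\,d\widetilde{W}_k(s)\Big\|_H^4\Big]\le C\sup_k\widetilde{\mathbb{E}}\Big(\int_0^T f(s)(1+\|\widetilde{X}_k(s)\|_H^2)\,ds\Big)^2<\infty,
\end{equation*}
and similarly for the integral driven by $\widetilde{W}$; hence the random variable inside the expectation in the statement of the lemma is bounded in $L^2(\widetilde{\Omega})$, therefore uniformly integrable, and combining this with \eqref{plan-step2} gives the claimed $L^2$-convergence. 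The main obstacle will be Step 2: It\^o's isometry cannot be applied directly to the difference of the two integrals, since they are driven by different cylindrical (hence only $U_1$-valued) Wiener processes and are adapted to different filtrations; this is precisely why the limit must be taken through the elementary-process approximation (or via an external convergence-of-stochastic-integrals result), and why one has to be careful with the cylindrical nature of $\widetilde{W}_k$ and $\widetilde{W}$ when passing to the limit in the increment terms.
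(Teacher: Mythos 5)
Your proposal is correct and follows essentially the same route as the paper: first establish $\widetilde{\mathbb{E}}\int_0^T\|\sigma(s,\widetilde{X}_k(s))-\sigma(s,\widetilde{X}(s))\|_{\mathcal{L}_2}^2\,ds\to 0$ from the a.s.\ convergence of $\widetilde{X}_k$, the continuity hypothesis in $(\mathbf{H}_{\sigma})$ and uniform integrability from the moment bounds; then deduce convergence in probability of the stochastic integrals from this together with $\widetilde{W}_k\to\widetilde{W}$ in $C([0,T],U_1)$; and finally upgrade to $L^2$ via B-D-G and Vitali. The only cosmetic difference is that the paper invokes Lemma 4.3 of \cite{BMX} for the middle step where you additionally sketch a self-contained elementary-process approximation, whose cylindrical-noise and filtration subtleties you correctly flag.
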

\begin{proof}
 Since $\|\widetilde{X}_k-\widetilde{X}\|_{L^{\alpha}([0,T],H)} \to 0,$ $\widetilde{\mathbb{P}}\text{-a.s.}$, by \eqref{xgj1}, \eqref{low21} and H\"older's inequality, we can use the dominated convergence theorem to obtain
\begin{align*}
	\lim\limits_{k\to \infty} \widetilde{\mathbb{E}}\int_{0}^{T}\|\widetilde{X}_k(t)-\widetilde{X}(t)\|_H^{\kappa} = 0~~ \text{for~all}~\kappa\in [1,\alpha).
\end{align*}
Then there exists a subsequence still denoted by $\{\widetilde{X}_k\}$ such that for $dt\otimes\mathbb{P}$-a.e.~$(t,\omega)$
\begin{align}
	\lim\limits_{k\to \infty}\|\widetilde{X}_k(t,\omega)-\widetilde{X}(t,\omega)\|_H = 0.\label{H}
\end{align}
Thus by $(\mathbf{H}_{\sigma})$, \eqref{xgj1} and \eqref{low11}, we can obtain
\begin{align}
	\lim\limits_{k\to \infty} \widetilde{\mathbb{E}}\int_{0}^{T}\|\sigma(t,\widetilde{X}_k(t))-\sigma(t,\widetilde{X}(t))\|_{\mathcal{L}_2}^2 dt = 0.\label{lemma81}
\end{align}
According to Lemma 4.3 in \cite{BMX}, due to the convergence (\ref{eqcw}) and (\ref{lemma81}), we have
\begin{equation}
\sup_{t\in[0,T]}\Big\|\int_{0}^{t}\sigma(s,\widetilde{X}_k(s))d\widetilde{W}_k(s)- \int_{0}^{t}\sigma(s,\widetilde{X}(s))d\widetilde{W}(s)\Big\|_H^2\to 0~~\text{in probability}.
\end{equation}
Therefore, making use of the Vitali's convergence theorem and B-D-G's inequality, it follows from the growth condition in $(\mathbf{H}_{\sigma})$ and the uniform estimate (\ref{low11}) that
$$\lim_{k\to\infty}\widetilde{\mathbb{E}}\bigg[\sup_{t\in[0,T]}\Big\|\int_{0}^{t}\sigma(s,\widetilde{X}_k(s))d\widetilde{W}_k(s)- \int_{0}^{t}\sigma(s,\widetilde{X}(s))d\widetilde{W}(s)\Big\|_H^2\bigg]= 0.$$
We complete the proof.
\end{proof}

Set
\begin{align}
	\bar{X}(t):= x-\int_{0}^{t}\widetilde{\mathcal{\eta}}(s)ds-\int_{0}^{t}\widetilde{\mathcal{B}}(s)ds+\int_{0}^{t}\sigma(s,\widetilde{X}(s))d\widetilde{W}(s).\label{eq1}
\end{align}
Then, it is clear that
\begin{align}
	\widetilde{X}=\widehat{X}=\bar{X} \quad dt\otimes \widetilde{\mathbb{P}}\text{-a.e..}\label{e31}
\end{align}
Indeed, the left equality in \eqref{e31} follows from the uniqueness of the limits and  the equality on the right follows from e.g.~page 58 in \cite{stephan2012yosida}. Moreover, applying Theorem 4.2.5 in \cite{prevot2007concise}, it follows that $\bar{X}$ is an $H$-valued continuous process. According to (\ref{gj1}) and \eqref{low11}, $\widetilde{X}$ is $H$-valued and weakly continuous in $H$. Hence, $\widetilde{X}$ and $\bar{X}$ are indistinguishable.

Throughout the remaining part of this subsection, we work on the new filtered probability space $(\widetilde{\Omega},\widetilde{\mathcal{F}},\{\widetilde{\mathcal{F}}_t\}_{t\ge 0},\widetilde{\mathbb{P}})$, however,   we drop all the superscripts for sake of simplicity.

The following lemma characterizes the limit of $B(\cdot,X_k(\cdot))$. To this end, we recall that $X_k$ and $X$ satisfy the equation \eqref{eqk1}, \eqref{eq1} respectively.
\begin{lemma}\label{lemmaB1}
If we have
\begin{align}
&X_k \rightharpoonup X \ {\rm in} \ L^{\alpha}([0,T]\times\Omega, V);\nonumber\\
&\mathcal{A}_{\lambda_k}(\cdot, X_k(\cdot)) \rightharpoonup \eta \ {\rm in}\ L^{\frac{\alpha}{\alpha-1}}([0,T]\times\Omega, V^*);\label{tj3}\\
&B(\cdot,X_k(\cdot)) \rightharpoonup \mathcal{B} \ {\rm in}\ L^{\frac{\alpha}{\alpha-1}}([0,T]\times\Omega, V^*);\label{tj2}\\
\limsup_{k\to\infty}\mathbb{E}\int_{0}^{T}\langle \mathcal{A}_{\lambda_k}(t, &X_k(t))+B(t,X_k(t)), X_k(t)\rangle dt \le \mathbb{E}\int_{0}^{T}\langle \eta(t)+\mathcal{B}(t), X(t)\rangle dt,\label{tj41}
\end{align}
then $\mathcal{B}(\cdot) = B(\cdot,X(\cdot))$ $dt\otimes\mathbb{P}\text{-a.e.}$.
\end{lemma}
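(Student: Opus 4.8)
The plan is to peel the joint $\limsup$ in \eqref{tj41} into a maximal-monotone part, controlled purely by the properties of the generalized Yosida approximation, and a pseudo-monotone part, handled by the assumed pseudo-monotonicity of $B$. First I would rewrite \eqref{tj41} in ``difference form''. Since $X\in L^{\alpha}([0,T]\times\Omega,V)$, testing the weak convergences \eqref{tj3} and \eqref{tj2} against $X$ gives
\[
\mathbb{E}\int_{0}^{T}\big\langle \mathcal{A}_{\lambda_k}(t,X_k(t))+B(t,X_k(t)),X(t)\big\rangle\,dt
\;\longrightarrow\;
\mathbb{E}\int_{0}^{T}\big\langle \eta(t)+\mathcal{B}(t),X(t)\big\rangle\,dt ,
\]
so that \eqref{tj41} is equivalent to
\[
\limsup_{k\to\infty}\mathbb{E}\int_{0}^{T}\big\langle \mathcal{A}_{\lambda_k}(t,X_k(t))+B(t,X_k(t)),\,X_k(t)-X(t)\big\rangle\,dt\le 0 .
\]

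The next (and structurally new) step is to show that the Yosida part is asymptotically nonnegative,
\[
\liminf_{k\to\infty}\mathbb{E}\int_{0}^{T}\big\langle \mathcal{A}_{\lambda_k}(t,X_k(t)),\,X_k(t)-X(t)\big\rangle\,dt\ge 0 .
\]
By the monotonicity of $\mathcal{A}_{\lambda_k}$ (Proposition \ref{propyosgj}(i)), for $dt\otimes\mathbb{P}$-a.e.\ $(t,\omega)$,
\[
\big\langle \mathcal{A}_{\lambda_k}(t,X_k(t)),X_k(t)-X(t)\big\rangle\;\ge\;\big\langle \mathcal{A}_{\lambda_k}(t,X(t)),X_k(t)-X(t)\big\rangle .
\]
Here $\mathcal{A}_{\lambda_k}(t,X(t))$ is well defined because $\mathcal{D}(\mathcal{A}(t,\cdot))=V$, and by Proposition \ref{propyosgj}(ii)--(iii) it converges to $\mathcal{A}^{0}(t,X(t))$ in $V^*$ with $\|\mathcal{A}_{\lambda_k}(t,X(t))\|_{V^*}\le\|\mathcal{A}^{0}(t,X(t))\|_{V^*}$; since $(\mathbf{H}_{\mathcal{A}}^3)$ and the moment bounds \eqref{low11}--\eqref{low21} give $\mathcal{A}^{0}(\cdot,X(\cdot))\in L^{\frac{\alpha}{\alpha-1}}([0,T]\times\Omega,V^*)$, the dominated convergence theorem yields $\mathcal{A}_{\lambda_k}(\cdot,X(\cdot))\to\mathcal{A}^{0}(\cdot,X(\cdot))$ \emph{strongly} in $L^{\frac{\alpha}{\alpha-1}}([0,T]\times\Omega,V^*)$. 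As $X_k-X\rightharpoonup 0$ weakly in $L^{\alpha}([0,T]\times\Omega,V)$, the strong--weak pairing on the right tends to $0$, proving the claim. Combining it with the reformulation of \eqref{tj41} above gives
\[
\limsup_{k\to\infty}\mathbb{E}\int_{0}^{T}\big\langle B(t,X_k(t)),\,X_k(t)-X(t)\big\rangle\,dt\le 0 .
\]

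Finally I would conclude by pseudo-monotonicity. Since $B(t,\cdot)$ is pseudo-monotone for a.e.\ $t\in[0,T]$, and $(\mathbf{H}_B^2)$ together with Lemma \ref{lem1} shows that $B(\cdot,X_k(\cdot))$ is bounded in $L^{\frac{\alpha}{\alpha-1}}([0,T]\times\Omega,V^*)$, the combination of $X_k\rightharpoonup X$ in $L^{\alpha}([0,T]\times\Omega,V)$, the weak limit \eqref{tj2}, and the last $\limsup$ bound allows one to pass to the limit in the Nemytskii operator $u\mapsto B(\cdot,u(\cdot))$, exactly as in the pseudo-monotone existence theory for single-valued SPDEs (cf.\ \cite{liu11,rockner2022well,LR15}); this identifies $\mathcal{B}(\cdot)=B(\cdot,X(\cdot))$ $dt\otimes\mathbb{P}$-a.e.\ (and, as a by-product, $\mathbb{E}\int_0^T\langle B(t,X_k(t)),X_k(t)\rangle\,dt\to\mathbb{E}\int_0^T\langle \mathcal{B}(t),X(t)\rangle\,dt$).

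I expect the main obstacle to lie in this last step rather than in the bookkeeping: the pointwise-in-$t$ pseudo-monotonicity of $B(t,\cdot)$ has to be upgraded to a statement about the operator on $L^{\alpha}([0,T]\times\Omega,V)$, which is not obtained from a direct monotonicity inequality but needs a subsequence / measurable-selection argument and careful control of the $H$-growth factor $(1+\|\cdot\|_H^{\beta})$ in $(\mathbf{H}_B^2)$ through the uniform moment estimates. The genuinely new feature compared with the single-valued theory is the splitting in the second paragraph, where the monotonicity and uniform boundedness of the \emph{generalized} Yosida approximation (Proposition \ref{propyosgj}(i)--(ii)), together with its convergence $\mathcal{A}_{\lambda}(x)\to\mathcal{A}^{0}(x)$ (Proposition \ref{propyosgj}(iii)), are precisely what make the maximal-monotone contribution nonnegative in the limit.
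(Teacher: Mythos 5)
Your first half is correct and is a cleaner, ``integrated'' version of what the paper does: testing \eqref{tj3}--\eqref{tj2} against $X$ to rewrite \eqref{tj41} in difference form, and then using the monotonicity of $\mathcal{A}_{\lambda_k}$ together with the strong convergence $\mathcal{A}_{\lambda_k}(\cdot,X(\cdot))\to\mathcal{A}^0(\cdot,X(\cdot))$ in $L^{\frac{\alpha}{\alpha-1}}([0,T]\times\Omega,V^*)$ (dominated convergence via Proposition \ref{propyosgj}(ii)--(iii) and $(\mathbf{H}_{\mathcal{A}}^3)$) to peel off the maximal-monotone part is a legitimate argument; it yields $\limsup_k\mathbb{E}\int_0^T\langle B(t,X_k),X_k-X\rangle\,dt\le 0$. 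The paper, by contrast, never splits at the integrated level: it works with the combined quantity $g_k=\langle \mathcal{A}_{\lambda_k}(t,X_k)+B(t,X_k),X_k-X\rangle$ pointwise in $(t,\omega)$ throughout, first proving $\liminf_k g_k\ge 0$ a.e.\ by a contradiction argument, then upgrading \eqref{tj41} to $g_k\to 0$ in $L^1$ and a.e.\ along a subsequence via a generalized Fatou lemma, and only then extracting the pointwise weak $V$-convergence of $X_k(t,\omega)$ and applying the pointwise pseudo-monotonicity of $B(t,\cdot)$.

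The genuine gap is in your last step, and it is exactly where you suspect it is --- but it is worse than a bookkeeping issue, because your integrated splitting discards the ingredient needed to close it. To invoke the pointwise pseudo-monotonicity of $B(t,\cdot)$ you must first show, for a.e.\ $(t,\omega)$, that $X_k(t,\omega)$ is bounded in $V$ (so that $X_k(t,\omega)\rightharpoonup X(t,\omega)$ in $V$) and that $\limsup_k\langle B(t,X_k(t,\omega)),X_k(t,\omega)-X(t,\omega)\rangle\le 0$ \emph{pointwise}, not merely in integrated form. Passing from the integrated $\limsup$ to the pointwise one requires a generalized Fatou argument, which in turn needs a lower bound of the form $\langle B(t,X_k),X_k-X\rangle\ge -F_k$ with $F_k$ uniformly integrable and a.e.\ convergent. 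But $B$ is only \emph{weakly} coercive ($(\mathbf{H}_B^1)$), so the best available lower bound contains a term $-\|B(t,X_k)\|_{V^*}\|X\|_V\gtrsim -\|X_k\|_V^{\alpha-1}(1+\|X_k\|_H^\beta)^{\frac{\alpha-1}{\alpha}}\|X\|_V$, which is not uniformly integrable without an a priori pointwise $V$-bound on $X_k(t,\omega)$ --- and that bound is precisely what you are trying to establish. In the paper both obstacles are resolved simultaneously by the coercivity of the \emph{sum}: inequality \eqref{A341} gives $g_k\ge \frac{C}{2}\|X_k\|_V^{\alpha}-F_k$, where the positive $\frac{C}{2}\|X_k\|_V^{\alpha}$ term comes entirely from the coercivity of $\mathcal{A}_{\lambda_k}$ (Lemma \ref{lemmacoe1}); this makes generalized Fatou applicable to $g_k$ and, once $g_{k_i}\to 0$ a.e., delivers $\sup_i\|X_{k_i}(t,\omega)\|_V<\infty$ a.e. So after your split, you cannot conclude ``exactly as in the single-valued pseudo-monotone theory'' --- the single-valued theory assumes the full drift is coercive, whereas your residual operator $B$ is not. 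To repair the argument you would have to reintroduce the combined quantity $g_k$ for the pointwise localization (together with the a.e.\ strong $H$-convergence \eqref{H} from the Skorohod construction, which you use only implicitly), which is essentially Steps 1--3 of the paper's proof.
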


\begin{proof}
According to the proof of Lemma \ref{propyos1} and the Young's inequality, it follows that
\begin{align}
	&\left<\mathcal{A}_{\lambda_k}(t,X_k(t))+B(t,X_k(t)), X_k(t)-X(t) \right>\nonumber \\
	\ge&C\|X_k(t)\|_V^{\alpha}+Cf(t)(1+\|X_k(t)\|_H^2)-\|\mathcal{A}_{\lambda_k}(t,X_k(t))+B(t,X_k(t))\|_{V^*}\|X(t)\|_V\nonumber \\
	\ge&C\|X_k(t)\|_V^{\alpha}+Cf(t)(1+\|X_k(t)\|_H^2)\nonumber \\
&-\big[f(t)+C\|X_k(t)\|_V^{\alpha}\big]^{\frac{\alpha-1}{\alpha}}\big[1+\|X_k(t)\|_H^{\beta}\big]^{\frac{\alpha-1}{\alpha}}\|X(t)\|_V\nonumber \\	\ge&\frac{C}{2}\|X_k(t)\|_V^{\alpha}-Cf(t)(1+\|X_k(t)\|_H^2)-C\|X(t)\|_V^{\alpha}-C\|X_k(t)\|_H^{\beta(\alpha-1)}\|X(t)\|_V^{\alpha}.\label{A341}
\end{align}
We denote
\begin{align*}
	&g_k(t,\omega):=\left<\mathcal{A}_{\lambda_k}(t,X_k(t,\omega))+B(t,X_k(t,\omega)), X_k(t,\omega)-X(t,\omega)\right>, \\
	&F_k(t,\omega):=C\Big(f(t)(1+\|X_k(t,\omega)\|_H^2)+\|X(t,\omega)\|_V^{\alpha}+\|X_k(t,\omega)\|_H^{\beta(\alpha-1)}\|X(t,\omega)\|_V^{\alpha}\Big).
\end{align*}
Then, \eqref{A341}  reduces to
\begin{align}
	g_k(t,\omega)\ge \frac{C}{2}\|X_k(t,\omega)\|_V^{\alpha}-F_k(t,\omega).\label{gF21}
\end{align}
We divide the rest of our proof in three steps.

\vspace{3mm}
\textbf{Step 1.} In this part, we intend to prove that for a.e. ($t,\omega$),
\begin{align}
	\liminf\limits_{k \to \infty} g_k(t,\omega)\ge 0.\label{claim121}
\end{align}
Due to \eqref{H} and Lemma \ref{lemmapseudo1}, there exists a measurable subset $\Gamma$ of $[0,T]\times\Omega$ such that $([0,T]\times\Omega)\backslash\Gamma $ is an $dt\otimes\mathbb{P}$-null set,
\begin{align}
	\lim\limits_{k\to \infty} \|X_k(t,\omega)-X(t,\omega)\|_H = 0, \quad \forall (t,\omega)\in \Gamma,\label{qsl21}
\end{align}
and $B(t,\cdot)$ is pseudo-monotone for any $(t,\omega)\in \Gamma$. Taking any fixed $(t,\omega)\in \Gamma$ and assuming that
$$
\liminf_{k\to\infty}g_k(t,\omega)<0.
$$
Therefore, there exists a subsequence $\{k_i\}_{i\in\mathbb{N}}$  such that
\begin{align}\label{gsub}
	\lim_{i\to\infty}g_{k_i}(t,\omega)<0.
\end{align}
Combining \eqref{gF21} and \eqref{qsl21} yields  that
$\left\{\|X_{k_i}(t,\omega)\|_V^{\alpha}\right\}_{i\in\mathbb{N}}$ is bounded. Thus, we have an element $z\in V$ such that $X_{k_i}(t,\omega)$ converges weakly to $z$ in $V$. According to \eqref{qsl21}, it is clear that $z=X(t,\omega)$ and
$$
X_{k_i}(t,\omega)\rightharpoonup X(t,\omega)~\text{in}~V~~\text{as}~i\to\infty.
$$
By the monotonicity of $A_{\lambda_{k_i}}$ and \eqref{gsub}, it follows that
\begin{align}
	&\langle B(t,X_{k_i}(t,\omega), X_{k_i}(t,\omega)-X(t,\omega)\rangle\nonumber\\
	<& -\langle A_{\lambda_{k_i}}(t,X_{k_i}(t,\omega)), X_{k_i}(t,\omega)-X(t,\omega)\rangle \nonumber\\
	\le& -\langle A_{\lambda_{k_i}}(t,X(t,\omega)), X_{k_i}(t,\omega)-X(t,\omega)\rangle.\label{ABClaim11}
\end{align}
By Proposition \ref{propyosgj} (iii), we know that $A_{\lambda_{k_i}}(x)\to A^0(x)$ for $x\in \mathcal{D}(A)$. Since $\mathcal{D}(A)=V$ and $X_{k_i}(t,\omega) \rightharpoonup X(t,\omega)$ in $V$ for a.e.~$(t,\omega)$, in view of Proposition 21.23 (j) in \cite{ZachariasA}  we have
$$\langle A_{\lambda_{k_i}}(t,X(t,\omega)), X_{k_i}(t,\omega)-X(t,\omega)\rangle\to 0~\text{as}~i\to \infty.$$
Thus, we can get
\begin{align}
	\liminf\limits_{i \to \infty }\langle A_{\lambda_{k_i}}(t,X_{k_i}(t,\omega), X_{k_i}(t,\omega)-X(t,\omega)\rangle\ge 0.\label{Ainf1}
\end{align}
Then, by \eqref{ABClaim11}, we have
\begin{align*}
	\limsup\limits_{i \to \infty }\langle B(t,X_{k_i}(t,\omega), X_{k_i}(t,\omega)-X(t,\omega)\rangle \le 0.
\end{align*}
Applying the pseudo-monotonicity of $B$, it follows that
\begin{align}
	\liminf\limits_{i \to \infty }\langle B(t,X_{k_i}(t,\omega), X_{k_i}(t,\omega)-X(t,\omega)\rangle \ge 0.\label{Bsup1}
\end{align}
Combining \eqref{Ainf1} and \eqref{Bsup1} yields that
\begin{align*}
\liminf\limits_{i\to \infty}\langle A_{\lambda_{k_i}}(t,X_{k_i}(t,\omega))+B(t,X_{k_i}(t,\omega)), X_{k_i}(t,\omega)-X(t,\omega)
\ge 0,
\end{align*}
that is
\begin{align*}
	\liminf\limits_{i\to\infty} g_{k_i}(t,\omega)\ge 0,
\end{align*}
which contradicts to \eqref{gsub}. Hence, we complete the proof of (\ref{claim121}).

\vspace{3mm}
\textbf{Step 2.} In this step, we prove that there exists a subsequence $\{k_i\}$ such that
\begin{align}\label{es3}
	\lim\limits_{i \to \infty}g_{k_i}(t,\omega)=0, \quad \text{for~a.e.~}(t,\omega).
\end{align}
In view of \eqref{qsl21}, we note that $F_k(t,\omega)$ is convergent for a.e.~($t,\omega$). By \eqref{xgj1}, we also know that $F_k$ is uniformly integrable. Therefore, in view of the generalized Fatou's lemma (see e.g.~\cite[p10]{cairoli2011sequential}), \eqref{gF21} and Step 1, it follows that
\begin{align}
	\liminf\limits_{k \to \infty}\mathbb{E}\int_{0}^{T}g_k(t)dt \ge \mathbb{E}\int_{0}^{T}\liminf\limits_{k \to \infty}g_k(t)dt \ge 0.\label{sup21}
\end{align}
Using conditions \eqref{tj3}-\eqref{tj41} yields
\begin{align}
	\limsup\limits_{k \to \infty}\mathbb{E}\int_{0}^{T}g_k(t)dt \le 0.\label{inf21}
\end{align}
Hence, combining \eqref{sup21} and \eqref{inf21} reduces
\begin{align}\label{intgk}
	\lim\limits_{k \to \infty}\mathbb{E}\int_{0}^{T}g_k(t)dt=0.
\end{align}
Set $g_k^-(t,\omega):=\min\{g_k(t,\omega),0\}.$ From Step 1, we know that for a.e.~$(t,\omega)$,
\begin{align*}
	\lim\limits_{k \to \infty}g_k^-(t,\omega)=0.
\end{align*}
By \eqref{gF21} and the uniform integrability of $F_k$, it follows that
\begin{align*}
	\lim\limits_{k \to \infty}\mathbb{E}\int_{0}^{T}g_k^-(t)dt =0.
\end{align*}
Then, applying $|g_k|=g_k-2g_k^-$ and \eqref{intgk}, we obtain
\begin{align*}
	\lim\limits_{k \to \infty}\mathbb{E}\int_{0}^{T}|g_k(t)|dt=0.
\end{align*}
Thus, (\ref{es3}) follows.

\vspace{3mm}
\textbf{Step 3.} In this step, we prove that $\mathcal{B}(\cdot)=B(\cdot,X(\cdot))$  $dt\otimes\mathbb{P}\text{-a.e.}$.

\vspace{1mm}
From \eqref{gF21} and Step 2, it follows that
\begin{align}
	\sup\limits_{i \in \mathbb{N}}\|X_{k_i}(t,\omega)\|_V <\infty \quad \text{for~a.e.~} (t,\omega).\label{claim421}
\end{align}
Combining \eqref{qsl21} and \eqref{claim421}  yields for a.e.~$(t,\omega)$, as $i\to \infty$,
\begin{align*}
	X_{k_i}(t,\omega) \rightharpoonup X(t,\omega)~\text{in~} V.
\end{align*}
By the monotonicity of $A_{\lambda_{k_i}}$ and Step 2, we have a.e.~$(t,\omega)$,
\begin{align*}
	&\limsup\limits_{i \to \infty}\left\langle B(t,X_{k_i}(t,\omega)), X_{k_i}(t,\omega)-X(t,\omega)\right\rangle\nonumber\\
	=& -\liminf\limits_{i \to \infty}\langle A_{\lambda_{k_i}}(t,X_{k_i}(t,\omega)), X_{k_i}(t,\omega)-X(t,\omega)\rangle\nonumber\\
	\le& -\liminf\limits_{i \to \infty}\langle A_{\lambda_{k_i}}(t,X(t,\omega)),X_{k_i}(t,\omega)-X(t,\omega)\rangle
\end{align*}
Using the same argument as in the proof of Step 1, we can get that for a.e.~$(t,\omega)$, as $i\to \infty$,
\begin{equation*}
\langle A_{\lambda_{k_i}}(t,X(t,\omega)), X_{k_i}(t,\omega)-X(t,\omega)\rangle \to 0.	
\end{equation*}
Hence, it follows that
\begin{equation}
\limsup\limits_{i \to \infty}\left<B(t,X_{k_i}(t,\omega)), X_{k_i}(t,\omega)-X(t,\omega)\right> \le 0.	\label{infB1}
\end{equation}
The inequality \eqref{infB1} together with the pseudo-monotonicity of $B$ yields that for a.e.~($t,\omega$), as $i\to \infty$,
\begin{align*}
	B(t,X_{k_i}(t,\omega)) \rightharpoonup B(t,X(t,\omega))~\text{in~}V^*,
\end{align*}
and
\begin{align}
	\lim\limits_{i \to \infty}\left\langle B(t,X_{k_i}(t,\omega)), X_{k_i}(t,\omega)-X(t,\omega)\right\rangle =0.\label{BClaim41}
\end{align}
According to \eqref{Bjx1}  and the uniqueness of limits, we deduce that $\mathcal{B}(\cdot)=B(\cdot,X(\cdot))$, which completes the proof of Lemma \ref{lemmaB1}.
\end{proof}

By combining the above lemmas, we present the proof of Theorem \ref{theoremzy11} as follows.
\vspace{2mm}

\textbf{Proof of Theorem \ref{theoremzy11}.} We intend to prove that $X$  is a weak solution to \eqref{general} in the sense of Definition \ref{weaksolution}.   First, we show that
\begin{equation}\label{Bee}
\mathcal{B}(\cdot)=B(\cdot,X(\cdot))\quad dt\otimes\mathbb{P} \text{-a.e.}.
 \end{equation}
Note that if we can prove \eqref{tj41}, by Lemma \ref{lemmaB1} the result follows immediately.
Applying It\^o's formula to the equation \eqref{eqk1} and \eqref{eq1}, which satisfied by $X_k$ and $X$  respectively, and taking expectations,  we obtain that for any $t\in [0,T]$,
\begin{align}
\mathbb{E}\|X_k(t)\|_H^2=&\|x\|_H^2-2\mathbb{E}\int_{0}^{t}\left<\mathcal{A}_{\lambda_k}(s,X_k(s)),X_k(s)\right>ds \nonumber \\
&-2\mathbb{E}\int_{0}^{t}\left<B(s,X_k(s)),X_k(s)\right>ds+\mathbb{E}\int_{0}^{t}\|\sigma(s,X_k(s))\|_{\mathcal{L}_2}^2ds,\label{em11}\\
\mathbb{E}\|X(t)\|_H^2=&\|x\|_H^2-2\mathbb{E}\int_{0}^{t}\left<\eta(s),X(s)\right>ds \nonumber \\
&-2\mathbb{E}\int_{0}^{t}\left<\mathcal{B}(s),X(s)\right>ds+\mathbb{E}\int_{0}^{t}\|\sigma(s,X(s))\|_{\mathcal{L}_2}^2ds. \label{em21}
\end{align}
Since $\|X_k-X\|_{C([0,T],V^*)} \to 0$, by the lower semicontinuity of $\|\cdot\|_H$ in $V^*$ and Fatou's lemma, it follows that for any $t\in [0,T]$,
\begin{align}
	\mathbb{E}\|X(t)\|_H^2\le \mathbb{E}\Big[\liminf\limits_{k\to \infty}\|X_k(t)\|_H^2\Big] \le \liminf\limits_{k\to \infty}\mathbb{E}\|X_k(t)\|_H^2.\label{liminf1}
\end{align}
Collecting \eqref{em11}-\eqref{liminf1}, in view of \eqref{lemma81} it implies that for any $t\in [0,T]$,
\begin{align*}
&2\limsup\limits_{k\to\infty}\mathbb{E}\int_{0}^{t}\langle \mathcal{A}_{\lambda_k}(s,X_k(s))+B(s,X_k(s)),X_k(s)\rangle ds\\
\le& 2\mathbb{E}\int_{0}^{t}\langle\eta(s)+\mathcal{B}(s),X(s)\rangle ds+\liminf\limits_{k\to\infty}\mathbb{E}\int_{0}^{t}\left(\|\sigma(s,X_k(s))\|_{\mathcal{L}_2}^2-\|\sigma(s,X(s))\|_{\mathcal{L}_2}^2\right)ds\\
\le& 2\mathbb{E}\int_{0}^{t}\langle\eta(s)+\mathcal{B}(s),X(s)\rangle ds+\liminf\limits_{k\to\infty}\mathbb{E}\int_{0}^{t}\|\sigma(s,X_k(s))-\sigma(s,X(s))\|_{\mathcal{L}_2}^2ds\\
=&2\mathbb{E}\int_{0}^{t}\langle\eta(s)+\mathcal{B}(s),X(s)\rangle ds.
\end{align*}
Thus, we can conclude that
$$
\limsup\limits_{k\to\infty}\mathbb{E}\int_{0}^{T}\langle \mathcal{A}_{\lambda_k}(t,X_k(t))+B(t,X_k(t)),X_k(t)\rangle dt\le \mathbb{E}\int_{0}^{T}\langle\eta(t)+\mathcal{B}(t),X(t)\rangle dt,
$$
which implies that (\ref{Bee}) holds.

Finally, once we can show that $\eta\in \mathcal{A}(\cdot,X)$ $dt\otimes\mathbb{P}\text{-a.e.}$,
whose proof is provided in the following lemma, we can deduce that $X$ is a weak solution to Eq.~\eqref{general}. The moment estimates \eqref{genest1} for $X$ are derived from the estimates \eqref{low11} and \eqref{low21}. Thus, we complete the proof. $\hfill\square$

\vspace{2mm}
The following lemma characterizes the limit of $\mathcal{A}(\cdot,X_k(\cdot))$.
\begin{lemma}\label{lemc}
$\eta\in \mathcal{A}(\cdot,X)$ $dt\otimes\mathbb{P}\text{-a.e.}$.
\end{lemma}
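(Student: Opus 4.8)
The plan is to transfer the inclusion from the regularized equation to the limit through the resolvents. Put $Y_k(\cdot):=\mathcal{R}_{\lambda_k}(\cdot,X_k(\cdot))$, so that by Proposition \ref{propyosgj}(iv) one has $\mathcal{A}_{\lambda_k}(t,X_k(t))\in\mathcal{A}(t,Y_k(t))$ for a.e.\ $(t,\omega)$. The first point is to show $Y_k\rightharpoonup X$ in $L^{\alpha}([0,T]\times\Omega,V)$, and this is exactly where the generalized duality map $J$ of Definition \ref{defJ} is used: from $\mathcal{A}_{\lambda_k}(\cdot,X_k(\cdot))=\lambda_k^{-1}J(X_k-Y_k)$ and $\|J(u)\|_{V^*}=\|u\|_V^{\alpha-1}$ we get
\[
\mathbb{E}\int_0^T\|X_k(t)-Y_k(t)\|_V^{\alpha}\,dt=\lambda_k^{\frac{\alpha}{\alpha-1}}\,\mathbb{E}\int_0^T\|\mathcal{A}_{\lambda_k}(t,X_k(t))\|_{V^*}^{\frac{\alpha}{\alpha-1}}\,dt,
\]
and the right side tends to $0$ by Lemma \ref{lem1} and $\lambda_k\to0$. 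Hence $X_k-Y_k\to0$ strongly in $L^{\alpha}([0,T]\times\Omega,V)$, and since $X_k\rightharpoonup X$ there (by \eqref{conv11}) we obtain $Y_k\rightharpoonup X$ in $L^{\alpha}([0,T]\times\Omega,V)$. This argument works for every $\alpha>1$; it is the step that breaks down for the classical Yosida approximation when $\alpha>2$.

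Next I would isolate the maximal-monotone part of the energy estimate. Inside the proof of Theorem \ref{theoremzy11} we already established
\[
\limsup_{k\to\infty}\mathbb{E}\int_0^T\langle\mathcal{A}_{\lambda_k}(t,X_k(t))+B(t,X_k(t)),X_k(t)\rangle\,dt\le\mathbb{E}\int_0^T\langle\eta(t)+B(t,X(t)),X(t)\rangle\,dt.
\]
Along the subsequence from the proof of Lemma \ref{lemmaB1} one has, for a.e.\ $(t,\omega)$, $B(t,X_k(t))\rightharpoonup B(t,X(t))$ in $V^*$ together with \eqref{BClaim41}, so $\langle B(t,X_k(t)),X_k(t)\rangle\to\langle B(t,X(t)),X(t)\rangle$ a.e.; combining this with a uniform-integrability argument based on $(\mathbf{H}_B^1)$, $(\mathbf{H}_B^2)$ and the moment bounds \eqref{genest1} (localizing by the stopping times $\tau_k^M$ if convenient, since $f\in L^1$ only) upgrades it to $\mathbb{E}\int_0^T\langle B(t,X_k(t)),X_k(t)\rangle dt\to\mathbb{E}\int_0^T\langle B(t,X(t)),X(t)\rangle dt$. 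Therefore
\[
\limsup_{k\to\infty}\mathbb{E}\int_0^T\langle\mathcal{A}_{\lambda_k}(t,X_k(t)),X_k(t)\rangle\,dt\le\mathbb{E}\int_0^T\langle\eta(t),X(t)\rangle\,dt,
\]
and since $\langle\mathcal{A}_{\lambda_k}(t,X_k(t)),X_k(t)-Y_k(t)\rangle=\lambda_k^{-1}\|X_k(t)-Y_k(t)\|_V^{\alpha}\ge0$, the same bound holds with $Y_k$ in place of $X_k$ on the left-hand side.

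For the conclusion I would pass to the realization of $\mathcal{A}$ on the product space. Let $\widehat{\mathcal{A}}$ be the operator on $L^{\alpha}([0,T]\times\Omega,V)$ defined by $v\in\widehat{\mathcal{A}}(u)$ iff $v(t,\omega)\in\mathcal{A}(t,u(t,\omega))$ for $dt\otimes\mathbb{P}$-a.e.\ $(t,\omega)$; with respect to the duality with $L^{\frac{\alpha}{\alpha-1}}([0,T]\times\Omega,V^*)$ given by $\mathbb{E}\int_0^T\langle\cdot,\cdot\rangle dt$, this operator is maximal monotone (the standard measurable-realization result; see the Appendix and \cite{barbu2010nonlinear}). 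By Proposition \ref{propyosgj}(iv), $[Y_k,\mathcal{A}_{\lambda_k}(\cdot,X_k(\cdot))]$ lies in the graph of $\widehat{\mathcal{A}}$, and we have $Y_k\rightharpoonup X$ by the first step and $\mathcal{A}_{\lambda_k}(\cdot,X_k(\cdot))\rightharpoonup\eta$ by \eqref{conv31}. Expanding $\mathbb{E}\int_0^T\langle\mathcal{A}_{\lambda_k}(t,X_k(t))-\eta(t),Y_k(t)-X(t)\rangle dt$ and using the previous paragraph to bound the term $\mathbb{E}\int_0^T\langle\mathcal{A}_{\lambda_k}(t,X_k(t)),Y_k(t)\rangle dt$, together with the two weak convergences for the remaining three terms, gives $\limsup_{k\to\infty}\mathbb{E}\int_0^T\langle\mathcal{A}_{\lambda_k}(t,X_k(t))-\eta(t),Y_k(t)-X(t)\rangle dt\le0$. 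Lemma \ref{propmaxi}, applied to the maximal-monotone operator $\widehat{\mathcal{A}}$, then shows that $[X,\eta]$ lies in the graph of $\widehat{\mathcal{A}}$, i.e.\ $\eta(t,\omega)\in\mathcal{A}(t,X(t,\omega))$ for $dt\otimes\mathbb{P}$-a.e.\ $(t,\omega)$, which is the assertion.

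The main obstacles I anticipate are the strong convergence $X_k-Y_k\to0$ in $L^{\alpha}([0,T]\times\Omega,V)$ in the first step, which is precisely the property the nonlinear Yosida approximation is built to furnish for all $\alpha>1$, and the detachment of $B(\cdot,X_k(\cdot))$ from $\mathcal{A}_{\lambda_k}(\cdot,X_k(\cdot))$ in the energy inequality, in particular the uniform integrability of $\langle B(\cdot,X_k(\cdot)),X_k(\cdot)\rangle$ under the mere $L^1$-bound on $f$; the maximality of the realization $\widehat{\mathcal{A}}$ in the last step is classical but should be cited with care.
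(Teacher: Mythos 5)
Your proposal is correct and follows essentially the same route as the paper's proof: the strong convergence $X_k-\mathcal{R}_{\lambda_k}(X_k)\to 0$ in $L^{\alpha}([0,T]\times\Omega,V)$ via the identity $\|X_k-\mathcal{R}_{\lambda_k}(X_k)\|_V^{\alpha}=\lambda_k^{\frac{\alpha}{\alpha-1}}\|\mathcal{A}_{\lambda_k}(\cdot,X_k)\|_{V^*}^{\frac{\alpha}{\alpha-1}}$, the energy-based $\limsup$ inequality after detaching the $B$-terms, and the maximal monotonicity of the realization of $\mathcal{A}$ on the product space combined with Lemma \ref{propmaxi} are exactly the paper's Claims 2, 3 and 1. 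The only cosmetic difference is that you pass from $X_k$ to $\mathcal{R}_{\lambda_k}(X_k)$ in the $\limsup$ bound using the sign of $\langle\mathcal{A}_{\lambda_k}(x),x-\mathcal{R}_{\lambda_k}(x)\rangle=\lambda_k^{-1}\|x-\mathcal{R}_{\lambda_k}(x)\|_V^{\alpha}\ge 0$, where the paper uses H\"older's inequality together with the strong convergence; both are valid.
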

\begin{proof}
\textbf{Claim 1.} the multi-valued operator
$$L^{\alpha}([0,T]\times\Omega,V)\ni  x \mapsto \Psi(x):=\mathcal{A}(\cdot,x(\cdot))\in 2^{L^{\frac{\alpha}{\alpha-1}}([0,T]\times\Omega,V^*)}$$
 is maximal-monotone.

\vspace{1mm}
Let $x_1,x_2 \in L^{\alpha}([0,T]\times\Omega,V)$ and $v_i\in \Psi(x_i), i=1,2.$ In view of the monotonicity of $\mathcal{A}$, it follows that
\begin{align*}
	_{L^{\frac{\alpha}{\alpha-1}}([0,T]\times\Omega,V^*)}\langle v_1-v_2, x_1-x_2 \rangle_{L^{\alpha}([0,T]\times\Omega,V)}=\mathbb{E}\int_{0}^{T}\langle v_1(t)-v_2(t), x_1(t)-x_2(t)\rangle dt\ge 0.
\end{align*}
Hence, $\Psi$ is monotone.

Since for a.e. $t\in [0,T]$ the operator $\mathcal{A}(t,\cdot)$ is maximal-monotone, by Proposition 3.14 in \cite{liu2014yosida}, we have that for any $Y\in L^{\frac{\alpha}{\alpha-1}}([0,T]\times\Omega,V^*)$ there exists a progressively measurable process $X(t)$ such that for a.e. $t\in [0,T]$ and any $\lambda>0$,
\begin{align*}
	Y(t,\omega)\in \mathcal{A}(t, X(t,\omega))+\lambda J(X(t,\omega)), \  \omega\in\Omega.
\end{align*}
Let
$Z(t,\omega)\in \mathcal{A}(t, X(t,\omega))$ such that
$$Y(t,\omega)= Z(t,\omega)+\lambda J(X(t,\omega))~\text{for any}~\omega\in\Omega~\text{and a.e.}~t\in [0,T].$$
Taking the dualization product with $X(t,\omega)$, due to $(\mathbf{H}_{\mathcal{A}}^2)$ we can get for a.e.~$(t,\omega)\in [0,T]\times\Omega$
\begin{align*}
	\langle Y(t,\omega), X(t,\omega)\rangle &= \langle Z(t,\omega), X(t,\omega)\rangle+\lambda\langle J(X(t,\omega)), X(t,\omega)\rangle\nonumber\\
	&\ge (\delta+\lambda)\|X(t,\omega)\|_V^{\alpha}-f(t).
\end{align*}
Since $Y\in L^{\frac{\alpha}{\alpha-1}}([0,T]\times\Omega,V^*)$, according to Young's inequality, we have $X\in L^{\alpha}([0,T]\times\Omega,V)$. By Theorem \ref{maxiifon1} in Appendix below, we deduce that the mapping $\Psi$ is maximal-monotone. The claim follows.

\vspace{2mm}
\textbf{Claim 2.} $\mathcal{R}_{\lambda_k}(X_k)$ converges weakly to $X$ in $L^{\alpha}([0,T]\times\Omega,V)$, as $k\to \infty$.

\vspace{1mm}
By the definition of the generalized Yosida approximation (\ref{Alamd}), it follows that for any $(t,\omega)\in[0,T]\times \Omega$
\begin{align*}
	\lambda_k \mathcal{A}_{\lambda_k}(t,X_k(t,\omega))= J(X_k(t,\omega)-\mathcal{R}_{\lambda_k}(X_k(t,\omega))),
\end{align*}
which implies
$$\|\mathcal{R}_{\lambda_k}(X_k(t,\omega))-X_k(t,\omega)\|_V^{\alpha}=\lambda_k^{\frac{\alpha}{\alpha-1}}\|\mathcal{A}_{\lambda_k}(t,X_k(t,\omega))\|_{V^*}^{\frac{\alpha}{\alpha-1}}.$$
Hence, in terms of Lemma \ref{corogjx1} we can deduce that
\begin{align*}
	\mathbb{E}\int_{0}^{T}\|\mathcal{R}_{\lambda_k}(X_k(s))-X_k(s)\|_V^{\alpha}ds= \lambda_k^{\frac{\alpha}{\alpha-1}}\mathbb{E}\int_{0}^{T}\|\mathcal{A}_{\lambda_k}(s,X_k(s))\|_{V^*}^{\frac{\alpha}{\alpha-1}}ds \xrightarrow{k\to\infty} 0,
\end{align*}
that is,
\begin{align}
	\lim\limits_{k \to  \infty}\|\mathcal{R}_{\lambda_k}(X_k)-X_k\|_{L^{\alpha}([0,T]\times\Omega,V)}=0. \label{Jconv1}
\end{align}
Thus, we have for any $\varphi\in L^{\frac{\alpha}{\alpha-1}}([0,T]\times\Omega,V^*)$,
$$\mathbb{E}\int_{0}^{T}\langle \varphi(s), \mathcal{R}_{\lambda_k}(X_k(s))-X_k(s)\rangle ds  \xrightarrow{k\to\infty} 0.$$ Since $X_k \rightharpoonup X$ in $L^{\alpha}([0,T]\times\Omega,V)$, we can get that for any $\varphi\in L^{\frac{\alpha}{\alpha-1}}([0,T]\times\Omega,V^*)$,
\begin{align*}
	&\mathbb{E}\int_{0}^{T}\langle \varphi(s), \mathcal{R}_{\lambda_k}(X_k(s))-X(s)\rangle ds
\nonumber \\
=&\mathbb{E}\int_{0}^{T}\langle \varphi(s), \mathcal{R}_{\lambda_k}(X_k(s))-X_k(s)\rangle ds+\mathbb{E}\int_{0}^{T}\langle \varphi(s), X_k(s)-X(s)\rangle ds\xrightarrow{k\to\infty}0.
\end{align*}
The claim follows.

\vspace{2mm}
\textbf{Claim 3.}
$\limsup\limits_{k\to \infty}\mathbb{E}\int_{0}^{T} \left<\mathcal{A}_{\lambda_k}(s,X_k(s)), \mathcal{R}_{\lambda_k}(X_k(s))\right> ds\le \mathbb{E}\int_{0}^{T}\left<\eta(s), X(s)\right> ds.$

\vspace{1mm}
By \eqref{em11}, we deduce that for any $t\in [0,T]$,
\begin{align}
	\mathbb{E}\|X_k(t)\|_H^2+2\mathbb{E}\int_{0}^{t}\left<\mathcal{A}_{\lambda_k}(s,X_k(s)), X_k(s)\right>ds+I_1+I_2=\|x\|_H^2\label{Igj1},
\end{align}
where
\begin{align*}
I_1:= &\mathbb{E} \int_{0}^{t} 2 \left<B(s,X_k(s))-B(s,X(s)),X_k(s)-X(s)\right>ds,\\
I_2:= &\mathbb{E}\int_{0}^{t}\Big(2 \left<B(s,X(s)),X_k(s)\right>+2 \left<B(s,X_k(s))-B(s,X(s)),X(s)\right>\bigg.\bigg.\\
&\bigg.\bigg.-\|\sigma(s,X_k(s))\|_{\mathcal{L}_2}^2\Big) ds .
\end{align*}
By \eqref{Bk1}, \eqref{conv11} and \eqref{BClaim41}, there exists a subsequence, still denoted by $k$, such that for all $t\in [0,T]$, we have
\begin{align}
\lim\limits_{k \to  \infty}\mathbb{E}\int_{0}^{t}\left<B(s,X_k(s)),X_k(s)-X(s)\right>ds=0\label{Bcl1}
\end{align}
Using the convergence properties \eqref{conv11}-\eqref{Bjx1} and \eqref{lemma81}, we conclude
\begin{align}
	\lim\limits_{k \to \infty} I_2= &\mathbb{E} \int_{0}^{t}\Big(2 \left<\mathcal{B}(s),X(s)\right>-\|\sigma(s,X(s))\|_{\mathcal{L}_2}^2\Big) ds .\label{I_21}
\end{align}
Substituting \eqref{liminf1} and \eqref{I_21}  into \eqref{Igj1} and recalling (\ref{em21}), we obtain
\begin{align}
&\limsup\limits_{k \to  \infty}2\mathbb{E}\int_{0}^{T}\Big(\left<\mathcal{A}_{\lambda_k}(s,X_k(s)),X_k(s)\right>-\left<\eta(s),X(s)\right>\Big)ds\nonumber \\
\le&\mathbb{E}\int_{0}^{T}\Big(2\left<\mathcal{B}(s),X(s)\right>-\|\sigma(s,X(s))\|_{\mathcal{L}_2}^2\Big)ds-\lim\limits_{k \to \infty} (I_1+ I_2)\nonumber\\
=&-\lim\limits_{k \to \infty} I_1.\label{imp1}
\end{align}   
In view of \eqref{conv11}, \eqref{lemma81} and \eqref{Bcl1}, it follows that
\begin{align*}
	\lim\limits_{k \to  \infty}I_1= 0.
\end{align*}
Thus, we conclude that
\begin{equation}
\limsup\limits_{k\to \infty}\mathbb{E}\int_{0}^{T}\left<\mathcal{A}_{\lambda_k}(s,X_{k}(s)),X_{k}(s)\right> ds\le\mathbb{E}\int_{0}^{T}\left<\eta(s),X(s)\right> ds.\label{limsup1}
\end{equation}
Applying H\"older's inequality, Lemma \ref{corogjx1}, \eqref{Jconv1} and \eqref{limsup1}, we deduce that
\begin{align*}
&\limsup\limits_{k \to  \infty}\mathbb{E}\int_{0}^{T}\left<\mathcal{A}_{\lambda_k}(s,X_k(s)),\mathcal{R}_{\lambda_k}(X_k(s))\right>ds  \\
\le& \limsup\limits_{k \to  \infty}\left(\left(\mathbb{E}\int_{0}^{T}\|\mathcal{A}_{\lambda_k}(s,X_k(s))\|_{V^*}^{\frac{\alpha}{\alpha-1}}ds\right)^{\frac{\alpha-1}{\alpha}}\cdot\|\mathcal{R}_{\lambda_k}(X_k)-X_k\|_{L^{\alpha}([0,T]\times\Omega,V)}\right) \\
&+\limsup\limits_{k\to\infty}\mathbb{E}\int_{0}^{T}\left<\mathcal{A}_{\lambda_k}(s,X_k(s)),X_k(s)\right>ds \\
\le& \mathbb{E}\int_{0}^{T}\left<\eta(s),X(s)\right>ds.
\end{align*}
The claim follows.

Now, we proceed to prove the lemma. Since $\mathcal{A}_{\lambda_k}(\cdot,X_k(\cdot))$ converges weakly to $\eta$ in $L^{\frac{\alpha}{\alpha-1}}([0,T]\times\Omega,V^*)$, by \textbf{Claims 1}-\textbf{3} along with the fact that $\mathcal{A}_{\lambda_k}(\cdot,X_k(\cdot))\in \mathcal{A}(\cdot,\mathcal{R}_{\lambda_k}(X_k(\cdot)))$ (cf.~Proposition \ref{propyosgj} (iv)), Lemma \ref{propmaxi} implies that
$$\eta\in \Psi(X)\quad dt\otimes\mathbb{P}\text{-a.e.},$$
that is, $\eta(t,\omega)\in \mathcal{A}(t,X(t,\omega))$ for a.e.~$(t,\omega)\in [0,T]\times\Omega.$
\end{proof}

\subsection{Proof of Theorem \ref{corollary1}}
It suffices to prove the pathwise uniqueness of solutions to \eqref{general}. Then, Theorem \ref{corollary1} follows directly from the Yamada-Watanabe theorem.

Let $(X_1,\eta_1),(X_2,\eta_2)$ be two solutions of Eq.~\eqref{general} with initial values $X_1(0)=X_2(0)=x$. Set
\begin{align*}
	\varphi(t):= \exp\Bigg\{ -\int_{0}^{t}\big(f(s)+\rho(X_1(s))+\zeta(X_2(s))\big)ds\Bigg\}.
\end{align*}
Applying It\^{o}'s formula and by $(\mathbf{H}_{\mathcal{A}}^1)$ and $(\mathbf{H}_B^4)$, it follows that
\begin{align}
	&\varphi(t)\|X_1(t)-X_2(t)\|_H^2\nonumber \\
	=&\int_{0}^{t}\varphi(s)\bigg\{-2\left<\eta_1(s)-\eta_2(s), X_1(s)-X_2(s)\right>\big.\nonumber\\
	&-2\left<B(s,X_1(s))-B(s,X_2(s)), X_1(s)-X_2(s)\right>+\|\sigma(s,X_1(s))-\sigma(s,X_2(s))\|_{\mathcal{L}_2}^2\nonumber\\
	&\big.-[f(s)+\rho(X_1(s))+\zeta(X_2(s))]\|X_1(s)-X_2(s)\|_H^2\bigg\}ds\nonumber\\
	&+2\int_{0}^{t}\varphi(s)\left<X_1(s)-X_2(s),(\sigma(s,X_1(s))-\sigma(s,X_2(s)))dW(s)\right>_H\nonumber\\
	\le&2\int_{0}^{t}\varphi(s)\left<X_1(s)-X_2(s),(\sigma(s,X_1(s))-\sigma(s,X_2(s)))dW(s)\right>_H.\label{twoest1}
\end{align}
Let $\{\tau_n\}$ be a sequence of stopping times such that the local martingale in \eqref{twoest1} becomes a martingale up to $\tau_n$. Then, by taking the expectation on both sides of \eqref{twoest1}, it implies that
\begin{align}
\mathbb{E}\left[\varphi(t\land\tau_n)\|X_1(t\land\tau_n)-X_2(t\land\tau_n)\|_H^2\right]\le 0.\label{uniqstop1}
\end{align}
Letting $n\to\infty$ and applying Fatou's lemma, we deduce that
\begin{align}
	\mathbb{E}\left[\varphi(t)\|X_1(t)-X_2(t)\|_H^2\right]\le 0.\label{uniq1}
\end{align}
Since
\begin{align*}
	\int_{0}^{T}\Big(f(r)+\rho(X_1(r))+\zeta(X_2(r))\Big)dr < \infty\quad \mathbb{P}\text{-a.s.},
\end{align*}
then \eqref{uniq1} yields the pathwise uniqueness of solutions to Eq.~\eqref{general} by the pathwise continuity of solutions in $H$. \hspace{\fill}$\Box$

\subsection{Proof of Theorem \ref{theoremzy21}}
Let the stopping time $\tau_n^M$ be defined as follows
\begin{align*}
	\tau_n^M:= &T\land \inf\left\{t\ge 0: \|X(t,x_n)\|_H >M\right\}\\
	&\land\inf\left\{t\ge 0: \int_{0}^{t}\|X(s,x_n)\|_V^{\alpha}ds > M\right\}\\
	&\land\inf\left\{t\ge 0: \|X(t,x)\|_H > M\right\}\\
	&\land\inf\left\{t\ge 0: \int_{0}^{t}\|X(s,x)\|_V^{\alpha}ds >M \right\},~M>0.
\end{align*}
First, we note that by the moment estimates \eqref{genest1} for the solutions, it follows that
\begin{align}
\lim\limits_{M \to \infty}\sup\limits_{n\in \mathbb{N}}\mathbb{P}(\tau_n^M <T)=0.\label{supst1}
\end{align}
Let $\varphi_n(t) := \exp\Big\{-\int_{0}^{t}\big(f(r)+\rho(X(r,x_n))+\zeta(X(r,x))\big)dr\Big\}$. Combining \eqref{uniqstop1} and \eqref{uniq1}, we obtain
\begin{align}
\mathbb{E}\Big[\varphi_n(t\land\tau_n^M)\|X(t\land \tau_n^M,x_n)-X(t\land \tau_n^M,x)\|_H^2\Big]\le \|x_n-x\|_H^2.\label{gjin}
\end{align}
In light of Chebyshev's inequality and \eqref{gjin}, there exists a constant $C_M >0$ such that for any $\varepsilon >0$,
\begin{align}
	&\mathbb{P}(\|X(t,x_n)-X(t,x)\|_H \ge \varepsilon)\nonumber \\
	\le &\mathbb{P}(\|X(t,x_n)-X(t,x)\|_H\ge \varepsilon, \tau_n^M \ge T)+\mathbb{P}(\tau_n^M < T)\nonumber\\
	\le & \frac{1}{\varepsilon^2C_M}\mathbb{E}\Big[\varphi_n(t\land \tau_n^M)\|X(t\land \tau_n^M,x_n)-X(t\land \tau_n^M,x)\|_H^2\Big]+\mathbb{P}(\tau_n^M < T)\nonumber \\
	\le & \frac{1}{\varepsilon^2C_M}\|x_n-x\|_H^2+\sup\limits_{n\in \mathbb{N}}\mathbb{P}(\tau_n^M < T).\label{Pconv1}
\end{align}
 Taking the limit as $n\to \infty$ and $M \to \infty$ in \eqref{Pconv1} and using \eqref{supst1}, we conclude that for any $t\in [0,T]$,
\begin{align*}
	\|X(t,x_n)-X(t,x)\|_H\xrightarrow{n \to \infty}0 \quad \text{in probability.}
\end{align*}
Furthermore, by \eqref{genest1} we have for any $p \ge 2$,
\begin{align}
	\sup\limits_{n\in \mathbb{N}}\mathbb{E}\Big[\sup\limits_{t\in[0, T]}\|X(t,x_n)\|_H^p \Big] < \infty.\label{supest1}
\end{align}
Then, the Vitali's theorem implies that
\begin{align*}
	\lim\limits_{n\to \infty}\mathbb{E}\int_{0}^{T}\|X(t,x_n)-X(t,x)\|_H^2dt =0,
\end{align*}
which yields that
\begin{align*}
	\|X(t,x_n)-X(t,x)\|_H \xrightarrow{n \to \infty}  0\quad dt\otimes\mathbb{P} \text{-a.e.}.
\end{align*}
Hence, in view of $(\mathbf{H}_{\sigma})$ and \eqref{supest1}, we have
\begin{align}
	\lim\limits_{n\to \infty}\mathbb{E}\int_{0}^{T}\|\sigma(t,X(t,x_n))-\sigma(t,X(t,x))\|_{\mathcal{L}_2}^2dt=0.\label{twoD1}
\end{align}
Then, using \eqref{twoest1}, B-D-G's inequality and Young's inequality, we obtain
\begin{align}
	&\mathbb{E}\Big[\sup\limits_{t\in[0, T\land\tau_n^M]}\big(\varphi_n(t)\|X(t,x_n)-X(t,x)\|_H^2\big)\Big]\nonumber\\
	\le&\|x_n-x\|_H^2+
2\mathbb{E}\Bigg[\sup\limits_{t\in[0, T\land\tau_n^M]}\Big|\int_{0}^{t}\varphi_n(s)\langle X(s,x_n)-X(s,x), \big(\sigma(s,X(s,x_n))
\nonumber\\
&-\sigma(s,X(s,x))\big)dW(s)\rangle_H\Big|\Bigg]\nonumber\\
	\le&\|x_n-x\|_H^2\nonumber\\
&
+C\mathbb{E}\left(\int_{0}^{T\land\tau_n^M}\varphi_n(t)^2\|X(t,x_n)-X(t,x)\|_H^2\|\sigma(t,X(t,x_n))-\sigma(t,X(t,x))\|_{\mathcal{L}_2}^2 dt\right)^{\frac{1}{2}}\nonumber\\
	\le&\|x_n-x\|_H^2+\frac{1}{2}\mathbb{E}\Big[\sup\limits_{t\in[0, T\land\tau_n^M]}\big(\varphi_n(t)\|X(t,x_n)-X(t,x)\|_H^2 \big)\Big]\nonumber\\
	&+C\mathbb{E}\int_{0}^{T\land\tau_n^M}\varphi_n(t)\|\sigma(t,X(t,x_n))-\sigma(t,X(t,x))\|_{\mathcal{L}_2}^2 dt.\label{twosup1}
\end{align}
Combining \eqref{twoD1} and \eqref{twosup1} implies
\begin{align*}
	\lim\limits_{n\to \infty}\mathbb{E}\Big[\sup\limits_{t\in[0, T\land\tau_n^M]}\big(\varphi_n(t)\|X(t,x_n)-X(t,x)\|_H^2 \big)\Big]=0.
\end{align*}
A similar argument as  in the proof of \eqref{Pconv1} leads to
\begin{align*}
\sup\limits_{t\in[0,T]}\|X(t,x_n)-X(t,x)\|_H \xrightarrow{n \to \infty}0 \quad \text{in probability}.
\end{align*}
Furthermore, from \eqref{supest1} we have
\begin{align*}
	\lim\limits_{n\to \infty}\mathbb{E}\Big[\sup\limits_{t\in[0,T]}\|X(t,x_n)-X(t,x)\|_H^p\Big]=0,
\end{align*}
which completes the proof. \hspace{\fill}$\Box$

\section{Finite time extinction}
Self-organized criticality  is extensively studied in  physics from various perspectives (cf.~\cite{T99}). For example, the authors in \cite{BDR092} have suggested that the continuum limit of the Bak-Tang-Wiesenfeld  sandpile model, as introduced in \cite{BTW88},  with a stochastic force can be  interpreted as a type of multi-valued stochastic porous media equations. They have established that the finite time extinction results for this type of multi-valued SPDEs are particularly relevant to the self-organized critical behavior of the BTW model. Subsequently, further investigations have been conducted on  the finite time extinction of stochastic  (sign) fast-diffusion equations perturbed by linear multiplicative noises (cf.~\cite{BDR092,BR12,BRR15,G15,RW13}).

In this section, we demonstrate that the finite time extinction of solutions holds with probability one for all initial value $x\in H$, for a class of multi-valued stochastic evolution inclusions perturbed by linear multiplicative noise.

\subsection{Main result}
Building on the existence and uniqueness of solutions to Eq.~\eqref{general} (i.e.~Theorem \ref{corollary1}), in this part, we mainly focus on the case of the linear multiplicative noise, i.e.,
\begin{align}\label{de}
\sigma(t,x)v:=\sum_{k=1}^{\infty}h_k(t)x\left\langle v,g_k\right\rangle_U,
\end{align}
where $\{g_k\}_{k\in\mathbb{N}}$ is an orthonormal basis on $U$, $\{h_k(t)\}_{k\in\mathbb{N}}$ is a sequence of real-valued functions. More precisely, we consider the noise of the following form
\begin{align*}
\int_0^t\sigma(s,X(s))dW(s)=\sum_{k=1}^{\infty}\int_0^th_k(s)X(s)d\beta_k(s),
\end{align*}
where $\{\beta_k\}_{k\in\mathbb{N}}$ are independent standard Brownian motions defined on a filtered probability space $(\Omega,\mathcal{F},\{\mathcal{F}_t\}_{t\ge 0},\mathbb{P})$.

We assume that there are constants $\alpha\in (1,2)$ and $\delta>0$ such that the following conditions are satisfied for a.e.~$t\in [0,\infty)$.
\begin{enumerate}

\item [(\textbf{H}$_{\mathcal{A}}^{2*}$)]
 For any $x\in V$ and $v\in \mathcal{A}(t,x)$,
$$
\left<v,x\right>\ge \delta\|x\|_V^{\alpha}.
$$

\item [(\textbf{H}$_B^{1*}$)]
There exists $f\in L^1([0,\infty),[0,\infty))$ such that  for any $x\in V$,
$$
2\left<B(t,x),x\right> \ge -f(t)\|x\|_H^2.
$$

\item [$(\mathbf{H}_{\sigma}^*)$]
The function
$$h(t):=\sum_{k=1}^{\infty}|h_k(t)|^2\in L^1([0,\infty),[0,\infty)),$$
which  satisfies
\begin{align*}
    (\alpha-1)h(t)\ge f(t).
\end{align*}

\end{enumerate}

\begin{rem}
    If the diffusion coefficient in $(\ref{de})$ satisfies $(\mathbf{H}_{\sigma}^*)$, then it is easy to verify that $(\mathbf{H}_{\sigma})$ also holds.
\end{rem}

Let $\tau_e$ be the extinction time as follows
$$
\tau_e:=\inf\big\{t\ge0:\|X(t)\|_H=0\big\},
$$
where $X(t), t\ge 0,$ is the solution of Eq.~\eqref{general} with the initial value $x\in H$.

\vspace{1mm}
The following result shows the finite time extinction of solutions to Eq.~\eqref{general}.
\begin{theorem}\label{fini}
	Suppose that  $(\mathbf{H}_{\mathcal{A}}^1)$, $(\mathbf{H}_{\mathcal{A}}^{2*})$,	$(\mathbf{H}_{\mathcal{A}}^3)$, $(\mathbf{H}_B^{1*})$, $(\mathbf{H}_B^2)$-$(\mathbf{H}_B^4)$, and $(\mathbf{H}_{\sigma}^*)$ hold. For any $x\in H$, we have for any $t\ge\tau_e$,
	\begin{align}\label{finit1}
		\|X(t)\|_H=0,\ \mathbb{P}\text{-a.s.,}
	\end{align}
and there is a constant $c^*>0$ such that for any $T>0$,
\begin{align}\label{finit2}
\mathbb{P}(\tau_e\leq T)\geq 1-\frac{c^*\|x\|_{H}^{2-\alpha}}{T}.
\end{align}
Furthermore, we have
\begin{align}\label{finit3}
\mathbb{E}\tau_e\leq c^*\|x\|_H^{2-\alpha}.
\end{align}
\end{theorem}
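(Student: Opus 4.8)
The plan is to run two It\^o-formula computations on powers of $\|X(t)\|_H$, exploiting that the coercivity exponent $\alpha$ in $(\mathbf{H}_{\mathcal{A}}^{2*})$ is exactly the ``critical'' one for the transformation $r\mapsto r^{(2-\alpha)/2}$. Throughout set $\beta:=\tfrac{2-\alpha}{2}\in(0,\tfrac12)$; I may assume $x\ne 0$, since otherwise $\tau_e=0$ and there is nothing to prove. By Theorem~\ref{corollary1} (whose hypotheses are implied by those assumed here), Eq.~\eqref{general} has a unique solution $X$ on $[0,\infty)$ with $\eta\in\mathcal{A}(\cdot,X)$ and satisfying \eqref{genest1}. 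For the noise \eqref{de} one computes $\|\sigma(t,x)\|_{\mathcal{L}_2}^2=h(t)\|x\|_H^2$ and $\int_0^t\sigma(s,X(s))\,dW(s)=\int_0^tX(s)\,dM(s)$, where $M(t):=\sum_k\int_0^th_k(s)\,d\beta_k(s)$ is a continuous martingale with $\langle M\rangle_t=\int_0^th(s)\,ds\le\|h\|_{L^1}<\infty$. I also fix $C_0>0$ with $\|v\|_V^\alpha\ge C_0\|v\|_H^\alpha$ for $v\in V$, available since $V\hookrightarrow H$ is continuous.

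\emph{Step 1 (that the solution stays at $0$, i.e.~\eqref{finit1}).} I would introduce $G(t):=\exp\!\big(-2M(t)+(2-\alpha)\int_0^th(s)\,ds\big)>0$ and $R(t):=\|X(t)\|_H^2\,G(t)$. Applying It\^o's formula (in the variational framework) to $\|X(t)\|_H^2$ and then It\^o's product rule, the martingale contributions cancel and one is left with
\begin{equation*}
dR(t)=G(t)\Big\{-2\langle\eta(t),X(t)\rangle-2\langle B(t,X(t)),X(t)\rangle-(\alpha-1)h(t)\|X(t)\|_H^2\Big\}\,dt .
\end{equation*}
Now $(\mathbf{H}_{\mathcal{A}}^{2*})$ gives $\langle\eta(t),X(t)\rangle\ge\delta\|X(t)\|_V^\alpha\ge 0$, $(\mathbf{H}_B^{1*})$ gives $-2\langle B(t,X(t)),X(t)\rangle\le f(t)\|X(t)\|_H^2$, and $(\mathbf{H}_{\sigma}^*)$ gives $f(t)\le(\alpha-1)h(t)$; hence $dR(t)\le 0$. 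Since $R$ is continuous, non-increasing and non-negative while $G>0$, once $\|X(t)\|_H=0$ we get $R\equiv 0$ thereafter, i.e.~$\|X(t)\|_H=0$ for all $t\ge\tau_e$.

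\emph{Step 2 (the quantitative bounds \eqref{finit2}--\eqref{finit3}).} Here I would apply It\^o's formula to $Y(t):=\|X(t)\|_H^{2-\alpha}=(\|X(t)\|_H^2)^\beta$, working on $[0,\tau^\epsilon]$ with $\tau^\epsilon:=\inf\{t\ge0:\|X(t)\|_H^2<\epsilon\}$ to avoid the singularity of $r\mapsto r^\beta$ at the origin. A short computation gives, for $t<\tau^\epsilon$,
\begin{equation*}
dY(t)=\beta(\|X(t)\|_H^2)^{\beta-1}\big[-2\langle\eta(t),X(t)\rangle-2\langle B(t,X(t)),X(t)\rangle\big]dt+\beta(1-\alpha)h(t)Y(t)\,dt+2\beta Y(t)\,dM(t).
\end{equation*}
The decisive observation is that $\beta-1=-\tfrac{\alpha}{2}$, so by $(\mathbf{H}_{\mathcal{A}}^{2*})$ and the embedding $(\|X(t)\|_H^2)^{\beta-1}\langle\eta(t),X(t)\rangle\ge\delta C_0(\|X(t)\|_H^2)^{\beta-1+\alpha/2}=\delta C_0$, a positive \emph{constant}; combined with $(\mathbf{H}_B^{1*})$, $(\mathbf{H}_{\sigma}^*)$ and $Y\ge 0$ this yields
\begin{equation*}
dY(t)\le-\mu\,dt+2\beta Y(t)\,dM(t),\qquad \mu:=(2-\alpha)\,\delta\,C_0>0 .
\end{equation*}
Integrating on $[0,t\wedge\tau^\epsilon]$ gives $Y(t\wedge\tau^\epsilon)+\mu(t\wedge\tau^\epsilon)\le\|x\|_H^{2-\alpha}+\int_0^{t\wedge\tau^\epsilon}2\beta Y(s)\,dM(s)$; since $Y^2\le 1+\|X\|_H^2$, the bound \eqref{genest1} and $h\in L^1$ make the stochastic integral a martingale. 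Taking expectations, dropping the non-negative term $\mathbb{E}\,Y(t\wedge\tau^\epsilon)$, letting $\epsilon\downarrow 0$ (so $\tau^\epsilon\uparrow\tau_e$ by path continuity) and $t\uparrow\infty$ (monotone convergence), I obtain $\mu\,\mathbb{E}\tau_e\le\|x\|_H^{2-\alpha}$, which is \eqref{finit3} with $c^*:=1/\mu$. Taking instead $t=T$ and using $\mathbb{E}[\tau_e\wedge T]\ge T\,\mathbb{P}(\tau_e\ge T)$ gives $\mathbb{P}(\tau_e\ge T)\le c^*\|x\|_H^{2-\alpha}/T$, hence $\mathbb{P}(\tau_e\le T)\ge 1-c^*\|x\|_H^{2-\alpha}/T$, i.e.~\eqref{finit2}.

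The main obstacle will be the rigorous justification of It\^o's formula for the non-smooth functional $\|X(t)\|_H^{2-\alpha}$ at the level of variational solutions: $r\mapsto r^\beta$ has an unbounded second derivative at $0$, so one must control the drift terms up to $\tau^\epsilon$ and show that no mass is lost as $\epsilon\downarrow 0$; this is also where the sharper growth hypotheses $(\mathbf{H}_{\mathcal{A}}^{2*})$, $(\mathbf{H}_B^{1*})$ (without the additive $f(t)$ / $f(t)$-terms) are essential, so that the coefficient of $(\|X\|_H^2)^{\beta-1}$ does not blow up near $X=0$. A clean route around this—presumably the one the authors take—is to first carry out Step~2 for the Yosida approximations $X_\lambda$ solving \eqref{subequ1} (for which $\mathcal{A}_\lambda$ is coercive, so the same inequality holds uniformly in $\lambda$) and then pass to the limit using the $L^2$-convergence $X_\lambda\to X$. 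By contrast, the two algebraic facts that make the scheme work—the exact cancellation $(\|X\|_H^2)^{\beta-1}(\|X\|_H^2)^{\alpha/2}\equiv 1$, which is precisely why one needs $\alpha\in(1,2)$ and why coercivity contributes a \emph{deterministic} decay rate $\mu$, and the disappearance of the noise from $R(t)$—are immediate once the stochastic calculus is in place.
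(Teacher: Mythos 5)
Your proposal is correct, and it rests on the same core mechanism as the paper's proof — the identity $(\|X\|_H^2)^{-\alpha/2}\|X\|_V^{\alpha}\ge c_0^{\alpha}$ converts the coercivity $(\mathbf{H}_{\mathcal{A}}^{2*})$ into a \emph{deterministic} decay rate for $\|X(t)\|_H^{2-\alpha}$, while $(\alpha-1)h\ge f$ makes the remaining drift contributions cancel — but your technical implementation differs from the paper's in both steps. For \eqref{finit1} the paper shows that $t\mapsto\|X(t)\|_H^{2-\alpha}$ is a nonnegative supermartingale (a consequence of Lemma \ref{finigj}) and applies optional stopping at $\tau_e$; your exponential-weight process $R(t)=\|X(t)\|_H^2G(t)$ instead produces a pathwise non-increasing nonnegative process, which is more elementary and avoids the singular power entirely in this step. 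For \eqref{finit2}--\eqref{finit3} the paper does not apply It\^o's formula to $\|X\|_H^{2-\alpha}$ directly on the limit equation: it first establishes the $L^2$-convergence of the Yosida approximations $X_n\to X$ (Theorem \ref{finiconv}, via the Gy\"ongy--Krylov characterization), applies It\^o to the regularized functional $(\varepsilon+\|X_n(t)\|_H^2)^{1-\alpha/2}$ at the approximation level, and then sends $n\to\infty$ and $\varepsilon\to0$, arriving at a drift of the form $\delta\rho^{\alpha}(1-\tfrac{\alpha}{2})\mathbf{1}_{\{\|X(s)\|_H>0\}}\,ds$. Your route regularizes instead by stopping at $\tau^{\epsilon}$ and works directly with $(X,\eta)$; this is legitimate because $\|X(t)\|_H^2$ is already a continuous real semimartingale with explicit decomposition (the variational It\^o formula applies to the solution pair, exactly as the paper itself uses in \eqref{em21} and \eqref{twoest1}), and $r\mapsto r^{1-\alpha/2}$ is smooth and bounded away from its singularity on $[0,\tau^{\epsilon}]$; your identification $\tau^{\epsilon}\uparrow\tau_e$ and the square-integrability of the stochastic integral (via $Y^2\le 1+\|X\|_H^2$, \eqref{genest1} and $h\in L^1$) close the argument. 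The trade-off: your version is shorter and gives a slightly better constant (no loss of the factor $2^{-\alpha}$ from Lemma \ref{lemmacoe1}, since you use $\langle\eta,X\rangle\ge\delta\|X\|_V^{\alpha}$ rather than the coercivity of $\mathcal{A}_{\lambda}$), whereas the paper's detour yields the $L^2$-convergence of the approximating sequence as a by-product, which the authors highlight as of independent interest. The final passage from $\mu\,\mathbb{E}[T\wedge\tau_e]\le\|x\|_H^{2-\alpha}$ to \eqref{finit2} and \eqref{finit3} is identical in both arguments.
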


\begin{rem}
$(i)$ In contrast to earlier works \cite{BDR091, BDR092,BDR12,BR13,BRR15}, where the extinction in finite time was typically established with positive probability, we provide a quantitative characterization of the probability for the extinction time being less than any given  time and also give an explicit upper bound estimate for the first moment  of the extinction time for a class of  multi-valued stochastic evolution
inclusions, including  multi-valued stochastic porous media equations and  multi-valued stochastic $\Phi$-Laplace equations $($see Section \ref{sec5} for details$)$. In fact, as a consequence of the proof of Theorem \ref{fini}, the constant $c^*$ has the following explicit characterization
$$c^*=1/\Big\{\delta(\frac{c_0}{2})^{\alpha}(1-\frac{\alpha}{2})\Big\},$$
where  the constant $c_0$ is related to the embedding constant $\|\cdot\|_V\ge c_0\|\cdot\|_H$.
Moreover, we also drop the requirement of small initial values, and instead consider any initial value $x\in H$.

$(ii)$ A further open question is to obtain the lower bound estimate of the extinction time for multi-valued stochastic evolution
inclusions, which deserves further investigation in the future work.

\end{rem}

\subsection{Approximating sequences}
In order to prove Theorem \ref{fini}, we first recall the approximating equations \eqref{subequ1} constructed in the proof of Theorem \ref{theoremzy11}, i.e.,
\begin{equation}
	\left\{	
	\begin{aligned}
		&dX_{\lambda}(t)+\left[\mathcal{A}_{\lambda}(t,X_{\lambda}(t))+B(t,X_{\lambda}(t))\right]dt= \sum_{k=1}^{\infty}h_k(t)X_{\lambda}(t)d\beta_k(t),\\
		&X_{\lambda}(0)=x\in H,
	\end{aligned}
    \right.\label{subequ2}
\end{equation}
where $\mathcal{A}_{\lambda}$ is the generalized Yosida approximation of $\mathcal{A}$. Moreover, in view of the proof of Theorem \ref{theoremzy11} we take a subsequence $\{\lambda_n\}_{n=1}^{\infty}$, for which $\lambda_n\to 0$ as $n \to \infty$. Along this subsequence, we denote $X_{\lambda_n}$ by $X_n$.

We first recall the powerful characterization of convergence in probability as given by Gy\"{o}ngy and Krylov \cite{GK96}.
Let $(\mathbb{S},\|\cdot\|_{\mathbb{S}})$ be a separable Banach space and $\{Y_n\}_{n\in\mathbb{N}}$ be a sequence of $\mathbb{S}$-valued random variables on the probability space $(\Omega,\mathcal{F},\mathbb{P})$. Let $\{\mu_{n,m}\}_{n,m\in\mathbb{N}}$ be the collection of joint laws of $\{Y_n\}_{n\in\mathbb{N}}$. We have the following result.
\begin{proposition}\label{lemp}
A sequence of $\mathbb{S}$-valued random variables $\{Y_n\}_{n\in\mathbb{N}}$ converges in probability if and only if for every subsequence of joint probabilities laws $\{\mu_{n,m}\}_{n,m\in\mathbb{N}}$, there exists a further subsequence which converges weakly to a probability measure $\mu$ such that
$$\mu(\{(x,y)\in\mathbb{S}\times \mathbb{S}:x=y\})=1.$$

\end{proposition}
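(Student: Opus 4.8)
\emph{Proof strategy.} This statement is precisely the Gyöngy--Krylov characterization of convergence in probability, so one may simply invoke \cite[Lemma 1.1]{GK96}; for completeness I sketch how the argument goes. The plan is to treat the two implications separately, the forward (``only if'') direction being essentially immediate and the converse being the substantive part.

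For the ``only if'' direction, suppose $Y_n\to Y$ in probability for some $\mathbb{S}$-valued random variable $Y$. Then for any sequences $n_k,m_k\to\infty$ we have $\|Y_{n_k}-Y\|_{\mathbb{S}}\to 0$ and $\|Y_{m_k}-Y\|_{\mathbb{S}}\to 0$ in probability, hence $(Y_{n_k},Y_{m_k})\to(Y,Y)$ in probability in $\mathbb{S}\times\mathbb{S}$ and therefore in law. Thus \emph{every} subsequence of $\{\mu_{n,m}\}_{n,m\in\mathbb{N}}$ already converges weakly to $\mu:=\Law(Y,Y)$, which is supported on the diagonal $\{(x,y):x=y\}$, so the required further subsequence may be taken to be the sequence itself.

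For the converse, I would argue by contradiction: assume the stated condition holds but $\{Y_n\}$ fails to be Cauchy in probability. Then there exist $\varepsilon>0$, $\gamma>0$ and sequences $n_k,m_k\to\infty$ with $\mathbb{P}(\|Y_{n_k}-Y_{m_k}\|_{\mathbb{S}}>\varepsilon)>\gamma$ for all $k$. The family $\{\Law(Y_{n_k},Y_{m_k})\}_{k\in\mathbb{N}}$ is a subsequence of $\{\mu_{n,m}\}_{n,m}$, so by hypothesis a further subsequence (not relabelled) converges weakly to a probability measure $\mu$ on $\mathbb{S}\times\mathbb{S}$ with $\mu(\{(x,y):x=y\})=1$. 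Since $F:=\{(x,y)\in\mathbb{S}\times\mathbb{S}:\|x-y\|_{\mathbb{S}}\ge\varepsilon\}$ is closed and disjoint from the diagonal, $\mu(F)=0$, and the portmanteau theorem then yields
$$\gamma\le\limsup_{k\to\infty}\mathbb{P}\big(\|Y_{n_k}-Y_{m_k}\|_{\mathbb{S}}>\varepsilon\big)\le\limsup_{k\to\infty}\Law(Y_{n_k},Y_{m_k})(F)\le\mu(F)=0,$$
a contradiction. Hence $\{Y_n\}$ is Cauchy in probability, and since $(\mathbb{S},\|\cdot\|_{\mathbb{S}})$ is complete and separable one extracts, via a Borel--Cantelli argument, an almost surely convergent subsequence whose limit $Y$ then serves as the limit in probability of the whole sequence.

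There is no genuine obstacle here; the only points requiring mild care are the correct use of the portmanteau theorem on the closed set $F$ (one must pass from the strict inequality $\|\cdot\|_{\mathbb{S}}>\varepsilon$ to the closed condition $\|\cdot\|_{\mathbb{S}}\ge\varepsilon$) and the passage from ``Cauchy in probability'' to ``convergent in probability'', which relies on completeness of $\mathbb{S}$.
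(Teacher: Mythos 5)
Your proposal is correct: the paper gives no proof of this proposition at all, simply citing Gy\"ongy--Krylov \cite{GK96}, and your sketch is the standard argument behind that lemma (trivial forward direction; contradiction via tightness of a bad subsequence, the portmanteau theorem on the closed set $\{\|x-y\|_{\mathbb{S}}\ge\varepsilon\}$, and completeness of $\mathbb{S}$ to pass from Cauchy in probability to convergence in probability). Nothing further is needed.
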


We now establish the following convergence of the sequence $\{X_n(t)\}_{n\in\mathbb{N}}$, possibly along with a subsequence,  to $X(t)$.

\begin{theorem}\label{finiconv}
For any $t\in [0,\infty)$,  we have
$$\mathbb{E}\|X_n(t)-X(t)\|_H^2\to 0,~~\text{as}~~n\to\infty.$$
\end{theorem}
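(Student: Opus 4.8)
The plan is to compare $X_n$ with the strong solution $X$ of \eqref{general} directly on one stochastic basis, via an It\^o energy estimate that exploits the monotonicity of $\mathcal{A}$ through the resolvent $\mathcal{R}_{\lambda_n}$ and the local monotonicity $(\mathbf{H}_B^4)$, removing the resulting exponential weight by a stopping-time localization. First I would put everything on a common basis: under the present hypotheses both \eqref{general} and the approximating equation \eqref{subequ2} fall into the scope of the single-valued variational well-posedness theory (for \eqref{subequ2} the monotone demicontinuous term $\mathcal{A}_{\lambda_n}$ is absorbed into the drift and only improves local monotonicity and coercivity), so each has a \emph{unique strong solution}; hence $X_n$ and the solution pair $(X,\eta)$ of \eqref{general} --- with $\eta(t)\in\mathcal{A}(t,X(t))$ and $\eta\in L^{\frac{\alpha}{\alpha-1}}([0,T]\times\Omega,V^{*})$, the latter because $\eta$ arises as a weak $L^{\frac{\alpha}{\alpha-1}}$-limit of $\{\mathcal{A}_{\lambda_k}(\cdot,X_k(\cdot))\}$ --- may be realized on the original $(\Omega,\mathcal{F},\{\mathcal{F}_t\},\mathbb{P})$ driven by the same $\{\beta_k\}$. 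Fixing $t$ and taking $T\ge t$, I would record from Lemmas \ref{propgj1} and \ref{corogjx1} (applicable since \eqref{de} satisfies $(\mathbf{H}_{\sigma})$ under $(\mathbf{H}_{\sigma}^{*})$) the uniform bounds
\[
\sup_{n}\Big(\mathbb{E}\big[\sup_{t\in[0,T]}\|X_n(t)\|_H^{4}\big]+\mathbb{E}\int_0^T\|X_n(s)\|_V^{\alpha}ds+\mathbb{E}\int_0^T\|\mathcal{A}_{\lambda_n}(s,X_n(s))\|_{V^{*}}^{\frac{\alpha}{\alpha-1}}ds\Big)<\infty,
\]
together with $X\in L^{\alpha}([0,T]\times\Omega,V)\cap C([0,T],H)$ and, crucially, $\varepsilon_n:=\|X_n-\mathcal{R}_{\lambda_n}(X_n)\|_{L^{\alpha}([0,T]\times\Omega,V)}\to0$, which follows from $\|X_n-\mathcal{R}_{\lambda_n}(X_n)\|_V^{\alpha}=\lambda_n^{\frac{\alpha}{\alpha-1}}\|\mathcal{A}_{\lambda_n}(\cdot,X_n)\|_{V^{*}}^{\frac{\alpha}{\alpha-1}}$ and $\lambda_n\to0$ (as in \eqref{Jconv1}).

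Next, set $\varphi_n(t):=\exp\big\{-\int_0^t\big(f(s)+\rho(X_n(s))+\zeta(X(s))\big)ds\big\}$ and apply It\^o's formula to $\varphi_n(t)\|X_n(t)-X(t)\|_H^2$ using \eqref{subequ2} and \eqref{general}. By $(\mathbf{H}_B^4)$ the contributions of $B$ and $\sigma$ are dominated by $\big(f+\rho(X_n)+\zeta(X)\big)\|X_n-X\|_H^2$, which is exactly cancelled by the derivative of $\varphi_n$; what survives is $-2\int_0^t\varphi_n(s)\langle\mathcal{A}_{\lambda_n}(s,X_n(s))-\eta(s),X_n(s)-X(s)\rangle ds$ plus a local martingale. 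The key manipulation: since $\mathcal{A}_{\lambda_n}(s,X_n(s))\in\mathcal{A}(s,\mathcal{R}_{\lambda_n}(X_n(s)))$ by Proposition \ref{propyosgj}(iv) and $\eta(s)\in\mathcal{A}(s,X(s))$, the maximal monotonicity $(\mathbf{H}_{\mathcal{A}}^1)$ gives $\langle\mathcal{A}_{\lambda_n}(s,X_n(s))-\eta(s),\mathcal{R}_{\lambda_n}(X_n(s))-X(s)\rangle\ge0$, whence
\[
-\langle\mathcal{A}_{\lambda_n}(s,X_n(s))-\eta(s),X_n(s)-X(s)\rangle\le\langle\eta(s)-\mathcal{A}_{\lambda_n}(s,X_n(s)),X_n(s)-\mathcal{R}_{\lambda_n}(X_n(s))\rangle,
\]
so the uncontrolled difference $X_n-X$ is traded for the \emph{small} quantity $X_n-\mathcal{R}_{\lambda_n}(X_n)$. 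Localizing the martingale, taking expectations, using $0\le\varphi_n\le1$ and H\"older's inequality, I would obtain, for every stopping time $\tau\le T$,
\[
\mathbb{E}\big[\varphi_n(t\wedge\tau)\|X_n(t\wedge\tau)-X(t\wedge\tau)\|_H^2\big]\le 2\big(\|\eta\|_{L^{\frac{\alpha}{\alpha-1}}}+\sup_n\|\mathcal{A}_{\lambda_n}(\cdot,X_n)\|_{L^{\frac{\alpha}{\alpha-1}}}\big)\varepsilon_n=:\kappa_n\longrightarrow0 .
\]

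Finally I would discard $\varphi_n$ by localization. Put $\tau_n^M:=T\wedge\inf\{t:\|X_n(t)\|_H^2>M\}\wedge\inf\{t:\int_0^t\|X_n\|_V^{\alpha}>M\}\wedge\inf\{t:\|X(t)\|_H^2>M\}\wedge\inf\{t:\int_0^t\|X\|_V^{\alpha}>M\}$. By the growth bound on $\rho,\zeta$ in $(\mathbf{H}_B^4)$, $\int_0^{\tau_n^M}(f+\rho(X_n)+\zeta(X))ds\le\|f\|_{L^1([0,T])}+C(1+M^{\beta/2})(T+M)=:\widetilde C_M$, so $\varphi_n\ge e^{-\widetilde C_M}$ on $[0,\tau_n^M]$, and the display above with $\tau=\tau_n^M$ gives $\mathbb{E}\|X_n(t\wedge\tau_n^M)-X(t\wedge\tau_n^M)\|_H^2\le e^{\widetilde C_M}\kappa_n$. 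Splitting according to $\{\tau_n^M\ge t\}$ and $\{\tau_n^M<t\}$, Cauchy--Schwarz together with $\sup_n\mathbb{E}\|X_n(t)-X(t)\|_H^4<\infty$ yields
\[
\mathbb{E}\|X_n(t)-X(t)\|_H^2\le e^{\widetilde C_M}\kappa_n+C\big(\sup_n\mathbb{P}(\tau_n^M<T)\big)^{1/2},
\]
and $\sup_n\mathbb{P}(\tau_n^M<T)\to0$ as $M\to\infty$ by Chebyshev (as in \eqref{supst1}); taking $\limsup_{n\to\infty}$ for each fixed $M$ and then letting $M\to\infty$ finishes the proof. If one prefers not to invoke strong well-posedness of \eqref{subequ2}, a variant is to first establish convergence in probability of $\{X_n\}$ in $C([0,T],V^{*})\cap L^{\alpha}([0,T],H)$ via Proposition \ref{lemp} and the stochastic-compactness arguments used in the proof of Theorem \ref{theoremzy11} (the two limit processes on the Skorohod space are both solutions of \eqref{general} driven by the same Wiener process, hence coincide by pathwise uniqueness), and then upgrade to $L^2$ by the energy estimate along an a.s.-convergent subsequence. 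The hard part is singling out a version of $\eta$ in $L^{\frac{\alpha}{\alpha-1}}$ and carrying out the monotonicity trade above --- this is precisely where the structure of the generalized Yosida approximation, notably Proposition \ref{propyosgj}(ii) and (iv), is essential; the remaining steps are routine localization and moment bookkeeping.
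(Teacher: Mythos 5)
Your argument is correct, but it takes a genuinely different route from the paper. The paper proceeds in two stages: it first proves convergence \emph{in probability} of $\{X_n\}$ in $L^{\alpha}([0,T],H)\cap C([0,T],V^*)$ via the Gy\"ongy--Krylov criterion (tightness of the pair laws $\mathcal{L}(X_n,X_m,W)$, Skorohod representation, identification of both limits as solutions, pathwise uniqueness), and only then upgrades to $L^2$ at fixed $t$ by an \emph{unweighted} It\^o identity for $\mathbb{E}\|X_n(t)-X(t)\|_H^2$, in which the $B$-term is killed using the pointwise pseudo-monotonicity consequences of Lemma \ref{lemmaB1} together with dominated convergence, the noise term via the a.e.\ convergence obtained in Stage 1, and Gronwall. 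You instead bypass the compactness/Skorohod machinery entirely: you use $(\mathbf{H}_B^4)$ \emph{quantitatively} through the exponential weight $\varphi_n$, so that the $B$- and $\sigma$-contributions cancel deterministically and only the resolvent trade $\langle\mathcal{A}_{\lambda_n}(X_n)-\eta,\,X_n-\mathcal{R}_{\lambda_n}(X_n)\rangle$ survives --- exactly the paper's estimate \eqref{t2} --- and then you remove the weight by localization. Your route is shorter, avoids the Skorohod transfer, and even yields a rate in terms of $\varepsilon_n=\|X_n-\mathcal{R}_{\lambda_n}(X_n)\|_{L^{\alpha}}$ up to the localization constant $e^{\widetilde C_M}$; its price is that it leans on two facts you assert rather than prove: (a) strong well-posedness of the approximating equations \eqref{subequ2} on the original basis (true here, since $\mathcal{A}_{\lambda_n}$ is monotone and $(\mathbf{H}_B^3)$--$(\mathbf{H}_B^4)$ give pathwise uniqueness, hence Yamada--Watanabe applies --- and note the paper's own Gy\"ongy--Krylov step implicitly needs the $X_n$ on a common basis too), and (b) the selection $\eta$ for the strong solution lying in $L^{\frac{\alpha}{\alpha-1}}([0,T]\times\Omega,V^*)$, which holds for the solution produced by Theorem \ref{theoremzy11} but should be stated as part of the choice of $(X,\eta)$. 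Both of these you flag explicitly, and your closing ``variant'' is essentially the paper's actual proof, so nothing essential is missing.
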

\begin{proof}
\textbf{Step 1 (Convergence in probability).} Let us denote
$$
\tilde{\Upsilon}:=\mathbb{S}\times \mathbb{S} \times C([0,T],U_1),
$$
where $\mathbb{S}:=L^{\alpha}([0,T],H)\cap C([0,T],V^*)$.

Similar to the proof of Lemma \ref{lemmatig1}, it is straightforward that the family $\{\mathcal{L}(X_n,X_m,W)\}_{n,m\in\mathbb{N} }$ is tight in $\tilde{\Upsilon}$. Then there exist a probability space $(\widetilde{\Omega},\widetilde{\mathcal{F}},\widetilde{\mathbb{P}})$ and sequences $\{(\widetilde{X}_n,\widetilde{\widetilde{X}}_m,\widetilde{W}_n)\}$ and $(\widetilde{X},\widetilde{\widetilde{X}},\widetilde{W})$ such that along a subsequence,  we have $(\widetilde{X}_n,\widetilde{\widetilde{X}}_m,\widetilde{W}_n)$ converges in $\tilde{\Upsilon}$ to $(\widetilde{X},\widetilde{\widetilde{X}},\widetilde{W})$ $\widetilde{\mathbb{P}}$-a.s..
Note that in particular, $\mu_{n,m}:=\mathcal{L}(X_n,X_m)$  converges weakly to the measure $\mu:=\mathcal{L}(\widetilde{X},\widetilde{\widetilde{X}})|_{\widetilde{\mathbb{P}}}$.

Exactly as for the proof of Theorem \ref{theoremzy11}, we can infer that
both $(\widetilde{X},\widetilde{W})$ and $(\widetilde{\widetilde{X}},\widetilde{W})$ are weak solutions of Eq.~\eqref{general}.
Then by the uniqueness of solutions, we can deduce that $\widetilde{X}=\widetilde{\widetilde{X}}$ $\widetilde{\mathbb{P}}$-a.s..
In other words,
$$\mu(\{(x,y)\in\mathbb{S}\times \mathbb{S}:x=y\})=1.$$
Consequently, Lemma \ref{lemp} implies that the original sequence $X_n$ defined on the initial probability space $(\Omega,\mathcal{F},\mathbb{P})$ converges in probability in the space $\mathbb{S}$ to the unique solution $X$ of Eq.~\eqref{general}.

\vspace{1mm}
\textbf{Step 2 ($L^2$-Convergence).} In view of the Step 1, along with a subsequence, we have
\begin{align*}
	\lim\limits_{n\to \infty}\|X_n(t)-X(t)\|_H = 0~~dt\otimes \mathbb{P}\text{-a.e.},
\end{align*}
which combining with \eqref{genest1} and \eqref{xgj1} implies
\begin{align}\label{t1}
\lim_{n\to\infty}\mathbb{E}\int_{0}^{t}\Big(h(s)\|X_n(s)-X(s)\|_H^2\Big)ds=0.
\end{align}

Applying It\^{o}'s formula for $\|X_n(t)-X(t)\|_H^2$ and by \eqref{eq1} and $(\mathbf{H}_{\sigma}^*)$, it follows that
	\begin{align*}
		&\|X_n(t)-X(t)\|_H^2\nonumber\\
		=&\int_{0}^{t}\Bigg\{-2\langle \mathcal{A}_{\lambda_n}(s,X_n(s))-\widetilde{\eta}(s),X_n(s)-X(s)\rangle\nonumber\\
&-2\langle B(s,X_n(s))-B(s,X(s)), X_n(s)-X(s)\rangle+h(s)\|X_n(s)-X(s)\|_H^2\Bigg
		\}ds\nonumber\\
		&+2\sum_{k=1}^{\infty}\int_{0}^{t}\Big(h_k(s)\|X_n(s)-X(s)\|_H^2\Big)d\beta_k(s).
	\end{align*}
Then, taking the expectation on both sides, we have
	\begin{align}\label{t}
		&\mathbb{E}\|X_n(t)-X(t)\|_H^2\nonumber\\
		=&\mathbb{E}\int_{0}^{t}\Bigg\{-2\langle \mathcal{A}_{\lambda_n}(s,X_n(s))-\widetilde{\eta}(s),X_n(s)-X(s)\rangle\nonumber\\
&-2\langle B(s,X_n(s))-B(s,X(s)), X_n(s)-X(s)\rangle+h(s)\|X_n(s)-X(s)\|_H^2
		\Bigg\}ds.
	\end{align}
By the monotonicity of $\mathcal{A}$, Lemma \ref{corogjx1} and \eqref{Jconv1}, we deduce that
	\begin{align}\label{t2}
		&\lim_{n\to\infty}\mathbb{E}\int_{0}^{t}\langle \widetilde{\eta}(s)-\mathcal{A}_{\lambda_n}(s,X_n(s)),X_n(s)-X(s)\rangle ds\nonumber\\		\le&\sup_{n\in\mathbb{N}}C\Bigg\{\mathbb{E}\int_{0}^{t}\Big(\|\mathcal{A}_{\lambda_n}(s,X_n(s))\|_{V^*}^{\frac{\alpha}{\alpha-1}}+\|\widetilde{\eta}(s)\|_{V^*}^{\frac{\alpha}{\alpha-1}}\Big)ds\Bigg\}^{\frac{\alpha-1}{\alpha}}
\nonumber\\
&\cdot\lim_{n\to\infty}\left(\mathbb{E}\int_{0}^{t}\|\mathcal{R}_{\lambda_n}(X_n(s))-X_n(s)\|_V^{\alpha}ds\right)^{\frac{1}{\alpha}}\nonumber\\
		&-\lim_{n\to\infty}\mathbb{E}\int_{0}^{t}\langle \mathcal{A}_{\lambda_n}(s,X_n(s))-\eta(s),\mathcal{R}_{\lambda_n}(X_n(s))-X(s)\rangle ds
	\nonumber\\
		\le&~0.
	\end{align}
According to the proof of Lemma \ref{lemmaB1}, it follows that there exists a subsequence such that for a.e.~$(t,\omega)$,
	\begin{align*}
		\lim\limits_{n \to \infty}\langle B(t,X_{n}(t,\omega)), X_{n}(t,\omega)-X(t,\omega)\rangle =0.
	\end{align*}
By applying the dominated convergence theorem and using \eqref{xgj1} and \eqref{Bk1}, we conclude that for any $t\in [0,\infty)$,
\begin{align}\label{t3}
    \lim_{n\to\infty}\mathbb{E}\int_{0}^{t}\langle B(s,X_n(s))-B(s,X(s)), X_n(s)-X(s)\rangle ds=0.
\end{align}
Combining \eqref{t1}-\eqref{t3} and using Gronwall's lemma implies that for any $t\in [0,\infty)$,
\begin{align*}
\lim_{n\to\infty}\mathbb{E}\|X_n(t)-X(t)\|_H^2=0.
\end{align*}
We complete the proof.
\end{proof}


\subsection{Proof of main result}
The following lemma plays an important role in the derivation of \eqref{finit1} and \eqref{finit2}.
\begin{lemma}\label{finigj}
	There exists a constant $\rho>0$ such that for any $0\le r<t$,
	\begin{align}\label{finigjine}
		&\|X(t)\|_H^{2-\alpha}+\delta\rho^{\alpha}(1-\frac{\alpha}{2})\int_r^t \mathbf{1}_{\{\|X(s)\|_H>0\}}ds\nonumber\\
		\le& \|X(r)\|_H^{2-\alpha}+2(1-\frac{\alpha}{2})\sum_{k=1}^{\infty}\int_r^t\Big(h_k(s)\|X(s)\|_H^{2-\alpha}\mathbf{1}_{\{\|X(s)\|_H>0\}}\Big) d\beta_s^k.
	\end{align}
\end{lemma}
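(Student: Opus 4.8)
The plan is to first prove the inequality for the Yosida approximations $X_n=X_{\lambda_n}$ solving \eqref{subequ2} and then pass to the limit $n\to\infty$ via the $L^2$-convergence established in Theorem \ref{finiconv}. Set $\theta:=1-\tfrac{\alpha}{2}\in(0,\tfrac12)$ and $Y_n:=\|X_n\|_H^2$; note that $Y_n$ is a continuous real semimartingale. Observe also that the target constants in \eqref{finigjine} match this normalization, since $2(1-\tfrac{\alpha}{2})=2\theta$ and $1-\tfrac{\alpha}{2}=\theta$.

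First I would record It\^o's formula for $\|X_n\|_H^2$. Since $\mathcal{A}_{\lambda_n}(\cdot,X_n(\cdot)),B(\cdot,X_n(\cdot))\in L^{\frac{\alpha}{\alpha-1}}([0,T]\times\Omega,V^*)$ by Lemma \ref{corogjx1}, the variational It\^o formula (Theorem 4.2.5 in \cite{prevot2007concise}), combined with the linear multiplicative structure $\sigma(t,x)v=\sum_k h_k(t)x\langle v,g_k\rangle_U$, gives
\begin{align*}
Y_n(t)=Y_n(r)+\int_r^t\big[-2\langle \mathcal{A}_{\lambda_n}(s,X_n(s))+B(s,X_n(s)),X_n(s)\rangle+h(s)Y_n(s)\big]\,ds+2\sum_{k}\int_r^t h_k(s)Y_n(s)\,d\beta_k(s),
\end{align*}
with quadratic variation $4\int_r^t h(s)Y_n(s)^2\,ds$ of the martingale part. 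Next I would establish a uniform coercivity bound for the Yosida approximation: for $\lambda_n<\delta^{-1}$, using $\mathcal{A}_{\lambda_n}(x)\in\mathcal{A}(\mathcal{R}_{\lambda_n}(x))$ (Proposition \ref{propyosgj}(iv)), $(\mathbf{H}_{\mathcal{A}}^{2*})$, the identity $\langle \mathcal{A}_{\lambda_n}(x),x-\mathcal{R}_{\lambda_n}(x)\rangle=\tfrac1{\lambda_n}\|x-\mathcal{R}_{\lambda_n}(x)\|_V^{\alpha}\ge0$, convexity, and $\|\cdot\|_V\ge c_0\|\cdot\|_H$, one obtains $\langle \mathcal{A}_{\lambda_n}(t,x),x\rangle\ge\kappa\|x\|_H^{\alpha}$ for some $\kappa>0$ depending only on $\delta,c_0,\alpha$ (not on $n$). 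Together with $(\mathbf{H}_B^{1*})$ this bounds the drift density in the $Y_n$-equation from above by $-2\kappa Y_n^{\alpha/2}+(f+h)Y_n$.

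The heart of the argument is a regularized It\^o formula. For $\epsilon>0$ I would apply It\^o's formula to the $C^2$ function $\phi_\epsilon(y)=(y+\epsilon)^{\theta}$ and the semimartingale $Y_n$; since $\phi_\epsilon'\ge0$ and $\phi_\epsilon''\le0$, the drift density of $\phi_\epsilon(Y_n)$ is bounded above by
\begin{align*}
\theta(Y_n+\epsilon)^{\theta-1}\big(-2\kappa Y_n^{\alpha/2}+(f+h)Y_n\big)+2\theta(\theta-1)(Y_n+\epsilon)^{\theta-2}hY_n^2\;\le\;\theta(f+h)Y_n^{\theta}\in L^1([r,t]),
\end{align*}
after discarding the two nonpositive terms in the last step. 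As $\epsilon\downarrow0$ one has pointwise $(Y_n+\epsilon)^{\theta-1}Y_n^{\alpha/2}\to Y_n^{\theta-1+\alpha/2}=\mathbf{1}_{\{Y_n>0\}}$ (the exponent is zero since $\theta-1+\tfrac{\alpha}{2}=0$), $(Y_n+\epsilon)^{\theta-1}Y_n\to Y_n^{\theta}$ and $(Y_n+\epsilon)^{\theta-2}Y_n^2\to Y_n^{\theta}$, so the displayed upper bound converges to $-2\kappa\theta\mathbf{1}_{\{Y_n>0\}}+\theta(f+(2\theta-1)h)Y_n^{\theta}=-2\kappa\theta\mathbf{1}_{\{Y_n>0\}}+\theta(f-(\alpha-1)h)Y_n^{\theta}\le -2\kappa\theta\mathbf{1}_{\{Y_n>0\}}$ by $(\mathbf{H}_{\sigma}^*)$. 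Meanwhile $\phi_\epsilon(Y_n(\cdot))\to\|X_n(\cdot)\|_H^{2-\alpha}$ pointwise, and the stochastic term $2\theta\sum_k\int_r^t h_k(Y_n+\epsilon)^{\theta-1}Y_n\,d\beta_k$ converges in $L^2(\Omega)$ to $2\theta\sum_k\int_r^t h_k\|X_n\|_H^{2-\alpha}\,d\beta_k$ by dominated convergence for stochastic integrals (dominating function $\sum_k|h_k|^2Y_n^{2\theta}=hY_n^{2\theta}$, which is $dt\otimes\mathbb{P}$-integrable since $h\in L^1$, $2(2-\alpha)\le2$ and \eqref{xgj1} holds). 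Letting $\epsilon\downarrow0$ along a suitable sequence and using the reverse Fatou lemma for the drift integral, I get \eqref{finigjine} with $X$ replaced by $X_n$ and $\delta\rho^{\alpha}$ replaced by $2\kappa$, the indicator in the stochastic integral being harmless because $\|X_n\|_H^{2-\alpha}$ vanishes on $\{\|X_n\|_H=0\}$.

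Finally I would pass to the limit $n\to\infty$. By Theorem \ref{finiconv} and \eqref{xgj1}, along a subsequence $\|X_n(s)\|_H\to\|X(s)\|_H$ for $dt\otimes\mathbb{P}$-a.e.\ $(s,\omega)$ and $\|X_n(t)\|_H^{2-\alpha}\to\|X(t)\|_H^{2-\alpha}$, $\|X_n(r)\|_H^{2-\alpha}\to\|X(r)\|_H^{2-\alpha}$ $\mathbb{P}$-a.s., while the stochastic integral converges in $L^2(\Omega)$ to $2\theta\sum_k\int_r^t h_k\|X\|_H^{2-\alpha}\mathbf{1}_{\{\|X\|_H>0\}}\,d\beta_k$ as before. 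Since $y\mapsto\mathbf{1}_{\{y>0\}}$ is lower semicontinuous, $\liminf_n\mathbf{1}_{\{\|X_n(s)\|_H>0\}}\ge\mathbf{1}_{\{\|X(s)\|_H>0\}}$ a.e., and Fatou's lemma yields $\liminf_n\int_r^t\mathbf{1}_{\{\|X_n(s)\|_H>0\}}\,ds\ge\int_r^t\mathbf{1}_{\{\|X(s)\|_H>0\}}\,ds$. Taking $\liminf_n$ in the $X_n$-inequality then produces \eqref{finigjine} with $\rho:=(2\kappa/\delta)^{1/\alpha}$, a constant determined by $\delta,\alpha$ and the embedding constant $c_0$. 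The main obstacle is twofold: making the $\epsilon\downarrow0$ passage rigorous — it rests on the identity $\theta-1+\tfrac{\alpha}{2}=0$, which is exactly what forces $Y^{\alpha/2}(Y+\epsilon)^{\theta-1}$ to tend to $\mathbf{1}_{\{Y>0\}}$, and on the precise sign $f-(\alpha-1)h\le0$ supplied by $(\mathbf{H}_{\sigma}^*)$ so that the residual $Y^{\theta}$-terms do not contribute — and the fact that only lower semicontinuity of $\mathbf{1}_{\{\cdot>0\}}$ survives the limit $n\to\infty$, which is fortunately precisely the inequality direction needed to retain $+\,2\kappa\theta\int_r^t\mathbf{1}_{\{\|X\|_H>0\}}\,ds$ on the left-hand side.
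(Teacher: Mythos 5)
Your proposal is correct and follows essentially the same route as the paper: Itô's formula for $\|X_n\|_H^2$, the regularization $(\varepsilon+\|X_n\|_H^2)^{1-\alpha/2}$, the uniform coercivity of $\mathcal{A}_{\lambda_n}$ from Lemma \ref{lemmacoe1} combined with the embedding $\|\cdot\|_V\ge c_0\|\cdot\|_H$, and the cancellation $f-(\alpha-1)h\le 0$ from $(\mathbf{H}_{\sigma}^*)$. The only difference is the order of the two limits — you send $\varepsilon\downarrow 0$ first and then $n\to\infty$ (using lower semicontinuity of the indicator and Fatou), whereas the paper passes $n\to\infty$ at fixed $\varepsilon$ and then lets $\varepsilon\downarrow 0$; both orders are valid here and yield the same constant.
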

\begin{proof}

By It\^o's formula for $\|X_n(t)\|_H^2$, which is defined in (\ref{subequ2}), and due to $(\mathbf{H}_{\sigma}^*)$, we have that for any $0\le r<t$,
\begin{align*}
\|X_n(t)\|_H^2=&\|X_n(r)\|_H^2+\int_r^t\Big( -2\langle \mathcal{A}_{\lambda_n}(s,X_n(s))+B(s, X_n(s)), X_n(s)\rangle
\\
&+h(s)\|X_n(s)\|_H^2\Big)ds
+2\sum_{k=1}^{\infty}\int_{r}^{t}\Big(h_k(s)\|X_n(s)\|_H^2\Big)d\beta_s^k.
\end{align*}
For any $\varepsilon > 0$, we apply It\^o's formula based on the auxiliary function $V^{\varepsilon}(x):=(\varepsilon+x)^{1-\frac{\alpha}{2}}$ to get
\begin{align*}
		&(\varepsilon+\|X_n(t)\|_H^2)^{1-\frac{\alpha}{2}}+\delta2^{-\alpha}(1-\frac{\alpha}{2})\int_{r}^{t}\frac{\|X_n(s)\|_V^{\alpha}}{(\varepsilon+\|X_n(s)\|_H^2)^{\frac{\alpha}{2}}}ds\nonumber\\
		\le &(\varepsilon+\|X_n(r)\|_H^2)^{1-\frac{\alpha}{2}}+(1-\frac{\alpha}{2})\int_{r}^{t}\frac{(f(s)+h(s))\|X_n(s)\|_H^2}{(\varepsilon+\|X_n(s)\|_H^2)^{\frac{\alpha}{2}}}ds\nonumber\\
		&-\frac{\alpha}{2}(1-\frac{\alpha}{2})\int_{r}^{t}\frac{2h(s)\|X_n(s)\|_H^4}{(\varepsilon+\|X_n(s)\|_H^2)^{\frac{\alpha}{2}+1}}ds+{\mathcal{M}}_{r,t}^{\varepsilon,n}\nonumber\\
		=&(\varepsilon+\|X_n(r)\|_H^2)^{1-\frac{\alpha}{2}}+(1-\frac{\alpha}{2})\int_{r}^{t}\frac{(f(s)+h(s))(\varepsilon+\|X_n(s)\|_H^2)\|X_n(s)\|_H^2}{(\varepsilon+\|X_n(s)\|_H^2)^{\frac{\alpha}{2}+1}}ds\nonumber\\
		&-(1-\frac{\alpha}{2})\int_{r}^{t}\frac{\alpha h(s)\|X_n(s)\|_H^4}{(\varepsilon+\|X_n(s)\|_H^2)^{\frac{\alpha}{2}+1}}ds+\mathcal{M}_{r,t}^{\varepsilon,n},
\end{align*}
where
$$
\mathcal{M}_{r,t}^{\varepsilon,n}:=2(1-\frac{\alpha}{2})\sum_{k=1}^{\infty}\int_{r}^{t}\frac{h_k(s)\|X_n(s)\|_H^2}{(\varepsilon+\|X_n(s)\|_H^2)^{\frac{\alpha}{2}}}d\beta_s^k,
$$
and we used assumptions (\textbf{H}$_{\mathcal{A}}^{2*}$), $(\mathbf{H}_B^{1*})$ and Lemma \ref{lemmacoe1} in the first inequality.

Since there exists a constant $\rho>0$ such that $\|x\|_V\ge 2\rho\|x\|_H$, it implies that
	\begin{align}\label{xkine}
		&(\varepsilon+\|X_n(t)\|_H^2)^{1-\frac{\alpha}{2}}+\delta\rho^{\alpha}(1-\frac{\alpha}{2})\int_{r}^{t}\frac{\|X_n(s)\|_H^{\alpha}}{(\varepsilon+\|X_n(s)\|_H^2)^{\frac{\alpha}{2}}}ds\nonumber\\
		\le &(\varepsilon+\|X_n(r)\|_H^2)^{1-\frac{\alpha}{2}}+{\mathcal{M}}_{r,t}^{\varepsilon,n}\nonumber\\
		&+(1-\frac{\alpha}{2})\int_{r}^{t}\frac{(f(s)+h(s))(\varepsilon+\|X_n(s)\|_H^2)\|X_n(s)\|_H^2}{(\varepsilon+\|X_n(s)\|_H^2)^{\frac{\alpha}{2}+1}}ds\nonumber\\
		&-(1-\frac{\alpha}{2})\int_{r}^{t}\frac{\alpha h(s)\|X_n(s)\|_H^4}{(\varepsilon+\|X_n(s)\|_H^2)^{\frac{\alpha}{2}+1}}ds.
	\end{align}
 By Theorem \ref{finiconv} and \eqref{xgj1}, we can take $n\to\infty$ (possibly along with a subsequence) to \eqref{xkine} and apply the dominated convergence theorem. As a result, it follows that
\begin{align*}
		&(\varepsilon+\|X(t)\|_H^2)^{1-\frac{\alpha}{2}}+\delta\rho^{\alpha}(1-\frac{\alpha}{2})\int_{r}^{t}\frac{\|X(s)\|_H^{\alpha}}{(\varepsilon+\|X(s)\|_H^2)^{\frac{\alpha}{2}}}ds\nonumber\\
		\le &(\varepsilon+\|X(r)\|_H^2)^{1-\frac{\alpha}{2}}+2(1-\frac{\alpha}{2})\sum_{k=1}^{\infty}\int_{r}^{t}\frac{h_k(s)\|X(s)\|_H^2}{(\varepsilon+\|X(s)\|_H^2)^{\frac{\alpha}{2}}}d\beta_s^k.\nonumber\\
		&+(1-\frac{\alpha}{2})\int_{r}^{t}\frac{(f(s)+h(s))(\varepsilon+\|X(s)\|_H^2)\|X(s)\|_H^2}{(\varepsilon+\|X(s)\|_H^2)^{\frac{\alpha}{2}+1}}ds\nonumber\\
		&-(1-\frac{\alpha}{2})\int_{r}^{t}\frac{\alpha h(s)\|X(s)\|_H^4}{(\varepsilon+\|X(s)\|_H^2)^{\frac{\alpha}{2}+1}}ds.
\end{align*}
Thus, we deduce that
	\begin{align*}
		&(\varepsilon+\|X(t)\|_H^2)^{1-\frac{\alpha}{2}}+\delta\rho^{\alpha}(1-\frac{\alpha}{2})\int_{r}^{t}\Bigg(\frac{\|X(s)\|_H^{\alpha}}{(\varepsilon+\|X(s)\|_H^2)^{\frac{\alpha}{2}}}\mathbf{1}_{\{\|X(s)\|_H>0\}}\Bigg)ds\nonumber\\
		\le &(\varepsilon+\|X(r)\|_H^2)^{1-\frac{\alpha}{2}}+2(1-\frac{\alpha}{2})\sum_{k=1}^{\infty}\int_{r}^{t}\Bigg(\frac{h_k(s)\|X(s)\|_H^2}{(\varepsilon+\|X(s)\|_H^2)^{\frac{\alpha}{2}}}\mathbf{1}_{\{\|X(s)\|_H>0\}}\Bigg)d\beta_s^k\nonumber\\
		&+(1-\frac{\alpha}{2})\int_{r}^{t}\Bigg(\frac{(f(s)+h(s))(\varepsilon+\|X(s)\|_H^2)\|X(s)\|_H^2}{(\varepsilon+\|X(s)\|_H^2)^{\frac{\alpha}{2}+1}}\mathbf{1}_{\{\|X(s)\|_H>0\}}\Bigg)ds\nonumber\\
		&-(1-\frac{\alpha}{2})\int_{r}^{t}\Bigg(\frac{\alpha h(s)\|X(s)\|_H^4}{(\varepsilon+\|X(s)\|_H^2)^{\frac{\alpha}{2}+1}}\mathbf{1}_{\{\|X(s)\|_H>0\}}\Bigg)ds.
	\end{align*}
Letting $\varepsilon\to 0$, it implies that
	\begin{align*}
		&\|X(t)\|_H^{2-\alpha}+\delta\rho^{\alpha}(1-\frac{\alpha}{2})\int_{r}^{t}\mathbf{1}_{\{\|X(s)\|_H>0\}}ds\nonumber\\
		\le & \|X(r)\|_H^{2-\alpha}+{\mathcal{M}}_{r,t}\nonumber\\
		&+(1-\frac{\alpha}{2})\int_r^t\Bigg(\frac{(f(s)+h(s)-\alpha h(s))\|X(s)\|_H^4}{\|X(s)\|_H^{\alpha+2}}\mathbf{1}_{\{\|X(s)\|_H>0\}}\Bigg)ds,
	\end{align*}
	where
	$$
	\mathcal{M}_{r,t}:=2(1-\frac{\alpha}{2})\sum_{k=1}^{\infty}\int_{r}^{t}\Big(h_k(s)\|X(s)\|_H^{2-\alpha}\mathbf{1}_{\{\|X(s)\|_H>0\}}\Big)d\beta_s^k.
	$$
Finally, according to assumption $(\mathbf{H}_{\sigma}^*)$, we conclude that
	$$
	\|X(t)\|_H^{2-\alpha}+\delta\rho^{\alpha}(1-\frac{\alpha}{2})\int_{r}^{t}\mathbf{1}_{\{\|X(s)\|_H>0\}}ds\le \|X(r)\|_H^{2-\alpha}+\mathcal{M}_{r,t},
	$$
which completes the proof.
\end{proof}

\textbf{Proof of Theorem \ref{fini}.} The proof is divided into the following two steps.

\vspace{1mm}
\textbf{Step 1.} In view of Lemma \ref{finigj}, we have that for any $0\le r<t$,
\begin{align*}
	\mathbb{E}[\|X(t)\|_H^{2-\alpha}|\mathcal{F}_r]\le \|X(r)\|_H^{2-\alpha}+\mathbb{E}[\mathcal{M}_{r,t}|\mathcal{F}_r]=\|X(r)\|_H^{2-\alpha}.
\end{align*}
Therefore, $t\to\|X(t)\|_H^{2-\alpha}$ is a nonnegative $(\mathcal{F}_t)$-supermartingale. This combining with (\ref{genest1}) implies that for every pair of stopping times $\tau_1<\tau_2$,
$$\mathbb{E}\|X(\tau_2)\|_{H}^{2-\alpha}\leq \mathbb{E}\|X(\tau_1)\|_{H}^{2-\alpha}.$$
In particular, for any $t>\tau_{e}$ it clear that
$$\mathbb{E}\|X(t)\|_{H}^{2-\alpha}\leq \mathbb{E}\|X(\tau_{e})\|_{H}^{2-\alpha}=0.$$
It implies that for any $t\ge\tau_e$,
$$
\|X(t)\|_H=0\quad \mathbb{P}\text{-a.s.}.
$$
Thus, (\ref{finit1}) follows.

\vspace{1mm}
\textbf{Step 2.} Letting $r=0$ and taking the expectation on both sides of \eqref{finigjine}, we obtain that for any $T> 0$,
\begin{align*}
	\mathbb{E}\|X(T)\|_H^{2-\alpha}+\delta\rho^{\alpha}(1-\frac{\alpha}{2})\int_{0}^{T}\mathbb{P}(\tau_e>s)ds\le\|x\|_H^{2-\alpha}.
\end{align*}
This implies that
$$
\big(\delta\rho^{\alpha}(1-\frac{\alpha}{2})\big)\mathbb{P}(\tau_e>T)T\le\delta\rho^{\alpha}(1-\frac{\alpha}{2})\int_{0}^{T}\mathbb{P}(\tau_e>s)ds\le\|x\|_H^{2-\alpha}.
$$
Then, we deduce that
$$
\mathbb{P}(\tau_e>T)\le\|x\|_H^{2-\alpha}\Big/\big(\delta\rho^{\alpha}(1-\frac{\alpha}{2})\big)T,
$$
which completes (\ref{finit2}).

Thus, letting $T\to\infty$, it follows that
$$
\mathbb{P}(\tau_e<\infty)=1,
$$
which shows that the multi-valued stochastic system \eqref{general} almost surely extinct in finite time.    Based on this, recalling inequality \eqref{finigjine} again, letting $t=\tau_e,r=0$ and taking expectation on both sides of \eqref{finigjine}, we can deduce that
$$\big(\delta\rho^{\alpha}(1-\frac{\alpha}{2})\big)\mathbb{E}\tau_e\leq \|x\|_H^{2-\alpha},$$
which yields  (\ref{finit3}).

We complete the proof.
\hspace{\fill}$\Box$

\section{Examples/Applications}\label{sec5}
In this section, we illustrate our general results  to some examples of multi-valued stochastic evolution inclusions, e.g.,~multi-valued  stochastic porous media equations, multi-valued stochastic $\Phi$-Laplacian equations and stochastic differential inclusions involving subdifferentials.

\subsection{Multi-valued stochastic porous media equations}\label{mspme}
The porous media equation models the flow and diffusion of fluids, such as liquids or gases, through porous materials. It has applications in various fields, including gas flow, heat transfer, and groundwater flow (cf. \cite{B1904}). In this context, we investigate the well-posedness and the finite time extinction of multi-valued stochastic porous media equations.

Let $\Lambda\subset \mathbb{R}^d$ ($d\ge 3$) be an open and bounded domain with smooth boundary $\partial\Lambda$ and consider $p>\frac{2d}{d+2}$. We consider the multi-valued  stochastic porous media equations as follows
\begin{eqnarray}\label{porousmedia}
\left\{
  \begin{aligned}
 &dX(t) \in \Delta\Psi (X(t))dt+\sigma(t, X(t))dW(t), \\
    & X|_{\partial\Lambda}=0, X(0) = x,
  \end{aligned}
\right.
\end{eqnarray}
where $W(t)$ is a cylindrical Wiener process defined on $(\Omega,\mathcal{F},\{\mathcal{F}_t\}_{t\ge 0},\mathbb{P})$ taking values in $U$.

Let $H_0^1(\Lambda):=H_0^{1,2}(\Lambda)$ and $H^{-1}(\Lambda)$ denote the dual space of $H_0^1(\Lambda)$. If $d\ge 3$, by the classical Sobolev embedding theorem, we have $H_0^{1}(\Lambda)\subset L^{\frac{2d}{d-2}}(\Lambda)$ continuously and densely. Since $\frac{p}{p-1}<\frac{2d}{d-2}$, it follows that $L^p(\Lambda)=(L^{\frac{p}{p-1}}(\Lambda))^*\subset(H_0^{1}(\Lambda))^*=H^{-1}(\Lambda)$ continuously and densely.

By Lemma 4.1.12 in \cite{LR15}, the map $(-\Delta)^{-1}:H^{-1}(\Lambda)\to H_0^1(\Lambda)$ is the Riesz isomorphism on $H^{-1}(\Lambda)$. Thus, we can identify $H^{-1}(\Lambda)$ with its dual $(H^{-1}(\Lambda))^*=H_0^1(\Lambda)$ by the Riesz map $(-\Delta)^{-1}$.

Define $V:=L^p(\Lambda), H:=H^{-1}(\Lambda)$ and $V^*:=(L^p(\Lambda))^*$, we have the Gelfand triple
$$
V\subset H\subset V^*,
$$
and the embedding $V\subset H$ is compact.

\begin{lemma}$($cf. \cite[Lemma 4.1.13]{LR15}$)$\label{ps}
    The map
    $$
    \Delta:H_0^1(\Lambda)\to (L^p(\Lambda))^*
    $$
    extends to a linear isometry
    $$
    \Delta: L^{\frac{p}{p-1}}(\Lambda)\to (L^p(\Lambda))^*=V^*
    $$
    and for all $u\in  L^{\frac{p}{p-1}}(\Lambda), v\in L^p(\Lambda)$,
    \begin{align}\label{pse}
        _{V^*}\langle-\Delta u, v\rangle_{V}=_{L^{\frac{p}{p-1}}}\langle u, v\rangle_{L^p}=\int_{\Lambda}u(\xi)v(\xi)d\xi.
    \end{align}
\end{lemma}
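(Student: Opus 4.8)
The plan is to realize the claimed extension explicitly and then check that it is consistent with $\Delta$ on $H_0^1(\Lambda)$. Concretely, for $u\in L^{\frac{p}{p-1}}(\Lambda)$ I would define the functional $T_u\in V^*=(L^p(\Lambda))^*$ by $T_u(v):=\int_\Lambda u(\xi)v(\xi)\,d\xi$ for $v\in L^p(\Lambda)$. By H\"older's inequality $|T_u(v)|\le\|u\|_{L^{p/(p-1)}}\|v\|_{L^p}$, and by the duality $(L^p(\Lambda))^*=L^{\frac{p}{p-1}}(\Lambda)$ the norm is attained, so $u\mapsto T_u$ is a linear isometry from $L^{\frac{p}{p-1}}(\Lambda)$ into $V^*$; by construction it satisfies the pairing identity \eqref{pse} (with $-\Delta u$ replaced by $T_u$). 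It then remains to (a) identify $T_u$ with $-\Delta u$ for $u\in H_0^1(\Lambda)$, where $-\Delta u$ is a priori viewed as an element of $H^{-1}(\Lambda)\subset V^*$, and (b) verify that $H_0^1(\Lambda)$ is dense in $L^{\frac{p}{p-1}}(\Lambda)$, so that $u\mapsto -T_u$ is forced to be the unique continuous extension of $\Delta\colon H_0^1(\Lambda)\to V^*$.

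For (a), I would fix $u\in H_0^1(\Lambda)$; then $-\Delta u\in H^{-1}(\Lambda)=H$, so by the Gelfand triple identity ${}_{V^*}\langle z,v\rangle_V=\langle z,v\rangle_H$ for $z\in H$, $v\in V$, one has ${}_{V^*}\langle-\Delta u,v\rangle_V=\langle-\Delta u,v\rangle_{H^{-1}}$ for every $v\in L^p(\Lambda)=V$. Since the $H^{-1}$-inner product is defined through the Riesz isomorphism $(-\Delta)^{-1}$, this equals ${}_{H^{-1}}\langle-\Delta u,(-\Delta)^{-1}v\rangle_{H_0^1}$, and because $-\Delta$ is the Riesz map $H_0^1(\Lambda)\to H^{-1}(\Lambda)$ this in turn equals $\langle u,(-\Delta)^{-1}v\rangle_{H_0^1}={}_{H^{-1}}\langle v,u\rangle_{H_0^1}$. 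Finally, the embedding $L^p(\Lambda)=(L^{\frac{p}{p-1}}(\Lambda))^*\hookrightarrow(H_0^1(\Lambda))^*=H^{-1}(\Lambda)$ acts by restriction of functionals from $L^{\frac{p}{p-1}}(\Lambda)\supset H_0^1(\Lambda)$, hence ${}_{H^{-1}}\langle v,u\rangle_{H_0^1}=\int_\Lambda u(\xi)v(\xi)\,d\xi=T_u(v)$. Thus $-\Delta u=T_u$ in $V^*$, which also yields \eqref{pse} on $H_0^1(\Lambda)$.

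For (b), I would note that $p>\frac{2d}{d+2}$ is equivalent to $\frac{p}{p-1}<\frac{2d}{d-2}$, and since $\Lambda$ is bounded we get the continuous inclusions $H_0^1(\Lambda)\subset L^{\frac{2d}{d-2}}(\Lambda)\subset L^{\frac{p}{p-1}}(\Lambda)$; as $C_c^\infty(\Lambda)\subset H_0^1(\Lambda)$ is dense in $L^{\frac{p}{p-1}}(\Lambda)$, so is $H_0^1(\Lambda)$. By (a), $\Delta\colon H_0^1(\Lambda)\to V^*$ equals $u\mapsto -T_u$ on $H_0^1(\Lambda)$, hence it is isometric for the $L^{\frac{p}{p-1}}$-norm on its domain and admits a unique continuous — in fact isometric — extension to $L^{\frac{p}{p-1}}(\Lambda)$, which must coincide with $u\mapsto -T_u$. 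The identity \eqref{pse} then extends from $H_0^1(\Lambda)$ to all of $L^{\frac{p}{p-1}}(\Lambda)$ by continuity, since for fixed $v\in L^p(\Lambda)$ both sides depend linearly and boundedly on $u\in L^{\frac{p}{p-1}}(\Lambda)$. The only genuinely non-formal point, and the step I expect to need the most care, is the chain of identifications in (a): one must track the two Riesz identifications — $H^{-1}(\Lambda)$ with $H_0^1(\Lambda)$ via $(-\Delta)^{-1}$, and $L^p(\Lambda)$ with a subspace of $H^{-1}(\Lambda)$ via the integral pairing — and check their compatibility; everything else is routine.
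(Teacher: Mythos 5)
Your proof is correct. The paper does not prove this lemma itself but cites it from \cite[Lemma 4.1.13]{LR15}, and your argument (realizing the extension as $u\mapsto -T_u$ via the $L^p$--$L^{p/(p-1)}$ duality, checking consistency with $\Delta$ on $H_0^1(\Lambda)$ through the Riesz identification of $H^{-1}(\Lambda)$ by $(-\Delta)^{-1}$, and concluding by density of $H_0^1(\Lambda)$ in $L^{p/(p-1)}(\Lambda)$) is essentially the standard proof given there.
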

Let $\Psi:\mathbb{R}\to 2^{\mathbb{R}}$ be a function satisfying the following assumptions

\vspace{1mm}
$(\Psi1)$ $\Psi$ is maximal-monotone.

\vspace{1mm}
($\Psi2$) For some $p\in (1,\infty)$, there exist $c_1>0$, $c_2\ge 0$ such that for every $x\in \Psi(s)$,
$$s\cdot x\ge c_1|s|^p-c_2.$$

\vspace{1mm}
$(\Psi3)$ There exists $c>0$ such that for every $x\in \Psi(s)$ and every $s\in \mathbb{R}$,
$$|x|\le c|s|^{p-1}+c.$$

Here are some examples from physics models that  satisfy $(\Psi1)$-$(\Psi3)$.
\begin{example}
The stochastic BTW model and the Zhang model  play significant roles in the study of self-organized criticality. Both can be described as multi-valued stochastic porous media equations involving the following operator
\begin{align*}
    \Psi(s)=\left\{
	\begin{aligned}
		&1+\delta s, \ \text{if~}s>0,\\
		&[0,1], \ \text{if~}s= 0,\\
        &0, \ \text{if~}s<0,
	\end{aligned}
	\right.
\end{align*}
with $\delta=0$ in the BTW model $($cf.~\cite{BTW88}$)$ and $\delta>0$ in the Zhang model $($cf.~\cite{Z89}$)$.

\vspace{1mm}
In this work, for $p>1$ and $\nu >0$, we can consider the function
\begin{equation}\label{psi1}
\Psi(s)=\text{sign}(s)(\nu+|s|^{p-1}),
\end{equation}
which satisfies $(\Psi1)$-$(\Psi3)$. Here, the sign function is defined as follows
\begin{align}\label{sign1}
	\text{sign}(s)=\left\{
	\begin{aligned}
		&\frac{s}{|s|}, \ \text{if~}s\neq 0,\\
		&[-1,1], \ \text{if~}s= 0.
	\end{aligned}
	\right.
\end{align}
For $p=2$, (\ref{psi1}) can be seen as a modification of the Zhang model. Moreover, we also extend the well-posedness result established in \cite{liu2014yosida} for $p\in(1,2]$ to the case  $p>1$.
\end{example}

\begin{example}
    For $p>1$ and $\rho>0$, we consider the function
$$\Psi(s)=\rho\cdot\text{sign}(s)|s|^{p-1}+\tilde{\Psi}(s),$$
where $\tilde{\Psi}$ is a continuous monotonically nondecreasing function  with growth of order $p-1$. Our results extend those in \cite{BDR092} from $p\in (1,2)$ to $p>1$, which can be applied to the  slow diffusion models for $p>2$ and the fast diffusion models for $1<p<2$ $($in particular, the plasma fast diffusion model if $p=\frac{3}{2}$$)$.

\begin{example}
For $\rho>0$, we consider the function
$$\Psi(s)=\rho\cdot\text{sign}(s)+\tilde{\Psi}(s),$$
where $\tilde{\Psi}\in C^1(\mathbb{R}\backslash\{0\})$ and there exist constants $\delta,c>0$ and $p>1$ such that $\tilde{\Psi}'(s)\ge \delta|s|^{p-2}$ and $|\tilde{\Psi}(s)|\le c|s|^{p-1}+c$ for all $s\in\mathbb{R}\backslash\{0\}$. For every $x\in \tilde{\Psi}(0)$, assume that there exists a constant $c>0$ such that $|x|\le c$ and that condition $(\Psi1)$ holds. Then, conditions $(\Psi1)$-$(\Psi3)$ are satisfied. The authors \cite{BDR091} investigated the case where $p=2$ in this example and also pointed out that the finite time extinction of solutions to this example exhibits self-organized criticality behavior.
\end{example}

\end{example}
\begin{theorem}\label{expm}
    Assume that $\Psi$ satisfies $(\Psi1)$-$(\Psi3)$ and that $(\mathbf{H}_{\sigma})$ holds. Then for any initial value $x\in H$, the multi-valued stochastic porous media equation \eqref{porousmedia} has a unique (probabilistically) strong solution. In addition, for any $q\ge 2$, we have the following estimate
    \begin{align*}
	\mathbb{E}\Big[\sup\limits_{t\in [0,T]}\|X(t)\|_H^{q}\Big]+\mathbb{E}\left\{\left(\int_{0}^{T}\|X(s)\|_V^pds\right)^{\frac{q}{2}}\right\}<\infty.
\end{align*}
Furthermore, let $X(t,x)$ be the unique solution of Eq.~\eqref{porousmedia} with the initial value $x$, $\{x_n\}_{n\in\mathbb{N}}$ and $x$ be a sequence with $\|x_n-x\|_H\to 0$. Then for any $q>0$,
\begin{align*}
	\lim\limits_{n\to \infty}\mathbb{E}\Big[\sup_{t\in [0,T]}\|X(t,x_n)-X(t,x)\|_H^{q}\Big]=0.
\end{align*}
\end{theorem}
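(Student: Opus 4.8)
The plan is to fit \eqref{porousmedia} into the abstract inclusion \eqref{general} and then invoke Theorem \ref{corollary1} (strong well-posedness) and Theorem \ref{theoremzy21} (continuous dependence). With the Gelfand triple $V=L^p(\Lambda)\subset H=H^{-1}(\Lambda)\subset V^*=(L^p(\Lambda))^*$ fixed as above --- the hypothesis $p>\frac{2d}{d+2}$ being exactly what makes $V\subset H$ continuous, dense and compact --- I take the coercivity exponent to be $\alpha=p$, set $B\equiv 0$ and keep $\sigma$ as given, and define the (time-independent) multi-valued drift
\[
\mathcal{A}(u):=\big\{-\Delta v:\ v\in L^{\frac{p}{p-1}}(\Lambda),\ v(\xi)\in\Psi(u(\xi))\ \text{for a.e.\ }\xi\in\Lambda\big\},
\]
where $-\Delta\colon L^{\frac{p}{p-1}}(\Lambda)\to V^*$ is the linear isometry of Lemma \ref{ps}. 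The first check is $\mathcal{D}(\mathcal{A})=V$: for $u\in L^p(\Lambda)$, the pointwise minimal selection $\xi\mapsto\Psi^0(u(\xi))$ satisfies $|\Psi^0(u(\xi))|\le c|u(\xi)|^{p-1}+c$ by $(\Psi3)$, hence belongs to $L^{\frac{p}{p-1}}(\Lambda)$, and applying $-\Delta$ shows $\mathcal{A}(u)\ne\emptyset$.

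I would then verify the three structural conditions on $\mathcal{A}$. Monotonicity in $(\mathbf{H}_{\mathcal{A}}^1)$ is immediate from \eqref{pse}: for selections $v_i\in\Psi(u_i(\cdot))$, $\langle-\Delta v_1-(-\Delta v_2),u_1-u_2\rangle=\int_\Lambda(v_1-v_2)(u_1-u_2)\,d\xi\ge 0$ by the pointwise monotonicity of $\Psi$. Maximal monotonicity is the delicate point: since $\Psi$ is maximal monotone on $\mathbb{R}$, the superposition (Nemytskii) operator $u\mapsto\{v\in L^{\frac{p}{p-1}}(\Lambda):v(\xi)\in\Psi(u(\xi))\ \text{a.e.}\}$ is maximal monotone from $L^p(\Lambda)$ to $2^{L^{p/(p-1)}(\Lambda)}$ --- this follows from the one-dimensional representation $\Psi=\partial\varphi$ together with the fact that $u\mapsto\int_\Lambda\varphi(u(\xi))\,d\xi$ is proper, convex and lower semicontinuous on $L^p(\Lambda)$, or directly from the range/perturbation criteria for maximal monotone operators recalled in the Appendix --- and composition with the linear isometric isomorphism $-\Delta$ preserves maximal monotonicity. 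For the coercivity $(\mathbf{H}_{\mathcal{A}}^2)$, $(\Psi2)$ and \eqref{pse} give $\langle-\Delta v,u\rangle=\int_\Lambda v(\xi)u(\xi)\,d\xi\ge c_1\|u\|_{L^p}^p-c_2|\Lambda|$ for every selection $v$, so $(\mathbf{H}_{\mathcal{A}}^2)$ holds with $\delta=c_1$, $\alpha=p$ and $f\equiv c_2|\Lambda|$. Finally, the minimal section $\mathcal{A}^0(u)$ equals $-\Delta v^0$ with $v^0(\xi)=\Psi^0(u(\xi))$ (because $-\Delta$ is isometric and the $L^{p/(p-1)}$-norm of a selection is minimised by the pointwise minimal one), so by $(\Psi3)$
\[
\|\mathcal{A}^0(u)\|_{V^*}^{\frac{p}{p-1}}=\int_\Lambda|\Psi^0(u(\xi))|^{\frac{p}{p-1}}\,d\xi\le C\big(1+\|u\|_{L^p}^p\big),
\]
which is $(\mathbf{H}_{\mathcal{A}}^3)$ with $\beta=0$.

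It remains to note that $B\equiv 0$ trivially satisfies $(\mathbf{H}_B^1)$, $(\mathbf{H}_B^2)$ and $(\mathbf{H}_B^3)$, while $(\mathbf{H}_B^4)$ (with $\rho\equiv\zeta\equiv 0$) reduces to the Lipschitz-type control of the diffusion coefficient that is part of the standing hypotheses on $\sigma$. Hence all assumptions of Theorem \ref{corollary1} (with coercivity exponent $p$) and Theorem \ref{theoremzy21} are met, and these theorems deliver, respectively, the unique probabilistically strong solution together with the moment bound \eqref{genest1} --- which is precisely the asserted estimate once the moment exponent there is renamed $q$ and the coercivity exponent is $p$ --- and the continuous dependence on the initial value $x$. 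I expect the genuine work to lie entirely in showing that $\mathcal{A}$ is \emph{maximal} monotone on $L^p(\Lambda)$ --- not merely monotone --- and has full domain $\mathcal{D}(\mathcal{A})=V$; once these two facts are in place, $(\mathbf{H}_{\mathcal{A}}^2)$ and $(\mathbf{H}_{\mathcal{A}}^3)$ are a direct transcription of $(\Psi2)$ and $(\Psi3)$ through the isometry of Lemma \ref{ps}, and everything else is routine.
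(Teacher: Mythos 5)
Your proposal is correct and follows essentially the same route as the paper: fix the Gelfand triple $L^p(\Lambda)\subset H^{-1}(\Lambda)\subset (L^p(\Lambda))^*$, set $\alpha=p$ and $B\equiv 0$, read $(\mathbf{H}_{\mathcal{A}}^2)$ and $(\mathbf{H}_{\mathcal{A}}^3)$ off $(\Psi2)$--$(\Psi3)$ through the isometry of Lemma \ref{ps}, and invoke Theorems \ref{corollary1} and \ref{theoremzy21}. The one point where you diverge is maximal monotonicity: the paper simply cites the argument in Section 6 of \cite{liu2014yosida}, whereas you propose proving it via the representation of the Nemytskii operator as the subdifferential of the integral functional $u\mapsto\int_\Lambda\varphi(u(\xi))\,d\xi$ on $L^p(\Lambda)$ (or via the range condition of Theorem \ref{maxiifon1}); both routes are valid, and yours is the more self-contained. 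One small caution: $(\mathbf{H}_B^4)$ with $B\equiv 0$ demands $\|\sigma(t,x)-\sigma(t,y)\|_{\mathcal{L}_2}^2\le (f(t)+\rho(x)+\zeta(y))\|x-y\|_H^2$, which is \emph{not} contained in $(\mathbf{H}_\sigma)$ as stated (that condition only gives continuity and linear growth), so your remark that it is ``part of the standing hypotheses on $\sigma$'' is not literally accurate --- but the paper's own proof passes over exactly the same point, so this is an imprecision inherited from the statement rather than a gap you introduced.
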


\begin{proof}
We define the multi-valued porous media operator $\mathcal{A}:V\to 2^{V^*}$ by
$$
\mathcal{A}(u):=-\Delta\Psi(u),\quad u\in L^p(\Lambda).
$$
According to condition $(\Psi3)$, we know that $\Psi(u)\in L^{\frac{p}{p-1}}(\Lambda)$ for any $u\in L^p(\Lambda)$. Therefore, the operator $\mathcal{A}$ is well-defined by Lemma \ref{ps}.

In view of \eqref{pse} and condition $(\Psi2)$, we have that for any $u\in L^p(\Lambda)$ and $v\in\Psi(u)$,
$$
\langle -\Delta v,u\rangle =\int_{\Lambda}v(\xi)u(\xi)d\xi\ge c_1\int_{\Lambda}|u(\xi)|^pd\xi-c_2|\Lambda|=c_1\|u\|_V^p-C.
$$
Therefore, the multi-valued porous media operator $\mathcal{A}$ satisfies $(\mathbf{H}_{\mathcal{A}}^2)$ with $\delta:=c_1$ and $\alpha=p$. In addition, let $u\in L^p(\Lambda)$ and $v\in \Psi(u)$ be such that $-\Delta v=\mathcal{A}^0(u)$. According to Lemma \ref{ps} and condition $(\Psi3)$, we deduce that
$$
\|\mathcal{A}^0(u)\|_{V^*}\le C\|u\|_V^{p-1}+C.
$$
Hence, $(\mathbf{H}_{\mathcal{A}}^3)$ holds for the multi-valued porous media operator $\mathcal{A}$. By the proof in Section 6 of \cite{liu2014yosida}, we know that $-\Delta\Psi(\cdot)$ is maximal-monotone.

Since $\mathcal{A}$ satisfies $(\mathbf{H}_{\mathcal{A}}^1)$-$(\mathbf{H}_{\mathcal{A}}^3)$, $B=0$ and $\sigma$ satisfies $(\mathbf{H}_{\sigma})$, we can directly obtain the desired results by applying Theorem \ref{corollary1} and Theorem \ref{theoremzy21}.
\end{proof}

Next, let us focus on the finite time extinction of the multi-valued stochastic porous media equations perturbed by linear
multiplicative noise, i.e.,
\begin{align}\label{porousmedia2}
	\left\{
	\begin{aligned}
		&dX(t) \in \Delta\Psi (X(t))dt+\sum_{k=1}^{\infty}h_k(t)X(t)d\beta_k(t),\\
		& X|_{\partial\Lambda}=0, X(0) = x,
	\end{aligned}
	\right.
\end{align}
where $\{\beta_k\}$ is a sequence of standard Brownian motions on a filtered probability space $(\Omega,\mathcal{F},\{\mathcal{F}_t\}_{t\ge 0},\mathbb{P})$ and $h(t):=\sum_{k=1}^{\infty}|h_k(t)|^2\in L^1([0,\infty),[0,\infty))$.

\begin{theorem}\label{thft1}
Assume that  $(\Psi1)$-$(\Psi3)$ hold for Eq.~\eqref{porousmedia2} with $p\in(1,2)$ and $c_2=0$ in $(\Psi2)$.
For any  $x\in H$ and  $t\ge\tau_e$, where $\tau_e$ is the extinction time,  we have
	\begin{align*}
		\|X(t)\|_H=0,\ \mathbb{P}\text{-a.s.,}
	\end{align*}
and there is a constant $c^*>0$ such that for any $T>0$,
\begin{align*}
\mathbb{P}(\tau_e\leq T)\geq 1-\frac{c^*\|x\|_{H}^{2-p}}{T}.
\end{align*}
Furthermore, we have
\begin{align*}
\mathbb{E}\tau_e\leq c^*\|x\|_H^{2-p}.
\end{align*}
\end{theorem}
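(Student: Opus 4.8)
The plan is to obtain Theorem \ref{thft1} as a direct consequence of the general finite-time extinction result Theorem \ref{fini}, by checking that the multi-valued porous-media data satisfy its hypotheses within the Gelfand triple already set up in the proof of Theorem \ref{expm}. First I would recall that framework: $V=L^p(\Lambda)$, $H=H^{-1}(\Lambda)$, $V^*=(L^p(\Lambda))^*$ with $V\subset H$ compact, the drift operator $\mathcal{A}(u):=-\Delta\Psi(u)$ and $B\equiv 0$, and the noise of the linear multiplicative form \eqref{de} with $h(t)=\sum_{k=1}^\infty|h_k(t)|^2\in L^1([0,\infty),[0,\infty))$, as specified in Eq.~\eqref{porousmedia2}.

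Next I would verify the operator conditions. That $\mathcal{A}$ satisfies $(\mathbf{H}_{\mathcal{A}}^1)$ (maximal monotonicity of $-\Delta\Psi(\cdot)$, using the argument from Section~6 of \cite{liu2014yosida}) and $(\mathbf{H}_{\mathcal{A}}^3)$ (via $(\Psi3)$, which yields $\|\mathcal{A}^0(u)\|_{V^*}\le C\|u\|_V^{p-1}+C$) is exactly as in the proof of Theorem \ref{expm}. For the strengthened coercivity $(\mathbf{H}_{\mathcal{A}}^{2*})$ I would invoke the isometry identity \eqref{pse} together with $(\Psi2)$ in which now $c_2=0$: for $u\in L^p(\Lambda)$ and $v\in\Psi(u)$,
\[
\langle -\Delta v,u\rangle=\int_\Lambda v(\xi)u(\xi)\,d\xi\ge c_1\int_\Lambda|u(\xi)|^p\,d\xi=c_1\|u\|_V^p,
\]
so $(\mathbf{H}_{\mathcal{A}}^{2*})$ holds with $\delta=c_1$ and $\alpha=p$, and by hypothesis $p\in(1,2)$. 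Since $B\equiv 0$, conditions $(\mathbf{H}_B^2)$ and $(\mathbf{H}_B^3)$ are vacuous, $(\mathbf{H}_B^{1*})$ holds with $f\equiv 0$, and $(\mathbf{H}_B^4)$ holds with $\rho\equiv\zeta\equiv 0$ because for the linear multiplicative noise one has $\|\sigma(t,x)-\sigma(t,y)\|_{\mathcal{L}_2}^2=h(t)\|x-y\|_H^2$.

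For the noise condition $(\mathbf{H}_{\sigma}^*)$, since we may take $f\equiv 0$ in $(\mathbf{H}_B^{1*})$, the compatibility requirement $(\alpha-1)h(t)\ge f(t)$ reduces to $(\alpha-1)h(t)\ge 0$, which is automatic; combined with $h\in L^1([0,\infty),[0,\infty))$ this gives $(\mathbf{H}_{\sigma}^*)$, hence also $(\mathbf{H}_{\sigma})$ by the remark following its statement. All hypotheses of Theorem \ref{fini} are therefore met, and it yields $\|X(t)\|_H=0$ $\mathbb{P}$-a.s.\ for $t\ge\tau_e$, the bound $\mathbb{P}(\tau_e\le T)\ge 1-c^*\|x\|_H^{2-p}/T$ for all $T>0$, and $\mathbb{E}\tau_e\le c^*\|x\|_H^{2-p}$, with $c^*=1/\{\delta\rho^{p}(1-p/2)\}$ where $\delta=c_1$ and $\rho>0$ is the constant from the continuous embedding $\|\cdot\|_V\ge 2\rho\|\cdot\|_H$. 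The only genuinely non-mechanical point — and the place a careless reading could go wrong — is recognizing that the seemingly restrictive spectral condition $(\alpha-1)h(t)\ge f(t)$ becomes trivial precisely because the porous-media equation has no lower-order drift term, so $f$ may be taken identically zero; everything else reuses the verifications already carried out for Theorem \ref{expm}, and the assumptions $p\in(1,2)$ and $c_2=0$ are exactly what is needed to pass from $(\mathbf{H}_{\mathcal{A}}^2)$ to $(\mathbf{H}_{\mathcal{A}}^{2*})$ with $\alpha\in(1,2)$.
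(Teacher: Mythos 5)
Your proposal is correct and is essentially the argument the paper intends: Theorem \ref{thft1} is obtained by applying Theorem \ref{fini} in the Gelfand triple $L^p(\Lambda)\subset H^{-1}(\Lambda)\subset (L^p(\Lambda))^*$, reusing the verifications of $(\mathbf{H}_{\mathcal{A}}^1)$ and $(\mathbf{H}_{\mathcal{A}}^3)$ from the proof of Theorem \ref{expm}, upgrading to $(\mathbf{H}_{\mathcal{A}}^{2*})$ via $c_2=0$, and noting that $B\equiv 0$ makes $(\mathbf{H}_B^{1*})$ hold with $f\equiv 0$ so that the constraint $(\alpha-1)h\ge f$ in $(\mathbf{H}_{\sigma}^*)$ is vacuous. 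Your identification of the constant $c^*$ also matches the paper's explicit characterization, so nothing further is needed.
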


\begin{rem}
We point out that if $d=1,2$, one could apply stronger embedding  from  Theorems 7.10 and 7.15 in \cite{GT83} to obtain the well-posedness  and the finite time extinction  for all $p>1$ and $p\in(1,2)$, respectively.
\end{rem}

\subsection{Multi-valued stochastic $\Phi$-Laplacian equations}\label{exlap}

The $\Phi$-Laplacian equations, which generalize the $p$-Laplacian equations, were studied by Gess and T\"olle \cite{GT14} under the weak coercivity condition for $p\in [1,2)$. In this subsection, we focus on the well-posedness and finite time extinction of multi-valued stochastic $\Phi$-Laplacian equations for $p>1$. These include the (singular/degenerate) $p$-Laplacian equations and the models of non-Newtonian fluids governed by the $\Phi$-Laplacian operator.

Let $\Lambda\subset \mathbb{R}^d$ be an open and bounded domain with smooth boundary $\partial\Lambda$, and either $p>1$ if $d=1,2$ or $p> \frac{2d}{d+2}$ if $d\ge 3$. Considering the following multi-valued stochastic $\Phi$-Laplacian equations
 \begin{align}\label{lap}
	\left\{
	\begin{aligned}
		&dX(t) \in \text{div}\Phi(\nabla X(t))dt+\sigma(t, X(t))dW(t),\\
		& X|_{\partial\Lambda}=0, X(0) = x.
	\end{aligned}
	\right.
\end{align}

Define $V:= W_0^{1,p}(\Lambda), H:=L^2(\Lambda)$ and $V^*=(W_0^{1,p}(\Lambda))^*$. Then, we have the Gelfand triple
\begin{align*}
    V\subset H \subset V^*
\end{align*}
compactly and densely.

\vspace{1mm}
Let $\Phi:\mathbb{R}\to 2^{\mathbb{R}}$ be a function satisfying the following conditions.

\vspace{1mm}
$(\Phi1)$ $\Phi$ is maximal-monotone.

\vspace{1mm}
$(\Phi2)$ For $p\in (1,\infty)$, there exist $c_1>0, c_2\ge 0$ such that for any $x\in \Phi(s)$,
$$s\cdot x\ge c_1|s|^p-c_2.$$

\vspace{1mm}
$(\Phi3)$ There exist $c>0$ such that for any $x\in \Phi(s)$ and $s\in \mathbb{R}$,
$$|x|\le c|s|^{p-1}+c.$$

\begin{example}
$(i)$ $p$-Laplacian equations: For $p>1$, the operator
$$ \Phi(s)=|s|^{p-1}\text{sign}(s)$$
satisfies $(\Phi1)$-$(\Phi3)$ with $c_1=1$ and $c_2=0$,
where the sign function is defined in $(\ref{sign1})$.

\vspace{1mm}
$(ii)$ Non-Newtonian fluid models: For $p>1$, the operator
$$\Phi(s)=(1+s^2)^{(p-2)/2}s$$
satisfies $(\Phi1)$-$(\Phi3)$, which has been considered in \cite{SST23}.
\end{example}

\begin{theorem}\label{expl}
Assume that $(\Phi1)$-$(\Phi3)$ and $(\mathbf{H}_{\sigma})$ holds. Then for any initial value $x\in H$, the multi-valued stochastic $\Phi$-Laplacian equations \eqref{lap} have a unique (probabilistically) strong solution. In addition, for any $q\ge 2$, we have the following estimate
    \begin{align*}
	\mathbb{E}\Big[\sup\limits_{t\in [0,T]}\|X(t)\|_H^{q}\Big]+\mathbb{E}\left\{\left(\int_{0}^{T}\|X(s)\|_V^{p}ds\right)^{\frac{q}{2}}\right\}<\infty.
\end{align*}
Furthermore, let $X(t,x)$ be the unique solution of Eq.~\eqref{lap} with the initial value $x$, $\{x_n\}_{n\in\mathbb{N}}$ and $x$ be a sequence with $\|x_n-x\|_H\to 0$. Then for any $q>0$,
\begin{align*}
	\lim\limits_{n\to \infty}\mathbb{E}\Big[\sup_{t\in [0,T]}\|X(t,x_n)-X(t,x)\|_H^{q}\Big]=0.
\end{align*}
\end{theorem}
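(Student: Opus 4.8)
The plan is to reduce Theorem~\ref{expl} to Theorem~\ref{corollary1} and Theorem~\ref{theoremzy21} by checking that the $\Phi$-Laplace operator, together with the trivial choice $B\equiv 0$, fits the abstract hypotheses with $\alpha=p$ and $\beta=0$. Concretely, on $V=W_0^{1,p}(\Lambda)$ I would set $\mathcal{A}(u):=-\mathrm{div}\,\Phi(\nabla u)$, understood as $\mathcal{A}(u):=\{-\mathrm{div}\,v\mid v\in L^{p/(p-1)}(\Lambda;\mathbb{R}^d),\ v(\xi)\in\Phi(\nabla u(\xi))\text{ a.e.}\}$. A measurable-selection argument (the graph of $\Phi$ is closed and $\nabla u$ is measurable) yields at least one such selection, and condition $(\Phi3)$ forces every such $v$ to lie in $L^{p/(p-1)}(\Lambda;\mathbb{R}^d)$, so that $-\mathrm{div}\,v\in W^{-1,p/(p-1)}(\Lambda)=V^*$; hence $\mathcal{A}$ is well defined with full domain $\mathcal{D}(\mathcal{A})=V$.

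Monotonicity of $\mathcal{A}$ is immediate from monotonicity of $\Phi$ via $\langle -\mathrm{div}(v_1-v_2),u_1-u_2\rangle=\int_\Lambda(v_1-v_2)\cdot\nabla(u_1-u_2)\,d\xi\ge 0$. For the maximal monotonicity in $(\mathbf{H}_{\mathcal{A}}^1)$ the cleanest route is the convex-functional representation: by Rockafellar's theorem $\Phi=\partial j$ for a proper convex lower semicontinuous $j:\mathbb{R}\to(-\infty,\infty]$, and $(\Phi2)$--$(\Phi3)$ yield the two-sided bound $c_1'|s|^p-c_2'\le j(s)\le c_3'|s|^p+c_4'$, so that $\mathcal{J}(u):=\int_\Lambda j(\nabla u)\,d\xi$ is proper, convex and lower semicontinuous on $V$; one then identifies $\partial\mathcal{J}=\mathcal{A}$ (again using measurable selections), whence $\mathcal{A}$ is maximal monotone — alternatively one may invoke the treatment of the $\Phi$-Laplacian in \cite{GT14}. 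For $(\mathbf{H}_{\mathcal{A}}^2)$, using $(\Phi2)$ and the Poincar\'e inequality (equivalence of $\|\cdot\|_{W_0^{1,p}}$ with $\|\nabla\cdot\|_{L^p}$) one gets, for $v\in\Phi(\nabla u)$, $\langle-\mathrm{div}\,v,u\rangle=\int_\Lambda v\cdot\nabla u\,d\xi\ge c_1\|\nabla u\|_{L^p}^p-c_2|\Lambda|\ge\delta\|u\|_V^p-C$, i.e.\ coercivity with $\delta:=c_1$, $\alpha:=p$ and $f\equiv C$ (which is admissible on the bounded interval $[0,T]$). For $(\mathbf{H}_{\mathcal{A}}^3)$, if $v\in\Phi(\nabla u)$ realizes the minimal section $-\mathrm{div}\,v=\mathcal{A}^0(u)$, then $(\Phi3)$ gives $\|v\|_{L^{p/(p-1)}}\le C\|\nabla u\|_{L^p}^{p-1}+C$, hence $\|\mathcal{A}^0(u)\|_{V^*}^{p/(p-1)}\le C\|u\|_V^p+C$, i.e.\ the growth condition with $\beta=0$.

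Since $B\equiv 0$, conditions $(\mathbf{H}_B^1)$--$(\mathbf{H}_B^4)$ hold trivially (take $\rho=\zeta=0$ in $(\mathbf{H}_B^4)$; hemicontinuity is vacuous), and $(\mathbf{H}_{\sigma})$ is assumed. All hypotheses of Theorem~\ref{corollary1} are therefore met, giving the existence of a unique probabilistically strong solution to \eqref{lap} together with the moment estimate of Theorem~\ref{corollary1} with $\alpha=p$ (restated with the exponent $q$); Theorem~\ref{theoremzy21} then yields the asserted continuous dependence on initial data for every $q>0$. The one genuinely nontrivial point is the maximal monotonicity of $\mathcal{A}$ on $W_0^{1,p}(\Lambda)$: one must show that the Nemytskii-type map induced by a general, possibly set-valued, monotone $\Phi$ composed with the gradient and followed by $-\mathrm{div}$ is again maximal monotone, which is precisely what the identification $\mathcal{A}=\partial\mathcal{J}$ delivers once the $p$-growth of the potential $j$ is in hand and the measurable-selection step identifying $\partial\mathcal{J}(u)$ with $\{-\mathrm{div}\,v:v\in\Phi(\nabla u)\}$ has been carried out.
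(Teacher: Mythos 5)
Your reduction to Theorems \ref{corollary1} and \ref{theoremzy21} with $B\equiv 0$, $\alpha=p$, $\beta=0$, and your verification of $(\mathbf{H}_{\mathcal{A}}^2)$, $(\mathbf{H}_{\mathcal{A}}^3)$ and of plain monotonicity all coincide with what the paper does. Where you genuinely diverge is the one hard step, the maximal monotonicity of $\mathcal{A}(u)=-\div\,\Phi(\nabla u)$ on $W_0^{1,p}(\Lambda)$. The paper proves this via the range condition of Theorem \ref{maxiifon1}: it solves the regularized equation $J(x)-\div\,\Phi_{\lambda}(\nabla x)=y$ using the generalized Yosida approximation $\Phi_\lambda$, derives uniform bounds, and passes to the limit with Lemma \ref{propmaxi}; this is self-contained, uses only maximal monotonicity of $\Phi$, and showcases the paper's new Yosida machinery. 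You instead write $\Phi=\partial j$ (Rockafellar), form $\mathcal{J}(u)=\int_\Lambda j(\nabla u)\,d\xi$, and identify $\mathcal{A}=\partial\mathcal{J}$, so that maximality comes for free from the subdifferential theorem. This is a legitimate and cleaner route, but it buys its brevity at two costs you should make explicit. First, it requires $\Phi$ to be cyclically monotone: automatic if one reads $\Phi:\mathbb{R}\to 2^{\mathbb{R}}$ literally as in the paper's hypotheses, but not for a general maximal monotone map on $\mathbb{R}^d$ acting on $\nabla u$, which is the setting the $\Phi$-Laplacian really lives in; the paper's range-condition argument needs no such structure. Second, the identification $\partial\mathcal{J}(u)=\{-\div\,v:\ v\in\Phi(\nabla u)\ \text{a.e.}\}$ is exactly the nontrivial content (the easy inclusion $\supseteq$ does not suffice for maximality — you need equality), and you only sketch it via a chain rule for $\partial(I_j\circ\nabla)$ plus Rockafellar's formula for $\partial I_j$; these are standard citable facts under the $p$-growth of $j$ furnished by $(\Phi3)$, so the argument closes, but as written it outsources the same amount of work that the paper spends on its Yosida limit. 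Also note that your claimed lower bound $j(s)\ge c_1'|s|^p-c_2'$ does not follow cleanly from $(\Phi2)$ (the integration produces a divergent term), but it is not needed: the $p$-growth upper bound already makes $\mathcal{J}$ finite, convex and continuous on $V$, and coercivity of $\mathcal{A}$ is checked directly from $(\Phi2)$ as you do.
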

\begin{proof}
Define the $\Phi$-Laplace operator $\mathcal{A}: V\to 2^{V^*}$ by
$$
\mathcal{A}(u):= -\text{div}\Phi(\nabla u), \quad u\in W_0^{1,p}(\Lambda),
$$
more precisely, given $u\in W_0^{1,p}(\Lambda)$, for any  $\tilde{v}\in \Phi(\nabla u)$, we define $v:=-\text{div}(\tilde{v})\in \mathcal{A}(u)$ such that
\begin{align}\label{lapdef}
    \langle v, w\rangle:=\int_{\Lambda}\tilde{v}(\xi)\nabla w(\xi)d\xi~~\text{for~all~}w\in W_0^{1,p}(\Lambda).
\end{align}
In view of condition $(\Phi3)$, for any $u\in H_0^{1,p}(\Lambda)$ we note that each $v\in\mathcal{A}(u)$ is a well-defined element on $(H_0^{1,p}(\Lambda))^*$, and
$$
\|v\|_{V^*}\le C \|u\|_V^{p-1}+C.
$$
Thus, $(\mathbf{H}_{\mathcal{A}}^3)$ holds for the multi-valued $\Phi$-Laplace operator $\mathcal{A}$ with $\alpha=p$.

According to condition $(\Phi2)$, we deduce that for any $u\in H_0^{1,p}(\Lambda)$,
$$
\langle -\text{div}\Phi(\nabla u), u \rangle=\int_{\Lambda}\Phi(\nabla u(\xi))\nabla u(\xi) d\xi\ge c_1\int_{\Lambda}|\nabla u(\xi)|^pd\xi-c_2|\Lambda|\ge c_1\|u\|_V^{p}-C,
$$
then the operator $\mathcal{A}$ satisfies $(\mathbf{H}_{\mathcal{A}}^2)$ with $\delta:=c_1$ and $\alpha=p$.

Next, we prove that $\mathcal{A}$ is maximal-monotone. Let $x,y\in V, \tilde{v}\in \Phi(\nabla x), \tilde{u}\in \Phi(\nabla y)$ and $v:=-\text{div}(\tilde{v}),u:=-\text{div}(\tilde{u})$. By condition $(\Phi1)$, we have
$$
\langle v-u, x-y \rangle=\int_{\Lambda}(\tilde{v}(\xi)-\tilde{u}(\xi))(\nabla x(\xi)-\nabla y(\xi))d\xi\ge 0.
$$
This shows that $\mathcal{A}$ is monotone.

In order to get the maximal monotonicity $\mathcal{A}$, according to Theorem \ref{maxiifon1}, it is sufficient to show that if for any $y\in V^*$, there exists $x\in V$ such that
\begin{align}\label{exa2}
  J(x)-\text{div}(\tilde{v})= y,
\end{align}
where $\tilde{v}\in \Phi(\nabla x)$ and $J$ is the duality mapping from $V$ to $V^*$.

We begin by considering the approximating equation
\begin{align}\label{exa2apro}
 J(x)-\text{div}\Phi_{\lambda}(\nabla x)=y,
\end{align}
where $\Phi_{\lambda}:\mathbb{R}\to\mathbb{R}$ is the generalized Yosida approximation of $\Phi$ defined by (\ref{Alamd}).

Let $u_n\to u$ in $V$, then we have $\nabla u_n\to \nabla u$ in $L^p(\Lambda)$. By the demicontinuity of $\Phi_{\lambda}$, we deduce that $\Phi_{\lambda}(\nabla u_n)\rightharpoonup \Phi_{\lambda}(\nabla u)$ in $L^{\frac{p}{p-1}}(\Lambda)$. Therefore, for any $w\in V$, we have
$$
\lim_{n\to\infty}\langle -\text{div}\Phi_{\lambda}(\nabla u_n)+\text{div}\Phi_{\lambda}(\nabla u), w \rangle=\lim_{n\to\infty}{}_{L^{\frac{p}{p-1}}}\langle\Phi_{\lambda}(\nabla u_n)-\Phi_{\lambda}(\nabla u), \nabla w\rangle_{L^{p}}=0.
$$
This shows that $-\text{div}\Phi_{\lambda}(\nabla \cdot)$ is also demicontinuous.

Next, since $-\text{div}\Phi_{\lambda}(\nabla \cdot)$ is monotone and demicontinuous, and $J$ is maximal-monotone, we conclude that the operator $x\mapsto J(x)-\text{div}\Phi_{\lambda}(\nabla x)$ is maximal-monotone. According to condition $(\Phi2)$ and Lemma \ref{lemmacoe1}, there exist constants $c_1>0,c_2\ge 0$ such that for any $0<\lambda<c_1^{-1}$, $x\in\Phi_{\lambda}(s)$ and $s\in \mathbb{R}$, we have
$$
s\cdot x\ge c_1 2^{-p}|s|^p-c_2.
$$
Hence, for any $u\in V$, we obtain
\begin{align}\label{exa1}
   \langle -\text{div}\Phi_{\lambda}(\nabla u), u \rangle=\int_{\Lambda}\Phi_{\lambda}(\nabla u(\xi))\cdot\nabla u(\xi)d\xi\ge c_1 2^{-p} \|u\|_V^p-C.
\end{align}
This shows that $-\text{div}\Phi_{\lambda}(\nabla \cdot)$ is coercive. Then, it is clear that $J-\text{div}\Phi_{\lambda}(\nabla \cdot)$ is also coercive. Finally, by Corollary 2.2 in \cite{barbu2010nonlinear} there exists a solution $x_{\lambda}$ to \eqref{exa2apro}.

By \eqref{exa1}, we can deduce that
\begin{align*}
    c_1 2^{-p} \|x_{\lambda}\|_V^p&\le \langle -\text{div}\Phi_{\lambda}(\nabla x_{\lambda}), x_{\lambda}\rangle +C\\
    &=\langle y-J(x_{\lambda}),x_{\lambda}\rangle+C\\
    &\le c_1 2^{-(p+1)}\|x_{\lambda}\|_V^p+C(\|y\|_{V^*}^{\frac{p}{p-1}}+1).
\end{align*}
Therefore, for any $0<\lambda<c_1^{-1}$, we have
\begin{equation}\label{es5}
\sup_{\lambda}\|x_{\lambda}\|_V<\infty.
 \end{equation}
 Denote the operator $\mathcal{A}_{\lambda}(\cdot): = -\text{div}\Phi_{\lambda}(\nabla \cdot)$, it follows from Proposition \ref{propyosgj} (ii) and condition $(\Phi3)$ that
\begin{align}
    \|\Phi_{\lambda}(\nabla x_{\lambda})\|_{L^{\frac{p}{p-1}}(\Lambda)}\le C(\|x_{\lambda}\|_V^p+1)
\end{align}
and
\begin{align*}
\|\mathcal{A}_{\lambda}(x_{\lambda})\|_{V^*}&=\sup_{\|x\|_V=1}|\langle -\text{div}\Phi_{\lambda}(\nabla x_{\lambda}), x\rangle|\\
&\le\sup_{\|x\|_V=1}\int_{\Lambda}|\Phi_{\lambda}(\nabla x_{\lambda}(\xi))| \cdot |\nabla x(\xi)|d\xi\\
&\le C(\|x_{\lambda}\|_V^{p}+1).
\end{align*}
By the definition of $J$, we can conclude that for any $0<\lambda<c_1^{-1}$, the following estimate holds
\begin{equation}\label{es6}
\sup_{\lambda}\left(\|\Phi_{\lambda}(\nabla x_{\lambda})\|_{L^{\frac{p}{p-1}}(\Lambda)}+\|\mathcal{A}_{\lambda}(x_{\lambda})\|_{V^*}+\|J(x_{\lambda})\|_{V^*}\right)<\infty.
 \end{equation}
Hence, in terms of (\ref{es5})-(\ref{es6}), there exists a subsequence $\lambda_k$ of $\lambda$ $(0<\lambda<c_1^{-1})$, we denote $x_{\lambda_k}$ by $x_k$, such that  as $k\to\infty$,
$$x_k\rightharpoonup x~\text{in}~V,~~ \Phi_{\lambda_k}(\nabla x_k)\rightharpoonup x_{\Phi}~\text{in}~L^{\frac{p}{p-1}}(\Lambda),~~ \mathcal{A}_{\lambda_k}(x_k)\rightharpoonup x_{\mathcal{A}},J(x_k)\rightharpoonup x_J~\text{in}~V^*.$$
By \eqref{exa2apro}, it implies that for any $m,n\in \mathbb{N}$,
$$
\langle J(x_n)-J(x_m), x_n-x_m\rangle+ \langle \mathcal{A}_{\lambda_n}(x_n)-\mathcal{A}_{\lambda_m}(x_m), x_n-x_m\rangle =\langle y-y, x_n-x_m \rangle=0.
$$
Since $J$ is monotone, we have
$$
\limsup_{n,m\to \infty}\langle \mathcal{A}_{\lambda_n}(x_n)-\mathcal{A}_{\lambda_m}(x_m), x_n-x_m\rangle\le 0.
$$
This implies that
\begin{align}\label{Alam}
\limsup_{n,m\to \infty}\int_{\Lambda} (\Phi_{\lambda_n}(\nabla x_n(\xi))-\Phi_{\lambda_m}(\nabla x_m(\xi)))(\nabla x_n(\xi)-\nabla x_m(\xi))d\xi\le 0.
\end{align}
In view of $x_k\rightharpoonup x$ in $V$, we have $\nabla x_k\rightharpoonup \nabla x$ in $L^p(\Lambda)$. Since $\Phi$ is maximal-monotone, we know that $\Phi_{\lambda_k}(\nabla x_k)\in \Phi (\mathcal{R}_{\lambda_k}(\nabla x_k))$ and $\|\mathcal{R}_{\lambda_k}(\nabla x_k)-\nabla x_k\|_{L^p(\Lambda)}\to 0$, as $k\to\infty$. By \eqref{Alam} and Lemma \ref{propmaxi}, we deduce that $x_{\Phi}\in \Phi(\nabla x)$. Then, according to \eqref{lapdef}, we have that for all $w\in V$,
\begin{align*}
    \lim_{k\to\infty}\langle \mathcal{A}_{\lambda_k}(x_k)+\text{div}(x_{\Phi}), w\rangle=\lim_{k\to\infty}{_{L^{\frac{p}{p-1}}}}\langle\Phi_{\lambda_k}(\nabla x_k)-x_{\Phi}, \nabla w\rangle_{L^{p}}=0.
\end{align*}
This implies that $\mathcal{A}_{\lambda_n}(x_k)\rightharpoonup -\text{div}(x_{\Phi})\in \mathcal{A}(x)$.
Consequently,
 $$
 \limsup_{n,m\to \infty}\langle J(x_n)-J(x_m), x_n-x_m\rangle\le 0,
 $$
which implies that $x_J=J(x)$. Thus, we obtain that $x\in V$ is the solution of \eqref{exa2}. We complete the proof of (\ref{exa2}).

Since $\mathcal{A}$ satisfies $(\mathbf{H}_{\mathcal{A}}^1)$-$(\mathbf{H}_{\mathcal{A}}^3)$, $B=0$ and $\sigma$ satisfies $(\mathbf{H}_{\sigma})$, the conclusion follows directly from Theorem \ref{corollary1} and Theorem \ref{theoremzy21}.
\end{proof}
We consider the finite time extinction for the following multi-valued stochastic $\Phi$-Laplacian equations perturbed by linear
multiplicative noise
\begin{align}\label{lap2}
	\left\{
	\begin{aligned}
		&dX(t) \in \text{div}\Phi(\nabla X(t))dt+\sum_{k=1}^{\infty}h_k(t)X(t)d\beta_k(t),\\
		& X|_{\partial\Lambda}=0, X(0) = x,
	\end{aligned}
	\right.
\end{align}
where $h(t):=\sum_{k=1}^{\infty}|h_k(t)|^2\in L^1([0,\infty),[0,\infty))$.
\begin{theorem}
Assume that  $(\Phi1)$-$(\Phi3)$ hold for Eq.~\eqref{lap2} with $p\in(1,2)$ and $c_2=0$ in $(\Phi2)$. For any  $x\in H$ and $t\ge\tau_e$,  we have
	\begin{align*}
		\|X(t)\|_H=0,\ \mathbb{P}\text{-a.s.,}
	\end{align*}
and there is a constant $c^*>0$ such that for any $T>0$,
\begin{align*}
\mathbb{P}(\tau_e\leq T)\geq 1-\frac{c^*\|x\|_{H}^{2-p}}{T}.
\end{align*}
Furthermore, we have
\begin{align*}
\mathbb{E}\tau_e\leq c^*\|x\|_H^{2-p}.
\end{align*}
\end{theorem}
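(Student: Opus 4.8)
The plan is to obtain this result as an immediate consequence of the general finite-time extinction theorem, Theorem \ref{fini}, so that the only real work is to verify that its hypotheses are met in the present setting. First I would recall the functional-analytic framework set up in the proof of Theorem \ref{expl}: the Gelfand triple $V := W_0^{1,p}(\Lambda) \subset H := L^2(\Lambda) \subset V^*$ with compact and dense embeddings, and the $\Phi$-Laplace operator $\mathcal{A}(u) := -\text{div}\,\Phi(\nabla u)$ defined via \eqref{lapdef}. From that proof we already know that $\mathcal{A}$ is maximal-monotone, i.e.~$(\mathbf{H}_{\mathcal{A}}^1)$ holds, and that it satisfies the growth condition $(\mathbf{H}_{\mathcal{A}}^3)$ with $\alpha = p$. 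Since Eq.~\eqref{lap2} carries no single-valued pseudo-monotone drift, I set $B \equiv 0$, so that $(\mathbf{H}_B^2)$, $(\mathbf{H}_B^3)$ and $(\mathbf{H}_B^4)$ hold trivially and $(\mathbf{H}_B^{1*})$ holds with $f \equiv 0$.

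Next I would check the two conditions specific to the extinction regime. For $(\mathbf{H}_{\mathcal{A}}^{2*})$, using $(\Phi2)$ with $c_2 = 0$ together with \eqref{lapdef} one gets, for every $u \in W_0^{1,p}(\Lambda)$ and $v = -\text{div}(\widetilde{v}) \in \mathcal{A}(u)$ with $\widetilde{v}\in\Phi(\nabla u)$,
\begin{equation*}
\langle v, u\rangle = \int_{\Lambda} \widetilde{v}(\xi)\nabla u(\xi)\,d\xi \ge c_1\int_{\Lambda} |\nabla u(\xi)|^p\,d\xi = c_1\|u\|_V^p,
\end{equation*}
so $(\mathbf{H}_{\mathcal{A}}^{2*})$ holds with $\delta := c_1$ and $\alpha = p$, and by assumption $p \in (1,2)$, as required. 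For $(\mathbf{H}_{\sigma}^*)$, the diffusion coefficient is of the linear multiplicative form \eqref{de} with $h(t) := \sum_{k=1}^{\infty} |h_k(t)|^2 \in L^1([0,\infty),[0,\infty))$ by hypothesis, and since $f \equiv 0$ and $p > 1$ we trivially have $(\alpha-1)h(t) = (p-1)h(t) \ge 0 = f(t)$; in particular $(\mathbf{H}_{\sigma})$ then also holds by the remark following $(\mathbf{H}_{\sigma}^*)$, so that Theorem \ref{corollary1} already supplies a unique strong solution $X$ to Eq.~\eqref{lap2}.

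With all hypotheses verified, Theorem \ref{fini} applies and yields at once the three assertions: $\|X(t)\|_H = 0$ $\mathbb{P}$-a.s.~for every $t \ge \tau_e$, the quantitative bound $\mathbb{P}(\tau_e \le T) \ge 1 - c^*\|x\|_H^{2-p}/T$ for all $T>0$, and $\mathbb{E}\tau_e \le c^*\|x\|_H^{2-p}$, where one may take $c^* = 1/\Big\{\delta\rho^{p}\big(1-\tfrac{p}{2}\big)\Big\}$ with $\rho > 0$ any constant such that $\|\cdot\|_V \ge 2\rho\|\cdot\|_H$ on $V$ (which exists since $W_0^{1,p}(\Lambda) \hookrightarrow L^2(\Lambda)$ continuously under the standing restriction on $p$ and $d$). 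I do not expect any step here to be a genuine obstacle: the substantive analysis — maximal monotonicity of the $\Phi$-Laplacian through the generalized resolvent equation, and the $L^2$-convergence of the Yosida approximations underlying Lemma \ref{finigj} — is already contained in the proofs of Theorem \ref{expl} and Theorem \ref{fini}. The only point demanding a moment's care is confirming that the choice $f \equiv 0$ is simultaneously consistent with $(\mathbf{H}_B^{1*})$ and the compatibility requirement $(\alpha-1)h \ge f$ in $(\mathbf{H}_{\sigma}^*)$, which it plainly is because $h \ge 0$ and $p > 1$.
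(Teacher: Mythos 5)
Your proposal is correct and matches the paper's (implicit) argument exactly: the paper gives no separate proof of this theorem, treating it precisely as you do — as an immediate consequence of Theorem \ref{fini} once the hypotheses $(\mathbf{H}_{\mathcal{A}}^1)$, $(\mathbf{H}_{\mathcal{A}}^{2*})$, $(\mathbf{H}_{\mathcal{A}}^3)$ are read off from the proof of Theorem \ref{expl} (with $\delta=c_1$, $\alpha=p\in(1,2)$, using $c_2=0$), $B\equiv 0$, and $(\mathbf{H}_{\sigma}^*)$ is verified trivially with $f\equiv 0$. Your identification of the constant $c^*$ also agrees with the remark following Theorem \ref{fini}.
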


\begin{rem}
To the best of our knowledge, this is the first result to establish the finite time extinction with probability one and derive an upper bound for the moment estimate of the extinction time  to multi-valued stochastic $\Phi$-Laplacian equations $($in particular, the $p$-Laplace equations with all $p\in(1,2)$$)$.

In the earlier work \cite{BR13}, the finite time extinction with positive probability was proven for the stochastic total variation flow $($i.e., the singular $1$-Laplace equations$)$ under certain small initial data conditions. Our result can be viewed as a complementary extension to the case $p \in (1,2)$, where the almost sure extinction and the quantitative upper bound for the moment estimate of the extinction time are obtained.
\end{rem}

\subsection{SEIs with subdifferentials}\label{SEIsub}
Evolution inclusions with subdifferentials were first introduced by Br\'ezis in \cite{Brezis73}, originated from simplified models employed in the description of porous medium combustion, chemical reactor theory and game theory (cf.~\cite{OS23}). These evolution inclusions have contributed to the theory of nonlinear evolution equations (cf.~\cite{QX15}). Further research on evolution inclusions with subdifferentials has been conducted in \cite{AD72,KP88,O82,O84}.  In this part, we intend to present a more general result for the existence of weak solutions to a class of SEIs involving subdifferentials with single-valued pseudo-monotone operators.

Let $K$ be a uniformly convex Banach space and $\varphi: K \to \bar{\mathbb{R}} :=(-\infty, \infty]$ be a lower semicontinuous, proper and convex function on $K$. Let $V \subset K$ be a domain of $\partial\varphi$, where the mapping $\partial\varphi:K \to 2^{K^*}$ defined by
$$
\partial\varphi(v):=\Big\{v^*\in K^* \mid \varphi(v)\le \varphi(w) + \langle v^*, v-w\rangle, \forall w\in K\Big\}
$$
is called the subdifferential of $\varphi$. We denote $\mathcal{D}(\partial\varphi)$ as the set of all $x\in K$ for which $\partial\varphi(x) \neq \emptyset$.

\begin{proposition}$($cf. Theorem 2.8 in \cite{barbu2010nonlinear}$)$
For a lower semicontinuous, proper and convex function $\varphi$, the subdifferential $\partial\varphi: K\to 2^{K^*}$ is maximal-monotone.
\end{proposition}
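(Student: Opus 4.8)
My strategy is to obtain monotonicity directly from the definition and to derive maximality from the Rockafellar--Minty surjectivity criterion. Monotonicity is immediate: given $v_i^*\in\partial\varphi(v_i)$ for $i=1,2$, adding the two defining inequalities $\varphi(v_2)\ge\varphi(v_1)+\langle v_1^*,v_2-v_1\rangle$ and $\varphi(v_1)\ge\varphi(v_2)+\langle v_2^*,v_1-v_2\rangle$ yields $\langle v_1^*-v_2^*,v_1-v_2\rangle\ge 0$. For maximality I would use that, since $K$ is uniformly convex (hence reflexive) and the duality mapping $J\colon K\to K^*$ is single-valued and demicontinuous, a monotone operator $A\colon K\to 2^{K^*}$ is maximal-monotone as soon as $R(J+A)=K^*$; thus it suffices to prove that $J+\partial\varphi$ is surjective.

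To establish this surjectivity, I would fix $f\in K^*$ and minimise the functional $\Phi(x):=\varphi(x)+\tfrac12\|x\|_K^2-\langle f,x\rangle$ over $K$. Since $\varphi$ is proper, convex and lower semicontinuous it admits a continuous affine minorant by the Hahn--Banach theorem, so $\Phi$ is proper, strictly convex, lower semicontinuous and coercive; reflexivity of $K$ then produces a minimiser $x_0\in K$, whence $0\in\partial\Phi(x_0)$. Because the perturbation $x\mapsto\tfrac12\|x\|_K^2-\langle f,x\rangle$ is continuous on all of $K$, the Moreau--Rockafellar sum rule applies, and together with the identity $\partial\big(\tfrac12\|\cdot\|_K^2\big)=J$ it gives $0\in\partial\varphi(x_0)+J(x_0)-f$, i.e.\ $f\in J(x_0)+\partial\varphi(x_0)$. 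As $f$ was arbitrary, $R(J+\partial\varphi)=K^*$, and the surjectivity criterion yields the maximal-monotonicity of $\partial\varphi$.

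The hard part will be the functional-analytic bookkeeping rather than any single estimate: one must invoke that uniform convexity of $K$ forces reflexivity (Milman--Pettis) and the good mapping properties of $J$ (after an equivalent renorming if necessary, which leaves maximal-monotonicity unaffected), verify the coercivity of $\Phi$ carefully through the affine minorant of $\varphi$, and justify the subdifferential sum rule via the continuity --- not merely the properness --- of the quadratic-plus-linear term. All of these are classical, and in fact the entire assertion is Theorem~2.8 in \cite{barbu2010nonlinear}; within the paper it therefore suffices to record the monotonicity computation above and cite that reference, writing out the minimisation argument only if a self-contained proof is wanted.
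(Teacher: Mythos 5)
Your argument is correct and is essentially the standard Rockafellar proof underlying the result the paper cites: the paper itself gives no proof of this proposition, only the reference to Theorem~2.8 in \cite{barbu2010nonlinear}, and your monotonicity computation plus the surjectivity criterion $R(J+\partial\varphi)=K^*$ (established by minimising $\varphi(x)+\tfrac12\|x\|_K^2-\langle f,x\rangle$ and applying the Moreau--Rockafellar sum rule with $\partial\bigl(\tfrac12\|\cdot\|_K^2\bigr)=J$) is exactly the argument of that cited theorem. You also correctly flag the two points that need care --- reflexivity via Milman--Pettis and the renorming needed to make $J$ single-valued without affecting maximal-monotonicity --- so nothing further is required.
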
\label{subpro}

Let $\Lambda\subset \mathbb{R}^d$ be a bounded domain with smooth boundary $\partial\Lambda$. We consider the following multi-valued stochastic partial differential equation:
\begin{align}\label{quasispde}
	\left\{
	\begin{aligned}
		\partial_t u(t,x)&\in  \left[\Delta u(t,x)-g(t,x, u(t,x),\nabla
   u(t,x))-\partial\varphi(u(t,x))\right]dt\\
&~~~~~~~~~~~~~~~~~~~~+\sigma(t,u(t,x))dW(t),\\
		u(t,\cdot)|_{\partial\Lambda}&=0,~u(0,x)=u_0(x).
	\end{aligned}
	\right.
\end{align}
We define $H:=L^2(\Lambda)$ and $V:=W_0^{1,2}(\Lambda)$. This forms the Gelfand triple
$$V\subset H\subset V^*.$$
The space $V$ is a uniformly convex Banach space and the embedding $V\subset H$ is compact.

We assume that $\varphi$ is a lower semicontinuous, proper and convex function with $\varphi(0)=0$, and that $\mathcal{D}(\partial\varphi)=V$. Additionally, there exist constants $c_1>0,c_2\ge 0$ such that $$\varphi(u)\ge c_1\|u\|_V^{2}-c_2~~\text{for all}~u\in V,$$
 and a constant $c>0$  such that
 $$|\varphi(u)-\varphi(u-z)|\le c_3\|u\|_V+c_4~~\text{for all}~u,z\in V~\text{with}~\|z\|_V=1.$$

 We further assume that $g$ satisfies the following conditions:

\vspace{1mm}
(i) $g$ satisfies the Carath\'eodory conditions: for a.e.~$(t,x)\in [0,T]\times\Lambda$, $g(t,x,u,z)$ is continuous in $(u,z)\in \mathbb{R}\times\mathbb{R}^d$, and for each  $(u,z)\in\mathbb{R}\times\mathbb{R}^d$, $g(t,x,u,z)$ is measurable with respect to $(t,x)\in [0,T]\times\Lambda$.

   \vspace{1mm}
(ii) There exist a constant $c>0$ and a function $f_1\in L^1([0,T]\times\Lambda,[0,\infty))$ such that for a.e.~$(t,x)\in [0,T]\times\Lambda$ and all $(u,z)\in\mathbb{R}\times\mathbb{R}^d$,
    $$
    g(t,x,u,z)u\ge -c|u|^2-f_1(t,x).
    $$

\vspace{1mm}
(iii) There exist a nonnegative constant $c$ and a function $f_2 \in  L^2([0,T]\times\Lambda,[0,\infty))$ such that for a.e.~$(t,x)\in [0,T]\times\Lambda$ and all $(u,z)\in\mathbb{R}\times\mathbb{R}^d$,
    $$
    |g(t,x,u,z)|\le c|z|+c|u|^{\frac{d+2}{d}}+f_2(t,x).
    $$

\begin{theorem}
 Suppose that $\varphi$ and $g$ satisfy the above assumptions and that $(\mathbf{H}_{\sigma})$ holds. Then for any initial value $u_0\in H$, the multi-valued stochastic evolution inclusions \eqref{quasispde} have a (probabilistically) weak solution. Moreover, for any $p\ge 2$, we have the following estimate
    \begin{align*}
	\mathbb{E}\Big[\sup\limits_{t\in [0,T]}\|X(t)\|_H^{p}\Big]+\mathbb{E}\left\{\left(\int_{0}^{T}\|X(s)\|_V^2ds\right)^{\frac{p}{2}}\right\}<\infty.
\end{align*}
\end{theorem}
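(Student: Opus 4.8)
The plan is to recognize \eqref{quasispde} as a special case of the abstract inclusion \eqref{general} and then to invoke Theorem~\ref{theoremzy11}. Working in the Gelfand triple $V=W_0^{1,2}(\Lambda)\subset H=L^2(\Lambda)\subset V^*$ (where $V\hookrightarrow H$ is compact by Rellich--Kondrachov and $V,V^*$ are uniformly convex since $V$ is a Hilbert space), I would take $\mathcal{A}(t,u):=\partial\varphi(u)$, which is time-independent and, via the continuous inclusion $V\hookrightarrow K$ (hence $K^*\hookrightarrow V^*$), takes values in $2^{V^*}$, and
$$B(t,u):=-\Delta u+g\bigl(t,\cdot,u(\cdot),\nabla u(\cdot)\bigr).$$
Then \eqref{quasispde} is exactly \eqref{general} with the given $\sigma$, and the remaining task is to verify the hypotheses of Theorem~\ref{theoremzy11} with coercivity exponent $\alpha=2$.

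For $\mathcal{A}=\partial\varphi$, condition $(\mathbf{H}_{\mathcal{A}}^1)$ is the maximal monotonicity of the subdifferential of a proper, convex, lower semicontinuous function (stated just above the theorem), while the hypothesis $\mathcal{D}(\partial\varphi)=V$ supplies the full-domain requirement used throughout Section~3. For $(\mathbf{H}_{\mathcal{A}}^2)$, I would test the subdifferential inequality at $w=0$: for $v\in\partial\varphi(u)$ and using $\varphi(0)=0$, $\langle v,u\rangle\ge\varphi(u)\ge c_1\|u\|_V^2-c_2$, so $(\mathbf{H}_{\mathcal{A}}^2)$ holds with $\delta=c_1$, $\alpha=2$, $f\equiv c_2$. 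For $(\mathbf{H}_{\mathcal{A}}^3)$, for $v\in\partial\varphi(u)$ and $z\in V$ with $\|z\|_V=1$ the inequality gives $\langle v,z\rangle\ge\varphi(u)-\varphi(u-z)\ge-(c_3\|u\|_V+c_4)$; replacing $z$ by $-z$ yields $|\langle v,z\rangle|\le c_3\|u\|_V+c_4$, hence $\|v\|_{V^*}\le c_3\|u\|_V+c_4$ and in particular $\|\mathcal{A}^0(t,u)\|_{V^*}^2\le C(1+\|u\|_V^2)$, which is $(\mathbf{H}_{\mathcal{A}}^3)$ with $\beta=0$.

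For $B$, growth condition (iii) together with $\||u|^{(d+2)/d}\|_{L^{2d/(d+2)}}=\|u\|_{L^2}^{(d+2)/d}$ and the embedding $L^{2d/(d+2)}(\Lambda)\hookrightarrow V^*$ (dual to $V\hookrightarrow L^{2d/(d-2)}$; the Sobolev exponent being $\infty$ when $d\le 2$) shows that $B(t,\cdot)\colon V\to V^*$ is well defined with $\|B(t,u)\|_{V^*}\le C(\|u\|_V+\|u\|_H^{(d+2)/d}+\|f_2(t,\cdot)\|_{L^2})$; then, using the embedding constant $\|u\|_H\le C\|u\|_V$ to write $\|u\|_H^{2(d+2)/d}\le C\|u\|_V^2(1+\|u\|_H^{4/d})$, one obtains $(\mathbf{H}_B^2)$ with $\alpha=2$, $\beta=4/d$ and $f(t)=C\|f_2(t,\cdot)\|_{L^2}^2\in L^1([0,T])$. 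Condition (ii) gives $2\langle B(t,u),u\rangle=2\|\nabla u\|_{L^2}^2+2\int_\Lambda g(t,x,u,\nabla u)u\,dx\ge-2c\|u\|_H^2-2\|f_1(t,\cdot)\|_{L^1}$, i.e.\ $(\mathbf{H}_B^1)$; and $(\mathbf{H}_\sigma)$ is assumed. The genuinely delicate point is the pseudo-monotonicity of $B(t,\cdot)$, for which I would argue: if $u_n\rightharpoonup u$ in $V$ with $\limsup_n\langle B(t,u_n),u_n-u\rangle\le0$, then by the compactness of $V\hookrightarrow L^r(\Lambda)$ for $r<2d/(d-2)$ (and $V\hookrightarrow L^2$), Hölder's inequality and growth (iii) — splitting $|g(t,\cdot,u_n,\nabla u_n)|\le c|\nabla u_n|+c|u_n|^{(d+2)/d}+f_2$ and choosing the Hölder exponent for the critical term so that $|u_n|^{(d+2)/d}$ stays bounded in $L^{r'}$ with $r=2d^2/(d^2+4)<2d/(d-2)$ — one shows $\langle g(t,\cdot,u_n,\nabla u_n),u_n-u\rangle\to0$. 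Hence $\limsup_n\langle-\Delta u_n,u_n-u\rangle\le0$, and since $\langle\nabla u_n,\nabla u\rangle\to\|\nabla u\|_{L^2}^2$ this forces $\limsup_n\|\nabla u_n\|_{L^2}^2\le\|\nabla u\|_{L^2}^2$; combined with weak lower semicontinuity and the uniform convexity of $L^2$ we get $\nabla u_n\to\nabla u$ in $L^2$, so $u_n\to u$ strongly in $V$. By the continuity of $-\Delta$ and of the Nemytskii operator $(u,z)\mapsto g(t,\cdot,u,z)$ from $L^{2d/(d-2)}\times L^2$ into $L^{2d/(d+2)}$ (valid even at this critical growth), $B(t,u_n)\to B(t,u)$ in $V^*$, and the pseudo-monotonicity inequality follows.

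Having verified all the hypotheses of Theorem~\ref{theoremzy11} (with a common $f\in L^1([0,T],[0,\infty))$, a common constant $C$, $\alpha=2$ and $\beta=4/d$), that theorem yields a probabilistically weak solution to \eqref{quasispde} together with the moment bound \eqref{genest1}, which is precisely the asserted estimate for every $p\ge2$. I expect the pseudo-monotonicity step to be the main obstacle: the lower-order term has $L^2$-critical growth $(d+2)/d$, so passing to the limit in $\langle g(t,\cdot,u_n,\nabla u_n),u_n-u\rangle$ requires exploiting the compact embedding into the subcritical Lebesgue scale, and one must then promote the weak convergence $u_n\rightharpoonup u$ in $V$ to strong convergence by using the monotone principal part $-\Delta$.
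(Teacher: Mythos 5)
Your proof follows the same route as the paper: cast \eqref{quasispde} as an instance of \eqref{general} with $\mathcal{A}=\partial\varphi$ and $B(t,u)=-\Delta u+g(t,\cdot,u,\nabla u)$, verify $(\mathbf{H}_{\mathcal{A}}^1)$-$(\mathbf{H}_{\mathcal{A}}^3)$ from the convexity/coercivity/Lipschitz-type assumptions on $\varphi$, check $(\mathbf{H}_B^1)$, $(\mathbf{H}_B^2)$ and the pseudo-monotonicity of $B$, and then invoke Theorem~\ref{theoremzy11} with $\alpha=2$. The only presentational difference is that you carry out the Sobolev-exponent bookkeeping and the pseudo-monotonicity argument for $B$ in detail (correctly), whereas the paper delegates exactly these two points to Example 4.1 of \cite{rockner2022well}.
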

\begin{proof}
We define the operator $\mathcal{A}:V\to 2^{V^*}$ by
$$
\mathcal{A}(u):= \partial \varphi(u), \quad u\in W_0^{1,2}(\Lambda),
$$
it follows from Proposition \ref{subpro} that $\mathcal{A}$ satisfies the conditions $(\mathbf{H}_{\mathcal{A}}^1)$.

By the definition and assumptions of $\varphi$, we have that for any $u\in V$ and $u^*\in \mathcal{A}(u)$,
\begin{align*}
    \langle u^*,u \rangle&\ge \varphi(u)-\varphi(0)\ge c_1\|u\|_V^2-c_2,
\end{align*}
and
\begin{align*}
    \|u^*\|_{V^*}\le \sup_{\|z\|_V=1}|\langle u^*, z\rangle|\le\sup_{\|z\|_V=1}|\varphi(u)-\varphi(u-z)|\le c_3\|u\|_V+c_4.
\end{align*}
 Thus, $\mathcal{A}$ satisfies the conditions $(\mathbf{H}_{\mathcal{A}}^2)$ and $(\mathbf{H}_{\mathcal{A}}^3)$.

Define the operator $B: V \to V^*$ by
$$
B(u):= -\Delta u+g(u,\nabla u), \quad u\in W_0^{1,2}(\Lambda).
$$
From Example 4.1 in \cite{rockner2022well}, we know that $B$ is pseudo-monotone and satisfies the growth condition $(\mathbf{H}_B^2)$. We have
$$
 \langle  -\Delta u+g(u,\nabla u), u\rangle\ge -c\|u\|_H^2+\|u\|_V^2-\|f(t,\cdot)\|_{L^1}.
$$
Hence, $(\mathbf{H}_B^1)$ holds for $B$. Then, by Theorem \ref{theoremzy11} the result follows.
\end{proof}
\begin{rem}
Let $h:\mathbb{R}\to (-\infty,+\infty]$ be a lower semicontinuous, proper and convex function, and $\varphi: L^2(\Lambda)\to(-\infty,+\infty]$ be defined by
\begin{align*}
	\varphi(u):=\left\{
	\begin{aligned}
		&\int_{\Lambda}h(u(x))dx, \quad \text{if } h(u)\in L^1(\Lambda),\\
		&+\infty, \quad \text{otherwise}.
	\end{aligned}
	\right.
\end{align*}
By  Proposition 2.8 in \cite{barbu76}, $\varphi$ is a lower semicontinuous, proper and convex function and $\mathcal{D}(\partial\varphi)=L^2(\Lambda)$.
\end{rem}

\section{Appendix}
In this section, we present some results of the multi-valued maximal-monotone operator.
\begin{proposition}\label{app1}
    Let $V$ be a Banach space and the dual space $V^*$ is strictly convex, then the duality mapping $J:V\to V^*$ is single-valued, demicontinuous and odd.
\end{proposition}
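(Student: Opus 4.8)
The plan is to verify the three assertions separately, in the order oddness, single-valuedness, demicontinuity, relying only on the strict convexity of $V^*$ (and, implicitly, on the Hahn--Banach theorem, which guarantees $J(u)\neq\emptyset$ for every $u\in V$, where $J$ is the generalized duality mapping of Definition \ref{defJ} with exponent $\alpha>1$). Oddness is immediate: since $\|-u\|_V=\|u\|_V$, $\langle v,-u\rangle=\langle -v,u\rangle$ and $\|-v\|_{V^*}=\|v\|_{V^*}$, one has $v\in J(u)$ $\Leftrightarrow$ $-v\in J(-u)$, so $J(-u)=-J(u)$ as subsets of $V^*$ (hence as elements, once single-valuedness is established).

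For single-valuedness I would argue as follows. The case $u=0$ gives $J(0)=\{0\}$ directly from the definition. If $u\neq 0$ and $v_1,v_2\in J(u)$, set $R:=\|u\|_V^{\alpha-1}>0$; then $\|v_1\|_{V^*}=\|v_2\|_{V^*}=R$ and $\langle v_i,u\rangle=\|u\|_V^{\alpha}=R\|u\|_V$ for $i=1,2$, so $\langle (v_1+v_2)/2,u\rangle=R\|u\|_V$, which forces $\|(v_1+v_2)/2\|_{V^*}\ge R$. Combined with the triangle inequality $\|(v_1+v_2)/2\|_{V^*}\le R$, this shows that $v_1/R$ and $v_2/R$ are unit vectors of $V^*$ whose midpoint is again a unit vector. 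The strict convexity of $V^*$ then yields $v_1/R=v_2/R$, i.e. $v_1=v_2$. From this point on, $J$ is a genuine single-valued map $V\to V^*$.

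For demicontinuity, let $u_n\to u$ in $V$; the goal is $J(u_n)\rightharpoonup J(u)$ in $V^*$. Since $\|J(u_n)\|_{V^*}=\|u_n\|_V^{\alpha-1}$ is bounded and $V^*$ is reflexive, it suffices, by the standard subsequence argument, to show that every weak limit point $w$ of $\{J(u_n)\}$ equals $J(u)$. Along a subsequence with $J(u_{n_k})\rightharpoonup w$, combining this with the strong convergence $u_{n_k}\to u$ gives
\[
\langle w,u\rangle=\lim_{k\to\infty}\langle J(u_{n_k}),u_{n_k}\rangle=\lim_{k\to\infty}\|u_{n_k}\|_V^{\alpha}=\|u\|_V^{\alpha},
\]
while weak lower semicontinuity of $\|\cdot\|_{V^*}$ gives $\|w\|_{V^*}\le\liminf_k\|J(u_{n_k})\|_{V^*}=\|u\|_V^{\alpha-1}$; the reverse bound $\|w\|_{V^*}\ge\|u\|_V^{\alpha-1}$ follows from $\|u\|_V^{\alpha}=\langle w,u\rangle\le\|w\|_{V^*}\|u\|_V$ (with the case $u=0$ handled separately, since then $J(u_n)\to 0$ in norm). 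Hence $\langle w,u\rangle=\|u\|_V^{\alpha}=\|w\|_{V^*}^{\alpha/(\alpha-1)}$, so $w\in J(u)$, and by single-valuedness $w=J(u)$; this proves demicontinuity.

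The only step that is not entirely routine is the single-valuedness argument, where one must reduce the claim to the assertion that the midpoint of two unit vectors of $V^*$ can itself be a unit vector only if the two vectors coincide, and invoke strict convexity in precisely that form; the demicontinuity is then a standard bounded–weak-compactness-plus-subsequence argument, and oddness is trivial. A minor recurring point is to isolate the degenerate case $u=0$ throughout.
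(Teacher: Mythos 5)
Your proof is correct and follows essentially the same route as the paper: single-valuedness via the observation that strict convexity of $V^*$ forbids the midpoint of two distinct unit vectors from being a unit vector, demicontinuity via boundedness of $\{J(u_n)\}$ plus identification of every weak(-*) limit point through weak lower semicontinuity of the dual norm, and oddness directly from the symmetry of the defining relations. The only cosmetic difference is that you invoke reflexivity of $V^*$ to extract weakly convergent subsequences where the paper works with weak-* limit points; in the paper's standing framework ($V$, $V^*$ uniformly convex, hence reflexive) this is immaterial.
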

\begin{proof}
       We first prove that $J$ is single-valued. Let $v_1,v_2\in J(u)$, we have
      $$
      \left\langle v_1,u\right\rangle=\left\langle v_2,u\right\rangle=\|v_1\|_{V^*}^{\frac{\alpha}{\alpha-1}}=\|v_2\|_{V^*}^{\frac{\alpha}{\alpha-1}}=\|u\|_V^{\alpha}.
      $$
      Hence, according to \eqref{Jpro1}, it follows that
      \begin{align*}
          2\|v_1\|_{V^*}\|u\|_V\le \left\langle v_1+v_2,u\right\rangle\le\|v_1+v_2\|_{V^*}\|u\|_V \ \text{for~all~} u\in V,
      \end{align*}
      which implies that $\|v_1\|_{V^*}\le \frac{1}{2}\|v_1+v_2\|_{V^*}$. Since $\|v_1\|_{V^*}=\|v_2\|_{V^*}$ and $V^*$ is strictly convex, we deduce that $v_1=v_2$.

\vspace{1mm}
      Next, we show that $J$ is demicontinuous. Let $\{u_n\}_{n\in \mathbb{N}}\in V$ be strongly convergent to $u$ and let $u^*$ be any weak-* limit point of $\{J(u_n)\}_{n\in \mathbb{N}}$. According to the definition of $J(u_n)$, we have $$\|u_n\|_V^{\alpha}=\|J(u_n)\|_{V^*}^{\frac{\alpha}{\alpha-1}}.$$
       Thus, by the above conditions and the weak lower semicontinuity of the norm in $V^*$, we conclude that
    \begin{align*}
        \|u^*\|_{V^*}^{\frac{\alpha}{\alpha-1}}\le \liminf_{n\to \infty}\|J(u_n)\|_{V^*}^{\frac{\alpha}{\alpha-1}}&\le\lim_{n\to\infty}\|u_n\|_V^{\alpha}(=\|u\|_V^{\alpha})
        \\
        &=\lim_{n\to\infty}\langle J(u_n),u_n\rangle=\langle u^*,u\rangle\le \|u^*\|_{V^*}\|u\|_V.
    \end{align*}
    Then, it follows that $$\langle u^*,u\rangle=\|u\|_V^{\alpha}=\|u^*\|_{V^*}^{\frac{\alpha}{\alpha-1}}.$$ By the definition of $J(u)$, we deduce that $u^*=J(u)$. Consequently, the sequence $\{J(u_n)\}_{n\in\mathbb{N}}$ converges weak-* to $J(u)$. Therefore, $J:V\to V^*$ is demicontinuous.

    \vspace{1mm}
    According to the definition of $J$, it implies that
    \begin{align*}
        \langle J(-u),-u\rangle=\|J(-u)\|_V^{\frac{\alpha}{\alpha-1}}=\|u\|_V^{\alpha}=\langle -J(u),-u\rangle~~~ \text{for~all}~u\in V.
    \end{align*}
    Since $J$ is single-valued, we conclude
    \begin{align*}
        J(-u)=-J(u),
    \end{align*}
which shows that $J$ is odd.

\end{proof}
\begin{theorem}\label{maxiifon1}	Let $V$ and $V^*$ be reflexive and strictly convex Banach spaces. Let $\mathcal{A}: V\to 2^{V^*}$ be a monotone operator and $J: V\to V^*$ denote the duality mapping of $V$. Then $\mathcal{A}$ is maximal-monotone if and only if for any (or some) $\lambda >0$, the range condition
\begin{align*}
\mathcal{R}(\mathcal{A}+\lambda J)= V^*
\end{align*}
is satisfied.
\end{theorem}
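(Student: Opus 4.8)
The plan is to reduce this to a known characterization already available in Barbu's book: the analogous statement with the \emph{classical} duality mapping (the case $\alpha=2$). Recall that in the setting of the excerpt, $J$ is exactly the classical duality mapping \emph{with respect to the equivalent renorming for which it has gauge function $t\mapsto t^{\alpha-1}$}; more precisely, $J=\partial\psi$ where $\psi(u)=\frac1\alpha\|u\|_V^\alpha$, which is a continuous convex function on $V$. Since $V$ and $V^*$ are reflexive and strictly convex, $J$ is single-valued, monotone, demicontinuous, bounded on bounded sets, coercive, and surjective (all established in the excerpt in the remarks following Definition \ref{defJ}); moreover $J^{-1}$ is single-valued and demicontinuous as the duality-type mapping on $V^*$. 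The strategy is to prove the two implications directly using these properties, closely following the proof of the classical range theorem (Barbu, Theorem 2.3 and Corollary 2.2), since none of those arguments actually uses $\alpha=2$ — they only use that $J$ is a single-valued, monotone, bounded, demicontinuous, coercive bijection whose inverse is demicontinuous.

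First I would prove the easy direction: if $\mathcal{R}(\mathcal{A}+\lambda J)=V^*$ for some $\lambda>0$, then $\mathcal{A}$ is maximal-monotone. Suppose $[u_0,v_0]\in V\times V^*$ satisfies $\langle v_0-w,\,u_0-x\rangle\ge 0$ for all $[x,w]\in\mathcal{G}(\mathcal{A})$. By the range condition there exist $[x_0,w_0]\in\mathcal{G}(\mathcal{A})$ with $w_0+\lambda J(x_0-u_0)=v_0$, i.e.\ $-\lambda J(x_0-u_0)=w_0-v_0$. Testing the monotonicity inequality at $[x_0,w_0]$ gives $\langle v_0-w_0,\,u_0-x_0\rangle\ge 0$, that is $\langle \lambda J(x_0-u_0),\,u_0-x_0\rangle\ge 0$, hence $-\lambda\|x_0-u_0\|_V^\alpha\ge 0$, forcing $x_0=u_0$ and then $w_0=v_0$; so $[u_0,v_0]\in\mathcal{G}(\mathcal{A})$, proving maximality.

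Next the harder direction: if $\mathcal{A}$ is maximal-monotone, then $\mathcal{R}(\mathcal{A}+\lambda J)=V^*$ for every $\lambda>0$. Without loss of generality take $\lambda=1$ (replace $\mathcal{A}$ by $\lambda^{-1}\mathcal{A}$, still maximal-monotone). Fix $v^*\in V^*$; I want $u\in V$ and $w\in\mathcal{A}(u)$ with $w+J(u)=v^*$. The standard route is the Yosida/regularization argument: for $\varepsilon>0$ let $\mathcal{A}_\varepsilon$ denote the classical Yosida approximation of $\mathcal{A}$ (which exists since $V$ is reflexive and strictly convex and $\mathcal{A}$ is maximal-monotone — cf.\ \cite{barbu2010nonlinear}), so $\mathcal{A}_\varepsilon:V\to V^*$ is single-valued, monotone, bounded, demicontinuous, and the equation $\mathcal{A}_\varepsilon(u_\varepsilon)+J(u_\varepsilon)=v^*$ has a solution $u_\varepsilon$ by surjectivity of the sum of a monotone demicontinuous bounded operator and the coercive bijection $J$ (Corollary 2.2 in \cite{barbu2010nonlinear}, applied to $\mathcal{A}_\varepsilon+J$, which is coercive because $\langle J(u),u\rangle=\|u\|_V^\alpha$ dominates). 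Then one derives a priori bounds: pairing with $u_\varepsilon$ and using $\langle\mathcal{A}_\varepsilon(u_\varepsilon),u_\varepsilon\rangle\ge\langle\mathcal{A}_\varepsilon(u_\varepsilon)-w_0,u_\varepsilon-x_0\rangle+\cdots$ against a fixed $[x_0,w_0]\in\mathcal{G}(\mathcal{A})$ and $\langle J(u_\varepsilon),u_\varepsilon\rangle=\|u_\varepsilon\|_V^\alpha$, one bounds $\|u_\varepsilon\|_V$, hence (Proposition 2.2(ii)-type estimate) $\|\mathcal{A}_\varepsilon(u_\varepsilon)\|_{V^*}$ and $\|J(u_\varepsilon)\|_{V^*}$. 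By reflexivity pass to a subsequence: $u_\varepsilon\rightharpoonup u$, $\mathcal{A}_\varepsilon(u_\varepsilon)\rightharpoonup w$, $J(u_\varepsilon)\rightharpoonup g$ with $w+g=v^*$, and $J_\varepsilon u_\varepsilon:=(I+\varepsilon\mathcal{A})^{-1}u_\varepsilon\to$ the same weak limit $u$ (since $\|u_\varepsilon-J_\varepsilon u_\varepsilon\|_V=\varepsilon\|\mathcal{A}_\varepsilon u_\varepsilon\|_{\cdot}\to 0$ up to the appropriate exponent, using the resolvent identity). Then the usual "$\limsup$ trick": from $\mathcal{A}_\varepsilon(u_\varepsilon)\in\mathcal{A}(J_\varepsilon u_\varepsilon)$ and $\langle\mathcal{A}_\varepsilon(u_\varepsilon),u_\varepsilon\rangle=\langle v^*-J(u_\varepsilon),u_\varepsilon\rangle$ together with $\limsup_\varepsilon\langle J(u_\varepsilon),u_\varepsilon\rangle\ge\langle g,u\rangle$ (demiclosedness / monotonicity of $J$ plus Lemma \ref{propmaxi} applied to the maximal-monotone $J$ itself, giving $g=J(u)$ and $\langle J(u_\varepsilon),u_\varepsilon\rangle\to\langle J(u),u\rangle$), one gets $\limsup_\varepsilon\langle\mathcal{A}_\varepsilon(u_\varepsilon),J_\varepsilon u_\varepsilon\rangle\le\langle w,u\rangle$; Lemma \ref{propmaxi} applied to $\mathcal{A}$ then yields $[u,w]\in\mathcal{G}(\mathcal{A})$ and hence $v^*=w+J(u)\in\mathcal{R}(\mathcal{A}+J)$. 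Finally, to upgrade "for some $\lambda$" to "for all $\lambda$", note the easy direction shows "some $\lambda$" $\Rightarrow$ maximal-monotone, and the hard direction shows maximal-monotone $\Rightarrow$ "all $\lambda$", closing the equivalence.

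The main obstacle I anticipate is the bookkeeping in the limiting argument — specifically verifying that $\|u_\varepsilon - J_\varepsilon u_\varepsilon\|_V\to 0$ and handling the nonlinear exponent $\alpha$ in the a priori estimates (where in the classical $\alpha=2$ case one gets clean quadratic bounds, here one must use Young's inequality with conjugate exponents $\alpha,\frac{\alpha}{\alpha-1}$ consistently), and confirming that $\langle J(u_\varepsilon),u_\varepsilon\rangle\to\langle J(u),u\rangle$ so that the $\limsup$ passes to $\mathcal{A}_\varepsilon$ cleanly. Everything else is a routine adaptation of the classical range theorem, since $J$ here enjoys all the structural properties (bijectivity, demicontinuity of $J$ and $J^{-1}$, coercivity, boundedness) that its classical counterpart does.
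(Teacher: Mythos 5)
Your ``easy'' direction contains a genuine gap. You invoke the range condition to produce $[x_0,w_0]\in\mathcal{G}(\mathcal{A})$ with $w_0+\lambda J(x_0-u_0)=v_0$, i.e.\ you are using surjectivity of the \emph{translated} operator $\mathcal{A}+\lambda J(\cdot-u_0)$. The hypothesis only gives $\mathcal{R}(\mathcal{A}+\lambda J)=V^*$, and since $J$ is nonlinear for $\alpha\neq 2$ the range of the translated operator does not follow from the range of the untranslated one (the paper's resolvent equation \eqref{res} does involve the shifted map, but its solvability, Proposition \ref{propres}, presupposes maximal monotonicity of $\mathcal{A}$ --- exactly what you are trying to prove here). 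The correct move, which is what the paper does, is to solve $w_0+\lambda J(x_0)=v_0+\lambda J(u_0)$ using the stated range condition; the monotonicity relation then yields $\langle J(x_0)-J(u_0),x_0-u_0\rangle\le 0$, and one must \emph{additionally} show that this forces $x_0=u_0$. That step is not free: it requires the norm computation $(\|x_0\|_V^{\alpha-1}-\|u_0\|_V^{\alpha-1})(\|x_0\|_V-\|u_0\|_V)\le 0$, the identification $\langle J(x_0),u_0\rangle=\|x_0\|_V^{\alpha}=\|u_0\|_V^{\alpha}$, and the single-valuedness of $J^{-1}$ coming from strict convexity of $V$. Your shortcut (which would give $-\lambda\|x_0-u_0\|_V^{\alpha}\ge 0$ directly) avoids this argument precisely because it assumes a solvability statement you do not have. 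The direction is salvageable, but as written the key step fails.

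Your ``hard'' direction is correct in outline but takes a genuinely different and much heavier route than the paper. You re-run the Rockafellar-type regularization: introduce the classical Yosida approximation $\mathcal{A}_\varepsilon$, solve $\mathcal{A}_\varepsilon(u_\varepsilon)+\lambda J(u_\varepsilon)=v^*$, derive a priori bounds, and pass to the limit with the $\limsup$ trick and Lemma \ref{propmaxi} (applied both to $J$ and to $\mathcal{A}$). This works --- the boundedness of $\mathcal{A}_\varepsilon(u_\varepsilon)=v^*-\lambda J(u_\varepsilon)$, the convergence $\|u_\varepsilon-J_\varepsilon u_\varepsilon\|_V\to 0$, and the identification $g=J(u)$ all go through --- but it leans on the full classical Yosida machinery (itself built on the $\alpha=2$ range theorem) and requires the careful bookkeeping you yourself flag. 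The paper instead sets $B(u):=\lambda J(u)-y$, observes that $B$ is single-valued, monotone, hemicontinuous and coercive (since $\langle B(u),u\rangle/\|u\|_V\ge\lambda\|u\|_V^{\alpha-1}-\|y\|_{V^*}\to\infty$), applies Theorem 2.1 of \cite{barbu2010nonlinear} to obtain $x$ with $\langle\lambda J(x)-y+v,\,u-x\rangle\ge 0$ for all $[u,v]\in\mathcal{G}(\mathcal{A})$, and concludes $y-\lambda J(x)\in\mathcal{A}(x)$ by maximality. This is a two-step argument with no limit passage; your version buys nothing extra here, so you may want to adopt the shorter route.
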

\begin{proof}
\textbf{Step 1:} Suppose that $\mathcal{R}(\mathcal{A}+\lambda J)= V^*$ for some $\lambda>0$. If $\mathcal{A}$ is not maximal-monotone, then there exists $[x_1,y_1]\in V\times V^*$ such that $[x_1,y_1]\notin \mathcal{G}(\mathcal{A})$ and
    \begin{align}\label{es7}
    \langle y-y_1,x-x_1\rangle \ge 0, \quad \forall [x,y]\in \mathcal{G}(\mathcal{A}).
    \end{align}
    By assumption, we can choose $[x_2,y_2]\in \mathcal{G}(\mathcal{A})$ such that
    $$
    \lambda J(x_2)+y_2 = \lambda J(x_1)+y_1.
    $$
    Then in view of (\ref{es7}), we can deduce that
    $$
    \langle J(x_2)-J(x_1), x_2-x_1\rangle \le 0.
    $$
    According to the definition of $J$, we obtain
$$
\|x_1\|_V^{\alpha}+\|x_2\|_V^{\alpha}\le \langle J(x_2), x_1\rangle+\langle J(x_1),x_2\rangle\le \|x_2\|^{\alpha-1}\|x_1\|_V+\|x_1\|^{\alpha-1}\|x_2\|_V,
$$
which implies
\begin{align*}
    (\|x_1\|_V^{\alpha-1}-\|x_2\|_V^{\alpha-1})(\|x_1\|_V-\|x_2\|_V)\le 0.
\end{align*}
In light of $\alpha>1$, we deduce that
$$
\langle J(x_2), x_1\rangle=\langle  J(x_1), x_2\rangle=\|x_1\|_V^{\alpha}=\|x_2\|_V^{\alpha}.
$$
Thus, it follows that  $J(x_1)=J(x_2)$. Since the duality mapping $J^{-1}$ of $V^*$ is single-valued, it implies that $x_1=x_2$. Then it is clear that $[x_1,y_1]=[x_2,y_2]\in \mathcal{G}(\mathcal{A})$, which contradicts  to the hypothesis.  Thus, $\mathcal{A}$ is maximal-monotone,

\textbf{Step 2:} Suppose that $\mathcal{A}$ is maximal-monotone. Let $y$ be an arbitrary element of $V^*$ and let $\lambda>0$. Define
$$
B(u) :=\lambda J(u)-y, \quad \forall u\in V.
$$
Since
$$\lim\limits_{\|u\|_V\to\infty}\frac{\langle B(u),u \rangle}{\|u\|_V}\ge \lim\limits_{\|u\|_V\to\infty}(\lambda \|u\|_V^{\alpha-1}-\|y\|_{V^*})=\infty,
$$
we note that $B$ is coercive. From the definition of $J$, it follows that $B$ is monotone, single-valued and hemicontinuous. Then, according to Theorem 2.1 in \cite{barbu2010nonlinear}, there exists $x\in V$ such that
$$ \langle \lambda J (x) -y + v, u-x\rangle \ge 0, \quad \forall [u,v]\in \mathcal{G}(\mathcal{A}).
$$
In view of the maximal monotonicity of $\mathcal{A}$, we deduce that $[x,-\lambda J(x)+y]\in \mathcal{G}(\mathcal{A})$, which implies that $y\in \lambda J(x)+\mathcal{A}(x)$. We complete the proof.
\end{proof}
\begin{lemma}\label{lemmacoe1}
	Let $\alpha>1$ and  $\mathcal{A}:V \to 2^{V^*}$ be a maximal-monotone operator. Let $\mathcal{A}_{\lambda}$ denote its generalized Yosida approximation $($i.e.~$(\ref{Alamd})$$)$. Suppose that there exist $\delta >0$ and $C\in \mathbb{R}$ such that
	\begin{equation}
		\left<v,x\right>\ge \delta \|x\|_V^{\alpha}+C\quad\text{for~all}~x\in \mathcal{D}(\mathcal{A})~\text{and}\ v\in \mathcal{A}(x).\label{lemma 1}
	\end{equation}
	Then, for any $0 <\lambda< \delta^{-1}$, we have$$
	\left<\mathcal{A}_{\lambda}x,x\right>\ge \delta 2^{-\alpha}\|x\|_V^{\alpha}+C\quad\text{for~all}~x\in V.
	$$
\end{lemma}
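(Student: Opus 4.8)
The plan is to exploit the fact, recorded in Proposition \ref{propyosgj}~(iv), that $\mathcal{A}_{\lambda}(x)\in\mathcal{A}(\mathcal{R}_{\lambda}(x))$, together with the defining relation $\mathcal{A}_{\lambda}(x)=\frac{1}{\lambda}J(x-\mathcal{R}_{\lambda}(x))$ from \eqref{Alamd}. First I would observe that since $\mathcal{A}(\mathcal{R}_{\lambda}(x))\ni\mathcal{A}_{\lambda}(x)$ is non-empty, one has $\mathcal{R}_{\lambda}(x)\in\mathcal{D}(\mathcal{A})$, so the coercivity hypothesis \eqref{lemma 1} may be applied with $x$ replaced by $\mathcal{R}_{\lambda}(x)$ and $v=\mathcal{A}_{\lambda}(x)\in\mathcal{A}(\mathcal{R}_{\lambda}(x))$, which gives
\[
\langle\mathcal{A}_{\lambda}(x),\mathcal{R}_{\lambda}(x)\rangle\ge\delta\|\mathcal{R}_{\lambda}(x)\|_V^{\alpha}+C .
\]

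Next I would split $\langle\mathcal{A}_{\lambda}(x),x\rangle=\langle\mathcal{A}_{\lambda}(x),x-\mathcal{R}_{\lambda}(x)\rangle+\langle\mathcal{A}_{\lambda}(x),\mathcal{R}_{\lambda}(x)\rangle$. For the first summand, substituting $\mathcal{A}_{\lambda}(x)=\frac{1}{\lambda}J(x-\mathcal{R}_{\lambda}(x))$ and invoking Definition \ref{defJ} of the duality mapping yields exactly $\langle\mathcal{A}_{\lambda}(x),x-\mathcal{R}_{\lambda}(x)\rangle=\frac{1}{\lambda}\|x-\mathcal{R}_{\lambda}(x)\|_V^{\alpha}$. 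Combining this with the bound from the previous step gives
\[
\langle\mathcal{A}_{\lambda}(x),x\rangle\ge\frac{1}{\lambda}\|x-\mathcal{R}_{\lambda}(x)\|_V^{\alpha}+\delta\|\mathcal{R}_{\lambda}(x)\|_V^{\alpha}+C .
\]

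Finally I would use the restriction $0<\lambda<\delta^{-1}$, i.e.\ $\frac{1}{\lambda}>\delta$, to bound the right-hand side from below by $\delta\big(\|x-\mathcal{R}_{\lambda}(x)\|_V^{\alpha}+\|\mathcal{R}_{\lambda}(x)\|_V^{\alpha}\big)+C$, and then apply the convexity of $t\mapsto t^{\alpha}$ (valid since $\alpha>1$) together with the triangle inequality: writing $a:=\|x-\mathcal{R}_{\lambda}(x)\|_V$ and $b:=\|\mathcal{R}_{\lambda}(x)\|_V$, one has $a^{\alpha}+b^{\alpha}\ge 2\big(\tfrac{a+b}{2}\big)^{\alpha}=2^{1-\alpha}(a+b)^{\alpha}\ge 2^{1-\alpha}\|x\|_V^{\alpha}\ge 2^{-\alpha}\|x\|_V^{\alpha}$. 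Plugging this in yields $\langle\mathcal{A}_{\lambda}(x),x\rangle\ge\delta 2^{-\alpha}\|x\|_V^{\alpha}+C$, which is the claim. I do not expect any genuine obstacle here; the only points needing care are checking that $\mathcal{R}_{\lambda}(x)\in\mathcal{D}(\mathcal{A})$ and keeping the inequality $\lambda<\delta^{-1}$ strict enough to factor out $\delta$ from both terms — both of which are immediate — and one in fact obtains the marginally stronger constant $2^{1-\alpha}$ in place of $2^{-\alpha}$.
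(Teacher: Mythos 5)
Your proof is correct and follows essentially the same route as the paper: split $\langle\mathcal{A}_{\lambda}x,x\rangle$ at $\mathcal{R}_{\lambda}x$, evaluate the first piece via the duality mapping, apply the coercivity hypothesis to $\mathcal{A}_{\lambda}x\in\mathcal{A}(\mathcal{R}_{\lambda}x)$, and use $\lambda^{-1}>\delta$ plus convexity of $t\mapsto t^{\alpha}$. Your observation that the argument actually yields the sharper constant $2^{1-\alpha}$ is also accurate; the paper simply states the weaker bound $2^{-\alpha}$.
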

\begin{proof}
	For any $x\in V$, by the definition of $\mathcal{A}_{\lambda}$ and $J$, we have
	$$
	\left<\mathcal{A}_\lambda x, x-\mathcal{R}_{\lambda}x \right> =\frac{1}{\lambda} \left<J(x-\mathcal{R}_{\lambda}x), x-\mathcal{R}_{\lambda}x \right>=\frac{1}{\lambda}\|x-\mathcal{R}_{\lambda}x\|_V^{\alpha}.
	$$
Since $\mathcal{A}_{\lambda}x\in \mathcal{A}(\mathcal{R}_{\lambda}x)$,  by \eqref{lemma 1}  it follows that
\begin{align*}
\left<\mathcal{A}_\lambda x, x \right>&=\left<\mathcal{A}_\lambda x, \mathcal{R}_{\lambda}x \right>+\frac{1}{\lambda}\|x-\mathcal{R}_{\lambda}x\|_V^{\alpha}\\
&\ge \delta \|\mathcal{R}_\lambda x\|_V^{\alpha}+\frac{1}{\lambda}\|x-\mathcal{R}_{\lambda}x\|_V^{\alpha}+C\\
&=\delta\left(\|\mathcal{R}_{\lambda}x\|_V^{\alpha}+\|x-\mathcal{R}_{\lambda}x\|_V^{\alpha}\right)+(\frac{1}{\lambda}-\delta)\|x-\mathcal{R}_{\lambda}x\|_V^{\alpha}+C.
\end{align*}
Noting that $0<\lambda<\delta^{-1}$, we conclude
	\begin{align*}
		\left<\mathcal{A}_\lambda x, x \right> \ge \delta 2^{-\alpha}\|x\|_V^{\alpha}+C.
	\end{align*}
We complete the proof.
\end{proof}	

%
%
%

\vspace{3mm}


\noindent\textbf{Data availability} Data sharing is not applicable to this article as no datasets were generated or analysed during the current study.

\noindent\textbf{Statements and Declarations} On behalf of all authors, the corresponding author states that there is no conflict of interest.

\end{document}